\newtheorem{theorem}{Theorem}[section]
\newtheorem{lemma}[theorem]{Lemma}
\newtheorem{proposition}[theorem]{Proposition}
\newtheorem{corollary}[theorem]{Corollary}
\newtheorem{theorem-definition}[theorem]{Theorem-Definition}
\theoremstyle{definition}
\newtheorem{definition}[theorem]{Definition}
\newtheorem{notation}[theorem]{Notation}
\newtheorem{problem}[theorem]{Problem}
\newtheorem{example}[theorem]{Example}
\theoremstyle{remark}
\newtheorem{remark}[theorem]{Remark}
\numberwithin{equation}{section}
\def\aa{{\mathfrak{B}}}
\def\bb{{\mathcal{B}}}
\def\cc{{{C}}}
\def\gh{{\mathcal{H}}}
\def\hc{{\mathfrak{H}}}
\def\mk{{\mathfrak{m}}}
\def\mm{{{\widetilde{\mathfrak{m}}}_c}}
\def\m2{{{\widetilde{\mathfrak{m}}}_2}}
\def\pp{{\mathcal{P}}}
\def\rs{{\delta}}
\DeclareSymbolFont{cyss}{OT2}{wncyss}{m}{n}
\DeclareMathSymbol{\sh}{\mathbin}{cyss}{`x}
\begin{document}

\baselineskip 16pt 

\title[$p$-adic multiple $L$-functions]{Fundamentals of $p$-adic multiple $L$-functions and 
evaluation of their special values}


\author[H. Furusho, Y. Komori, K. Matsumoto, and H. Tsumura]{Hidekazu Furusho, Yasushi Komori, Kohji Matsumoto, and Hirofumi Tsumura}


\subjclass[2010]{Primary 11S40, 11G55; Secondary 11M32, 11S80}
\keywords{
$p$-adic multiple $L$-function, 
$p$-adic multiple polylogarithm, 
multiple Kummer congruence,
Coleman's $p$-adic integration theory}

\thanks{
Research of the authors
supported by Grants-in-Aid for Science Research (no. 24684001 for HF,
no. 25400026 for YK, no. 25287002 for KM, no. 15K04788 for HT, respectively), JSPS}

\maketitle

\begin{abstract}
\if0
We 
investigate an 
analytic side of the Kubota-Leopoldt $p$-adic $L$-functions 
for the multiple setting.
We construct $p$-adic multiple $L$-functions, multivariable versions of
their $p$-adic $L$-functions, by using a specific $p$-adic
measure, and 
establish 
various fundamental properties: \\
\fi
We construct $p$-adic multiple $L$-functions in several variables,
which are generalizations of the classical Kubota-Leopoldt $p$-adic
$L$-functions, by using a specific $p$-adic measure.    Our 
construction is from the $p$-adic analytic side of view, and we 
establish various fundamental properties of these functions:\\
\ \ (a) Evaluation at non-positive integers: 
We establish their intimate connection with
the 
complex multiple zeta-functions 
by showing that the special values of the $p$-adic multiple $L$-functions at non-positive integers
are expressed by 
the twisted multiple Bernoulli numbers, 
which are 
the special values of the complex multiple  zeta-functions
at non-positive integers.  \\
\ \ (b) Multiple Kummer congruences: We extend Kummer congruences for  Bernoulli numbers 
to congruences for the twisted multiple Bernoulli numbers.\\
\ \ (c) Functional relations with a parity condition: We extend the vanishing
property of the Kubota-Leopoldt $p$-adic $L$-functions with odd
characters to our $p$-adic multiple $L$-functions. \\
\ \ (d) Evaluation at positive integers: 
We establish their close relationship with  the $p$-adic twisted multiple star polylogarithms
by showing that the special values of the $p$-adic multiple $L$-functions at positive integers
are described by those of the $p$-adic twisted multiple star polylogarithms at roots of
unity,
which generalizes the 
result of Coleman  in the single variable case.
\end{abstract} 

\setcounter{section}{-1}
\section{Introduction} \label{sec-1}

Let $\mathbb{N}$, $\mathbb{N}_0$, $\mathbb{Z}$, $\mathbb{Q}$, $\mathbb{R}$ and $\mathbb{C}$ be the set of natural numbers, non-negative integers, rational integers, rational numbers, real numbers and complex numbers, respectively. 
For $s\in \mathbb{C}$, denote by $\Re s$ and $\Im s$ the real and the imaginary parts of $s$, respectively.

The aim of the present paper is 
to introduce $p$-adic multiple $L$-functions,
multiple analogues of Kubota-Leopoldt $p$-adic $L$-functions, 
and show 
basic properties, particularly 
their 
special values at both positive and non-positive integer points.
The construction of these 
functions 
is motivated 
by the technique of desingularization of 
complex multiple zeta functions and 
a certain evaluation  method of their special values at non-positive integer points, which were 
introduced and developed in
our previous paper \cite{FKMT-Desing}.

In Definition \ref{Def-pMLF}, we construct
the multivariable {\bf $p$-adic multiple $L$-function}, 
\begin{equation}\label{pMLF}
L_{p,r}(s_1,\ldots,s_r;\omega^{k_1},\ldots,\omega^{k_r};\gamma_1,\ldots,\gamma_{r};c).
\end{equation}
It is a $p$-adic valued function in 
$p$-adic variables $s_1,\dots, s_r$,
which is associated with 
each 
$\omega^{k_1},\ldots,\omega^{k_r}$
($\omega$: the Teichm\"{u}ller character, $k_1,\ldots,k_r\in \mathbb{Z}$),
$\gamma_1,\ldots,\gamma_r\in \mathbb{Z}_p$
and $c\in \mathbb{N}_{>1}$ with $(c,p)=1$.
It can be seen 
as a multiple analogue of
the Kubota-Leopoldt $p$-adic $L$-functions (cf. \cite{Iwasawa69, K-L})
and the $p$-adic double $L$-functions (introduced in \cite{KMT-IJNT}), 
as well as 
a $p$-adic analogue of the complex multivariable multiple zeta-function 
\begin{equation}                                                                        
\label{Barnes-Lerch}                                                                    
\zeta_r((s_j);(\xi_j);(\gamma_j)):=                                                     
    \sum_{\substack{m_1=1}}^\infty\cdots \sum_{\substack{m_r=1}}^\infty                 
    \prod_{j=1}^r \xi_j^{m_j}
    (m_1\gamma_1+\cdots+m_j\gamma_j )^{-s_j}.                               
\end{equation}
It is known that $\zeta_r((k_j);(1);(1))$ for $k_j\in \mathbb{N}$ $(1\leqslant j \leqslant r)$ and $k_r\geqslant 2$ is called the multiple zeta value (henceforth MZV). 
It can be constructed by a multiple analogue of 
the $p$-adic $\Gamma$-transform of a $p$-adic measure (see Koblitz \cite{Kob79}).
Another type of $p$-adic multiple zeta-functions, which are $p$-adic
versions of Barnes multiple zeta-functions, was studied by Kashio \cite{Kashio}.

Evaluation of \eqref{pMLF} at integral points is one of the 
main themes of this paper. 
We obtain 
special values of \eqref{pMLF} at non-positive integers as follows:

{\bf Theorem \ref{T-main-1} (Evaluation at non-positive integers).}
{\it
For $n_1,\ldots,n_r\in \mathbb{N}_0$, }
\begin{align*}
& L_{p,r}(-n_1,\ldots,-n_r;\omega^{n_1},\ldots,\omega^{n_r};\gamma_1,\ldots,\gamma_{r};c)
 =\sum_{\xi_1^c=1 \atop \xi_1\not=1}\cdots\sum_{\xi_r^c=1 \atop \xi_r\not=1}\aa((n_j);(\xi_j);(\gamma_j)) \notag\\
&\quad +\sum_{d=1}^{r}\left(-\frac{1}{p}\right)^d \sum_{1\leqslant i_1<\cdots<i_d\leqslant r}\sum_{\rho_{i_1}^p=1}\cdots\sum_{\rho_{i_d}^p=1}\sum_{\xi_1^c=1 \atop \xi_1\not=1}\cdots\sum_{\xi_r^c=1 \atop \xi_r\not=1}\aa((n_j);((\prod_{j\leqslant i_l}\rho_{i_l})^{\gamma_j}\xi_j);(\gamma_j)). \label{Th-main}
\end{align*}

Here, $\aa((n_j);(\xi_j);(\gamma_j))$'s  are the twisted multiple Bernoulli numbers
(Definition \ref{Def-M-Bern}), which were shown to be the special values
of complex multivariable multiple zeta-functions \eqref{Barnes-Lerch}
at non-positive integers in \cite{FKMT-Desing}.
The above equality might be regarded as a $p$-adic and multiple analogue
of the well-known equality $\zeta(-n)=-\frac{B_{n+1}}{n+1}$
of the Riemann zeta function $\zeta(s)$.

On the other hand, 
we get 
the following special values at positive integers:

{\bf Theorem \ref{L-Li theorem-2} (Evaluation at positive integers).}
{\it
For $n_1,\dots,n_r\in \mathbb{N}$,
\begin{align*}
L_{p,r} & (n_1,\dots,n_r;\omega^{-n_1},\dots,\omega^{-n_r};1,\dots,1;c) 
=
\underset{\xi_1\neq 1}{\sum_{\xi_1^c=1}} \cdots
\underset{\xi_r\neq 1}{\sum_{\xi_r^c=1}} 
Li^{(p),\star}_{n_1,\dots,n_r}\left(\frac{\xi_1}{\xi_2},\frac{\xi_2}{\xi_3},\dots,\frac{\xi_{r}}{\xi_{r+1}}\right)   \notag \\
&+\sum_{d=1}^r\left(-\frac{1}{p}\right)^d
\sum_{1\leqslant i_1<\cdots<i_d\leqslant r}\sum_{\rho_{i_1}^p=1}\cdots\sum_{\rho_{i_d}^p=1}
\underset{\xi_1\neq 1}{\sum_{\xi_1^c=1}} \cdots
\underset{\xi_r\neq 1}{\sum_{\xi_r^c=1}} 
Li^{(p),\star}_{n_1,\dots,n_r}\Bigl(\bigl(\frac{\prod_{l=1}^d\rho_{i_l}^{\delta_{i_lj}}\xi_j}{\xi_{j+1}}\bigr)\Bigr).
\end{align*}
}

Here, each term on 
the right-hand side is a special value of
a $p$-adic TMSPL
\footnote{
In this paper,  TMSPL stands for the twisted multiple star polylogarithm.
}
$Li^{(p),\star}_{n_1,\dots,n_r}(\xi_1,\dots,\xi_{r};z)$ 
at $z=1$ with some $\xi_i\in\mu_c$,
which will be 
constructed in Subsection \ref{sec-5-5}, 
by Coleman's $p$-adic integration theory \cite{C}. 
It is 
a $p$-adic analogue of the complex twisted multiple star polylogarithm
$$
Li^\star_{n_1,\dots,n_r}(\xi_1,\dots,\xi_r;z):=
{\underset{0<k_1\leqslant\cdots\leqslant k_{r}}{\sum}}
\frac{\xi_1^{k_1}\cdots \xi_r^{k_r}}{k_1^{n_1}\cdots k_r^{n_r}}z^{k_r}.
$$
Here 'star' means that we add equalities in the running indices of the summation 
of the complex twisted multiple polylogarithm
$$
Li_{n_1,\dots,n_r}(\xi_1,\dots,\xi_r;z):=
{\underset{0<k_1<\cdots<k_{r}}{\sum}}
\frac{\xi_1^{k_1}\cdots \xi_r^{k_r}}{k_1^{n_1}\cdots k_r^{n_r}}z^{k_r},
$$
which is a twisted version of multiple polylogarithm
$Li_{n_1,\dots,n_r}(1,\cdots, 1;z)$. The star version of complex multiple
polylogarithm was already considered by Ohno-Zudilin \cite{OZ} and Ulanskii \cite{Ulan},
but the multiple star polylogarithm in the $p$-adic framework
seems to be first introduced in the present paper.
The equality in Theorem \ref{L-Li theorem-2} 
can be seen as a $p$-adic and multiple analogue
of the equality $\zeta(n)=Li_n(1)$
with the polylogarithm $Li_n(z)=Li^\star_n(1;z)$. 
In the special case $r=1$, Coleman's equation
$L_p(n;\omega^{1-n})=(1-\frac{1}{p^n})Li_n^{(p),\star}(1)$
can be recovered (Example \ref{Coleman's equation}).
In order to prove Theorem \ref{L-Li theorem-2}, we 
introduce the notion of a \textit{$p$-adic rigid TMSPLs}
$\ell^{(p),\star}_{n_1,\dots,n_r}(\xi_1,\dots,\xi_{r};z)$
and 
of a 
\textit{$p$-adic partial TMSPLs}
$\ell^{\equiv (\alpha_1,\dots,\alpha_r),(p),\star}_{n_1,\dots,n_r}(\xi_1,\dots,\xi_{r};z)$
(Definition \ref{def of pMMPL} and  \ref{def of pPMPL} respectively)
as 
intermediary functions.

Up until now, 
we were not aware of any relationship 
between 
$p$-adic multiple polylogarithms given by the first-named author
(see \cite{Fu1})
and 
multiple analogues of $p$-adic $L$-functions given by
the second-, the third- and the fourth-named authors (see
\cite[Remark 4.4]{KMT-IJNT}).
Despite having been investigated 
for totally different reasons, 
the above theorems 
indicate 
a close connection at their special values.

The following multiple Kummer congruences and functional relations 
are byproducts of our evaluations:

{\bf Theorem \ref{Th-Kummer} (Multiple Kummer congruences).}
{\it
For $m_1,\ldots,m_r,n_1,\ldots,n_r\in \mathbb{N}_0$ 
such that 
$m_j\equiv n_j$\ $\text{\rm mod}\ (p-1)p^{l_j-1}$, 
and $l_j\in \mathbb{N}$ $(1\leqslant j\leqslant r)$,}
\begin{align*}
& \sum_{\xi_1^c=1 \atop \xi_1\not=1}\cdots\sum_{\xi_r^c=1 \atop \xi_r\not=1}\aa((m_j);(\xi_j);(\gamma_j)) \notag\\
& +\sum_{d=1}^{r}\left(-\frac{1}{p}\right)^d \sum_{1\leqslant i_1<\cdots<i_d\leqslant r}\sum_{\rho_{i_1}^p=1}\cdots\sum_{\rho_{i_d}^p=1}\sum_{\xi_1^c=1 \atop \xi_1\not=1}\cdots\sum_{\xi_r^c=1 \atop \xi_r\not=1}\aa((m_j);((\prod_{j\leqslant i_l}\rho_{i_l})^{\gamma_j}\xi_j);(\gamma_j))\notag\\
& \equiv \sum_{\xi_1^c=1 \atop \xi_1\not=1}\cdots\sum_{\xi_r^c=1 \atop \xi_r\not=1}\aa((n_j);(\xi_j);(\gamma_j)) \notag\\
& \qquad +\sum_{d=1}^{r}\left(-\frac{1}{p}\right)^d \sum_{1\leqslant i_1<\cdots<i_d\leqslant r}\sum_{\rho_{i_1}^p=1}\cdots\sum_{\rho_{i_d}^p=1}\sum_{\xi_1^c=1 \atop \xi_1\not=1}\cdots\sum_{\xi_r^c=1 \atop \xi_r\not=1}\aa((n_j);((\prod_{j\leqslant i_l}\rho_{i_l})^{\gamma_j}\xi_j);(\gamma_j))\notag\\
& \qquad \qquad \left({\rm mod}\  p^{\min \{l_j\,|\,{1\leqslant j\leqslant r}\}}\right). 
\end{align*}

The above congruence
includes  an ordinary Kummer congruence for Bernoulli numbers as a very special case, 
as well as a possibly new 
type of congruence for Bernoulli number such as
double and triple Kummer congruences (Example \ref{DKC} and \ref{triple-KC}).

{\bf Theorem \ref{T-6-1} (Functional relations).}
{\it
For $r\in \mathbb{N}_{>1}$, let $k_1,\ldots,k_r\in \mathbb{Z}$ 
such that 
$k_1+\cdots+k_r\not\equiv r\pmod 2$, $\gamma_{1}\in \mathbb{Z}_p$, $\gamma_2,\ldots,\gamma_{r}\in p\mathbb{Z}_p$ and $c\in \mathbb{Z}_p^\times$. Then for any 
$s_1,\ldots,s_r\in \mathbb{Z}_p$, }
\begin{align*}
& L_{p,r} (s_1,\ldots,s_r;\omega^{k_1},\ldots,\omega^{k_r};\gamma_1,\gamma_2,\ldots,\gamma_{r};c)\notag\\
& \ =-\sum_{J\subsetneq\{1,\ldots,r\} \atop 1\in J}
    \Bigr(\frac{1-c}{2}\Bigl)^{r-|J|}\notag\\
&   \qquad \times L_{p,|J|}\Bigl(\Bigl(\sum_{j_\mu(J)\leqslant l<j_{\mu+1}(J)}s_l\Bigr)_{\mu=1}^{|J|};\bigl(\omega^{\sum_{j_\mu(J)\leqslant l<j_{\mu+1}(J)}k_l}\bigr)_{\mu=1}^{|J|};(\gamma_{j})_{j\in J};c\Bigr). 
\end{align*}

The functional relations are linear relations among $p$-adic multiple $L$-functions, 
which reduce \eqref{pMLF} as a linear sum of \eqref{pMLF} with lower $r$.
An important feature is that the relations are valid under a 
parity condition of $k_1+\cdots +k_r$. 
\if0
The relations 
are 
associated with the 
parity of $k_1+\cdots+k_r$.
\fi
It extends  the known fact in the single variable case that 
any Kubota-Leopoldt $p$-adic $L$-function with odd character
is identically zero. 
In the double variable case, we recover the result of \cite{KMT-IJNT}
that a certain $p$-adic double $L$-function  is  equal to 
a Kubota-Leopoldt $p$-adic $L$-function 
up to a minor factor. 

\if0
Here is the plan of this paper:
In Section \ref{sec-3}, 
we will construct the \textit{$p$-adic $r$-ple $L$-function} 
$L_{p,r}((s_j);(\omega^{k_j});(\gamma_j);c)$ 
associated  with $\gamma_j\in \mathbb Z_p$, $k_j\in\mathbb Z$
$(1\leqslant j\leqslant r)$ and a positive integer $c(\geqslant 2)$ with $(c,p)=1$
(Definition \ref{Def-pMLF}).
The case $r=1$ essentially coincides with the Kubota-Leopoldt $p$-adic $L$-function 
(Example \ref{example for r=1}). 
Also the case $(r,c)=(2,2)$ coincides with the $p$-adic double $L$-function introduced in \cite{KMT-IJNT} as mentioned above
(Example \ref{example for r=2}).
Our main technique of construction is due to Koblitz \cite{Kob79}. 
By using a specific $p$-adic measure, we will define $L_{p,r}((s_j);(\omega^{k_j});(\gamma_j);c)$ as a multiple version of the $p$-adic $\Gamma$-transform that provides a $p$-adic analyticity in $(s_j)$ (Theorem \ref{Th-pMLF}). 
We will further show a non-trivial fact that the map $c\mapsto L_{p,r}((s_j);(\omega^{k_j});(\gamma_j);c)$ can be continuously extended to any $p$-adic integer $c$ 
as a $p$-adic continuous function (Theorem \ref{continuity theorem}). 
We expect that this fact will lead to the construction of $p$-adic multiple $L$-functions independent of the parameter $c$, regarded as $p$-adic analogues of the desingularized multiple zeta-functions defined in \cite{FKMT-Desing} (Problem \ref{Main-Prob}).

In Section \ref{sec-4}, we will describe the values of $L_{p,r}((s_j);(\omega^{k_j});(\gamma_j);c)$ at non-positive integers as a sum of $\{\aa((n_j);( \xi_j);( \gamma_j))\}$ 
(Theorem \ref{T-main-1}). 
We will see that our $L_{p,r}((s_j);(\omega^{k_j});(\gamma_j);c)$ is a $p$-adic interpolation of
a certain sum \eqref{Int-P} of the complex multiple zeta-functions  $\zeta_r((s_j);(\xi_j);(\gamma_j))$
(Remark \ref{rem-intpln}).
As an application of the theorem, we will obtain a multiple version of 
Kummer congruences (Theorem \ref{Th-Kummer}). 
In fact, the case $r=1$ coincides with the ordinary Kummer congruences for
Bernoulli numbers. 
Also we will give some functional relations with a parity condition among $p$-adic multiple $L$-functions (Theorem \ref{T-6-1}). 
The functional relations can be regarded as multiple versions of the well-known fact that $L_p(s;\omega^{2k+1})$ is the zero-function (see Example \ref{Rem-zero-2}). 
We will see that they also recover 
the functional relations shown in \cite{KMT-IJNT} as a special case $(r,c)=(2,2)$
(Example \ref{P-Func-eq-1}).

In Section \ref{sec-5},  we will describe the special values of
$L_{p,r}((s_j);(\omega^{k_j});(1);c)$ at positive integers.
In Theorem \ref{L-Li theorem-2}
we will establish their close  relation to those
of \textit{$p$-adic TMSPLs} 
$Li^{(p)}_{n_1,\dots,n_r}(\xi_1,\dots,\xi_r ;z)$
(cf. Definition \ref{Def-TMSPL})
at roots of unity,
which is an extension of the previous result of Coleman \cite{C}.
For this aim, we will introduce \textit{$p$-adic rigid TMSPLs}
$\ell^{(p)}_{n_1,\dots,n_r}(\xi_1,\dots,\xi_{r};z)$
and 
\textit{$p$-adic partial TMSPLs}
$\ell^{\equiv (\alpha_1,\dots,\alpha_r),(p)}_{n_1,\dots,n_r}(\xi_1,\dots,\xi_{r};z)$
(Definition \ref{def of pMMPL} and  \ref{def of pPMPL} respectively)
as intermediary functions.
In Subsection \ref{sec-5-3}
the above special values at positive integers
will be shown to be connected with  the special values of $p$-adic rigid
TMSPLs at roots of unity
(Theorem \ref{L-ell theorem}).
Basic properties of these in-between functions will be presented in Subsection \ref{sec-5-4}.
We will show
an explicit relationship between $p$-adic rigid TMSPLs
and $p$-adic TMSPLs (Theorem \ref{ell-Li theorem})
by transmitting through their connections with  $p$-adic partial TMSPLs
to obtain Theorem \ref{L-Li theorem-2}
in Subsection \ref{sec-5-5}.
\fi 
\
\section{$p$-adic multiple $L$-functions}\label{sec-3}

In this section, 
we will construct certain $p$-adic multiple $L$-functions 
(Definition \ref{Def-pMLF}) 
which can be regarded as multiple analogues of Kubota-Leopoldt $p$-adic $L$-functions, 
as well as $p$-adic analogues of complex multiple  $L$- (zeta-)functions.
We then 
investigate 
some of their $p$-adic properties; 
in particular, 
we shall prove $p$-adic analyticity 
with respect to certain variables, 
in Theorem \ref{Th-pMLF}, and 
$p$-adic continuity 
with respect to other variables, 
in Theorem \ref{continuity theorem}.

\subsection{Kubota-Leopoldt $p$-adic $L$-functions}\label{sec-3-1}
Here, we 
first review the fundamentals of Kubota-Leopoldt $p$-adic $L$-functions.

For a prime number $p$, let $\mathbb{Z}_p$, $\mathbb{Q}_p$, $\overline{\mathbb{Q}}_p$, $\mathbb{C}_p$, ${\mathcal O}_{\mathbb{C}_p}$ and ${\frak M}_{\mathbb{C}_p}$
be respectively the set of $p$-adic integers, $p$-adic numbers, the algebraic closure of $\mathbb{Q}_p$, the $p$-adic completion of $\overline{\mathbb{Q}}_p$, the ring of integers of $\mathbb{C}_p$, 
and 
the maximal ideal of ${\mathcal O}_{\mathbb{C}_p}$.
Fixing embeddings $\overline{\mathbb{Q}}\hookrightarrow \mathbb{C}$,\ $\overline{\mathbb{Q}}\hookrightarrow \mathbb{C}_p$, we identify $\overline{\mathbb{Q}}$ simultaneously 
with a subset of $\mathbb{C}$ and $\mathbb{C}_p$. 
Denote by $|\cdot |_p$ the $p$-adic absolute value, and by $\mu_c$ 
the group of $c$\,th roots of unity in $\mathbb{C}_p$ for $c\in\mathbb{N}$. 
Let $\textbf{P}^{1}(\mathbb{C}_p)=\mathbb{C}_p \cup \{\infty\}$ with $|\infty|_p=\infty$. 

Let 
$\mathbb{Z}_p^\times$ be the unit group in $\mathbb{Z}_p$, and define 
\begin{equation*}
W:=
\begin{cases}
\{\pm 1\} & (\text{if\ }p=2)\\
\{ \xi\in \mathbb{Z}_p^\times\,|\,\xi^{p-1}=1\} & (\text{if\ }p\geqslant 3),
\end{cases}
\end{equation*}
\begin{equation}
q:=
\begin{cases}
4 & (\text{if\ }p=2)\\
p & (\text{if\ }p\geqslant 3).
\end{cases}
\label{q-def}
\end{equation}
It is well-known that 
$W$ forms a set of complete representatives of $\left(\mathbb{Z}_p/q\mathbb{Z}_p\right)^\times$, and 
$$\mathbb{Z}_p^\times = W\times (1+q\mathbb{Z}_p),$$
where we denote this correspondence by $x=\omega(x)\cdot\langle x \rangle$ for $x\in \mathbb{Z}_p^\times$ (see \cite[Section\,3]{Iwasawa72}). 
Noting $\omega(m)\equiv m\ ({\rm mod\ }q\mathbb{Z}_p)$ for $(m,q)=1$, we see that 
\begin{equation}
\omega\,:\,\left(\mathbb{Z}/q\mathbb{Z}\right)^{\times} \to W \label{Teich-Char}
\end{equation}
is a primitive Dirichlet character of conductor $q$, which is called the 
{\it Teichm\"uller character}.


We 
recall Koblitz' definition of 
the twisted Bernoulli numbers.

\begin{definition}[{\cite[p.\,456]{Kob79}}]
For any root of unity $\xi$, 
the {\bf twisted Bernoulli numbers} $\{ \aa_n(\xi)\}$ 
are defined by
\begin{equation}
\hc(t;\xi)=\frac{1}{1-\xi e^{t}}=\sum_{n=-1}^\infty \aa_n(\xi)\frac{t^n}{n!}, \label{def-tw-Ber}
\end{equation}
where we formally let $(-1)!=1$.
\end{definition}

\begin{remark}
Koblitz \cite{Kob79}, 
in fact, 
defined the more general twisted Bernoulli numbers $B_{n,\chi,\xi}$ $(n\in \mathbb{N}_{0})$, 
associated with a primitive 
Dirichlet character $\chi$ of conductor $f$ by
$$\sum_{a=0}^{f-1}\frac{\chi(a)\xi^ate^{at}}{\xi^fe^{ft}-1}=\sum_{n=0}^\infty B_{n,\chi,\xi}\frac{t^n}{n!}.$$
Note that $B_{n,\chi}=B_{n,\chi,1}$ is the generalized Bernoulli number (see \cite[Chapter 4]{Wa}). 
The above $\aa_n(\xi)$ corresponds to $B_{n,\chi_0,\xi}$ for the trivial character $\chi_0$. 
\end{remark}

In the case $\xi=1$, we have 
\begin{equation}
\aa_{-1}(1)=-1,\qquad \aa_n(1)=-\frac{B_{n+1}}{n+1}\quad (n\in \mathbb{N}_0). \label{Ber-01}
\end{equation}

\if0
In the case $\xi\not=1$, 
we have $\aa_{-1}(\xi)=0$ and 
$\aa_n(\xi)=\frac{1}{1-\xi}H_n\left(\xi^{-1}\right)$ $(n\in \mathbb{N}_0)$, where $\{H_n(\lambda)\}_{n\geqslant 0}$ are what is called the Frobenius-Euler numbers associated with $\lambda$ defined by
$$\frac{1-\lambda}{e^t-\lambda}=\sum_{n=0}^\infty H_n(\lambda)\frac{t^n}{n!}$$
(see Frobenius \cite{Fro}). 
\fi

We obtain from \eqref{def-tw-Ber} that $\aa_n(\xi)\in \mathbb{Q}(\xi)$. For example, 
\begin{equation}
\begin{split}
& \aa_0(\xi)=\frac{1}{1-\xi},\quad \aa_1(\xi)=\frac{\xi}{(1-\xi)^2},\quad \aa_2(\xi)=\frac{\xi(\xi+1)}{(1-\xi)^3},\\
& \aa_3(\xi)=\frac{\xi(\xi^2+4\xi+1)}{(1-\xi)^4},\quad \aa_4(\xi)=\frac{\xi(\xi^3+11\xi^2+11\xi+1)}{(1-\xi)^5},\ldots
\end{split}
 \label{TBN-exam}
\end{equation}

Let $\mu_k$ be the group of $k$th roots of unity. 
Using the relation
\begin{equation}
\frac{1}{X-1}-\frac{k}{X^{k}-1}=\sum_{\xi\in \mu_k\atop \xi\not=1}\frac{1}{1-\xi X}\qquad (k\in \mathbb{N}_{>1}) \label{log-der}
\end{equation}
for an indeterminate $X$, 
we obtain the following:

\begin{proposition}
Let $c\in \mathbb{N}_{>1}$. For $n\in \mathbb{N}_0$, 
\begin{equation}
\left(1-c^{n+1}\right)\frac{B_{n+1}}{n+1}=\sum_{\xi^c=1\atop \xi\not=1}\aa_n(\xi). \label{2-0-1}
\end{equation}
\end{proposition}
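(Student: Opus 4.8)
The plan is to derive the identity \eqref{2-0-1} directly from the generating-function relation \eqref{log-der} by substituting the exponential $X=e^{t}$ and extracting Taylor coefficients. First I would set $X=e^{t}$ in \eqref{log-der} with $k=c$, yielding
\begin{equation*}
\frac{1}{e^{t}-1}-\frac{c}{e^{ct}-1}
=\sum_{\xi^c=1\atop \xi\not=1}\frac{1}{1-\xi e^{t}}
=\sum_{\xi^c=1\atop \xi\not=1}\hc(t;\xi),
\end{equation*}
where the last equality is just the definition \eqref{def-tw-Ber} of $\hc(t;\xi)$. The right-hand side is therefore the generating function whose $t^{n}/n!$-coefficient is $\sum_{\xi^c=1,\,\xi\not=1}\aa_n(\xi)$, which is exactly the quantity we want to identify.

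Next I would compute the $t^n/n!$-coefficient of the left-hand side. The standard Bernoulli generating function gives $\frac{t}{e^{t}-1}=\sum_{m\geqslant 0}B_m\frac{t^m}{m!}$, so $\frac{1}{e^{t}-1}=\sum_{m\geqslant 0}B_m\frac{t^{m-1}}{m!}$, and replacing $t$ by $ct$ yields $\frac{c}{e^{ct}-1}=\sum_{m\geqslant 0}B_m\,c^m\frac{t^{m-1}}{m!}$. Subtracting, the coefficient of $t^{n}$ (for $n\in\mathbb{N}_0$) comes from the $m=n+1$ term of each series, producing $\frac{B_{n+1}}{(n+1)!}\bigl(1-c^{\,n+1}\bigr)$. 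Multiplying back by $n!$ to pass to the $t^n/n!$-normalization gives $\bigl(1-c^{\,n+1}\bigr)\frac{B_{n+1}}{n+1}$, matching the left-hand side of \eqref{2-0-1}. One technical point worth checking is the behavior of the singular $t^{-1}$ terms: each of $\frac{1}{e^{t}-1}$ and $\frac{c}{e^{ct}-1}$ has a simple pole at $t=0$ with residue $1$, so the poles cancel in the difference and the left-hand side is genuinely holomorphic at $t=0$; this is consistent with the fact that $\aa_{-1}(\xi)=0$ for $\xi\not=1$, so no $t^{-1}$ term appears on the right either. Comparing coefficients then yields the stated equality.

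I do not expect a serious obstacle here; the argument is essentially a bookkeeping of coefficients of two explicit generating functions. The only care needed is the normalization conventions (the formal rule $(-1)!=1$ in \eqref{def-tw-Ber} and the indexing shift $m=n+1$) and the verification that the $n=-1$ (i.e.\ $t^{-1}$) contributions vanish on both sides, which I would note explicitly so that the coefficient comparison is valid for all $n\geqslant -1$ and in particular recovers \eqref{2-0-1} for $n\in\mathbb{N}_0$.
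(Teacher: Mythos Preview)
Your proposal is correct and follows essentially the same approach as the paper: the paper indicates the proposition follows from the partial-fraction identity \eqref{log-der}, and your substitution $X=e^{t}$ together with the coefficient comparison is precisely the intended (and only natural) way to carry this out. Your extra remarks about the cancellation of the $t^{-1}$ poles and the vanishing of $\aa_{-1}(\xi)$ for $\xi\neq 1$ make the argument fully rigorous.
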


Let us consider 
the following multiple analogues of twisted Bernoulli numbers (see \cite{FKMT-Desing}): 

\begin{definition}\label{Def-M-Bern}
Let $r\in \mathbb{N}$, 
$\gamma_1,\ldots,\gamma_r\in \mathbb{C}$, 
and let $\xi_1,\ldots,\xi_r\in \mathbb{C}^\times \setminus \{1\}$ be roots of unity. 
Set 
\begin{align}
    \mathfrak{H}_r  (( t_j );( \xi_j); ( \gamma_j))&:=
    \prod_{j=1}^{r} \mathfrak{H}(\gamma_j (\sum_{k=j}^r t_k);\xi_j)=\prod_{j=1}^{r} \frac{1}{1-\xi_j \exp\left(\gamma_j \sum_{k=j}^r t_k\right)}\label{Def-Hr}
\end{align}
and define the 
{\bf twisted multiple Bernoulli numbers}
$\{\aa(n_1,\ldots,n_r;( \xi_j);( \gamma_j))\}$
by 
\begin{align}
    \mathfrak{H}_r  (( t_j );( \xi_j); ( \gamma_j))
    =\sum_{n_1=0}^\infty
    \cdots
    \sum_{n_r=0}^\infty
    \aa(n_1,\ldots,n_r;( \xi_j);( \gamma_j))
    \frac{t_1^{n_1}}{n_1!}
    \cdots
    \frac{t_r^{n_r}}{n_r!}.
\label{Fro-def-r}
\end{align}
In the case $r=1$, we have $\aa_n(\xi_1)=\aa(n;\xi_1;1)$. Note that 
$\mathfrak{H}_r  (( t_j );( \xi_j); ( \gamma_j))$ is holomorphic around the origin with respect to the parameters $t_1,\dots, t_r$, 
and hence, the singular part does not appear on the right-hand side of \eqref{Fro-def-r}.

\end{definition}
We immediately obtain the following from \eqref{def-tw-Ber}, \eqref{Def-Hr} and \eqref{Fro-def-r}:

\begin{proposition}\label{prop-M-Bern}
Let $\gamma_1,\ldots,\gamma_r\in \mathbb{C}$ 
and $\xi_1,\ldots,\xi_r\in \mathbb{C}^\times \setminus \{1\}$ be roots of unity. Then 
$\aa(n_1,\ldots,n_r;( \xi_j);( \gamma_j))$ can be expressed as a polynomial in $\{ \aa_{n}(\xi_j)\,|\,1\leqslant j\leqslant r,~{n\geqslant 0}\}$ and $\{\gamma_1,\ldots,\gamma_r\}$ with $\mathbb{Q}$-coefficients, hence, a rational function in $\{\xi_j\}$ and $\{\gamma_j\}$ with $\mathbb{Q}$-coefficients. 
\end{proposition}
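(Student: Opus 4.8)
The plan is to read the twisted multiple Bernoulli numbers off directly from the product formula \eqref{Def-Hr}, by expanding each single factor and extracting the coefficient of a fixed monomial $t_1^{n_1}\cdots t_r^{n_r}$. Because each $\xi_j\neq 1$ we have $1-\xi_j\neq 0$, so the factor $\mathfrak{H}(\gamma_j\sum_{k=j}^r t_k;\xi_j)$ is holomorphic at the origin and its expansion \eqref{def-tw-Ber} begins at $m_j=0$ (that is, $\aa_{-1}(\xi_j)=0$). First I would substitute the argument $u_j=\gamma_j\sum_{k=j}^r t_k$ into \eqref{def-tw-Ber} to get $\mathfrak{H}(u_j;\xi_j)=\sum_{m_j\geqslant 0}\aa_{m_j}(\xi_j)\,\gamma_j^{m_j}\bigl(\sum_{k=j}^r t_k\bigr)^{m_j}/m_j!$, and then expand $\bigl(\sum_{k=j}^r t_k\bigr)^{m_j}$ by the multinomial theorem. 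This presents each factor as a power series in $t_j,\dots,t_r$ whose coefficients are products of a single $\aa_{m_j}(\xi_j)$, a power of $\gamma_j$, and an integer multinomial coefficient.

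Next I would multiply over $j=1,\dots,r$ and collect the coefficient of $t_1^{n_1}\cdots t_r^{n_r}$; comparison with \eqref{Fro-def-r} identifies $\aa(n_1,\dots,n_r;(\xi_j);(\gamma_j))$ as $n_1!\cdots n_r!$ times this coefficient. The crucial point is finiteness: the $j$-th factor $\bigl(\sum_{k=j}^r t_k\bigr)^{m_j}$ is homogeneous of degree $m_j$ in the $t$-variables, so any contributing term has total $t$-degree $m_1+\cdots+m_r$, which must equal $N:=n_1+\cdots+n_r$. Hence each $m_j\leqslant N$, and only finitely many tuples $(m_1,\dots,m_r)$, together with finitely many multinomial splittings of each, contribute. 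Since the product picks out exactly one $\aa_{m_j}(\xi_j)$ from each index $j$, the resulting expression is a finite $\mathbb{Q}$-linear combination of monomials $\prod_{j=1}^r \aa_{m_j}(\xi_j)\gamma_j^{m_j}$; this is precisely a polynomial in $\{\aa_m(\xi_j)\}$ and $\{\gamma_j\}$ with $\mathbb{Q}$-coefficients, multilinear across the indices $j$.

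For the last assertion I would simply substitute the already-recorded fact that each $\aa_n(\xi)$ lies in $\mathbb{Q}(\xi)$, i.e.\ is a rational function of $\xi$ with $\mathbb{Q}$-coefficients (for instance $\aa_0(\xi)=1/(1-\xi)$, cf.\ \eqref{TBN-exam}). Feeding these rational functions into the polynomial produced above yields a rational function in $\{\xi_j\}$ and $\{\gamma_j\}$ (polynomial in the $\gamma_j$) with $\mathbb{Q}$-coefficients. There is no genuine obstacle in this argument; the only point demanding care is the combinatorial bookkeeping of the multinomial expansion together with the degree constraint $m_1+\cdots+m_r=N$, which is what guarantees that each coefficient is an honest finite polynomial in the $\aa_m(\xi_j)$ rather than a formal infinite series.
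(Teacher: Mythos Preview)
Your proposal is correct and follows essentially the same approach as the paper, which simply states that the proposition is immediate from \eqref{def-tw-Ber}, \eqref{Def-Hr}, and \eqref{Fro-def-r}; the subsequent Example~\ref{Exam-DH} carries out exactly your expansion-and-multinomial argument in the case $r=2$. Your write-up makes explicit the finiteness bookkeeping (via the total-degree constraint $m_1+\cdots+m_r=N$) that the paper leaves implicit.
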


\begin{example}\label{Exam-DH}
Substituting \eqref{def-tw-Ber} into \eqref{Def-Hr} 
for the case $r=2$ and $\xi_j\neq 1$ $(j=1,2)$, 
we have
\begin{align*}
& \mathfrak{H}_2(t_1,t_2;\xi_1,\xi_2;\gamma_1,\gamma_2)=\frac{1}{1-\xi_1 \exp\left(\gamma_1(t_1+t_2)\right)}\frac{1}{1-\xi_2 \exp\left(\gamma_2 t_2\right)}\\
    &=\left(\sum_{m=0}^\infty \aa_m(\xi_1)\frac{\gamma_1^m (t_1+t_2)^m}{m!}\right) \left(\sum_{n=0}^\infty \aa_n(\xi_2)\frac{\gamma_2^n t_2^n}{n!}\right)
    \\
    &=\sum_{m=0}^\infty\sum_{n=0}^\infty \aa_m(\xi_1)\aa_n(\xi_2)\left(\sum_{k,j\geqslant 0 \atop k+j=m}\frac{t_1^k t_2^j}{k!j!}\right)\gamma_1^m\gamma_2^n\frac{t_2^n}{n!}.
\end{align*}
Putting $l:=n+j$, we have 
\begin{align*}
\mathfrak{H}_2(t_1,t_2;\xi_1,\xi_2;\gamma_1,\gamma_2)
    &=\sum_{k=0}^\infty\sum_{l=0}^\infty \sum_{j=0}^{l}\binom{l}{j}\aa_{k+j}(\xi_1)\aa_{l-j}(\xi_2)\gamma_1^{k+j}\gamma_2^{l-j}\frac{t_1^k}{k!}\frac{t_2^l}{l!},
\end{align*}
which gives 
\begin{equation}
\begin{split}
\aa(k,l;\xi_1,\xi_2;\gamma_1,\gamma_2)&=\sum_{j=0}^{l}\binom{l}{j}\aa_{k+j}(\xi_1)\aa_{l-j}(\xi_2)\gamma_1^{k+j}\gamma_2^{l-j}\quad (k,l\in \mathbb{N}_0).
\end{split}
\label{Eur-exp1}
\end{equation}
\end{example}
\begin{definition}\label{define-tilde-H}
For $c\in \mathbb{R}$ and 
$\gamma_1,\ldots,\gamma_r\in \mathbb{C}$ with $\Re \gamma_j >0 \ (1\leqslant j\leqslant r)$, define
\begin{align}
\widetilde{\mathfrak{H}}_r  (( t_j ); ( \gamma_j);c)
& =\prod_{j=1}^{r} \left( \frac{1}{\exp\left(\gamma_j \sum_{k=j}^r t_k\right)-1}-\frac{c}{\exp\left(c\gamma_j \sum_{k=j}^r t_k\right)-1}\right)\notag\\
& =\prod_{j=1}^{r} \left(\sum_{m=1}^\infty  \left(1-c^m\right)B_m\frac{\left(\gamma_j \sum_{k=j}^r t_k\right)^{m-1}}{m!}\right).\label{def-tilde-H}
\end{align}
In particular, when $c\in \mathbb{N}_{>1}$, by use of \eqref{log-der}, we have
\begin{align}
\widetilde{\mathfrak{H}}_r  (( t_j ); ( \gamma_j);c)&=\prod_{j=1}^{r} \sum_{\xi_j^c=1 \atop \xi_j\not=1}\frac{1}{1-\xi_j \exp\left(\gamma_j \sum_{k=j}^r t_k\right)}\notag\\
& =\sum_{\xi_1^c=1 \atop \xi_1\not=1}\cdots \sum_{\xi_r^c=1 \atop \xi_r\not=1}\mathfrak{H}_r  (( t_j );( \xi_j); ( \gamma_j)).\label{tilde-H}
\end{align}
\end{definition}

Now we recall the Kubota-Leopoldt $p$-adic $L$-functions which are defined by use of certain $p$-adic integrals with respect to 
the $p$-adic measure (for the details, see \cite[Chapter 2]{Kob}, 
\cite{Kob79},~\cite[Chapter 5]{Wa}). 
Following Koblitz \cite[Chapter 2]{Kob}, 
we consider the $p$-adic measure $\mk_z$ 
on $\mathbb{Z}_p$ for
$z\in\textbf{P}^{1}(\mathbb{C}_p)$ with $|z-1|_p\geqslant 1$  by
\begin{align}
\mk_z\left(j+p^N\mathbb{Z}_p\right)&:=\frac{z^{j}}{1-z^{p^N}}
\quad\in {\mathcal O}_{\mathbb{C}_p}
\quad (0\leqslant j\leqslant p^N-1). \label{Koblitz-measure}
\end{align}
Corresponding to this measure, 
we can define the $p$-adic integral 
\begin{equation}
\int_{\mathbb{Z}_p} f(x)d{\mathfrak{m}}_z(x)  :=\lim_{N\to \infty}\sum_{a=0}^{p^N-1} f(a) \mk_z\left(a+p^N\mathbb{Z}_p\right)
\quad\in  \mathbb{C}_p   \label{p-adic integ}
\end{equation}
for any continuous function $f\,:\,\mathbb{Z}_p \to \mathbb{C}_p$. 
The $p$-adic integral over a compact open subset $U$ of $\mathbb{Z}_p$ is defined by
$$
\int_{U} f(x)d{\mathfrak{m}}_z(x)  :=
\int_{\mathbb{Z}_p} f(x)\chi_U(x)d{\mathfrak{m}}_z(x) \quad\in \mathbb{C}_p
$$
where $\chi_U(x)$ is the characteristic function of $U$.

For an arbitrary $t\in \mathbb{C}_p$ with $|t|_p<p^{-1/(p-1)}$, we see that $e^{tx}$ is a continuous function in $x\in \mathbb{Z}_p$. We obtain the following proposition by regarding \eqref{def-tw-Ber} as the Maclaurin expansion of $\hc(t;\xi)$ in $\mathbb{C}_p$ and using Koblitz' results \cite[Chapter 2,\ Equations (1.4), (2.4) and (2.5)]{Kob}:

\begin{proposition}\label{Kob-Ch2}
For any primitive root of unity $\xi\in \mathbb{C}_p^\times \setminus \{1\}$ of order prime to $p$,
\begin{equation}
\hc(t;\xi)=\int_{\mathbb{Z}_p}e^{tx}d\mk_\xi(x)
\quad\in \mathbb{C}_p[[t]], \label{gene-01}
\end{equation}
and so 
\begin{equation}
\int_{\mathbb{Z}_p}x^n d\mk_\xi(x)=\aa_n(\xi)
\qquad (n\in \mathbb{N}_0). \label{gene-02}
\end{equation}
\end{proposition}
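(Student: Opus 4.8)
The plan is to evaluate the defining limit \eqref{p-adic integ} of the integral in closed form and then pass to power series coefficients; this is essentially Koblitz's computation, but I will keep it self-contained. Write $S_N(t)$ for the $N$-th Riemann sum approximating $\int_{\mathbb{Z}_p}e^{tx}\,d\mk_\xi(x)$. Inserting the explicit measure values \eqref{Koblitz-measure} and summing the finite geometric series in $\xi e^t$, I would first obtain
\[
S_N(t)=\sum_{a=0}^{p^N-1}e^{ta}\,\mk_\xi(a+p^N\mathbb{Z}_p)=\frac{1}{1-\xi^{p^N}}\sum_{a=0}^{p^N-1}(\xi e^t)^a=\frac{1}{1-\xi e^t}\cdot\frac{1-\xi^{p^N}e^{tp^N}}{1-\xi^{p^N}},
\]
valid for $t$ in a small disk about $0$, where $\xi e^t\neq 1$ (this holds because $\xi\neq 1$, so $\xi e^t$ stays away from $1$). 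The first factor is already $\hc(t;\xi)=(1-\xi e^t)^{-1}$, so everything reduces to showing that the correction factor tends to $1$ as $N\to\infty$.

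For the convergence I would rewrite the correction factor as $1-\dfrac{\xi^{p^N}(e^{tp^N}-1)}{1-\xi^{p^N}}$ and estimate the second term $p$-adically. Since the order $m$ of $\xi$ is prime to $p$, so is the order of $\xi^{p^N}$; in fact $\xi^{p^N}$ is again a nontrivial root of unity of order prime to $p$, whence $|\xi^{p^N}|_p=1$ and, crucially, $|1-\xi^{p^N}|_p=1$. Together with the estimate $|e^{tp^N}-1|_p=|t|_p\,p^{-N}$ (valid for $|t|_p<p^{-1/(p-1)}$, the $k=1$ term of the exponential dominating), this forces the correction term to have absolute value $\to 0$, so $S_N(t)\to\hc(t;\xi)$ in $\mathbb{C}_p$. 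I expect the denominator estimate $|1-\xi^{p^N}|_p=1$ to be the main point: it is exactly where the hypothesis that $\xi$ has order prime to $p$ is used, simultaneously guaranteeing that $\xi^{p^N}\neq 1$ for every $N$ (so that the geometric sum formula applies and $\mk_\xi$ is even defined) and that $1-\xi^{p^N}$ remains a $p$-adic unit, which is what prevents the correction factor from blowing up.

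Finally, to pass from the pointwise identity to the power series statement \eqref{gene-01} and the moment formula \eqref{gene-02}, I would expand $e^{tx}=\sum_{n\geq 0}x^n t^n/n!$, which converges uniformly in $x\in\mathbb{Z}_p$ for $|t|_p<p^{-1/(p-1)}$. Because the measure $\mk_\xi$ is bounded (its values lie in $\mathcal{O}_{\mathbb{C}_p}$), term-by-term integration is legitimate and yields $\int_{\mathbb{Z}_p}e^{tx}\,d\mk_\xi(x)=\sum_{n\geq 0}\bigl(\int_{\mathbb{Z}_p}x^n\,d\mk_\xi(x)\bigr)\,t^n/n!$, exhibiting the integral as a convergent element of $\mathbb{C}_p[[t]]$. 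Comparing this with the Maclaurin expansion \eqref{def-tw-Ber}, which for $\xi\neq 1$ has no singular term since $\hc(t;\xi)$ is holomorphic at $t=0$, and invoking uniqueness of the coefficients of a convergent power series, I would conclude $\int_{\mathbb{Z}_p}x^n\,d\mk_\xi(x)=\aa_n(\xi)$ for all $n\in\mathbb{N}_0$, which also gives \eqref{gene-01}.
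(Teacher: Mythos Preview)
Your proof is correct and is precisely the computation the paper has in mind: the paper does not spell out a proof but simply cites Koblitz \cite[Chapter~2, (1.4), (2.4), (2.5)]{Kob}, and what you wrote is exactly that argument carried out in detail, including the key use of $|1-\xi^{p^N}|_p=1$ (from the hypothesis that the order of $\xi$ is prime to $p$) to control the correction factor.
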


For any $c\in \mathbb{N}_{>1}$ with $(c,p)=1$,
we consider
\begin{align}
\widetilde{\mathfrak{m}}_c\left(j+p^N\mathbb{Z}_p\right)&:=\sum_{\xi \in \mathbb{C}_p\setminus \{1\} \atop \xi^{c}=1}\mk_\xi\left(j+p^N\mathbb{Z}_p\right), \label{KM-measure}
\end{align}
which is a $p$-adic measure on $\mathbb{Z}_p$ introduced by Mazur 
\cite[Chapter 2]{Kob}. 
Considering the Galois action of ${\rm Gal}(\overline{\mathbb{Q}}_p/{\mathbb{Q}}_p)$, we see that $\widetilde{\mathfrak{m}}_c$ is a $\mathbb{Q}_p$-valued measure, 
and hence, a $\mathbb{Z}_p$-valued measure.

Note that 
\begin{equation}
\begin{split}
\mm\left(pj+p^N\mathbb{Z}_p\right)&=\sum_{\xi \in \mathbb{C}_p\setminus \{1\} \atop \xi^{c}=1}\frac{\xi^{pj}}{1-\xi^{p^N}}\\
&=\sum_{\rho \in \mathbb{C}_p\setminus \{1\} \atop \rho^{c}=1}\frac{\rho^{j}}{1-\rho^{p^{N-1}}}=\mm\left(j+p^{N-1}\mathbb{Z}_p\right),
\end{split}
\label{2-0}
\end{equation}
because $(c,p)=1$. 
From \eqref{2-0-1} and \eqref{gene-02}, we have the following. 

\begin{lemma}[{cf. \cite[Chapter 2,\,(2.6)]{Kob}}]
For any $c\in \mathbb{N}_{>1}$ with $(c,p)=1$,
\begin{align}
\int_{\mathbb{Z}_p} x^{n}d\mm(x) &=\left(c^{n+1}-1\right)\zeta(-n)=\left(1-c^{n+1}\right)\frac{B_{n+1}}{n+1}\in \mathbb{Q}\quad  (n\in \mathbb{N}), \label{2-1}
\end{align}
and
\begin{equation}
\int_{\mathbb{Z}_p} 1\,d\mm(x) =\frac{c-1}{2}\in \mathbb{Z}_p. \label{2-1-2}
\end{equation}
\end{lemma}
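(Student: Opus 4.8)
The plan is to reduce everything to the two facts already established, namely the generating-function evaluation \eqref{gene-02} and the twisted analogue \eqref{2-0-1} of $\zeta(-n)=-B_{n+1}/(n+1)$, exploiting that $\mm$ is by its very definition \eqref{KM-measure} a \emph{finite} sum of Koblitz measures. First I would observe that for every fixed $N$ and every $0\leqslant a\leqslant p^N-1$ one has $\mm(a+p^N\mathbb{Z}_p)=\sum_{\xi^c=1,\,\xi\neq1}\mk_\xi(a+p^N\mathbb{Z}_p)$, so that the Riemann-type sum \eqref{p-adic integ} defining $\int_{\mathbb{Z}_p}x^n\,d\mm(x)$ splits as a finite sum of the corresponding sums for the $\mk_\xi$. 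Since the index set $\{\xi:\xi^c=1,\ \xi\neq1\}$ is finite and independent of $N$, the limit $N\to\infty$ commutes with this summation, giving $\int_{\mathbb{Z}_p}x^n\,d\mm(x)=\sum_{\xi^c=1,\,\xi\neq1}\int_{\mathbb{Z}_p}x^n\,d\mk_\xi(x)$.

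The next step is to evaluate each inner integral via Proposition \ref{Kob-Ch2}. To legitimately apply \eqref{gene-02} I must check its hypothesis for every $\xi$ appearing: each such $\xi$ is a primitive root of unity of some order $d\mid c$ with $d>1$, and because $(c,p)=1$ this order $d$ is prime to $p$. Hence \eqref{gene-02} yields $\int_{\mathbb{Z}_p}x^n\,d\mk_\xi(x)=\aa_n(\xi)$ for all $n\in\mathbb{N}_0$ (the value being genuine rather than singular, since $\xi\neq1$ makes $\hc(t;\xi)$ regular at $t=0$, so the expansion begins at $n=0$), and summing over $\xi$ gives $\int_{\mathbb{Z}_p}x^n\,d\mm(x)=\sum_{\xi^c=1,\,\xi\neq1}\aa_n(\xi)$. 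Now \eqref{2-0-1} identifies this sum with $(1-c^{n+1})B_{n+1}/(n+1)$, and the equality with $(c^{n+1}-1)\zeta(-n)$ follows from $\zeta(-n)=-B_{n+1}/(n+1)$; this settles the first formula for $n\in\mathbb{N}$.

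For the constant function the same chain of equalities at $n=0$, together with $B_1=-\tfrac12$, gives $\int_{\mathbb{Z}_p}1\,d\mm(x)=\sum_{\xi^c=1,\,\xi\neq1}\aa_0(\xi)=(1-c)B_1=\tfrac{c-1}{2}$. The integrality claim $\tfrac{c-1}{2}\in\mathbb{Z}_p$ I would argue separately and immediately: $\int_{\mathbb{Z}_p}1\,d\mm(x)=\lim_{N}\sum_{a}\mm(a+p^N\mathbb{Z}_p)=\mm(\mathbb{Z}_p)$, which lies in $\mathbb{Z}_p$ by the remark following \eqref{KM-measure} that $\mm$ is $\mathbb{Z}_p$-valued. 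There is no serious obstacle here; the only points demanding care are the verification that every nontrivial $c$-th root of unity has order prime to $p$ (so that Proposition \ref{Kob-Ch2} applies) and the bookkeeping that lets the finite sum over $\xi$ pass through the limit \eqref{p-adic integ}. Both are routine consequences of $(c,p)=1$ and the finiteness of $\mu_c$.
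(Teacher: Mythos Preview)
Your argument is correct and is exactly the approach the paper has in mind: the text simply says ``From \eqref{2-0-1} and \eqref{gene-02}, we have the following,'' and your proof spells out precisely that deduction (splitting $\mm$ as the finite sum $\sum_{\xi^c=1,\,\xi\neq1}\mk_\xi$, applying \eqref{gene-02} termwise, then invoking \eqref{2-0-1}). Your added care about verifying the order-prime-to-$p$ hypothesis and the $\mathbb{Z}_p$-integrality of $\tfrac{c-1}{2}$ via the $\mathbb{Z}_p$-valuedness of $\mm$ is appropriate and matches the surrounding remarks in the paper.
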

Note that $c$ is odd when $p=2$.

Now we recall the definition of {\bf Kubota-Leopoldt $p$-adic $L$-functions} as follows:

\begin{definition}[{\cite[Chapter 2, (4.5)]{Kob}}] \label{Def-Kubota-L}
For $k\in \mathbb{Z}$ and  $c\in \mathbb{N}_{>1}$ with $(c,p)=1$,
\begin{equation*}
\begin{split}
L_p(s;\omega^{k})& :=\frac{1}{\langle c \rangle^{1-s}\omega^{k}(c)-1}\int_{\mathbb{Z}_p^\times}\langle x \rangle^{-s}\omega^{k-1}(x)d\mm(x) 
\end{split}
\end{equation*}
for $s\in \mathbb{C}_p$ with $|s|_p<qp^{-1/(p-1)}$, which is a $\mathbb{C}_p$-valued function.
\end{definition}

\begin{remark}
We note that $\langle x \rangle^{-s}$ can be defined as an ${\mathcal O}_{\mathbb{C}_p}$-valued rigid-analytic function (cf. Notation \ref{rigid-basics}, below) in $s\in \mathbb{C}_p$ with $|s|_p<qp^{-1/(p-1)}$ (see \cite[p.\,54]{Wa}). 
\end{remark}

\begin{proposition}[{\cite[Chapter 2, Theorem]{Kob}, \cite[Theorem 5.11 and Corollary 12.5]{Wa}}] 
For $k\in \mathbb{Z}$, $L_p(s;\omega^{k})$ is rigid-analytic in the sense of Notation 
\ref{rigid-basics}, below (except for a pole at $s=1$ when $k \equiv 0$ mod $p-1$), and satisfies 
\begin{align}
L_p(1-m;\omega^k)& =\left(1-\omega^{k-m}(p)p^{m-1}\right)L\left(1-m,\omega^{k-m}\right)\notag\\
& =-\left(1-\omega^{k-m}(p)p^{m-1}\right)\frac{B_{m,\omega^{k-m}}}{m} \quad (m\in \mathbb{N}). \label{p-L-vals}
\end{align}
\end{proposition}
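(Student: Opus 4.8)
The plan is to treat the analyticity assertion and the interpolation formula \eqref{p-L-vals} separately, the former being essentially formal and the latter carrying the real content.

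For the rigid-analyticity, I would start from the remark following Definition \ref{Def-Kubota-L}: since $\langle x\rangle\in 1+q\mathbb{Z}_p$, the expansion $\langle x\rangle^{-s}=\exp(-s\log\langle x\rangle)$ shows that, uniformly in $x\in\mathbb{Z}_p^\times$, this is a power series in $s$ convergent on the disc $|s|_p<qp^{-1/(p-1)}$ with coefficients that are bounded continuous functions of $x$. As $\mm$ is a $\mathbb{Z}_p$-valued, hence bounded, measure, integrating this family term by term against $\omega^{k-1}(x)\,d\mm(x)$ over $\mathbb{Z}_p^\times$ produces a power series in $s$ convergent on the same disc, so the numerator in Definition \ref{Def-Kubota-L} is rigid-analytic; the denominator $\langle c\rangle^{1-s}\omega^k(c)-1$ is visibly rigid-analytic, and hence the quotient is rigid-analytic off the zeros of the denominator. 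To locate these I would use that $\langle c\rangle^{1-s}\in 1+q\mathbb{Z}_p$ while $\omega^k(c)$ is a root of unity of order prime to $p$, so $\langle c\rangle^{1-s}\omega^k(c)=1$ forces $\omega^k(c)=1$, i.e. $\omega^k$ trivial (for $p$ odd, $k\equiv 0\bmod(p-1)$); in that case the unique zero on the disc is the simple zero at $s=1$, giving the claimed simple pole.

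For \eqref{p-L-vals} I would substitute $s=1-m$ in Definition \ref{Def-Kubota-L}. Using $x=\omega(x)\langle x\rangle$, i.e. $\langle x\rangle=x\,\omega(x)^{-1}$, the integrand simplifies to $\langle x\rangle^{m-1}\omega^{k-1}(x)=x^{m-1}\omega^{k-m}(x)$ and the denominator to $\langle c\rangle^{m}\omega^k(c)-1=c^m\omega^{k-m}(c)-1$. Writing $\psi:=\omega^{k-m}$ (a primitive Dirichlet character of conductor dividing $q$), the whole statement reduces to the moment identity
\begin{equation*}
\int_{\mathbb{Z}_p^\times}x^{m-1}\psi(x)\,d\mm(x)=-\bigl(c^m\psi(c)-1\bigr)\bigl(1-\psi(p)p^{m-1}\bigr)\frac{B_{m,\psi}}{m},
\end{equation*}
because then the prefactor $\bigl(c^m\psi(c)-1\bigr)^{-1}$ cancels and $L(1-m,\psi)=-B_{m,\psi}/m$ yields \eqref{p-L-vals}; the very cancellation of the $c$-factor is what exhibits independence of the auxiliary $c$ at these points.

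To prove the moment identity I would split according to how the $p$-Euler factor appears. When $\psi$ is trivial the integrand is $x^{m-1}$, and writing $\int_{\mathbb{Z}_p^\times}=\int_{\mathbb{Z}_p}-\int_{p\mathbb{Z}_p}$ and applying the scaling invariance $\mm(pj+p^N\mathbb{Z}_p)=\mm(j+p^{N-1}\mathbb{Z}_p)$ from \eqref{2-0} gives $\int_{\mathbb{Z}_p^\times}x^{m-1}\,d\mm=(1-p^{m-1})\int_{\mathbb{Z}_p}x^{m-1}\,d\mm$, which by \eqref{2-1} (and \eqref{2-1-2} for $m=1$) equals $(1-c^m)(1-p^{m-1})B_m/m$, matching the formula with $\psi(p)=1$ and $B_{m,\psi}=B_m$. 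When $\psi$ is nontrivial its conductor is divisible by $p$, so $\psi(p)=0$, the Euler factor is $1$, and $\psi$ already vanishes on $p\mathbb{Z}_p$; here I would decompose $\psi$ into residue classes modulo its conductor, use $\mm=\sum_{\xi^c=1,\,\xi\neq1}\mk_\xi$ with $\int_{\mathbb{Z}_p}x^n\,d\mk_\xi=\aa_n(\xi)$ from \eqref{gene-02} to express the twisted partial moments through Koblitz's twisted generalized Bernoulli numbers $B_{n,\psi,\xi}$, and finally sum over the $\xi$ via the partial-fraction identity \eqref{log-der} (as in the derivation of \eqref{2-0-1}) to produce the factor $c^m\psi(c)-1$ times $B_{m,\psi}$. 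The main obstacle is exactly this last step in the nontrivial case: matching the $c$-regularized sum of twisted generalized Bernoulli numbers with the generating-function definition of $B_{m,\psi}$ and correctly extracting the factor $c^m\psi(c)-1$. It is the character-twisted generalization of the unweighted moment formula \eqref{2-1}, and the bookkeeping of the $c$-factor is where the delicacy lies; everything else is either formal or a direct appeal to the identities recorded above.
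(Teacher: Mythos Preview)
The paper does not supply its own proof of this proposition; it is quoted from Koblitz and Washington, so there is no in-paper argument to compare against. Your outline is essentially the standard proof found in those references: expand $\langle x\rangle^{-s}$ as a power series and integrate termwise for analyticity, then substitute $s=1-m$ and reduce to a Bernoulli-number moment computation via the Mazur measure.

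Two small points deserve care. First, the inference ``$\omega^k(c)=1$, i.e.\ $\omega^k$ trivial'' is not valid for an arbitrary $c$ with $(c,p)=1$; it holds only if $c$ is a primitive root modulo $p$ (for $p$ odd). The clean way to handle this is either to choose such a $c$ from the outset, or to first establish the interpolation formula \eqref{p-L-vals} and deduce that the right-hand side of Definition \ref{Def-Kubota-L} is independent of $c$, after which one may pick $c$ freely to analyze the pole. Second, in the nontrivial-character case you correctly identify the crux but stop short of executing it: one must actually compute $\int_{\mathbb{Z}_p}x^{m-1}\psi(x)\,d\mm(x)$ by writing $\psi(x)=\sum_a \psi(a)\chi_{a+q\mathbb{Z}_p}(x)$ and tracking the generating functions to produce $-(c^m\psi(c)-1)B_{m,\psi}/m$. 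This is exactly Koblitz's computation with the twisted Bernoulli numbers $B_{n,\chi,\xi}$; once you fill in that calculation, your sketch becomes the textbook proof.
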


We know that $B_{m,\omega^{k-m}}=0$ $(m\in \mathbb{N})$ when $k$ is odd. Hence we obtain the following.

\begin{proposition}[{\cite[p.57, Remark]{Wa}}] \label{Prop-zero}
When $k$ is any odd integer, $L_p(s;\omega^k)$ is the zero-function.
\end{proposition}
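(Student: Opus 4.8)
The plan is to deduce the vanishing from the interpolation formula \eqref{p-L-vals} together with the rigid-analyticity asserted just before the proposition, via the $p$-adic identity theorem. The governing principle is that a nonzero $p$-adic analytic function on a disk has only finitely many zeros in any strictly smaller closed subdisk; so it suffices to produce infinitely many zeros of $L_p(s;\omega^k)$ inside its domain, and the interpolation points $s=1-m$ already furnish them once the relevant generalized Bernoulli numbers are known to vanish.

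First I would record the source of the vanishing. Since $\omega$ is odd (indeed $\omega(-1)=-1$, because $-1\in W$ and $\omega$ restricts to the identity on $W$), the character $\omega^{k-m}$ satisfies $\omega^{k-m}(-1)=(-1)^{k-m}$. By the parity principle for generalized Bernoulli numbers, $B_{m,\chi}=0$ unless $\chi(-1)=(-1)^{m}$; applied to $\chi=\omega^{k-m}$ this forces $B_{m,\omega^{k-m}}=0$ whenever $(-1)^{k-m}\neq(-1)^{m}$, i.e. whenever $k$ is odd, which is exactly the fact recalled in the text preceding the statement. Substituting into \eqref{p-L-vals} then gives
\begin{equation*}
L_p(1-m;\omega^k)=-\left(1-\omega^{k-m}(p)p^{m-1}\right)\frac{B_{m,\omega^{k-m}}}{m}=0\qquad(m\in\mathbb{N}),
\end{equation*}
so $L_p(s;\omega^k)$ vanishes at every point of $\{0,-1,-2,\dots\}$, an infinite set of distinct elements of $\mathbb{Z}_p$.

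Next I would invoke analyticity. For $k$ odd and $p\geqslant 3$ one has $k\not\equiv 0\pmod{p-1}$, so by the proposition stated just above $L_p(s;\omega^k)$ is genuinely rigid-analytic on the disk $|s|_p<qp^{-1/(p-1)}$ with no pole. Since $qp^{-1/(p-1)}=p^{(p-2)/(p-1)}>1$, this disk contains all of $\mathbb{Z}_p$, and in particular the infinitely many zeros $1-m$ lie in the closed subdisk $|s|_p\leqslant 1$, whose radius is strictly smaller than the radius of convergence. By Strassmann's theorem (equivalently, the $p$-adic identity theorem), a nonzero power series convergent on such a disk can have only finitely many zeros in a strictly smaller closed subdisk; having infinitely many zeros there forces $L_p(s;\omega^k)\equiv 0$.

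I expect the main obstacle to be purely a matter of verifying the hypotheses for the identity theorem, in particular handling the prime $p=2$ separately, where $p-1=1$ makes the no-pole criterion $k\not\equiv 0\pmod{p-1}$ degenerate. For $p=2$ one must check directly that the factor $\langle c\rangle^{1-s}\omega^k(c)-1$ does not create a genuine pole at $s=1$ (or restrict attention to the disk away from $s=1$), after which the same vanishing-plus-analyticity argument applies verbatim.
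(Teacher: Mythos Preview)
Your proof is correct and follows exactly the route the paper takes: the paper simply remarks that $B_{m,\omega^{k-m}}=0$ for all $m\in\mathbb{N}$ when $k$ is odd, and then states the proposition as an immediate consequence of the interpolation formula \eqref{p-L-vals} together with rigid-analyticity; you have merely spelled out the implicit identity-theorem step. Your residual worry about $p=2$ dissolves once you note that for $p=2$ the Teichm\"uller character has order $2$, so $\omega^k$ with $k$ odd is the nontrivial character and there is no pole at $s=1$; the radius $qp^{-1/(p-1)}=2>1$ still strictly contains $\mathbb{Z}_p$, and the same Strassmann argument applies.
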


\begin{remark}\label{Rem-zero-ft}
The above fact will be generalized to the multiple case as functional relations among the $p$-adic multiple $L$-functions defined in the following section (see Theorem \ref{T-6-1} and Example \ref{Rem-zero-2}).
\end{remark}

\subsection{Construction and properties of $p$-adic multiple $L$-functions}\label{sec-3-2}

As a double analogue of $L_p(s;\omega^k)$, the second-, the third- and the fourth-named authors defined a certain $p$-adic double $L$-function associated with the {Teichm\"uller character} for any odd prime $p$ in \cite{KMT-IJNT}. 
In this section, 
we will slightly generalize this by introducing 
a $p$-adic multiple $L$-function 
${L_{p,r}(s_1,\ldots,s_r;\omega^{k_1},\ldots,\omega^{k_r};\gamma_1,\ldots,\gamma_{r};c)}$
(Definition \ref{Def-pMLF}) 
and 
establishing its $p$-adic rigid analyticity with respect to the parameters 
$s_1,\ldots,s_r$ (Theorem \ref{Th-pMLF}), 
and 
its $p$-adic continuity with respect to  the parameter 
$c$ (Theorem \ref{continuity theorem}), 
which yields a non-trivial $p$-adic property of
its special values (Corollary \ref{non-trivial property}).
At the end of this subsection, 
we 
discuss the construction of
a $p$-adic analogue of the entire zeta-function
$\zeta^{\rm des}_r(s_1,\ldots,s_r;\gamma_1,\ldots,\gamma_{r})$
constructed in \cite{FKMT-Desing}.

Let $r\in \mathbb{N}$ and $\gamma_1,\ldots,\gamma_r\in \mathbb{Z}_p$. 
Let $\xi_1,\ldots,\xi_r\in \mathbb{C}_p$ be roots of unity other than $1$
whose orders are prime to $p$.

By combining \eqref{gene-01} and \eqref{Def-Hr}, we then have
\begin{align*}
    & \mathfrak{H}_r  (( t_j ),( \xi_j), ( \gamma_j))=
    \prod_{j=1}^{r} \int_{\mathbb{Z}_p}e^{\gamma_j (\sum_{k=j}^r t_k)x_j}d\mk_{\xi_j}(x_j)\\
    & =\int_{\mathbb{Z}_p^r} \exp\left( \sum_{k=1}^{r} \left(\sum_{j=1}^{k}x_j\gamma_j\right)t_k\right)\prod_{j=1}^{r}d\mk_{\xi_j}(x_j)
    \\
    & =\sum_{n_1,\ldots,n_r=0}^\infty \int_{\mathbb{Z}_p^r} \prod_{k=1}^{r}\left( \sum_{j=1}^{k}x_j\gamma_j \right)^{n_k}\prod_{j=1}^r d\mk_{\xi_j}(x_j)\frac{t_1^{n_1}\cdots t_r^{n_r}}{n_1!\cdots n_r!}.
\end{align*}
Hence, as a multiple analogue of Proposition \ref{Kob-Ch2}, 
we have the following:

\begin{proposition}\label{T-multi-2}
For $n_1,\ldots,n_r\in \mathbb{N}_0$, $\gamma_1,\ldots,\gamma_r\in \mathbb{Z}_p$ and
roots $\xi_1,\ldots,\xi_r\in\mathbb{C}_p^\times\setminus\{1\}$ of unity
whose orders are prime to $p$,
\begin{equation}
\int_{\mathbb{Z}_p^r} \prod_{k=1}^{r}( \sum_{j=1}^{k}x_j\gamma_j )^{n_k}\prod_{j=1}^r d\mk_{\xi_j}(x_j)=\aa((n_j);( \xi_j);( \gamma_j)). \label{multi-val2}
\end{equation}
\end{proposition}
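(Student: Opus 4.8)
The plan is to observe that the generating function $\mathfrak{H}_r((t_j);(\xi_j);(\gamma_j))$ carries two power-series expansions in the variables $t_1,\ldots,t_r$, and then to match their coefficients. The first expansion is the defining one \eqref{Fro-def-r}, whose coefficient of $\frac{t_1^{n_1}\cdots t_r^{n_r}}{n_1!\cdots n_r!}$ is by definition the twisted multiple Bernoulli number $\aa((n_j);(\xi_j);(\gamma_j))$. The second expansion is precisely the one assembled in the display immediately preceding the statement, whose corresponding coefficient is the integral on the left-hand side of \eqref{multi-val2}. Since $\mathfrak{H}_r$ is a single convergent power series over $\mathbb{C}_p$, comparing these two expansions will yield the asserted identity.

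To produce the integral expansion rigorously, I would first substitute the single-variable integral representation $\mathfrak{H}(t;\xi)=\int_{\mathbb{Z}_p}e^{tx}\,d\mk_\xi(x)$ from \eqref{gene-01} into the product formula \eqref{Def-Hr}. This is legitimate because each $\gamma_j\in\mathbb{Z}_p$ has $|\gamma_j|_p\leqslant 1$, so for $t_k$ chosen close enough to $0$ in $\mathbb{C}_p$ the argument $\gamma_j\sum_{k=j}^r t_k$ lands in the disc $|t|_p<p^{-1/(p-1)}$ on which \eqref{gene-01} is valid. I would then combine the $r$ single integrals into one integral over $\mathbb{Z}_p^r$ against the product measure $\prod_j d\mk_{\xi_j}(x_j)$, using multiplicativity of the exponential. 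The key algebraic step is the rearrangement of the double sum $\sum_{j=1}^r\gamma_j x_j\sum_{k=j}^r t_k=\sum_{k=1}^r\bigl(\sum_{j=1}^k x_j\gamma_j\bigr)t_k$, which is exactly what introduces the partial sums $\sum_{j=1}^k x_j\gamma_j$ occurring in \eqref{multi-val2}. Expanding the resulting exponential as a multivariable power series in the $t_k$ and integrating monomial by monomial then gives the second expansion.

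The only point requiring care—and thus the main, though mild, obstacle—is the interchange of the infinite summation with the $p$-adic integral when expanding $\exp(\sum_{k}(\sum_{j\leqslant k}x_j\gamma_j)t_k)$. This is justified because for $|t_k|_p$ sufficiently small the exponential series converges uniformly in $x\in\mathbb{Z}_p^r$ (uniform continuity of $e^{tx}$ in $x\in\mathbb{Z}_p$ is precisely the property underlying \eqref{gene-01}), and the measures $\mk_{\xi_j}$ are $\mathcal{O}_{\mathbb{C}_p}$-valued, hence bounded, so term-by-term integration is valid. With both expansions in hand, the uniqueness of coefficients of a convergent power series over $\mathbb{C}_p$ forces the coefficients of $\frac{t_1^{n_1}\cdots t_r^{n_r}}{n_1!\cdots n_r!}$ to agree, which is exactly \eqref{multi-val2}.
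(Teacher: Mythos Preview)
Your proposal is correct and follows essentially the same approach as the paper: the displayed computation immediately preceding the proposition is precisely the substitution of \eqref{gene-01} into \eqref{Def-Hr}, the rearrangement $\sum_{j}\gamma_jx_j\sum_{k\geqslant j}t_k=\sum_k(\sum_{j\leqslant k}x_j\gamma_j)t_k$, and the term-by-term expansion you describe, followed by comparison with \eqref{Fro-def-r}. Your added justification for interchanging the sum with the $p$-adic integral is a welcome bit of rigor that the paper leaves implicit.
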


We set
\begin{align}
\left( \mathbb{Z}_p^r\right)'_{\{\gamma_j\}}:=\bigg\{ (x_j)\in \mathbb{Z}_p^r& \,\bigg|\,p\nmid x_1\gamma_1,\ p\nmid (x_1\gamma_1+x_2\gamma_2),\ldots,\ p\nmid \sum_{j=1}^{r}x_j\gamma_j\,\bigg\}, \label{region}
\end{align}
and 
\label{pMLF-region}
\begin{equation}
\mathfrak{X}_r(d):=\left\{(s_1,\ldots,s_r)\in \mathbb{C}_p^r\,\big|\,|s_j|_p<d^{-1}p^{-1/(p-1)}\ (1\leqslant j\leqslant r)\right\}\label{region-d}
\end{equation}
for $d\in \mathbb{R}_{>0}$.

\begin{definition} \label{Def-pMLF}
For $r\in \mathbb{N}$, $k_1,\ldots,k_r,\in \mathbb{Z}$,
$\gamma_1,\ldots,\gamma_r\in \mathbb{Z}_p$
and $c\in \mathbb{N}_{>1}$ with $(c,p)=1$,
the associated {\bf $p$-adic multiple $L$-function} of depth $r$ is the $\mathbb{C}_p$-valued function for 
$(s_j)\in \mathfrak{X}_r\left(q^{-1}\right)$  defined by
\begin{equation}
\begin{split}
&{L_{p,r}(s_1,\ldots,s_r;\omega^{k_1},\ldots,\omega^{k_r};\gamma_1,\ldots,\gamma_{r};c)}\\
&\quad :=\int_{\left( \mathbb{Z}_p^r\right)'_{\{\gamma_j\}}}\langle x_1\gamma_1 \rangle^{-{s_1}}\langle x_1\gamma_1+ x_2\gamma_2 \rangle^{-{s_2}}\cdots \langle \sum_{j=1}^{r}x_{j}\gamma_{j} \rangle^{-{s_r}}\\
& \qquad \qquad \times \omega^{k_1}(x_1\gamma_1)\cdots\omega^{k_r}( \sum_{j=1}^{r}x_{j}\gamma_{j}) \prod_{j=1}^{r}d\mm(x_j).
\end{split}
\label{e-6-1}
\end{equation}
\end{definition}

\begin{remark}\label{Rem-gamma1}
When $\gamma_1\in p\mathbb{Z}_p$, we see that $\left( \mathbb{Z}_p^r\right)'_{\{\gamma_j\}}$ is an empty set, so 
we interpret 
${L_{p,r}((s_j);(\omega^{k_j});(\gamma_j);c)}$ as the zero-function.
\end{remark}

\begin{remark}
Note that we can define, more generally, 
\begin{align*}
{L_{p,r}(s_1,\ldots,s_r;(\omega^{k_j});(\gamma_j);(c_j))}
&:=\int_{\left( \mathbb{Z}_p^r\right)'_{\{\gamma_j\}}}\langle x_1\gamma_1 \rangle^{-{s_1}}\langle x_1\gamma_1+ x_2\gamma_2 \rangle^{-{s_2}}\cdots \langle \sum_{j=1}^{r}x_{j}\gamma_{j} \rangle^{-{s_r}}\\
& \qquad \qquad \times \omega^{k_1}(x_1\gamma_1)\cdots\omega^{k_r}( \sum_{j=1}^{r}x_{j}\gamma_{j}) \prod_{j=1}^{r}d\widetilde{\mathfrak{m}}_{c_j}(x_j)
\end{align*}
for $c_j\in \mathbb{N}_{>1}$ with $(c_j,p)=1$ $(1\leqslant j \leqslant r)$. 
Then we can naturally generalize the 
arguments in the remaining sections to this generalized case.
\end{remark}

In the next section, 
we will see that the above $p$-adic multiple $L$-function can
be seen as a $p$-adic interpolation of
a certain finite sum \eqref{Int-P} of complex multiple zeta functions
(cf. Theorem \ref{T-main-1} and Remark \ref{rem-intpln}).

The following example shows that the Kubota-Leopoldt $p$-adic $L$-functions 
are essentially the same as 
our $p$-adic multiple $L$-functions with $r=1$.

\begin{example}\label{example for r=1}
For $s\in\mathbb{C}_p$ with $|s|_p<qp^{-1/(p-1)}$, $\gamma_1\in \mathbb{Z}_p^\times $, $k\in \mathbb Z$
and $c\in \mathbb{N}_{>1}$ with $(c,p)=1$,
we have
\begin{equation}
\begin{split}
L_{p,1}(s;\omega^{k-1};\gamma_1;c)& =\int_{\mathbb{Z}_p^\times}\langle x\gamma_1\rangle^{-{s}} \omega^{k-1}(x\gamma_1)d\mm(x)\\
& =\langle \gamma_1\rangle^{-s} \omega^{k-1}(\gamma_1) (\langle c\rangle^{1-s}\omega^k(c)-1)\cdot L_p(s;\omega^k). 
\end{split}
\label{1-p-LF-gamma}
\end{equation}
\end{example}

The next example shows that 
when $r=2$, we recover 
the $p$-adic double $L$-functions 
introduced in \cite{KMT-IJNT}.

\begin{example}\label{example for r=2}
Let $p$ be an odd prime, $r=2$, $c=2$ and $\eta\in p\mathbb{Z}_p$. 
Then, for $s_1,s_2\in\mathbb{C}_p$ with $|s_j|_p<p^{1-1/(p-1)}$ $(j=1,2)$ and $k_1,k_2\in \mathbb Z$,
our $p$-adic double $L$-function is given by
\begin{equation}
\begin{split}
& L_{p,2}(s_1,s_2;\omega^{k_1},\omega^{k_2};1,\eta;2)\\
& \quad =\int_{\mathbb{Z}_p^\times\times \mathbb{Z}_p}\langle x_1 \rangle^{-{s_1}} \langle x_1+x_2\eta \rangle^{-{s_2}} \omega^{k_1}(x_1)\omega^{k_2}(x_1+x_2\eta)d\m2(x_1)d\m2(x_2).
\end{split}
\label{double-p-LF-eta}
\end{equation}
\end{example}

The next theorem asserts 
that our $p$-adic multiple $L$-functions are rigid-analytic
(cf. Notation \ref{rigid-basics}, below)
with respect to the parameters $s_1,\ldots,s_r$:

\begin{theorem}\label{Th-pMLF}
Let $k_1,\ldots,k_r\in \mathbb{Z}$,
$\gamma_1,\ldots,\gamma_r\in \mathbb{Z}_p$, 
and $c\in \mathbb{N}_{>1}$ with $(c,p)=1$.
Then 
$L_{p,r}((s_j);(\omega^{k_j});(\gamma_j);c)$
has the following expansion
\begin{align*}
&{L_{p,r}(s_1,\ldots,s_r;\omega^{k_1},\ldots,\omega^{k_r};\gamma_1,\ldots,\gamma_{r};c)}\\
&=\sum_{n_1,\ldots,n_r=0}^\infty \cc\left(n_1,\ldots,n_r;(\omega^{k_j});(\gamma_j);c\right)s_1^{n_1}\cdots s_r^{n_r}
\end{align*}
for $(s_j)\in \mathfrak{X}_r\left(q^{-1}\right)$, where $\cc\left((n_j);(\omega^{k_j});(\gamma_j);c\right)\in \mathbb{Z}_p$ satisfies 
\begin{equation}
\left| \cc\left(n_1,\ldots,n_r;(\omega^{k_j});(\gamma_j);c\right)\right|_p\leqslant \left(qp^{-1/(p-1)}\right)^{-n_1-\cdots -n_r}. \label{C-esti}
\end{equation}
\end{theorem}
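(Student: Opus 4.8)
The plan is to expand each of the $r$ factors $\langle y_k\rangle^{-s_k}$, where we abbreviate $y_k:=\sum_{j=1}^{k}x_j\gamma_j$, as a power series in $s_k$, to form their product, and to integrate term by term against $\prod_{j}d\mm(x_j)$. On the compact open domain $\left(\mathbb{Z}_p^r\right)'_{\{\gamma_j\}}$ every $y_k$ lies in $\mathbb{Z}_p^\times$, so $\langle y_k\rangle\in 1+q\mathbb{Z}_p$ and we may write
\begin{equation*}
\langle y_k\rangle^{-s_k}=\exp\bigl(-s_k\log\langle y_k\rangle\bigr)=\sum_{n=0}^\infty\frac{(-1)^n(\log\langle y_k\rangle)^n}{n!}\,s_k^n .
\end{equation*}

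First I would record two integrality facts, uniform in $y\in\mathbb{Z}_p^\times$. Since the $p$-adic logarithm carries $1+q\mathbb{Z}_p$ into $q\mathbb{Z}_p$ with $|\log\langle y\rangle|_p=|\langle y\rangle-1|_p\leqslant q^{-1}$, we have $\log\langle y\rangle/q\in\mathbb{Z}_p$. Writing $\frac{(\log\langle y\rangle)^n}{n!}=\frac{q^n}{n!}\bigl(\frac{\log\langle y\rangle}{q}\bigr)^n$ and using Legendre's formula $v_p(n!)=\frac{n-S_p(n)}{p-1}$, one checks $\frac{q^n}{n!}\in\mathbb{Z}_p$ and
\begin{equation*}
\Bigl|\tfrac{q^n}{n!}\Bigr|_p=q^{-n}p^{\,v_p(n!)}\leqslant\bigl(qp^{-1/(p-1)}\bigr)^{-n}.
\end{equation*}
Hence each coefficient $\frac{(\log\langle y\rangle)^n}{n!}$ is a $\mathbb{Z}_p$-valued continuous function of $y$ whose sup-norm is at most $\bigl(qp^{-1/(p-1)}\bigr)^{-n}$; this both forces the radius of convergence in each $s_k$ to be $qp^{-1/(p-1)}$ (that is, $(s_j)\in\mathfrak{X}_r(q^{-1})$) and supplies the factorwise bound used below.

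Next I would multiply the $r$ series together and integrate. For fixed $(s_j)\in\mathfrak{X}_r(q^{-1})$ the integrand is $\sum_{(n_k)}s_1^{n_1}\cdots s_r^{n_r}\,g_{(n_k)}(x)$ with $g_{(n_k)}(x)=\prod_{k=1}^r\frac{(-1)^{n_k}(\log\langle y_k\rangle)^{n_k}}{n_k!}\omega^{k_k}(y_k)$, and since $|\omega^{k_k}(y_k)|_p=1$ we get $\|s_1^{n_1}\cdots s_r^{n_r}g_{(n_k)}\|_\infty\leqslant\prod_k\bigl(|s_k|_p\,(qp^{-1/(p-1)})^{-1}\bigr)^{n_k}\to 0$ as $n_1+\cdots+n_r\to\infty$. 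Thus the series converges uniformly in $(x_j)$, and because $\mm$ is a bounded ($\mathbb{Z}_p$-valued) measure, term-by-term integration is legitimate and yields the asserted expansion with
\begin{equation*}
\cc\bigl(n_1,\ldots,n_r;(\omega^{k_j});(\gamma_j);c\bigr)=\int_{\left(\mathbb{Z}_p^r\right)'_{\{\gamma_j\}}}\prod_{k=1}^r\frac{(-1)^{n_k}(\log\langle y_k\rangle)^{n_k}}{n_k!}\,\omega^{k_k}(y_k)\prod_{j=1}^r d\mm(x_j).
\end{equation*}
The integrand is a product of the $\mathbb{Z}_p$-valued functions above with the roots of unity $\omega^{k_k}(y_k)\in W\subset\mathbb{Z}_p$, hence $\mathbb{Z}_p$-valued; as $\mm$ is $\mathbb{Z}_p$-valued the Riemann sums defining the integral lie in the closed set $\mathbb{Z}_p$, so $\cc\in\mathbb{Z}_p$. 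Finally, bounding the integral by the sup-norm of the integrand times the unit norm of the measure and invoking the factorwise estimate gives $\bigl|\cc\bigr|_p\leqslant\prod_k\bigl(qp^{-1/(p-1)}\bigr)^{-n_k}=\bigl(qp^{-1/(p-1)}\bigr)^{-n_1-\cdots-n_r}$, which is \eqref{C-esti}.

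The step I expect to require the most care is the passage from the one-variable expansion to the term-by-term integration of the $r$-fold product: one must verify that the multivariate series converges uniformly on $\left(\mathbb{Z}_p^r\right)'_{\{\gamma_j\}}$ with the stated sup-norm control, so that the non-archimedean continuity of integration against the bounded measure $\mm$ applies simultaneously in all $r$ variables. The accompanying integrality bookkeeping—that $q^n/n!\in\mathbb{Z}_p$ and $\log\langle y\rangle/q\in\mathbb{Z}_p$—is the other delicate point, since it is precisely what makes the integrand $\mathbb{Z}_p$-valued and thereby yields $\cc\in\mathbb{Z}_p$ rather than merely $\cc\in\mathcal{O}_{\mathbb{C}_p}$.
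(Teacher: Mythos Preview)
Your argument is correct and reaches the same conclusion, but the route differs from the paper's. The paper first expands each factor via the \emph{binomial} series
\[
\langle y_k\rangle^{-s_k}=\sum_{n_k\geq 0}\binom{-s_k}{n_k}\bigl(\langle y_k\rangle-1\bigr)^{n_k},
\]
integrates to get coefficients $a(n_1,\ldots,n_r)$ with $|a(n_1,\ldots,n_r)|_p\leqslant q^{-\sum n_j}$, and then invokes a separate multivariable lemma (Lemma~\ref{multi-lemma1}, proved via Lemma~\ref{multi-lemma2}) to convert from the basis $\prod_j\binom{-s_j}{n_j}$ to the monomial basis $\prod_j s_j^{n_j}$ with the required growth estimate. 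You instead expand directly via $\langle y_k\rangle^{-s_k}=\exp(-s_k\log\langle y_k\rangle)$, which already produces monomials in $s_k$; the estimate $|(\log\langle y\rangle)^n/n!|_p\leqslant (qp^{-1/(p-1)})^{-n}$ and the integrality $q^n/n!\in\mathbb{Z}_p$ do in one stroke the work that the paper's conversion lemma does. Your approach is shorter and avoids the auxiliary lemmas; the paper's approach has the advantage of isolating a reusable statement (Lemma~\ref{multi-lemma1}) that directly generalizes the single-variable treatment in Washington's book. One cosmetic point: in your displayed formula for $\cc$ the symbol $\omega^{k_k}(y_k)$ should be $\omega^{k_\nu}(y_\nu)$ with a distinct running index.
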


In order to prove this theorem, we show 
the following two lemmas, 
which are generalizations of \cite[Proposition 5.8 and its associated lemma]{Wa}:

\begin{lemma}\label{multi-lemma1}
Let $f$ be a continuous function from $\mathbb{Z}_p^r$ to $\mathbb{Q}_p$ defined by 
$$f(X_1,\ldots,X_r)=\sum_{n_1,\ldots,n_r=0}^\infty a(n_1,\ldots,n_r)\prod_{j=1}^{r}\binom{X_j}{n_j},$$
where $a(n_1,\ldots,n_r)\in \mathbb{Z}_p$. Suppose there exist $d,M\in \mathbb{R}_{>0}$ with $d<p^{-1/(p-1)}<1$ such that $|a(n_1,\ldots,n_r)|_p\leqslant Md^{\sum_{j=1}^r n_j}$ for any $(n_j)\in \mathbb{N}_0^r$. Then $f(X_1,\ldots,X_r)$ may be expressed as 
$$f(X_1,\ldots,X_r)=\sum_{n_1,\ldots,n_r=0}^\infty C(n_1,\ldots,n_r)X_1^{n_1}\cdots X_r^{n_r}\in \mathbb{Q}_p[[X_1,\ldots,X_r]],$$
which converges absolutely in $\mathfrak{X}_r(d)$, 
whenever 
$$\left|C(n_1,\ldots,n_r)\right|_p\leqslant M(d^{-1}p^{-1/(p-1)})^{-n_1-\cdots -n_r}.$$
\end{lemma}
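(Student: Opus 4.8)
The plan is to reduce the multivariable statement to a one-variable Mahler-expansion estimate applied coordinatewise. First I would recall the classical single-variable fact (this is exactly the content of the lemma being generalized, \cite[Proposition 5.8 and its associated lemma]{Wa}): if $g(X)=\sum_{n\geqslant 0}a(n)\binom{X}{n}$ with $|a(n)|_p\leqslant Md^n$ and $d<p^{-1/(p-1)}<1$, then rewriting each binomial $\binom{X}{n}=\frac{1}{n!}\sum_{k=0}^{n}s(n,k)X^k$ (Stirling numbers of the first kind) and collecting powers of $X$ produces a power series $g(X)=\sum_{k\geqslant 0}C(k)X^k$ with the coefficient bound $|C(k)|_p\leqslant M(d^{-1}p^{-1/(p-1)})^{-k}$, which then converges on $|X|_p<d^{-1}p^{-1/(p-1)}$. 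The crux of that one-variable bound is the estimate $|1/n!|_p\leqslant p^{(n-1)/(p-1)}$ combined with $\binom{X}{n}\in\mathbb{Z}_p$ for $X\in\mathbb{Z}_p$, so that rearranging the double sum is legitimate and each power-series coefficient inherits the geometric decay.

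Next I would carry out the multivariable passage. The key structural observation is that the proposed $f$ is a sum of products of binomials in the separate variables, so formally
\begin{equation*}
\prod_{j=1}^{r}\binom{X_j}{n_j}=\prod_{j=1}^{r}\frac{1}{n_j!}\sum_{k_j=0}^{n_j}s(n_j,k_j)X_j^{k_j},
\end{equation*}
and substituting this into the definition of $f$ and interchanging the (now multi-indexed) summations gives the claimed monomial expansion with
\begin{equation*}
C(k_1,\ldots,k_r)=\sum_{n_1\geqslant k_1}\cdots\sum_{n_r\geqslant k_r}a(n_1,\ldots,n_r)\prod_{j=1}^{r}\frac{s(n_j,k_j)}{n_j!}.
\end{equation*}
The hypothesis $|a(n_1,\ldots,n_r)|_p\leqslant Md^{\,n_1+\cdots+n_r}=M\prod_j d^{\,n_j}$ factors across the variables, so the multivariable coefficient estimate follows by applying the single-variable bound in each coordinate and multiplying: the $M$ appears once while each factor contributes $(d^{-1}p^{-1/(p-1)})^{-k_j}$, yielding $|C(k_1,\ldots,k_r)|_p\leqslant M(d^{-1}p^{-1/(p-1)})^{-k_1-\cdots-k_r}$ exactly as stated. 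Absolute convergence on $\mathfrak{X}_r(d)$ is then immediate, since for $|X_j|_p<d^{-1}p^{-1/(p-1)}$ each monomial term is bounded by $M\prod_j(|X_j|_p\,d\,p^{1/(p-1)})^{k_j}$, a product of convergent geometric series.

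The main obstacle is justifying the rearrangement of the $r$-fold infinite sum over $(n_j)$ into a sum over $(k_j)$: one must check that the interchange is valid in the $p$-adic topology, i.e.\ that the family is summable so that Fubini for convergent $p$-adic series applies. This is where the strict inequality $d<p^{-1/(p-1)}$ is essential, because it forces the products $a(n_1,\ldots,n_r)\prod_j s(n_j,k_j)/n_j!$ to tend to $0$ as $\max_j n_j\to\infty$ uniformly enough to guarantee unconditional summability. I would handle this by noting that $\mathbb{C}_p$ (equivalently $\mathbb{Q}_p$ here) is complete and that a $p$-adic multiple series converges unconditionally precisely when its general term tends to $0$; the geometric decay from the hypothesis, together with $|s(n,k)/n!|_p\leqslant|1/n!|_p\leqslant p^{(n-1)/(p-1)}$, delivers exactly the decay needed. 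Once summability is in hand, the reindexing is purely formal and the coefficient bound is a clean product of the one-dimensional estimates, so the genuinely delicate point is confined to this convergence bookkeeping rather than to any new combinatorial identity.
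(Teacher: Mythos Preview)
Your proposal is correct and follows essentially the same approach as the paper: expand each $\binom{X_j}{n_j}$ as a polynomial in $X_j$ with integer numerators divided by $n_j!$, collect monomials, and bound the resulting coefficients using $|a(n_1,\ldots,n_r)|_p\leqslant Md^{\sum n_j}$ together with $|1/n!|_p<p^{n/(p-1)}$ and the ultrametric maximum. The only organizational difference is in the convergence bookkeeping: the paper works with the truncated polynomials $\mathcal{P}_l=\sum_{n_1+\cdots+n_r\leqslant l}a(n_1,\ldots,n_r)\prod_j\binom{X_j}{n_j}$, shows their monomial coefficients form Cauchy sequences in $l$, and then invokes the auxiliary Lemma~\ref{multi-lemma2} to pass to the limit pointwise on $\mathfrak{X}_r(d)$; you instead appeal directly to unconditional summability of $p$-adic families whose general term tends to zero. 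Both routes encode the same estimate and the same interchange of summation, so there is no substantive difference.
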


\begin{lemma}\label{multi-lemma2}
Let 
$$P_l(X_1,\ldots,X_r)=\sum_{n_1,\ldots,n_r\geqslant 0}a(n_1,\ldots,n_r;l)\prod_{j=1}^{r}X_j^{n_j}\ \ (l\in \mathbb{N}_0)$$
be a sequence of power series with $\mathbb{C}_p$-coefficients which converges 
on a fixed subset $D$ of $\mathbb{C}_p^r$, and suppose that: 
\begin{enumerate}[{\rm (i)}]
\item when $l\to \infty$, $a(n_1,\ldots,n_r;l)\to a(n_1,\ldots,n_r;0)$ for each $(n_j)\in \mathbb{N}_0^r$,
\item for each $(X_j)\in D$ and any $\varepsilon>0$, there exists an $N_0=N_0((X_j),\varepsilon)$ such that 
$$\left|\sum_{(n_j)\in \mathbb{N}_0^r \atop \sum n_j >N_0}a(n_1,\ldots,n_r;l)\prod_{j=1}^{r}X_j^{n_j}\right|_p<\varepsilon$$
uniformly in $l\in \mathbb{N}$. 
\end{enumerate}
Then $P_l((X_j))\to P_0((X_j))$ as $l\to \infty$ for any $(X_j)\in D$.
\end{lemma}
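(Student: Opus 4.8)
The plan is to run the standard non-archimedean ``split into head and tail'' argument, which is the $p$-adic analogue of dominated convergence and the multivariable generalization of the lemma accompanying \cite[Proposition 5.8]{Wa}. The crucial simplification throughout is that $\mathbb{C}_p$ is non-archimedean, so the strong triangle inequality $|a+b+c|_p\leqslant\max(|a|_p,|b|_p,|c|_p)$ replaces the ordinary one and keeps all the estimates clean.

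First I would fix $(X_j)\in D$ and $\varepsilon>0$. Using hypothesis (ii), choose $N_0=N_0((X_j),\varepsilon)$ so that the tail $\sum_{\sum n_j>N_0}a(n_1,\ldots,n_r;l)\prod_j X_j^{n_j}$ has $p$-adic absolute value $<\varepsilon$ for every $l\in\mathbb{N}$, uniformly in $l$. Separately, since $P_0$ converges at $(X_j)\in D$, after possibly enlarging $N_0$ I can also arrange that the tail of $P_0$ has absolute value $<\varepsilon$; taking the larger of the two thresholds produces a single $N_0$ that simultaneously controls the tails of $P_l$ for all $l\in\mathbb{N}_0$.

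Next I would write the difference as
$$P_l((X_j))-P_0((X_j))=\sum_{\sum n_j\leqslant N_0}\bigl(a(n_1,\ldots,n_r;l)-a(n_1,\ldots,n_r;0)\bigr)\prod_{j=1}^{r}X_j^{n_j}+T_l-T_0,$$
where $T_l$ and $T_0$ denote the two tails over $\sum n_j>N_0$. The head is a finite sum, ranging over the finitely many multi-indices $(n_j)$ with $\sum n_j\leqslant N_0$. By hypothesis (i), each coefficient difference $a(n_1,\ldots,n_r;l)-a(n_1,\ldots,n_r;0)$ tends to $0$ as $l\to\infty$, and since there are only finitely many such terms there exists $L$ for which the head has absolute value $<\varepsilon$ whenever $l>L$. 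Applying the strong triangle inequality to the three pieces then gives $|P_l((X_j))-P_0((X_j))|_p<\varepsilon$ for all $l>L$, which is the desired conclusion as $\varepsilon$ was arbitrary.

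I do not anticipate a genuine obstacle: the statement is a routine uniform-tail/termwise-limit interchange, and the ultrametric inequality is precisely what lets it go through without the more delicate piecewise summation one would need in the archimedean setting. The only point requiring a little care is the bookkeeping of the $l=0$ tail, since hypothesis (ii) is phrased for $l\in\mathbb{N}$; this is handled by invoking the convergence of $P_0$ on $D$ to absorb $T_0$ into the same choice of $N_0$.
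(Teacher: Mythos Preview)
Your proposal is correct and follows essentially the same approach as the paper: split $P_l-P_0$ into a finite head controlled by coefficientwise convergence and tails controlled by hypothesis (ii), then apply the ultrametric inequality. You are in fact slightly more careful than the paper, which silently absorbs the $l=0$ tail into the same $\varepsilon$ without comment; your explicit enlargement of $N_0$ using the convergence of $P_0$ on $D$ fills that small gap.
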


\begin{proof}[Proof of Lemma \ref{multi-lemma2}] 
For any $\varepsilon$ and $(X_j)$, we can choose $N_0$ as above. Then 
$$|P_l((X_j))- P_0((X_j))|_p\leqslant \max_{\sum n_j\leqslant N_0}\left\{\varepsilon, \left|a(n_1,\ldots,n_r;0)-a(n_1,\ldots,n_r;l)\right|_p\cdot \prod_{j=1}^{r}|X_j|_p^{n_j}\right\}\leqslant \varepsilon$$
for any sufficiently large $l$. 
\end{proof}

\begin{proof}[Proof of Lemma \ref{multi-lemma1}] 
We can write
$$\binom{X}{n}=\frac{1}{n!}\sum_{m=0}^n b(n,m)X^m\qquad (n\in \mathbb{N}_0),$$
where $b(n,m)\in \mathbb{Z}$. For any $l\in \mathbb{N}$, let 
\begin{align*}
\pp_l(X_1,\ldots,X_r)&=\sum_{n_1,\ldots,n_r\geqslant 0 \atop n_1+\cdots +n_r\leqslant l}a(n_1,\ldots,n_r)\prod_{j=1}^{r}\binom{X_j}{{n_j}}\\
& =\sum_{0\leqslant m_1,\ldots,m_r\leqslant l \atop m_1+\cdots +m_r\leqslant l}C(m_1,\ldots,m_r;l)\prod_{j=1}^{r}X_j^{m_j},
\end{align*}
say. We 
show that $\pp_l(X_1,\ldots,X_r)$ satisfies the conditions (i) and (ii) for $P_l(X_1,\ldots,X_r)$ in Lemma \ref{multi-lemma2}. 
In fact, using the notation 
\begin{equation*}
\lambda_n(m)=
\begin{cases}
1 & (m\leqslant n)\\
0 & (m>n),
\end{cases}
\end{equation*}
we have
\begin{align*}
\pp_l(X_1,\ldots,X_r)&=\sum_{n_1+\cdots+n_r\leqslant l}\frac{a(n_1,\ldots,n_r)}{n_1!\cdots n_r!}\sum_{m_1\leqslant n_1}\cdots \sum_{m_r\leqslant n_r}\prod_{j=1}^{r}b(n_j,m_j)X_j^{m_j}\\
&=\sum_{n_1+\cdots+n_r\leqslant l}\frac{a(n_1,\ldots,n_r)}{n_1!\cdots n_r!}\sum_{m_1\leqslant l}\cdots \sum_{m_r\leqslant l \atop m_1+\cdots+m_r \leqslant l}\prod_{j=1}^{r}b(n_j,m_j)\lambda_{n_j}(m_j)X_j^{m_j}\\
&=\sum_{m_1,\ldots,m_r\leqslant l \atop m_1+\cdots +m_r\leqslant l}\left(\sum_{m_1\leqslant n_1,\ldots,m_r\leqslant n_r \atop n_1+\cdots+n_r\leqslant l}\frac{a(n_1,\ldots,n_r)}{n_1!\cdots n_r!}\prod_{j=1}^{r}b(n_j,m_j)\right)\prod_{j=1}^{r}X_j^{m_j}.
\end{align*}
Hence, we have
$$C(m_1,\ldots,m_r;l)=\sum_{m_1\leqslant n_1,\ldots,m_r\leqslant n_r \atop n_1+\cdots+n_r\leqslant l}\frac{a(n_1,\ldots,n_r)}{n_1!\cdots n_r!}\prod_{j=1}^{r}b(n_j,m_j).$$
Therefore, noting $b(n_j,m_j)\in \mathbb{Z}$ and using $|n!|_p>p^{-n/(p-1)}$, we obtain
\begin{align}
|C(m_1,\ldots,m_r;l)|_p& \leqslant \max_{m_1\leqslant n_1,\ldots,m_r\leqslant n_r \atop n_1+\cdots+n_r\leqslant l}\frac{Md^{\sum n_j}}{|n_1!\cdots n_r!|_p}\notag\\
& \leqslant \max_{m_1\leqslant n_1,\ldots,m_r\leqslant n_r \atop n_1+\cdots+n_r\leqslant l}M\left(dp^{1/(p-1)}\right)^{\sum n_j}\leqslant M\left(d^{-1}p^{-1/(p-1)}\right)^{-m_1-\cdots -m_r}\label{5-4-1}.
\end{align}
Furthermore, we have
\begin{align*}
& |C(m_1,\ldots,m_r;l+k)-C(m_1,\ldots,m_r;l)|_p\\
& \leqslant \left|\sum_{m_1\leqslant n_1,\ldots,m_r\leqslant n_r \atop l<n_1+\cdots+n_r\leqslant l+k}\frac{a(n_1,\ldots,n_r)}{n_1!\cdots n_r!}\prod_{j=1}^{r}b(n_j,m_j)\right|_p\\
& \leqslant \max_{m_1\leqslant n_1,\ldots,m_r\leqslant n_r \atop l<n_1+\cdots+n_r\leqslant l+k}\frac{Md^{\sum n_j}}{|n_1!\cdots n_r!|_p}\notag\\
& \leqslant \max_{m_1\leqslant n_1,\ldots,m_r\leqslant n_r \atop l<n_1+\cdots+n_r\leqslant l+k}M\left(dp^{1/(p-1)}\right)^{\sum n_j}\leqslant M\left(d^{-1}p^{-1/(p-1)}\right)^{-l-1}
\end{align*}
for any 
$l\in \mathbb{N}$. 
Hence, $\{C(m_1,\ldots,m_r;l)\}$ is a Cauchy sequence in $l$, 
so there exists
$$C(m_1,\ldots,m_r;0)=\lim_{l\to\infty} C(m_1,\ldots,m_r;l)\in \mathbb{Q}_p\quad ((m_j)\in \mathbb{N}_0^r)$$
with $|C(m_1,\ldots,m_r;0)|_p\leqslant M(d^{-1}p^{-1/(p-1)})^{-m_1-\cdots -m_r}$. 
For $(X_j)\in \mathfrak{X}_r(d)$ defined by \eqref{region-d}, let
$$\pp_0(X_1,\ldots,X_r)=\sum_{n_1,\ldots,n_r\geqslant 0}C(n_1,\ldots,n_r;0)X_1^{n_1}\cdots X_r^{n_r}.$$
Then $\pp_0(X_1,\ldots,X_r)$ converges absolutely in $\mathfrak{X}_r(d)$. Moreover, by \eqref{5-4-1}, 
we have
\begin{align*}
& \left|\sum_{n_1,\ldots,n_r\geqslant 0 \atop n_1+\cdots +n_r\geqslant N_0}C(n_1,\ldots,n_r;l)X_1^{n_1}\cdots X_r^{n_r}\right|_p \\
& \quad \leqslant \max_{n_1+\cdots +n_r\geqslant N_0}\left\{M\prod_{j=1}^{r}\left(d^{-1}p^{-1/(p-1)}\right)^{-n_j}|X_j|_p^{n_j}\right\} \to 0\quad (n_1,\ldots,n_r\to \infty)
\end{align*}
uniformly in $l$, for $(X_j)\in \mathfrak{X}_r(d)$. Therefore, by Lemma \ref{multi-lemma2}, we obtain 
$$f(X_1,\ldots,X_r)=\lim_{l\to \infty}\pp_l(X_1,\ldots,X_r)=\pp_0(X_1,\ldots,X_r)$$
for $(X_j)\in \mathfrak{X}_r(d)$, 
which completes the proof. 
\end{proof}

\begin{proof}[Proof of Theorem \ref{Th-pMLF}] 
Considering the binomial expansion in \eqref{e-6-1}, we have, for $(s_j)\in \mathfrak{X}_r=\mathfrak{X}_r\left(q^{-1}\right)$, 
\begin{align*}
&{L_{p,r}(s_1,\ldots,s_r;\omega^{k_1},\ldots,\omega^{k_r};\gamma_1,\ldots,\gamma_{r};c)}\\
&\quad =\sum_{n_1,\ldots,n_r=0}^\infty \bigg\{\int_{\left( \mathbb{Z}_p^r\right)'_{\{\gamma_j\}}}\prod_{\nu=1}^{r} \left(\langle \sum_{j=1}^{\nu}x_{j}\gamma_{j} \rangle-1\right)^{n_\nu}\\
& \qquad \qquad \times \omega^{k_1}(x_1\gamma_1)\cdots\omega^{k_r}( \sum_{j=1}^{r}x_{j}\gamma_{j}) \prod_{j=1}^{r}d\mm(x_j)\bigg\}\prod_{j=1}^{r}\binom{-s_j}{n_j}\\
& \quad =\sum_{n_1,\ldots,n_r=0}^\infty a(n_1,\ldots,n_r)\prod_{j=1}^{r}\binom{-s_j}{n_j},
\end{align*}
say. Since $\langle \sum_{j=1}^{\nu}x_{j}\gamma_{j} \rangle\equiv 1$ mod $q$, we have $|a(n_1,\ldots,n_r)|_p\leqslant q^{-\sum n_j}$. 
Applying Lemma \ref{multi-lemma1} with $d=q^{-1}$ and $M=1$, we obtain the proof of Theorem \ref{Th-pMLF}. 
\end{proof}


Next, we discuss the $p$-adic continuity of our $p$-adic multiple $L$-function
\eqref{e-6-1} with respect to the parameter $c$. 

\begin{theorem}\label{continuity theorem}
Let 
$s_1,\ldots,s_r\in \mathbb{Z}_p$,
$k_1,\ldots,k_r\in \mathbb{Z}$,
$\gamma_1,\ldots,\gamma_r\in\mathbb{Z}_p$ and
$c\in \mathbb{N}_{>1}$ with $(c,p)=1$.
Then 
the map 
$$c\mapsto{L_{p,r}(s_1,\ldots,s_r;\omega^{k_1},\ldots,\omega^{k_r};\gamma_1,\ldots,\gamma_{r};c)}$$
is continuously extended to $c\in\mathbb{Z}_p^\times$
as a $p$-adic continuous function.

Moreover the extension is uniformly continuous
with respect to both parameters $c$ and $(s_j)$.
Namely, 
for any given $\varepsilon>0$,
there always exists $\delta>0$ such that 
$$
|L_{p,r}(s_1,\ldots,s_r;\omega^{k_1},\ldots,\omega^{k_r};\gamma_1,\ldots,\gamma_{r};c)
-L_{p,r}(s'_1,\ldots,s'_r;\omega^{k_1},\ldots,\omega^{k_r};\gamma_1,\ldots,\gamma_{r};c')|_p
<\varepsilon
$$
holds for any $c, c'\in\mathbb Z_p^\times$ with $|c-c'|_p<\delta$, 
and any $s_j, s'_j\in\mathbb Z_p$ with $|s_j-s_j'|_p<\delta$
($1\leqslant j\leqslant r$).
\end{theorem}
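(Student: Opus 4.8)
My plan is to reduce the whole statement, via the Taylor expansion of Theorem~\ref{Th-pMLF}, to the continuity in $c$ of the individual expansion coefficients, and then to prove that continuity through the \emph{moments} of the measure $\mm$. Since the integers $c\in\mathbb{N}_{>1}$ with $(c,p)=1$ are dense in $\mathbb{Z}_p^\times$, it is enough to prove the displayed estimate for such integral $c,c'$ and to define the extension to $\mathbb{Z}_p^\times$ as the resulting uniformly continuous limit. For $(s_j)\in\mathbb{Z}_p^r\subseteq\mathfrak{X}_r(q^{-1})$ I would write
\[
L_{p,r}((s_j);(\omega^{k_j});(\gamma_j);c)=\sum_{n_1,\ldots,n_r=0}^\infty \cc((n_j);(\omega^{k_j});(\gamma_j);c)\,s_1^{n_1}\cdots s_r^{n_r},
\]
and first check that the bound \eqref{C-esti} is in fact \emph{uniform in $c$}: the proof of Theorem~\ref{Th-pMLF} uses only $\langle\cdot\rangle\equiv1\ (\mathrm{mod}\ q)$ and the fact that $\mm$ has total mass at most $1$, neither of which depends on $c$. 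Given this, the $(s_j)$-variation is controlled at once: since $|s_j|_p\leqslant1$, ultrametric telescoping gives $|\prod_j s_j^{n_j}-\prod_j (s_j')^{n_j}|_p\leqslant\max_j|s_j-s_j'|_p$, hence
\[
\bigl|L_{p,r}((s_j);\cdots;c)-L_{p,r}((s_j');\cdots;c)\bigr|_p\leqslant\max_j|s_j-s_j'|_p
\]
uniformly in $c$. Splitting the series at $\sum n_j=N_0$, the tail is $<\varepsilon$ uniformly in all parameters by \eqref{C-esti}, so the theorem reduces to showing that each map $c\mapsto\cc((n_j);(\omega^{k_j});(\gamma_j);c)$ extends continuously to $\mathbb{Z}_p^\times$.

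To prove that, I would pass to genuine integrals of $c$-independent functions. As in the proof of Theorem~\ref{Th-pMLF}, each coefficient $\cc((n_j);\cdots;c)$ is a limit — \emph{uniform in $c$} — of fixed $\mathbb{Q}_p$-linear combinations of the integrals
\[
a((n_j);c)=\int_{\left(\mathbb{Z}_p^r\right)'_{\{\gamma_j\}}}\prod_{\nu=1}^r\Bigl(\langle\textstyle\sum_{j=1}^\nu x_j\gamma_j\rangle-1\Bigr)^{n_\nu}\,\omega^{k_1}(x_1\gamma_1)\cdots\omega^{k_r}(\textstyle\sum_{j=1}^r x_j\gamma_j)\prod_{j=1}^r d\mm(x_j),
\]
whose integrand, extended by $0$ off the clopen set $\left(\mathbb{Z}_p^r\right)'_{\{\gamma_j\}}$, is a fixed continuous function $g$ on $\mathbb{Z}_p^r$. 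As a uniform limit of functions continuous in $c$ is continuous in $c$, it suffices to treat $c\mapsto a((n_j);c)$. Here is the mechanism I would exploit: $\mm$ is $\mathbb{Z}_p$-valued, so $|\int h\,\prod_j d\mm(x_j)|_p\leqslant\|h\|_\infty$ uniformly in $c$, while by \eqref{2-1} and \eqref{2-1-2} the monomial moments $\int_{\mathbb{Z}_p}x^n\,d\mm(x)=(1-c^{n+1})B_{n+1}/(n+1)$ and $\int_{\mathbb{Z}_p}1\,d\mm(x)=(c-1)/2$ are \emph{polynomials in $c$}. Consequently $\int P\,\prod_j d\mm(x_j)$ is a polynomial in $c$ for every polynomial $P$ in $(x_j)$. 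Approximating $g$ uniformly on $\mathbb{Z}_p^r$ by polynomials $P_m$ (multivariate Mahler expansion), the estimate $|\int(g-P_m)\prod_j d\mm|_p\leqslant\|g-P_m\|_\infty\to0$ is uniform in $c$, so $c\mapsto a((n_j);c)$ is a uniform limit of polynomials in $c$; it is therefore uniformly continuous on the dense set of admissible integers, hence uniquely and continuously extendable to $\mathbb{Z}_p^\times$ (and $\mathbb{Z}_p$-valued there, since the moments are $\mathbb{Z}_p$-valued on integer $c$).

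The one genuinely delicate feature — the step I expect to be the main obstacle — is that one \emph{cannot} argue measure-value by measure-value: the definition gives $\mm(a+p^N\mathbb{Z}_p)=\sum_{\xi^c=1,\xi\neq1}\xi^a(1-\xi^{p^N})^{-1}$, and a short computation rewrites this in terms of a residue modulo $c$, which is \emph{not} a continuous function of $c$, so the integral of a locally constant function against $\mm$ is in general discontinuous in $c$. The continuity of $L_{p,r}$ is thus a global cancellation phenomenon, and the whole point of invoking the polynomial moment formulas \eqref{2-1}--\eqref{2-1-2} (which trace back to the generating function \eqref{def-tilde-H}, manifestly polynomial in $c$) is precisely to package that cancellation without inspecting the measure locally. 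The remaining steps are bookkeeping: confirming that every estimate borrowed from Theorem~\ref{Th-pMLF} is uniform in $c$, so that the two limiting processes (the Taylor series in $(s_j)$ and the Mahler approximation of $g$) interchange with $c\to c'$, and then assembling the joint $\varepsilon$--$\delta$ by choosing $N_0$ for the Taylor tail, $m$ for the Mahler tail, and finally $\delta$ for the finitely many remaining polynomial coefficients — all bounds being uniform in $(s_j)$ because $|\prod_j s_j^{n_j}|_p\leqslant1$.
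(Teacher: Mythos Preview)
Your argument is correct and gives a valid alternative to the paper's route. The paper does not pass through the Taylor expansion of Theorem~\ref{Th-pMLF}; instead it identifies ${\rm Meas}_{\mathbb{Z}_p}(\mathbb{Z}_p)$ with the Iwasawa algebra $\mathbb{Z}_p[[T]]$, shows that $\mm$ corresponds to the element $g_c(T)=\frac{1}{(1+T)-1}-\frac{c}{(1+T)^c-1}$, and observes that $c\mapsto g_c(T)$ extends continuously to a map $\mathbb{Z}_p^\times\to\mathbb{Z}_p[[T]]$. From this it deduces directly that $c\mapsto\int f\,d\mm$ is continuous for \emph{every} continuous $f$ on $\mathbb{Z}_p$: first for characteristic functions $\chi_{a+p^N\mathbb{Z}_p}$ (whose integral is the coefficient of $(1+T)^a$ in the reduction of $g_c$ modulo $(1+T)^{p^N}-1$), then by density of step functions. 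The joint uniform continuity in $(s_j,c)$ is then obtained by a direct $\varepsilon$--$\delta$ argument on the integrand $W((s_j);(x_j))$, using that $\mm$ is $\mathbb{Z}_p$-valued. Your moment-based approach is more elementary in that it avoids the Iwasawa algebra entirely, trading it for the explicit formulas \eqref{2-1}--\eqref{2-1-2} and Mahler approximation; the paper's approach in return yields a canonical definition of $\mm$ for arbitrary $c\in\mathbb{Z}_p^\times$ (Remark~\ref{measure extension}) and the stronger statement that the \emph{measure itself} varies continuously with $c$.

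One point deserves correction, though it does not affect the validity of your proof: your assertion that ``the integral of a locally constant function against $\mm$ is in general discontinuous in $c$'' is false. Precisely because $g_c(T)$ varies continuously in $\mathbb{Z}_p[[T]]$, each value $\mm(a+p^N\mathbb{Z}_p)$ \emph{is} a continuous function of $c\in\mathbb{Z}_p^\times$; the roots-of-unity expression \eqref{KM-measure}, while it only makes literal sense for integral $c$, reorganises into a coefficient of $g_c$ that interpolates. So the ``global cancellation'' you invoke is already visible coset by coset. Your moment argument is a perfectly legitimate way to package the continuity, but it is not forced on you by any genuine obstruction at the level of step functions.
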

To prove 
Theorem \ref{continuity theorem}, we prepare several lemmas and propositions.

  Let ${\rm Meas}_{\mathbb Z_p}({\mathbb Z_p})$ be the space of all 
$\mathbb Z_p$-valued measures on $\mathbb{Z}_p$. 
By \cite[Subsection\,12.2]{Wa}, 
it is identified with the completed group algebra  ${\mathbb Z_p}[[{\mathbb Z_p}]]$;
\begin{equation}\label{identificationI}
{\rm Meas}_{\mathbb Z_p}({\mathbb Z_p})\simeq
{\mathbb Z_p}[[{\mathbb Z_p}]].
\end{equation}
Again by loc.\,cit. Theorem 7.1,
it is identified with the one parameter formal power series ring 
${\mathbb Z_p}[[T]]$ by sending $1\in {\mathbb Z_p}[[{\mathbb Z_p}]]$ to $1+T\in {\mathbb Z_p}[[T]]$:
\begin{equation}\label{identificationII}
{\mathbb Z_p}[[{\mathbb Z_p}]]\simeq{\mathbb Z_p}[[T]].
\end{equation}

By loc.\,cit.\,Subsection\,12.2, we obtain the following:

\begin{lemma}
Let $c\in \mathbb{N}_{>1}$ with $(c,p)=1$.
By the above correspondences,
$\widetilde{\mathfrak{m}}_c\in {\rm Meas}_{\mathbb Z_p}({\mathbb Z_p})$ 
corresponds to $g_c(T)\in {\mathbb Z_p}[[T]]$ 
defined by 
\begin{equation}
g_c(T)=\frac{1}{(1+T)-1}-\frac{c}{(1+T)^c-1}.
\end{equation}
\end{lemma}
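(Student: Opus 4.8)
The plan is to read off the power series attached to $\widetilde{\mathfrak{m}}_c$ directly from the explicit shape of the isomorphisms \eqref{identificationI} and \eqref{identificationII}. Under these identifications the topological generator $1\in\mathbb{Z}_p$ goes to $1+T$, so a group element $a\in\mathbb{Z}_p$ goes to $(1+T)^a=\sum_{n\geqslant 0}\binom{a}{n}T^n$, and a measure $\mu\in{\rm Meas}_{\mathbb{Z}_p}(\mathbb{Z}_p)$ corresponds to its power series (Amice transform) $f_\mu(T):=\int_{\mathbb{Z}_p}(1+T)^x\,d\mu(x)\in\mathbb{Z}_p[[T]]$. It therefore suffices to compute $f_{\widetilde{\mathfrak{m}}_c}(T)$ and identify it with $g_c(T)$.

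First I would treat a single Koblitz measure $\mathfrak{m}_\xi$ with $\xi$ a root of unity of order prime to $p$ and $\xi\neq 1$. For any measure $\mu$ the identity $\sum_{n\geqslant 0}\binom{x}{n}(e^t-1)^n=e^{tx}$ yields the formal relation $f_\mu(e^t-1)=\int_{\mathbb{Z}_p}e^{tx}\,d\mu(x)$, the moment generating function of $\mu$; since $e^t-1$ has vanishing constant term and nonvanishing linear coefficient, $T\mapsto e^t-1$ is an invertible substitution and determines $f_\mu$ uniquely. Feeding in \eqref{gene-01}, namely $\int_{\mathbb{Z}_p}e^{tx}\,d\mathfrak{m}_\xi(x)=\frac{1}{1-\xi e^t}$, and substituting $e^t=1+T$ gives $f_{\mathfrak{m}_\xi}(T)=\frac{1}{1-\xi(1+T)}\in\mathcal{O}_{\mathbb{C}_p}[[T]]$, where $1-\xi$ is a unit. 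Summing over $\xi$ via the additivity of $\mu\mapsto f_\mu$ and the definition \eqref{KM-measure} of $\widetilde{\mathfrak{m}}_c$ produces $g_c(T)=\sum_{\xi^c=1,\,\xi\neq 1}\frac{1}{1-\xi(1+T)}$, and the partial-fraction identity \eqref{log-der} with $X=1+T$, $k=c$ then collapses this to $\frac{1}{(1+T)-1}-\frac{c}{(1+T)^c-1}$, the asserted $g_c(T)$. Integrality of $g_c(T)$ in $\mathbb{Z}_p[[T]]$ is already guaranteed by the Galois-invariance argument showing $\widetilde{\mathfrak{m}}_c$ is $\mathbb{Z}_p$-valued.

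The delicate point I anticipate is reconciling the analytic identity \eqref{gene-01} (valid only on a small disk in $t$) with the purely formal power series $f_\mu$, i.e. justifying $e^t=1+T$ at the level of power series rather than of functions; working throughout with formal power series in $t$ and $T$, using only $\binom{x}{n}\in\mathbb{Z}_p$ and $\sum_n\binom{x}{n}(e^t-1)^n=e^{tx}$, dispels this issue. As a fully self-contained alternative one may compute $g_c(T)=\lim_{N\to\infty}\sum_{a=0}^{p^N-1}(1+T)^a\,\widetilde{\mathfrak{m}}_c(a+p^N\mathbb{Z}_p)$ directly from \eqref{Koblitz-measure} and \eqref{KM-measure}: the geometric sum in $a$ produces, for each $\xi$, the term $\frac{1}{1-\xi^{p^N}}\cdot\frac{\xi^{p^N}(1+T)^{p^N}-1}{\xi(1+T)-1}$, and rewriting the numerator as $\xi^{p^N}\bigl((1+T)^{p^N}-1\bigr)+(\xi^{p^N}-1)$ shows the first part tends to $0$ coefficientwise in $\mathbb{Z}_p[[T]]$ (as $(1+T)^{p^N}\to 1$ while $\xi^{p^N}/(1-\xi^{p^N})$ stays bounded), leaving $\sum_{\xi^c=1,\,\xi\neq 1}\frac{1}{1-\xi(1+T)}$ in the limit, after which \eqref{log-der} finishes as before. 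Here the care lies precisely in absorbing the non-convergence of $\xi^{p^N}$ for an individual $\xi$ into the vanishing factor $(1+T)^{p^N}-1$.
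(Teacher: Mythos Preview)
Your argument is correct. Both routes you sketch---via the moment generating function and the substitution $e^t=1+T$, or via the direct Riemann-sum computation---are valid, and your use of \eqref{gene-01} together with the partial-fraction identity \eqref{log-der} cleanly produces the claimed $g_c(T)$.

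The paper itself does not give a proof of this lemma: it simply cites \cite[Subsection~12.2]{Wa} and states the result. So your proposal is not an alternative to the paper's proof but rather a fleshing-out of what the reference contains. Your first approach (Amice transform plus moment generating function) is essentially the standard argument one finds in Washington, and your second direct computation is a useful independent check. The only comment I would add is that the ``delicate point'' you flag about reconciling the analytic identity \eqref{gene-01} with the formal substitution is not really delicate: both sides of \eqref{gene-01} are honest formal power series in $t$ with $\mathcal{O}_{\mathbb{C}_p}$-coefficients (the integral being taken coefficientwise), so the substitution $t\mapsto \log(1+T)$ is a formal operation requiring no convergence estimates.
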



\begin{lemma}
The map $c\mapsto g_c(T)$ is uniquely extended 
to a $p$-adic continuous function
$g:\mathbb Z_p^\times\to{\mathbb Z_p}[[T]]$.
\end{lemma}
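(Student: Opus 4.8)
The plan is to extend the defining formula itself. For every $c\in\mathbb{Z}_p^\times$ I would set $g_c(T):=\frac{1}{T}-\frac{c}{(1+T)^c-1}$, where $(1+T)^c:=\sum_{k=0}^\infty\binom{c}{k}T^k$ is the binomial series with generalized coefficients $\binom{c}{k}=\frac{c(c-1)\cdots(c-k+1)}{k!}$. Since each $\binom{c}{k}$ lies in $\mathbb{Z}_p$ for $c\in\mathbb{Z}_p$ (being a $p$-adic limit of the integers $\binom{n}{k}$), this is a well-defined element of $\mathbb{Z}_p[[T]]$, and for $c\in\mathbb{N}_{>1}$ with $(c,p)=1$ it agrees with the original $g_c(T)$. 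I work throughout with the natural $(p,T)$-adic topology on $\mathbb{Z}_p[[T]]=\varprojlim_M \mathbb{Z}_p[[T]]/(p^M,T^M)$, for which $\mathbb{Z}_p[[T]]$ is complete and Hausdorff and under which the identifications \eqref{identificationI} and \eqref{identificationII} are topological; this is the correct notion of continuity into $\mathbb{Z}_p[[T]]$. Uniqueness of the extension is then immediate: the integers $>1$ prime to $p$ are dense in $\mathbb{Z}_p^\times$, and a continuous map into a Hausdorff space is determined by its values on a dense set.

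For existence, the crux is a lemma I would isolate first: the map $c\mapsto(1+T)^c$ is continuous from $\mathbb{Z}_p$ to $\mathbb{Z}_p[[T]]$. To prove it, fix $M$ and reduce modulo $(p^M,T^M)$. Then $(1+T)^c\bmod(p^M,T^M)$ depends only on the finitely many coefficients $\binom{c}{0},\dots,\binom{c}{M-1}$ read modulo $p^M$. Each $c\mapsto\binom{c}{k}$ is a continuous map $\mathbb{Z}_p\to\mathbb{Z}_p$ (it is $\frac{1}{k!}$ times a polynomial in $c$ with integer coefficients and takes values in $\mathbb{Z}_p$), so its reduction $\mathbb{Z}_p\to\mathbb{Z}_p/p^M\mathbb{Z}_p$ is locally constant. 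Since $\mathbb{Z}_p[[T]]/(p^M,T^M)$ is a finite discrete ring, $c\mapsto(1+T)^c\bmod(p^M,T^M)$ is locally constant; passing to the inverse limit over $M$ gives continuity, and on the compact set $\mathbb{Z}_p$ it is in fact uniformly continuous.

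Finally I would record the algebraic form $g_c(T)=k_c(T)/h_c(T)$, where $h_c(T):=\sum_{k\geqslant1}\binom{c}{k}T^{k-1}$ satisfies $(1+T)^c-1=T\,h_c(T)$ and has constant term $\binom{c}{1}=c$, while $k_c(T):=\sum_{k\geqslant2}\binom{c}{k}T^{k-2}$ satisfies $h_c(T)-c=T\,k_c(T)$. Indeed $g_c=\frac{1}{T}\big(1-\frac{c}{h_c}\big)=\frac{1}{T}\cdot\frac{h_c-c}{h_c}=\frac{k_c}{h_c}$. For $c\in\mathbb{Z}_p^\times$ the constant term $c$ of $h_c$ is a unit, so $h_c$ is a unit in $\mathbb{Z}_p[[T]]$; this simultaneously shows $g_c\in\mathbb{Z}_p[[T]]$ (no pole at $T=0$) and, since $c\mapsto h_c$ and $c\mapsto k_c$ are continuous by the lemma while multiplication and inversion of units are continuous for the $(p,T)$-adic topology, shows $c\mapsto g_c(T)$ is continuous on $\mathbb{Z}_p^\times$. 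The main obstacle is exactly the passage to $c\in\mathbb{Z}_p^\times$: it is here, and only here, that the cancellation of the $1/T$ singularity is guaranteed and that $h_c^{-1}$ varies continuously (with uniform control, since $|c|_p=1$ on $\mathbb{Z}_p^\times$), which is why the extension lives on $\mathbb{Z}_p^\times$ rather than on all of $\mathbb{Z}_p$.
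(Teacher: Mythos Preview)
Your proof is correct and follows essentially the same strategy as the paper's: establish that $c\mapsto (1+T)^c$ extends continuously to $\mathbb{Z}_p$, and then observe that for $c\in\mathbb{Z}_p^\times$ the pole at $T=0$ cancels so that $g_c\in\mathbb{Z}_p[[T]]$. The only difference is presentational: the paper invokes the Iwasawa isomorphism $\mathbb{Z}_p[[T]]\simeq\varprojlim_N\mathbb{Z}_p[T]/\bigl((1+T)^{p^N}-1\bigr)$ (so that $(1+T)^c$ modulo the $N$th level depends only on $c\bmod p^N$), whereas you work directly with the $(p,T)$-adic topology via the binomial coefficients $\binom{c}{k}$; these describe the same profinite topology, so the arguments are equivalent. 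Your version is more self-contained and makes the cancellation $g_c=k_c/h_c$ with $h_c(0)=c\in\mathbb{Z}_p^\times$ fully explicit, while the paper's is terser by citing \cite[Theorem~7.1]{Wa}.
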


\begin{proof}
The series $(1+T)^c$ is well-defined 
in ${\mathbb Z_p}[[T]]$
for any $c\in\mathbb Z_p$ and 
is continuous with respect to 
 $c\in\mathbb Z_p$
because 
$$
{\mathbb Z_p}[[T]]\simeq\underset{\leftarrow}{\lim} \ 
{\mathbb Z_p}[T]/
\bigl((1+T)^{p^N}-1\bigr)
$$
(cf. \cite[Theorem 7.1]{Wa}).
When $c\in\mathbb Z_p^\times$, 
$\frac{c}{(1+T)^c-1}$ belongs to $\frac{1}{T}+{\mathbb Z_p}[[T]]$.
Hence, $g_c(T)$ belongs to  ${\mathbb Z_p}[[T]]$.
\end{proof}

For $c\in\mathbb Z_p^\times$, 
we denote 
$\widetilde{\mathfrak{m}}_c$
to be the $\mathbb Z_p$-valued measure on $\mathbb{Z}_p$ 
which corresponds to $g_c(T)$
by \eqref{identificationI} and \eqref{identificationII}.
We note that it coincides with \eqref{KM-measure}
when $c\in \mathbb{N}_{>1}$ with $(c,p)=1$. 

\begin{remark}\label{measure extension}
For the parameters 
$s_1,\ldots,s_r\in \mathbb{C}_p$ with $|s_j|_p<qp^{-1/(p-1)}$ $(1\leqslant j\leqslant r)$,
$k_1,\ldots,k_r\in \mathbb{Z}$,
$\gamma_1,\ldots,\gamma_r\in\mathbb Z_p$, and
$c\in \mathbb Z_p^\times$,
the $p$-adic multiple $L$-function
$${L_{p,r}(s_1,\ldots,s_r;\omega^{k_1},\ldots,\omega^{k_r};\gamma_1,\ldots,\gamma_{r};c)}$$
is defined by \eqref{e-6-1}
with the above measure $\widetilde{\mathfrak{m}}_c$.
\end{remark}

Let $C( \mathbb{Z}^r_p, \mathbb{C}_p)$ be the $ \mathbb{C}_p$-Banach space
of continuous  $\mathbb{C}_p$-valued functions on $ \mathbb{Z}^r_p$
with $||f||:=\sup_{x\in\mathbb Z^r_p} |f(x)|_p$, 
and let $\textrm{Step}( \mathbb{Z}^r_p)$ be the set of $ \mathbb{C}_p$-valued locally constant functions
on $ \mathbb{Z}^r_p$.
The subspace $\textrm{Step}( \mathbb{Z}^r_p)$ is dense in $C( \mathbb{Z}^r_p, \mathbb{C}_p)$ 
 (cf. \cite[Section\,12.1]{Wa}).

\begin{proposition}
For each function $f\in C( \mathbb{Z}_p, \mathbb{C}_p)$,
the map 
$$\Phi_f:\mathbb{Z}_p^\times\to\mathbb{C}_p$$
sending $c\mapsto\int_{\mathbb{Z}_p}f(z)d\widetilde{\mathfrak{m}}_c(z)$
is $p$-adically continuous.
\end{proposition}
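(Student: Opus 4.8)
The plan is to combine two facts already at our disposal: first, that $\widetilde{\mathfrak{m}}_c$ is a $\mathbb{Z}_p$-valued measure for every $c\in\mathbb{Z}_p^\times$ (by the lemma giving $g_c(T)\in\mathbb{Z}_p[[T]]$), so its total mass is $\leqslant 1$ uniformly in $c$; and second, that $c\mapsto g_c(T)$ is $p$-adically continuous from $\mathbb{Z}_p^\times$ into $\mathbb{Z}_p[[T]]$ by the preceding lemma. The $\mathbb{Z}_p$-valuedness gives the uniform estimate
$$
\Bigl|\int_{\mathbb{Z}_p}f(z)\,d\widetilde{\mathfrak{m}}_c(z)\Bigr|_p\leqslant \|f\|\qquad(c\in\mathbb{Z}_p^\times),
$$
which is what will let us reduce the general case to the case of locally constant $f$ via the density of $\textrm{Step}(\mathbb{Z}_p)$ in $C(\mathbb{Z}_p,\mathbb{C}_p)$.

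First I would carry out the density reduction. Fix $f\in C(\mathbb{Z}_p,\mathbb{C}_p)$ and $\varepsilon>0$, and choose $h\in\textrm{Step}(\mathbb{Z}_p)$ with $\|f-h\|<\varepsilon$. Since $\Phi_f$ is $\mathbb{C}_p$-linear in $f$, for any $c,c'\in\mathbb{Z}_p^\times$ one has
$$
\Phi_f(c)-\Phi_f(c')=\bigl(\Phi_h(c)-\Phi_h(c')\bigr)+\Phi_{f-h}(c)-\Phi_{f-h}(c'),
$$
and the ultrametric inequality together with the uniform bound above yields
$$
|\Phi_f(c)-\Phi_f(c')|_p\leqslant\max\bigl\{\,|\Phi_h(c)-\Phi_h(c')|_p,\ \|f-h\|\,\bigr\}.
$$
Thus it suffices to prove that $\Phi_h$ is continuous for a locally constant $h$.

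For such an $h$ I would write $h=\sum_{i}\lambda_i\,\chi_{a_i+p^N\mathbb{Z}_p}$ with a common level $N$, so that $\Phi_h(c)=\sum_i\lambda_i\,\widetilde{\mathfrak{m}}_c(a_i+p^N\mathbb{Z}_p)$ is a finite $\mathbb{C}_p$-linear combination of coset-measures. Hence everything comes down to the continuity of $c\mapsto\widetilde{\mathfrak{m}}_c(a+p^N\mathbb{Z}_p)$. Here I would use the finite-level description underlying the proof of the previous lemma: reducing $g_c(T)$ modulo $(1+T)^{p^N}-1$ realizes $\widetilde{\mathfrak{m}}_c$ inside the group algebra $\mathbb{Z}_p[[T]]/\bigl((1+T)^{p^N}-1\bigr)\simeq\mathbb{Z}_p[\mathbb{Z}/p^N\mathbb{Z}]$, and the coordinates of this image are precisely the coset-measures $\{\widetilde{\mathfrak{m}}_c(a+p^N\mathbb{Z}_p)\}_{0\leqslant a<p^N}$. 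Since $c\mapsto g_c(T)$ is continuous into $\mathbb{Z}_p[[T]]$ and the reduction onto the finite quotient is continuous (being a coordinate of the inverse limit $\mathbb{Z}_p[[T]]\simeq\underset{\leftarrow}{\lim}\,\mathbb{Z}_p[T]/\bigl((1+T)^{p^N}-1\bigr)$), each coset-measure depends continuously on $c$, completing the reduction.

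I do not expect a serious obstacle here: the argument is the standard mechanism by which bounded measures integrate continuous functions continuously. The one point that must be handled with care is that the bound $|\Phi_{f-h}(c)|_p\leqslant\|f-h\|$ be \emph{uniform in} $c$; this is exactly where the $\mathbb{Z}_p$-valuedness of every $\widetilde{\mathfrak{m}}_c$ (total mass $\leqslant 1$) is essential, and it is what makes the density step legitimate rather than merely a pointwise approximation.
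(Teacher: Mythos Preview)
Your proposal is correct and follows essentially the same approach as the paper: both arguments first handle characteristic functions $\chi_{a+p^N\mathbb{Z}_p}$ via the reduction of $g_c(T)$ modulo $(1+T)^{p^N}-1$ (whose coefficients in the basis $(1+T)^i$ give the coset-measures), then pass to general $f$ by density of $\textrm{Step}(\mathbb{Z}_p)$ using the uniform bound $|\Phi_{f-h}(c)|_p\leqslant\|f-h\|$ coming from the $\mathbb{Z}_p$-valuedness of $\widetilde{\mathfrak{m}}_c$. The only cosmetic difference is that you present the density reduction before the base case, while the paper does it afterwards.
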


\begin{proof}
First, assume that $f=\chi_{j,N}$, where $\chi_{j,N}$
($0\leqslant j<p^N$, $N\in\mathbb N$)
is the characteristic function of
the set $j+p^N\mathbb Z_p$.
Then $\Phi_f(c)$ is calculated to be $a_j(g_c)$ (cf. \cite[Section\,12.2]{Wa}), which is defined by
$$
g_c(T)\equiv\sum_{i=0}^{p^N-1}a_i(g_c)(1+T)^i\pmod{(1+T)^{p^N}-1}.
$$ 
Since the map $c\mapsto g_c$ is continuous, 
$c\mapsto\Phi_f(c)=a_j(g_c)$ is also continuous in this case.
This implies the continuity of $\Phi_f$
in $c$ when $f\in \textrm{Step} (\mathbb Z_p)$.

Next, assume $f\in C( \mathbb{Z}_p, \mathbb{C}_p)$
and $c\in\mathbb Z_p^\times$.
Then for any  $g\in C( \mathbb{Z}_p, \mathbb{C}_p)$ and
$c'\in\mathbb Z_p^\times$,
we have
\begin{align*}
\Phi_f(c) & -\Phi_f(c')=
\int_{\mathbb{Z}_p}f(z)d\widetilde{\mathfrak{m}}_c(z)-
\int_{\mathbb{Z}_p}f(z)d\widetilde{\mathfrak{m}}_{c'}(z) \\
&=\int_{\mathbb{Z}_p}(f(z)-g(z))\cdot
d(\widetilde{\mathfrak{m}}_c(z)-\widetilde{\mathfrak{m}}_{c'}(z))
+\int_{\mathbb{Z}_p}g(z)\cdot
d(\widetilde{\mathfrak{m}}_c(z)-\widetilde{\mathfrak{m}}_{c'}(z)) \\
&=(\Phi_{f-g}(c) -\Phi_{f-g}(c')) +(\Phi_g(c)-\Phi_g(c')).
\end{align*}
Let $\varepsilon>0$. Since $\textrm{Step}(\mathbb Z_p)$ is dense in $C( \mathbb{Z}_p, \mathbb{C}_p)$,
there exists $g_0\in \textrm{Step}(\mathbb Z_p)$ with $||f-g_0||<\varepsilon$. 
Thus, for any $c\in\mathbb Z_p^\times$, 
we have
$$
|\Phi_{f-g_0}(c)|_p=
\Bigl|\int_{\mathbb{Z}_p}(f(z)-g_0(z))\cdot d\widetilde{\mathfrak{m}}_c(z)\Bigr|_p
\leqslant 
||f-g_0|| 
<\varepsilon,
$$
because  $\widetilde{\mathfrak{m}}_c$ is a $\mathbb Z_p$-valued measure.

On the other hand, since $g_0\in \textrm{Step}(\mathbb Z_p)$,
there exists $\delta >0$ such that
$$
|\Phi_{g_0}(c)  -\Phi_{g_0}(c')|_p<\varepsilon
$$
for any $c'\in\mathbb Z_p^\times$ with $|c-c'|_p<\delta$.

Therefore, for $f\in C( \mathbb{Z}_p, \mathbb{C}_p)$, $c\in\mathbb Z_p^\times$
and any $\varepsilon>0$, 
there always exists  $\delta >0$ such that
$$
|\Phi_f(c)  -\Phi_f(c')|_p\leqslant \max \left\{ |\Phi_{f-g_0}(c)|_p ,\,|\Phi_{f-g_0}(c')|_p,\,|\Phi_{g_0}(c)-\Phi_{g_0}(c')|_p\right\}
<\varepsilon 
$$
for any $c'\in\mathbb Z_p^\times$ with $|c-c'|_p<\delta$.
\end{proof}

By generalizing our arguments above, we obtain the following:
\begin{proposition}\label{continuity}
For each function $f\in C( \mathbb{Z}_p^r, \mathbb{C}_p)$,
the map 
$$\Phi^r_f:\mathbb{Z}_p^\times\to\mathbb{C}_p$$
sending $c\mapsto\int_{\mathbb{Z}_p^r}f(x_1,\ldots,x_r)
\prod_{j=1}^{r}d\widetilde{\mathfrak{m}}_c(x_j)
$
is $p$-adically continuous.
\end{proposition}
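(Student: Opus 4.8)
The plan is to mirror the proof of the preceding single-variable proposition, inserting one extra reduction step that splits the $r$-fold product measure into an iterated integral. First I would record the two structural facts that make the argument run. The product measure $\prod_{j=1}^{r}d\widetilde{\mathfrak{m}}_c(x_j)$ on $\mathbb{Z}_p^r$ is again $\mathbb{Z}_p$-valued, since each factor $\widetilde{\mathfrak{m}}_c$ is $\mathbb{Z}_p$-valued and the product measure assigns to a box $\prod_{\nu=1}^{r}(j_\nu+p^N\mathbb{Z}_p)$ the value $\prod_{\nu=1}^{r}\widetilde{\mathfrak{m}}_c(j_\nu+p^N\mathbb{Z}_p)\in\mathbb{Z}_p$. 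Consequently the integration functional is contractive, namely $\bigl|\int_{\mathbb{Z}_p^r} g\,\prod_{j=1}^{r}d\widetilde{\mathfrak{m}}_c(x_j)\bigr|_p\leqslant \|g\|$ for every $g\in C(\mathbb{Z}_p^r,\mathbb{C}_p)$ and every $c\in\mathbb{Z}_p^\times$. This is precisely the ingredient that powered the density argument in the single-variable case.

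Next I would establish continuity for step functions. By $\mathbb{C}_p$-linearity of $\Phi^r_f$ in $f$ it suffices to treat a product characteristic function $f=\prod_{\nu=1}^{r}\chi_{j_\nu,N}$, where $\chi_{j,N}$ is the characteristic function of $j+p^N\mathbb{Z}_p$, since these span $\textrm{Step}(\mathbb{Z}_p^r)$. For such $f$, iterated integration against the product measure factors the defining integral:
\[
\Phi^r_f(c)=\prod_{\nu=1}^{r}\int_{\mathbb{Z}_p}\chi_{j_\nu,N}(x_\nu)\,d\widetilde{\mathfrak{m}}_c(x_\nu)=\prod_{\nu=1}^{r}\Phi_{\chi_{j_\nu,N}}(c).
\]
Each factor is $p$-adically continuous in $c$ by the preceding (single-variable) proposition, and a finite product of continuous $\mathbb{C}_p$-valued functions is continuous; hence $\Phi^r_f$ is continuous whenever $f\in\textrm{Step}(\mathbb{Z}_p^r)$.

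Finally I would pass to general $f\in C(\mathbb{Z}_p^r,\mathbb{C}_p)$ essentially verbatim as in the single-variable proof. Given $\varepsilon>0$, I would use the density of $\textrm{Step}(\mathbb{Z}_p^r)$ in $C(\mathbb{Z}_p^r,\mathbb{C}_p)$, already noted in the excerpt, to choose $g_0\in\textrm{Step}(\mathbb{Z}_p^r)$ with $\|f-g_0\|<\varepsilon$. The contraction bound yields $|\Phi^r_{f-g_0}(c)|_p\leqslant\|f-g_0\|<\varepsilon$ uniformly in $c\in\mathbb{Z}_p^\times$, while the step-function case provides $\delta>0$ with $|\Phi^r_{g_0}(c)-\Phi^r_{g_0}(c')|_p<\varepsilon$ whenever $|c-c'|_p<\delta$. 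The ultrametric decomposition
\[
\Phi^r_f(c)-\Phi^r_f(c')=\bigl(\Phi^r_{f-g_0}(c)-\Phi^r_{f-g_0}(c')\bigr)+\bigl(\Phi^r_{g_0}(c)-\Phi^r_{g_0}(c')\bigr)
\]
then forces $|\Phi^r_f(c)-\Phi^r_f(c')|_p<\varepsilon$ for $|c-c'|_p<\delta$, which proves continuity.

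The main obstacle I anticipate is not any single deep step but the bookkeeping needed to make the product-measure formalism rigorous: justifying that the $r$-fold product of the measures $\widetilde{\mathfrak{m}}_c$ is a well-defined $\mathbb{Z}_p$-valued measure on $\mathbb{Z}_p^r$, and that integration against it factors as an iterated integral on product (step) functions. Once these are in place, the continuity in $c$ is inherited factor-by-factor from the single-variable proposition, and the density/contraction argument is a routine transcription of the preceding proof.
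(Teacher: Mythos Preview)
Your proposal is correct and is precisely the generalization the paper has in mind: the paper does not spell out a proof but simply writes ``By generalizing our arguments above, we obtain the following,'' and your factorization of product characteristic functions into single-variable integrals, followed by the same density/contraction argument, is exactly that generalization.
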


A proof of Theorem \ref{continuity theorem} is attained by the above proposition.
This is the reason why we restrict Theorem \ref{continuity theorem} 
to the case for $s_1,\ldots,s_r\in \mathbb{Z}_p$.

\begin{proof}[Proof of Theorem \ref{continuity theorem}]

Let us fix notation: Define 
\begin{equation*}
\begin{split}
&
{W(s_1,\dots s_r;x_1,\dots,x_r)}
\\
&:=
\langle x_1\gamma_1 \rangle^{-{s_1}}\langle x_1\gamma_1+ x_2\gamma_2 \rangle^{-{s_2}}\cdots \langle \sum_{j=1}^{r}x_{j}\gamma_{j} \rangle^{-{s_r}}
\cdot
\omega^{k_1}(x_1\gamma_1)\cdots\omega^{k_r}( \sum_{j=1}^{r}x_{j}\gamma_{j}) \cdot
\chi_{_{\left( \mathbb{Z}_p^r\right)'_{\{\gamma_j\}}}}(x_1,\ldots,x_r),
\end{split}
\end{equation*}
where $\chi_{_{\left( \mathbb{Z}_p^r\right)'_{\{\gamma_j\}}}}(x_1,\ldots,x_r)$
is the characteristic function of 
${\left( \mathbb{Z}_p^r\right)'_{\{\gamma_j\}}}$ (cf. Definition \ref{Def-pMLF}).
Then we have
\begin{equation}\label{integration}
{L_{p,r}(s_1,\ldots,s_r;\omega^{k_1},\ldots,\omega^{k_r};\gamma_1,\ldots,\gamma_{r};c)}
:=\int_{\mathbb Z_p^r}
W(s_1,\dots ,s_r;x_1,\dots,x_r)\cdot
\prod_{j=1}^{r}d\widetilde{\mathfrak{m}}_c(x_j).
\end{equation}

Below we will prove that the map
$$
\varPsi:
(s_1,\ldots,s_r, c)\in \mathbb Z_p^r\times \mathbb Z_p^\times\mapsto
L_{p,r}(s_1,\ldots,s_r;\omega^{k_1},\ldots,\omega^{k_r};\gamma_1,\ldots,\gamma_{r};c)
\in \mathbb Z_p
$$
is continuous: 

First, by Proposition \ref{continuity} we see that 
it is continuous
with respect to $c\in\mathbb Z_p^\times$ for each fixed $(s_1,\dots,s_r)\in \mathbb Z_p^r$.
Namely, for each fixed $(s_1,\dots,s_r)\in \mathbb Z_p^r$ and $c\in \mathbb Z_p^\times$, 
and for any given $\varepsilon>0$,
there always exists $\delta>0$ such that
\begin{equation*}
|L_{p,r}(s_1,\ldots,s_r;\omega^{k_1},\ldots,\omega^{k_r};\gamma_1,\ldots,\gamma_{r};c)
-L_{p,r}(s_1,\ldots,s_r;\omega^{k_1},\ldots,\omega^{k_r};\gamma_1,\ldots,\gamma_{r};c')|_p<\varepsilon;
\end{equation*}
equivalently, 
\begin{equation}\label{bound1}
|\varPsi(s_1,\ldots,s_r; c)-\varPsi(s_1,\ldots,s_r; c')|_p<\varepsilon
\end{equation}
for all $c'\in\mathbb Z_p^\times$ with $|c-c'|_p<\delta$.

Next, take $M\in\mathbb N$ such that $\varepsilon> p^{-M}>0$.
Since
it is easy to see that there exists $\delta'>0$
such that
$$(1+u)^d\equiv 1\pmod{p^M}$$
for any $u\in p\mathbb Z_p$ and
$d\in\mathbb Z_p$ with $|d|_p<\delta'$
(
In fact, we may take $\delta'$ as $\delta'<p^{-M}$),
we have
$$x^s\equiv x^{s'}\pmod{p^M}$$
for all  $x \in (1+p\mathbb Z_p)$ and
$s,s'\in\mathbb Z_p$
with $|s-s'|<\delta'$ for such $\delta'$. 
Therefore
$$
W(s_1,\dots, s_r;x_1,\dots,x_r)\equiv W(s'_1,\dots, s'_r;x_1,\dots,x_r) \pmod{p^M}
$$
for all $(x_1,\dots, x_r)\in\mathbb Z_p^r$
when $|s_i-s'_i|_p<\delta'$ ($1\leqslant i\leqslant r$).
Thus, in this case, 
the inequality
\begin{align*}
|L_{p,r}(s_1,\ldots,s_r; \omega^{k_1},\ldots,\omega^{k_r};\gamma_1,\ldots,\gamma_{r};&c)-
L_{p,r}(s'_1,\ldots,s'_r;\omega^{k_1},\ldots,\omega^{k_r};\gamma_1,\ldots,\gamma_{r};c)|_p \\
&\leqslant p^{-M}
<\varepsilon
\end{align*}
holds for any $c\in\mathbb Z_p^\times$, 
because $\widetilde{\mathfrak{m}}_c$ is a $\mathbb Z_p$-valued measure.
Therefore, for any given $\varepsilon>0$,
there always exists $\delta'>0$ such that, 
when $|s_i-s'_i|_p<\delta'$ ($1\leqslant i\leqslant r$),
\begin{equation}\label{bound2}
|\varPsi(s_1,\ldots,s_r;c)-\varPsi(s'_1,\ldots,s'_r; c)|_p<\varepsilon
\end{equation}
holds for all $c\in\mathbb Z_p^\times$.

By \eqref{bound1} and \eqref{bound2}, we see that
for each fixed $(s_1,\dots,s_r)\in \mathbb Z_p^r$ and $c\in \mathbb Z_p^\times$,
and for any given $\varepsilon>0$,
there always exist $\delta, \delta'>0$ such that
\begin{align*}
\bigl|\varPsi&(s_1,\ldots,s_r;c)-\varPsi(s'_1,\ldots,s'_r; c')\bigr|_p \\
&=\bigl|\varPsi(s_1,\ldots,s_r;c)-\varPsi(s_1,\ldots,s_r;c')+\varPsi(s_1,\ldots,s_r;c')
-\varPsi(s'_1,\ldots,s'_r; c')\bigr|_p \\
&\leqslant
\max
\Bigl\{
\bigl|\varPsi(s_1,\ldots,s_r;c)-\varPsi(s_1,\ldots,s_r;c')\bigr|_p, 
\bigl|\varPsi(s_1,\ldots,s_r;c')-\varPsi(s'_1,\ldots,s'_r; c')\bigr|_p
\Bigr\}<\varepsilon
\end{align*}
for  $|s_i-s'_i|_p<\delta'$ ($1\leqslant i\leqslant r$) and $|c-c'|_p<\delta$.
Thus, we get the desired continuity of $\varPsi(s_1,\ldots,s_r; c)$.

The uniform continuity of $\varPsi(s_1,\ldots,s_r; c)$
is almost obvious because 
now we know that the function
is continuous and the source set $\mathbb Z_p^r\times \mathbb Z_p^\times$ is compact.
\end{proof}

As a corollary of Theorem \ref{continuity theorem},
the following non-trivial property of special values of $p$-adic multiple $L$-functions
at non-positive integer points
is obtained.

\begin{corollary}\label{non-trivial property}
The right-hand side of equation
\eqref{Th-main} of Theorem \ref{T-main-1} (in the next section)
is $p$-adically continuous
with respect to $c$. 
\end{corollary}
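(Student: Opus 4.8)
The plan is to deduce Corollary \ref{non-trivial property} as a direct consequence of the two main structural results already established: Theorem \ref{T-main-1}, which identifies the right-hand side of \eqref{Th-main} with the special value $L_{p,r}(-n_1,\ldots,-n_r;\omega^{n_1},\ldots,\omega^{n_r};\gamma_1,\ldots,\gamma_r;c)$ at non-positive integers, and Theorem \ref{continuity theorem}, which asserts that the map $(s_1,\ldots,s_r,c)\mapsto L_{p,r}(s_1,\ldots,s_r;\omega^{k_1},\ldots,\omega^{k_r};\gamma_1,\ldots,\gamma_r;c)$ is (uniformly) continuous on $\mathbb Z_p^r\times\mathbb Z_p^\times$. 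The point is that once the right-hand side has been recognized as a genuine special value of the $p$-adic multiple $L$-function, its continuity in $c$ is inherited from the continuity of the function itself.

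First I would invoke Theorem \ref{T-main-1} to rewrite the right-hand side of \eqref{Th-main} as the value of $L_{p,r}$ at the fixed integral point $(s_1,\ldots,s_r)=(-n_1,\ldots,-n_r)$ with the matching characters $\omega^{n_1},\ldots,\omega^{n_r}$. Note here that the integers $-n_1,\ldots,-n_r$ all lie in $\mathbb Z_p$, so they fall within the range where Theorem \ref{continuity theorem} applies: that theorem is deliberately stated for $s_1,\ldots,s_r\in\mathbb Z_p$, precisely the situation relevant to evaluation at non-positive integers. Thus the right-hand side of \eqref{Th-main} equals $\varPsi(-n_1,\ldots,-n_r;c)$ in the notation of the proof of Theorem \ref{continuity theorem}.

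Next I would apply Theorem \ref{continuity theorem}, which guarantees that $c\mapsto L_{p,r}(-n_1,\ldots,-n_r;\omega^{n_1},\ldots,\omega^{n_r};\gamma_1,\ldots,\gamma_r;c)$ extends to a $p$-adically continuous function on $\mathbb Z_p^\times$. Since continuity in the joint variables $(s_j,c)$ certainly implies continuity in $c$ alone for each fixed choice of $(s_j)$, specializing $(s_j)=(-n_j)$ yields exactly the continuity of the right-hand side of \eqref{Th-main} in $c$. This completes the argument.

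The main obstacle in this corollary is essentially conceptual rather than technical: one must confirm that the expression appearing in Theorem \ref{T-main-1} is literally the special value of the object whose continuity was proved, rather than some auxiliary quantity defined only for integer $c$. This is guaranteed by the compatibility noted after the extension of $\widetilde{\mathfrak m}_c$ to $c\in\mathbb Z_p^\times$ (see Remark \ref{measure extension}), which ensures that the integral representation \eqref{integration} defining $L_{p,r}$ agrees with the classical definition when $c\in\mathbb N_{>1}$ with $(c,p)=1$, so the continuous extension genuinely interpolates the arithmetically meaningful values. Granting this identification, the corollary follows immediately, and no further estimation is required.
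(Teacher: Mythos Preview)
Your argument is correct and matches the paper's approach: the corollary is stated without a separate proof, as an immediate consequence of Theorem \ref{continuity theorem} combined with the identification in Theorem \ref{T-main-1}. Your additional remark about the compatibility in Remark \ref{measure extension} makes the logical dependence explicit, which is a welcome clarification but not a departure from the intended reasoning.
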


We conclude this section with
a speculation how to construct a $p$-adic 
analogue of the entire function
$\zeta_r^{\rm des}(s_1,\ldots,s_r;\gamma_1,\ldots,\gamma_{r})$
which was constructed 
as a multiple analogue of the entire function $(1-s)\zeta(s)$ in \cite{FKMT-Desing}. 
By Theorem \ref{T-main-1} and Remark \ref{rem-intpln}, below, 
our
$L_{p,r}(s_1,\ldots,s_r;\omega^{n_1},\ldots,\omega^{n_r};\gamma_1,\ldots,\gamma_{r};c)$
is a $p$-adic interpolation of \eqref{Int-P}.

By $\lim_{c\to 1}g_c(T)=0$, we have,
for $s_1,\ldots,s_r\in \mathbb{Z}_p$,
$k_1,\ldots,k_r\in \mathbb{Z}$, 
and 
$\gamma_1,\ldots,\gamma_r\in\mathbb Z_p$,
\begin{equation*}
\underset{c\in\mathbb Z_p^\times}{\lim_{c\to 1}}{L_{p,r}(s_1,\ldots,s_r;\omega^{k_1},\ldots,\omega^{k_r};\gamma_1,\ldots,\gamma_{r};c)}=0.
\end{equation*}
On the other hand, by
$$\lim_{c\to 1}\frac{g_c(T)}{c-1}\in\mathbb{Q}_p[[T]]\setminus\mathbb{Z}_p[[T]],$$
we cannot say that the limit
$\lim_{c\to 1}
\frac{1}{c-1}{\widetilde{\mathfrak{m}}_c}$ 
converges to a measure.
Thus, 
we encounter the following:

\begin{problem}\label{Main-Prob}
For $s_1,\ldots,s_r\in \mathbb{Z}_p$,
$k_1,\ldots,k_r\in \mathbb{Z}$ and
$\gamma_1,\ldots,\gamma_r\in\mathbb Z_p$, does
\begin{equation}\label{limit}
\underset{c\in\mathbb Z_p^\times\setminus\{1\}}
{\lim_{c\to 1}}\frac{1}{(c-1)^r}L_{p,r}(s_1,\ldots,s_r;\omega^{k_1},\ldots,\omega^{k_r};\gamma_1,\ldots,\gamma_{r};c)
\end{equation}
converge?
\end{problem} 

If the limit \eqref{limit} exists and happens to be a rigid analytic function,
we may call it a $p$-adic analogue of the desingularized zeta function
$\zeta^{\rm des}_r(s_1,\ldots,s_r;\gamma_1,\ldots,\gamma_{r})$.
%
We recall 
that the problem is affirmative 
when 
$r=1$ and $\gamma=1$. 
In fact, by \eqref{1-p-LF-gamma}, 
we have
\begin{equation*}
\lim_{c\to 1}{(c-1)^{-1}L_{p,1}(s;\omega^k;1;c)}=
(1-s)\cdot L_p(s;\omega^{k+1}).
\end{equation*}
We also note  that the limit converges when $(s_1,\ldots,s_r)=(-n_1,\ldots,-n_r)$, 
for $n_1,\ldots,n_r\in\mathbb{N}_0$, 
by Theorem \ref{T-5-gene} in the next section.

\

\section{Special values of $p$-adic multiple $L$-functions at non-positive integers}\label{sec-4}

In this section, we 
consider the special values of our $p$-adic multiple $L$-functions
(Definition \ref{Def-pMLF}) at non-positive integers, 
and shall 
express them in terms of twisted multiple Bernoulli numbers
(Definition \ref{Def-M-Bern}) in Theorems \ref{T-main-1} and \ref{T-5-gene}.
We will see that our  $p$-adic multiple $L$-function
is a $p$-adic interpolation of
a certain sum \eqref{Int-P} of the entire  complex multiple zeta-functions  of generalized Euler-Zagier-Lerch type
in Remark \ref{rem-intpln}.
Based on 
these evaluations, we 
generalize the well-known Kummer congruences for ordinary Bernoulli numbers
to the multiple Kummer congruences for twisted multiple Bernoulli numbers in Theorem \ref{Th-Kummer}.
We 
also show certain functional relations with a parity condition
among $p$-adic multiple $L$-functions in Theorem \ref{T-6-1}. 
These functional relations 
may be seen as multiple generalizations of 
the vanishing property of the Kubota-Leopoldt $p$-adic $L$-functions with odd characters
(cf. Proposition \ref{Prop-zero})  in the single variable case, 
and of the  functional relations for the $p$-adic double $L$-functions (shown in \cite{KMT-IJNT})
in the double variable case under the special condition $c=2$.
Many 
examples will be investigated in this section.

\subsection{Evaluation of $p$-adic multiple $L$-functions at non-positive integers}\label{negative-1}

Based on the consideration in the previous section, we determine 
the values of $p$-adic multiple $L$-functions at non-positive integers as follows: 

\begin{theorem}\label{T-main-1}
For $n_1,\ldots,n_r\in \mathbb{N}_0$, $\gamma_1,\ldots,\gamma_r\in \mathbb{Z}_p$, 
and $c\in \mathbb{N}_{>1}$ with $(c,p)=1$, 
\begin{align}
& L_{p,r}(-n_1,\ldots,-n_r;\omega^{n_1},\ldots,\omega^{n_r};\gamma_1,\ldots,\gamma_{r};c)\notag\\
& =\sum_{\xi_1^c=1 \atop \xi_1\not=1}\cdots\sum_{\xi_r^c=1 \atop \xi_r\not=1}\aa((n_j);(\xi_j);(\gamma_j)) \notag\\
& +\sum_{d=1}^{r}\left(-\frac{1}{p}\right)^d \sum_{1\leqslant i_1<\cdots<i_d\leqslant r}\sum_{\rho_{i_1}^p=1}\cdots\sum_{\rho_{i_d}^p=1}\sum_{\xi_1^c=1 \atop \xi_1\not=1}\cdots\sum_{\xi_r^c=1 \atop \xi_r\not=1}\aa((n_j);((\prod_{j\leqslant i_l}\rho_{i_l})^{\gamma_j}\xi_j);(\gamma_j)), \label{Th-main}
\end{align}
where the empty product is interpreted as $1$.
\end{theorem}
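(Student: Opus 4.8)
The plan is to evaluate the defining integral \eqref{e-6-1} directly at $(s_j)=(-n_j)$ with $(k_j)=(n_j)$, and then to dissolve the restriction imposed by the domain $\left(\mathbb{Z}_p^r\right)'_{\{\gamma_j\}}$ by an inclusion--exclusion over the ``bad'' residues, detected through $p$-th roots of unity. Writing $S_\nu:=\sum_{j=1}^\nu x_j\gamma_j$, the first observation is that on $\left(\mathbb{Z}_p^r\right)'_{\{\gamma_j\}}$ every partial sum $S_\nu$ lies in $\mathbb{Z}_p^\times$, so the decomposition $x=\omega(x)\langle x\rangle$ gives $\langle S_\nu\rangle^{-s_\nu}\omega^{k_\nu}(S_\nu)=\langle S_\nu\rangle^{n_\nu}\omega^{n_\nu}(S_\nu)=S_\nu^{n_\nu}$ at $s_\nu=-n_\nu$, $k_\nu=n_\nu$. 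Hence the left-hand side of \eqref{Th-main} equals
\[
\int_{\left(\mathbb{Z}_p^r\right)'_{\{\gamma_j\}}}\prod_{\nu=1}^r S_\nu^{n_\nu}\,\prod_{j=1}^r d\mm(x_j).
\]

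Next I would remove the restriction. The characteristic function of $\left(\mathbb{Z}_p^r\right)'_{\{\gamma_j\}}$ factors as $\prod_{\nu=1}^r \mathbf{1}[p\nmid S_\nu]$, and for $m\in\mathbb{Z}_p$ one has $\mathbf{1}[p\nmid m]=1-\frac1p\sum_{\rho^p=1}\rho^m$, where $\rho^m$ is well defined since it depends only on $m\bmod p$. Expanding the product over $\nu$ produces $2^r$ terms indexed by subsets $\{i_1<\cdots<i_d\}\subseteq\{1,\ldots,r\}$, each carrying the factor $(-1/p)^d$ and a product $\prod_{l=1}^d\rho_{i_l}^{S_{i_l}}$. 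Collecting the powers attached to each variable $x_j$, and using that $x_j$ occurs in $S_{i_l}$ exactly when $j\leqslant i_l$, I would rewrite $\prod_{l=1}^d\rho_{i_l}^{S_{i_l}}=\prod_{j=1}^r\mu_j^{x_j}$ with $\mu_j:=\bigl(\prod_{j\leqslant i_l}\rho_{i_l}\bigr)^{\gamma_j}$.

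The decisive step is a twisting identity for the Koblitz measures: for a $p$-th root of unity $\lambda$ one has $\lambda^x\,d\mk_\xi(x)=d\mk_{\lambda\xi}(x)$, which I would verify directly from \eqref{Koblitz-measure} using $\lambda^{p^N}=1$ ($N\geqslant1$), so that the values on each coset $a+p^N\mathbb{Z}_p$ transform as required. Combined with $\mm=\sum_{\xi^c=1,\,\xi\neq1}\mk_\xi$, each subset's contribution becomes $\sum_{\xi_1,\ldots,\xi_r}\int_{\mathbb{Z}_p^r}\prod_\nu S_\nu^{n_\nu}\prod_j d\mk_{\mu_j\xi_j}(x_j)$. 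Since each $\xi_j\neq1$ has order prime to $p$ while $\mu_j$ is a $p$-th root of unity, we have $\mu_j\xi_j\neq1$ and $(\mu_j\xi_j)^{p^N}=\xi_j^{p^N}$; thus the generating-function computation underlying Proposition \ref{T-multi-2} applies and identifies this integral with $\aa((n_j);(\mu_j\xi_j);(\gamma_j))$. Reassembling over all subsets, with the $d=0$ term reproducing the first line of \eqref{Th-main}, yields the asserted formula.

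I expect the main obstacle to be this last identification: justifying that Proposition \ref{T-multi-2}, and hence the generating-function identity \eqref{gene-01}, remains valid when the twisting root of unity $\mu_j\xi_j$ has order divisible by $p$, whereas Koblitz's original hypothesis is that the order be prime to $p$. The key point to check is that only the quantities $(\mu_j\xi_j)^{p^N}$ and $(\mu_j\xi_j)^{a}$ enter the Riemann sums defining the integral, and that the $p$-part $\mu_j$ drops out of $(\mu_j\xi_j)^{p^N}$ for $N\geqslant1$, leaving exactly the convergence established in the prime-to-$p$ case. A secondary, purely combinatorial point is to confirm that the product $\prod_{j\leqslant i_l}\rho_{i_l}$ produced by the bookkeeping above matches the root of unity appearing in \eqref{Th-main}.
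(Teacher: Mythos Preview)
Your proposal is correct and follows essentially the same route as the paper: reduce the integrand at $(s_j,k_j)=(-n_j,n_j)$ to $\prod_\nu S_\nu^{n_\nu}$, replace the characteristic function of $(\mathbb{Z}_p^r)'_{\{\gamma_j\}}$ by $\prod_\nu\bigl(1-\frac1p\sum_{\rho_\nu^p=1}\rho_\nu^{S_\nu}\bigr)$, expand by inclusion--exclusion, absorb the factors $\rho^{x_j}$ via the twisting identity $\mk_{\rho\xi}(a+p^N\mathbb{Z}_p)=\rho^a\,\mk_\xi(a+p^N\mathbb{Z}_p)$, and finish with Proposition~\ref{T-multi-2}. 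The paper's proof is nothing more than this, written out with the same bookkeeping; your product $\prod_{j\leqslant i_l}\rho_{i_l}$ is exactly the root of unity appearing in \eqref{Th-main}.

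On the obstacle you flag: the paper simply invokes Proposition~\ref{T-multi-2} without comment, so you are being more scrupulous than the authors. Your resolution is the right one, and in fact slightly stronger than you need. Since $|\mu_j\xi_j-1|_p=|\xi_j-1|_p=1$ (use that $|\mu_j-1|_p<1$ and the ultrametric inequality), the Koblitz measure $\mk_{\mu_j\xi_j}$ is a bona fide ${\mathcal O}_{\mathbb C_p}$-valued measure, and the computation underlying \eqref{gene-01}--\eqref{gene-02} goes through verbatim: the moment identity $\int_{\mathbb Z_p}x^n\,d\mk_z=\aa_n(z)$ holds for every $z$ with $|z-1|_p\geqslant 1$, not merely for roots of unity of order prime to $p$. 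So Proposition~\ref{T-multi-2} extends to the twisted parameters $\mu_j\xi_j$ without change.
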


In Definition \ref{Def-M-Bern}, the twisted multiple Bernoulli number $\aa((n_j);(\xi_j);(\gamma_j))$ was defined for 
$\gamma_1,\ldots,\gamma_r\in \mathbb{C}$ and roots of unity $\xi_1,\ldots,\xi_r\in \mathbb{C}$. Here we define $\aa((n_j);(\xi_j);(\gamma_j))$ for $\gamma_1,\ldots,\gamma_r\in \mathbb{Z}_p$ and roots of unity $\xi_1,\ldots,\xi_r\in \mathbb{C}_p$ by \eqref{Def-Hr} and \eqref{Fro-def-r}.

\begin{remark}\label{rem-intpln}
When $\gamma_1,\ldots,\gamma_r\in \mathbb{Z}_p\cap \overline{\mathbb{Q}}$ satisfy 
$\Re \gamma_j >0 \quad (1\leqslant j\leqslant r)$, 
we obtain from the above theorem 
that
\begin{align*}
& L_{p,r}(-n_1,\ldots,-n_r;\omega^{n_1},\ldots,\omega^{n_r};\gamma_1,\ldots,\gamma_{r};c)
=(-1)^{r+n_1+\cdots+n_r}
\Bigl\{\sum_{\xi_1^c=1 \atop \xi_1\not=1}\cdots\sum_{\xi_r^c=1 \atop \xi_r\not=1}
\zeta_{r}((-n_j);(\xi_j);(\gamma_j)) \notag\\
&\qquad\qquad
 +\sum_{d=1}^{r}\left(-\frac{1}{p}\right)^d \sum_{1\leqslant i_1<\cdots<i_d\leqslant r}\sum_{\rho_{i_1}^p=1}\cdots\sum_{\rho_{i_d}^p=1}\sum_{\xi_1^c=1 \atop \xi_1\not=1}\cdots\sum_{\xi_r^c=1 \atop \xi_r\not=1}
\zeta_{r}((-n_j);((\prod_{j\leqslant i_l}\rho_{i_l})^{\gamma_j}\xi_j);(\gamma_j))\Bigr\},
\end{align*}
because 
\begin{equation*}
\zeta_r((-n_j);(\xi_j);(\gamma_j))=(-1)^{r+n_1+\cdots+n_r}\aa((n_j);( \xi_j^{-1});( \gamma_j)) 
\end{equation*}
(see \cite[Theorem 4.1]{FKMT-Desing}). 
Hence, 
the $p$-adic multiple $L$-function $L_{p,r}((s_j);(\omega^{k_1});(\gamma_j);c)$
is a $p$-adic interpolation of 
the  following finite sum of multiple zeta-functions $\zeta_r((s_j);(\xi_j);(\gamma_j))$
which are  entire:
\begin{equation}
\sum_{\xi_1^c=1 \atop \xi_1\not=1}\cdots \sum_{\xi_r^c=1 \atop \xi_r\not=1}
\zeta_r((s_j);(\xi_j);(\gamma_j)) \label{Int-P}.
\end{equation}
\end{remark}

\begin{proof}[Proof of Theorem \ref{T-main-1}]
We see that for any $\rho\in \mu_p$ and $\xi\in \mu_c\setminus\{1\}$, 
\begin{align*}
\mk_{\rho\xi}(j+p^N\mathbb{Z}_p)& =\frac{(\rho\xi)^j}{1-(\rho\xi)^{p^{N}}}=\rho^{j}\mk_{\xi}(j+p^N\mathbb{Z}_p) \qquad (N\geqslant 1). 
\end{align*}
Hence, using 
\begin{equation*}
\sum_{\rho^p=1}\rho^n=
\begin{cases}
0 & (p\nmid n)\\
p & (p\mid n),
\end{cases}
\end{equation*}
we have
\begin{align*}
& L_{p,r}(-n_1,\ldots,-n_r;\omega^{n_1},\ldots,\omega^{n_r};\gamma_1,\ldots,\gamma_{r};c)\\
& =\int_{\mathbb{Z}_p^r}(x_1\gamma_1)^{n_1} \cdots (\sum_{j=1}^{r}x_{j}\gamma_{j})^{n_r}\prod_{i=1}^{r}\left(1-\frac{1}{p}\sum_{\rho_i^p=1}\rho_i^{\sum_{\nu=1}^{i}x_\nu \gamma_\nu}\right) \prod_{j=1}^{r}\sum_{\xi_j^c=1 \atop \xi_j\not=1}d\mk_{\xi_j}(x_j)\\
& =\int_{\mathbb{Z}_p^r}(x_1\gamma_1)^{n_1} \cdots (\sum_{j=1}^{r}x_{j}\gamma_{j})^{n_r}\sum_{\xi_1^c=1 \atop \xi_1\not=1}\cdots \sum_{\xi_r^c=1 \atop \xi_r\not=1}\prod_{j=1}^{r}d\mk_{\xi_j}(x_j)\\
& \qquad +\sum_{d=1}^{r}\left(-\frac{1}{p}\right)^d \sum_{1\leqslant i_1<\cdots<i_d\leqslant r}\sum_{\rho_{i_1}^p=1}\cdots\sum_{\rho_{i_d}^p=1}\sum_{\xi_1^c=1 \atop \xi_1\not=1}\cdots\sum_{\xi_r^c=1 \atop \xi_r\not=1}\\
& \qquad \qquad \times 
\int_{\mathbb{Z}_p^r}\prod_{l=1}^{r} (\sum_{j=1}^{l}x_{j}\gamma_{j})^{n_l} \prod_{l=1}^{d}\rho_{i_l}^{\sum_{\nu=1}^{i_l}x_\nu\gamma_{\nu}}\prod_{j=1}^{r}d\mk_{\xi_j}(x_j)\\
& =\int_{\mathbb{Z}_p^r}(x_1\gamma_1)^{n_1} \cdots (\sum_{j=1}^{r}x_{j}\gamma_{j})^{n_r}\sum_{\xi_1^c=1 \atop \xi_1\not=1}\cdots \sum_{\xi_r^c=1 \atop \xi_r\not=1}\prod_{j=1}^{r}d\mk_{\xi_j}(x_j)\\
& \qquad +\sum_{d=1}^{r}\left(-\frac{1}{p}\right)^d \sum_{1\leqslant i_1<\cdots<i_d\leqslant r}\sum_{\rho_{i_1}^p=1}\cdots\sum_{\rho_{i_d}^p=1}\sum_{\xi_1^c=1 \atop \xi_1\not=1}\cdots\sum_{\xi_r^c=1 \atop \xi_r\not=1}\\
& \qquad \qquad \times 
\int_{\mathbb{Z}_p^r}\prod_{l=1}^{r} (\sum_{j=1}^{l}x_{j}\gamma_{j})^{n_l} \prod_{j=1}^{r}d\mk_{\{(\prod_{j\leqslant i_l}\rho_{i_l})^{\gamma_j}\xi_j\}}(x_j).
\end{align*}
Thus, 
the desired identity has been shown by Proposition \ref{T-multi-2}.
\end{proof}

\begin{remark}
We reiterate Corollary \ref{non-trivial property}: 
The right-hand side of 
equation \eqref{Th-main} is $p$-adically continuous, 
not only
with respect to $n_1,\ldots,n_r$, 
but also with respect to $c$.
\end{remark}


Considering the Galois action of ${\rm Gal}(\overline{\mathbb{Q}}_p/{\mathbb{Q}}_p)$, 
we obtain the following result from Theorems \ref{Th-pMLF} and \ref{continuity theorem}.

\begin{corollary}\label{C-main-0}
For $n_1,\ldots,n_r\in \mathbb{N}_0$, $\gamma_1,\ldots,\gamma_r\in \mathbb{Z}_p$
and $c\in \mathbb{Z}_{p}^\times$,
\begin{align*}
& L_{p,r}(-n_1,\ldots,-n_r;\omega^{n_1},\ldots,\omega^{n_r};\gamma_1,\ldots,\gamma_{r};c)\in \mathbb{Z}_p.
\end{align*}
Hence, the right-hand side of \eqref{Th-main} is in $\mathbb{Z}_p$, though it includes the terms like $\left(-\frac{1}{p}\right)^d$. 
In particular, when $\gamma_1,\ldots,\gamma_{r}\in \mathbb{Z}_{(p)}:=\{\frac{a}{b}\in \mathbb{Q}\,|\,a,b\in \mathbb{Z},\ (b,p)=1\}=\mathbb{Z}_p\cap \mathbb{Q}$, then 
\begin{align*}
& L_{p,r}(-n_1,\ldots,-n_r;\omega^{n_1},\ldots,\omega^{n_r};\gamma_1,\ldots,\gamma_{r};c)\in \mathbb{Z}_{(p)}.
\end{align*}
\end{corollary}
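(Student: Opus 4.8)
The plan is to settle the case of integral $c$ first, using the power-series expansion of Theorem \ref{Th-pMLF}, and then to propagate $\mathbb{Z}_p$-integrality to all $c\in\mathbb{Z}_p^\times$ via the continuity of Theorem \ref{continuity theorem}. For $c\in\mathbb{N}_{>1}$ with $(c,p)=1$, Theorem \ref{Th-pMLF}, applied with the fixed characters $\omega^{n_1},\dots,\omega^{n_r}$, gives
\[
L_{p,r}(s_1,\dots,s_r;\omega^{n_1},\dots,\omega^{n_r};\gamma_1,\dots,\gamma_r;c)=\sum_{m_1,\dots,m_r=0}^{\infty}\cc\bigl(m_1,\dots,m_r;(\omega^{n_j});(\gamma_j);c\bigr)\,s_1^{m_1}\cdots s_r^{m_r},
\]
with all coefficients $\cc(\,\cdots)\in\mathbb{Z}_p$ and convergence on $\mathfrak{X}_r(q^{-1})$. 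The evaluation point $(-n_1,\dots,-n_r)$ lies in $\mathfrak{X}_r(q^{-1})$ because $|{-n_j}|_p\leqslant 1<qp^{-1/(p-1)}$; since each $-n_j\in\mathbb{Z}_p$, every term of the convergent series lies in $\mathbb{Z}_p$, and by completeness of $\mathbb{Z}_p$ so does the sum. This proves the assertion for integral $c$.

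Next I would extend to $c\in\mathbb{Z}_p^\times$. The set $\{c\in\mathbb{N}_{>1}\mid (c,p)=1\}$ is dense in $\mathbb{Z}_p^\times$: any $c_0\in\mathbb{Z}_p^\times$ is the $p$-adic limit of its positive-integer partial sums (replaced by $1+p^N$ in the degenerate case $c_0=1$), which are $>1$ and prime to $p$. By Theorem \ref{continuity theorem} the map $c\mapsto L_{p,r}(-n_1,\dots,-n_r;\omega^{n_1},\dots,\omega^{n_r};\gamma_1,\dots,\gamma_r;c)$ is continuous on $\mathbb{Z}_p^\times$ (I use continuity in $c$ for the fixed tuple $(-n_j)\in\mathbb{Z}_p^r$). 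As this function takes values in the closed subset $\mathbb{Z}_p\subset\mathbb{C}_p$ on a dense set, it takes values in $\mathbb{Z}_p$ everywhere, giving the first claim. Since Theorem \ref{T-main-1} identifies the left-hand side with the right-hand side of \eqref{Th-main} for integral $c$, the right-hand side of \eqref{Th-main}—a priori carrying the denominators $(-1/p)^d$—is thereby forced into $\mathbb{Z}_p$.

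For the refinement, assume $\gamma_1,\dots,\gamma_r\in\mathbb{Z}_{(p)}=\mathbb{Q}\cap\mathbb{Z}_p$ and take $c\in\mathbb{N}_{>1}$ with $(c,p)=1$. By Proposition \ref{prop-M-Bern} each twisted multiple Bernoulli number occurring in \eqref{Th-main} is a rational function of the relevant roots of unity and of the $\gamma_j$ with $\mathbb{Q}$-coefficients, hence an algebraic number; moreover each twist $(\prod_{j\leqslant i_l}\rho_{i_l})^{\gamma_j}\xi_j$ is again a root of unity other than $1$, since $\xi_j\neq1$ has order prime to $p$ while the twisting factor has $p$-power order, so their product cannot be $1$. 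The nested sums in \eqref{Th-main} run over complete families of roots of unity (all $c$th roots $\neq 1$, and all $p$th roots), and any $\sigma\in\mathrm{Gal}(\overline{\mathbb{Q}}/\mathbb{Q})$ merely permutes these families while fixing the rational scalars $(-1/p)^d$ and the $\gamma_j$; hence $\sigma$ fixes the whole right-hand side, which is therefore rational. Combined with the previous paragraph it lies in $\mathbb{Q}\cap\mathbb{Z}_p=\mathbb{Z}_{(p)}$. I expect the only genuinely delicate point to be this Galois step: one must check that the exponent $\gamma_j$ in $(\prod\rho_{i_l})^{\gamma_j}$ depends only on $\gamma_j\bmod p$, so that the twist is an honest root of unity commuting with $\sigma$, and that the summation sets are stable under the cyclotomic action. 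The first two steps are bookkeeping on top of the cited theorems, the main care being the verification that $(-n_j)$ lies in the domain of convergence and that integers prime to $p$ are dense in $\mathbb{Z}_p^\times$.
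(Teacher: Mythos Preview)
Your argument is correct and follows the same route as the paper: invoke Theorem~\ref{Th-pMLF} for $\mathbb{Z}_p$-integrality at integral $c$, pass to arbitrary $c\in\mathbb{Z}_p^\times$ by density and Theorem~\ref{continuity theorem}, and then use a Galois argument on the explicit formula \eqref{Th-main} to obtain rationality when the $\gamma_j$ are in $\mathbb{Z}_{(p)}$. The paper's own proof is a one-line reference to exactly these ingredients; you have simply fleshed out the details, including the verifications that the twisted arguments $(\prod\rho_{i_l})^{\gamma_j}\xi_j$ remain nontrivial roots of unity and that the summation families in \eqref{Th-main} are stable under the Galois action. One cosmetic difference: the paper names $\mathrm{Gal}(\overline{\mathbb{Q}}_p/\mathbb{Q}_p)$, whereas you (appropriately, since the goal is rationality rather than $\mathbb{Q}_p$-rationality) work with $\mathrm{Gal}(\overline{\mathbb{Q}}/\mathbb{Q})$.
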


The following examples are special cases ($r=1$ and $r=2$) of Theorem \ref{T-main-1}.

\begin{example}
For $n\in \mathbb{N}_0$ 
and $c\in \mathbb{N}_{>1}$ with $(c,p)=1$,

\begin{align}
L_{p,1}(-n;\omega^n;1;c)&=(1-p^n)\sum_{\xi^c=1 \atop \xi\neq 1}\aa_n(\xi),\label{KL-LP}
\end{align}
which recovers the known fact (see \cite[Theorem 5.11]{Wa})
$$
L_p(-n;\omega^{n+1})
=-(1-p^n)\frac{B_{n+1}}{n+1},$$
thanks to \eqref{2-0-1} and \eqref{p-L-vals}. 
\end{example}

\begin{example}\label{C-main-1}
For $n_1,n_2\in \mathbb{N}_0$, $\gamma_1,\gamma_2\in \mathbb{Z}_p$, 
and $c\in \mathbb{N}_{>1}$ with $(c,p)=1$, 
\begin{align}
L_{p,2}&(-n_1,-n_2;\omega^{n_1},\omega^{n_2};\gamma_1,\gamma_{2};c)\notag\\
& =\sum_{\xi_1^c=1 \atop \xi_1\not=1}\sum_{\xi_2^c=1 \atop \xi_2\not=1}\aa(n_1,n_2;\xi_1,\xi_2;\gamma_1,\gamma_2) \notag\\
& \quad -\frac{1}{p} \sum_{\rho_{1}^p=1}\sum_{\xi_1^c=1 \atop \xi_1\not=1}\sum_{\xi_2^c=1 \atop \xi_2\not=1}\aa(n_1,n_2;\rho_{1}^{\gamma_1}\xi_1,\xi_2;\gamma_1,\gamma_2)\notag\\
& \quad -\frac{1}{p} \sum_{\rho_{2}^p=1}\sum_{\xi_1^c=1 \atop \xi_1\not=1}\sum_{\xi_2^c=1 \atop \xi_2\not=1}\aa(n_1,n_2;\rho_{2}^{\gamma_1}\xi_1, \rho_{2}^{\gamma_2}\xi_2;\gamma_1,\gamma_2)\notag\\
& \quad +\frac{1}{p^2} \sum_{\rho_{1}^p=1}\sum_{\rho_{2}^p=1}\sum_{\xi_1^c=1 \atop \xi_1\not=1}\sum_{\xi_2^c=1 \atop \xi_2\not=1}\aa(n_1,n_2;(\rho_{1}\rho_2)^{\gamma_1}\xi_1,\rho_{2}^{\gamma_2}\xi_2;\gamma_1,\gamma_2). \label{C-1-formula}
\end{align}
\end{example}

More generally, we consider the generating function of $\{ L_{p,r}((-n_j);(\omega^{n_j});(\gamma_j);c)\}$, that is, 
\begin{align*}
&F_{p,r}(t_1,\ldots,t_r;(\gamma_j);c)=\sum_{n_1=0}^\infty \cdots\sum_{n_r=0}^\infty L_{p,r}((-n_j);(\omega^{n_j});(\gamma_j);c)\prod_{j=1}^{r}\frac{t_j^{n_j}}{n_j!}.
\end{align*}
Then we have the following:

\begin{theorem}\label{T-5-gene}
Let $c\in\mathbb Z_p^\times$.
For $\gamma_1,\ldots,\gamma_r\in \mathbb{Z}_p$,
\begin{align*}
&F_{p,r}(t_1,\ldots,t_r;(\gamma_j);c)\notag\\
& =\prod_{j=1}^{r}\left(\frac{1}{\exp\left(\gamma_j\sum_{k=j}^r t_k\right)-1}-\frac{c}{\exp\left(c\gamma_j\sum_{k=j}^r t_k\right)-1}\right)\notag\\
& +\sum_{d=1}^{r}\left(-\frac{1}{p}\right)^d \sum_{1\leqslant i_1<\cdots<i_d\leqslant r}\sum_{\rho_{i_1}^p=1}\cdots\sum_{\rho_{i_d}^p=1}\notag\\
& \quad\times \prod_{j=1}^{r}\left(\frac{1}{(\prod_{j\leqslant i_l}\rho_{i_l})^{\gamma_j}\exp\left(\gamma_j\sum_{k=j}^r t_k\right)-1}-\frac{c}{(\prod_{j\leqslant i_l}\rho_{i_l})^{c\gamma_j}\exp\left(c\gamma_j\sum_{k=j}^r t_k\right)-1}\right)\\
& =\prod_{j=1}^{r}\left(\sum_{m_j=0}^\infty \left(1-c^{m_j+1}\right)\frac{B_{m_j+1}}{m_j+1}\frac{\left(\gamma_j\sum_{k=j}^r t_k\right)^{m_j}}{m_j!}\right)\notag\\
&\quad +(-1)^r\sum_{d=1}^{r}\left(-\frac{1}{p}\right)^d \sum_{1\leqslant i_1<\cdots<i_d\leqslant r}\sum_{\rho_{i_1}^p=1}\cdots\sum_{\rho_{i_d}^p=1}\notag\\
& \qquad\times \prod_{j=1}^{r}\left(\sum_{m_j=0}^\infty (1-c^{m_j+1})\frak{B}_{m_j}((\prod_{j\leqslant i_l}\rho_{i_l})^{\gamma_j})\frac{\left(\gamma_j\sum_{k=j}^r t_k\right)^{m_j}}{m_j!}\right). 
\end{align*}
In particular, when $\gamma_1\in \mathbb{Z}_p^\times$ and $\gamma_j\in p\mathbb{Z}_p$ $(2\leqslant j\leqslant r)$, 
\begin{align}
F_{p,r}(t_1,\ldots,t_r;(\gamma_j);c)
& =\bigg(\frac{1}{\exp\left(\gamma_1\sum_{k=1}^r t_k\right)-1}-\frac{c}{\exp\left(c\gamma_1\sum_{k=1}^r t_k\right)-1} \notag\\
& \qquad - \frac{1}{\exp\left(p\gamma_1\sum_{k=1}^r t_k\right)-1}+\frac{c}{\exp\left(cp\gamma_1\sum_{k=1}^r t_k\right)-1}\bigg)\notag\\
& \quad \times \prod_{j=2}^{r}\left(\frac{1}{\exp\left(\gamma_j\sum_{k=j}^r t_k\right)-1}-\frac{c}{\exp\left(c\gamma_j\sum_{k=j}^r t_k\right)-1}\right)\notag\\
& =\left( \sum_{n_1=0}^\infty \left(1-p^{n_1}\right)\left(1-c^{n_1+1}\right)\frac{B_{n_1+1}}{n_1+1}\frac{\left(\gamma_1\sum_{k=1}^{r}t_k\right)^{n_1}}{n_1!}\right)\notag\\
& \quad \times \prod_{j=2}^{r}\left(\sum_{n_j=0}^\infty \left(1-c^{n_j+1}\right)\frac{B_{n_j+1}}{n_j+1}\frac{\left(\gamma_j\sum_{k=j}^{r}t_k\right)^{n_j}}{n_j!}\right).\label{generating}
\end{align}
\end{theorem}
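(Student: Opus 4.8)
The plan is to read off the coefficients of $F_{p,r}$ from the closed formula \eqref{Th-main} of Theorem \ref{T-main-1} and to recognize each resulting sum as a (twisted) Bernoulli generating function. First I would substitute the right-hand side of \eqref{Th-main} into the definition of $F_{p,r}(t_1,\ldots,t_r;(\gamma_j);c)$ and interchange, at the level of formal power series in $t_1,\ldots,t_r$, the summations over $n_1,\ldots,n_r$ with the finite sums over $d$, over the index sets $\{i_1<\cdots<i_d\}$, and over the roots of unity; this is legitimate because each $\mathfrak{H}_r$ is holomorphic near the origin (Definition \ref{Def-M-Bern}). By \eqref{Fro-def-r} the leading term gives $\sum_{\xi}\mathfrak{H}_r((t_j);(\xi_j);(\gamma_j))=\widetilde{\mathfrak{H}}_r((t_j);(\gamma_j);c)$, which by \eqref{tilde-H} is exactly the first product in the theorem. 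For each correction term, writing $\alpha_j=(\prod_{j\leqslant i_l}\rho_{i_l})^{\gamma_j}$, the same summation produces $\prod_{j=1}^r\sum_{\xi_j}(1-\alpha_j\xi_j\exp(\gamma_j\sum_{k=j}^r t_k))^{-1}$, and applying \eqref{log-der} to each $j$ with $X=\alpha_j\exp(\gamma_j\sum_{k=j}^r t_k)$ turns the $j$-th inner sum into $\frac{1}{\alpha_j\exp(\gamma_j\sum_{k=j}^r t_k)-1}-\frac{c}{\alpha_j^c\exp(c\gamma_j\sum_{k=j}^r t_k)-1}$. This yields the first displayed equality.

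The second (series) equality is then obtained by expanding every factor into its Taylor series at the origin. An untwisted factor is handled by the Bernoulli expansion of \eqref{def-tilde-H}, reproducing the leading product. A twisted factor ($\alpha_j\neq1$) is expanded through the twisted Bernoulli numbers $\aa_m$; here one uses that after summing over $\rho_{i_1},\ldots,\rho_{i_d}\in\mu_p$ the second base $\alpha_j^c$ may be traded for $\alpha_j$, since $\rho\mapsto\rho^{\gamma_j}$ and $\rho\mapsto\rho^{c\gamma_j}$ both sweep $\mu_p$, which is what permits writing a single $\frak B_{m_j}(\alpha_j)=\aa_{m_j}(\alpha_j)$ in each factor. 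The global sign $(-1)^r$ records the reflection $\frac{1}{\alpha e^x-1}=-\frac{1}{1-\alpha e^x}$ performed in the $r$ factors. This step is essentially formal bookkeeping of signs and indices.

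The substantive part, and where I expect the main difficulty, is the collapse to the closed formula \eqref{generating} under the hypothesis $\gamma_1\in\mathbb{Z}_p^\times$, $\gamma_j\in p\mathbb{Z}_p$ $(2\leqslant j\leqslant r)$. The key observation is that for $j\geqslant2$ one has $\alpha_j=(\prod\rho_{i_l})^{\gamma_j}=1$, because $\rho_{i_l}^p=1$ and $p\mid\gamma_j$; hence every correction factor of index $j\geqslant2$ equals the corresponding leading factor, and only the $j=1$ factor stays twisted, through $\alpha_1=(\prod_{l=1}^d\rho_{i_l})^{\gamma_1}$. Summing that factor over $\rho_{i_1},\ldots,\rho_{i_d}\in\mu_p$, each value of $\prod_l\rho_{i_l}$ is hit $p^{d-1}$ times and $\omega\mapsto\omega^{\gamma_1}$ permutes $\mu_p$, so the sum equals $p^{d-1}\sum_{\alpha\in\mu_p}\bigl(\frac{1}{\alpha e^{x}-1}-\frac{c}{\alpha^c e^{cx}-1}\bigr)$ with $x=\gamma_1\sum_{k=1}^r t_k$; the bijection $\alpha\mapsto\alpha^c$ on $\mu_p$ together with $\sum_{\alpha\in\mu_p}(1-\alpha Y)^{-1}=p(1-Y^p)^{-1}$ reduces this to $p^d\bigl(\frac{1}{e^{px}-1}-\frac{c}{e^{cpx}-1}\bigr)$.

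Finally I would assemble the pieces: the prefactor $(-1/p)^d$ cancels $p^d$, the index sum contributes $\binom{r}{d}$, and $\sum_{d=1}^r(-1)^d\binom{r}{d}=-1$ collapses all corrections into a single subtraction of the $p$-scaled first factor. Combining with the leading term and expanding the resulting bracket by the Bernoulli series (which introduces the extra weight $1-p^{n_1}$ in the first variable) gives \eqref{generating}. Most of the care goes into the three nested summations over $d$, over $\{i_l\}$, and over $\mu_p$, and into justifying the root-of-unity identities; the rest is formal.
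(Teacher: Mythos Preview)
Your approach is the paper's: feed Theorem~\ref{T-main-1} into the generating series, regroup via \eqref{Fro-def-r} and \eqref{tilde-H}, and push each $\xi_j$-sum through \eqref{log-der}; for the special case, kill the twists for $j\geqslant 2$ via $p\mid\gamma_j$, sum the surviving $j=1$ factor over $\mu_p$, and collapse with $\sum_{d=1}^r(-1)^d\binom{r}{d}=-1$. This matches the paper's proof step for step.

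There is one genuine omission. Theorem~\ref{T-main-1}, your only input, is stated for $c\in\mathbb{N}_{>1}$ with $(c,p)=1$, whereas the present theorem is asserted for all $c\in\mathbb{Z}_p^\times$. You never pass from the dense subset to the full unit group. The paper does this in its final paragraph: the explicit right-hand sides have coefficients that are visibly polynomial in $c$ (cf.\ \eqref{def-tilde-H}), while the coefficients of $F_{p,r}$ are continuous in $c$ by Theorem~\ref{continuity theorem}; agreement on the dense set $\{c\in\mathbb{N}_{>1}:(c,p)=1\}$ then forces agreement on all of $\mathbb{Z}_p^\times$. Without this continuity step your argument proves strictly less than what is claimed.

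A smaller point: your justification of the second (series) equality is not correct as written. You argue that after summing over the $\rho_{i_l}$ one may trade $\alpha_j^c$ for $\alpha_j$ because ``$\rho\mapsto\rho^{\gamma_j}$ and $\rho\mapsto\rho^{c\gamma_j}$ both sweep $\mu_p$''. But the $\alpha_j$ for different $j$ are built from the \emph{same} tuple $(\rho_{i_l})_l$, so one cannot reparametrize factor by factor; and when $p\mid\gamma_j$ the map $\rho\mapsto\rho^{\gamma_j}$ is constant, not a bijection. The paper does not spell this equality out either, effectively treating it as the formal Taylor expansion of the first line; you should either do the same or supply a correct argument rather than the one offered.
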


We stress here that
Theorem \ref{T-main-1} holds for  $c\in \mathbb{N}_{>1}$ with $(c,p)=1$; 
in contrast,
Theorem \ref{T-5-gene} holds, more generally, for $c\in\mathbb Z_p^\times$.

\begin{proof}
For the moment, assume that $c\in \mathbb{N}_{>1}$ with $(c,p)=1$. 
Combining \eqref{Fro-def-r} and \eqref{Th-main}, we have
\begin{align*}
F_{p,r}((t_j);(\gamma_j);c)
& = \sum_{\xi_1^c=1 \atop \xi_1\not=1}\cdots \sum_{\xi_r^c=1 \atop \xi_r\not=1}\prod_{j=1}^{r} \frac{1}{1-\xi_j \exp\left(\gamma_j \sum_{k=j}^r t_k\right)}\\
& \quad +\sum_{d=1}^{r}\left(-\frac{1}{p}\right)^d \sum_{1\leqslant i_1<\cdots<i_d\leqslant r}\sum_{\rho_{i_1}^p=1}\cdots\sum_{\rho_{i_d}^p=1}\sum_{\xi_1^c=1 \atop \xi_1\not=1}\cdots \sum_{\xi_r^c=1 \atop \xi_r\not=1}\notag\\
& \quad\times \prod_{j=1}^{r}\frac{1}{1-(\prod_{j\leqslant i_l}\rho_{i_l})^{\gamma_j}\xi_j\exp\left(\gamma_j\sum_{k=j}^r t_k\right)}\\
& = \prod_{j=1}^{r} \sum_{\xi_j^c=1 \atop \xi_j\not=1}\frac{1}{1-\xi_j \exp\left(\gamma_j \sum_{k=j}^r t_k\right)}\\
& \quad +\sum_{d=1}^{r}\left(-\frac{1}{p}\right)^d \sum_{1\leqslant i_1<\cdots<i_d\leqslant r}\sum_{\rho_{i_1}^p=1}\cdots\sum_{\rho_{i_d}^p=1}\\
& \quad \times \prod_{j=1}^{r}\left(\sum_{\xi_j^c=1 \atop \xi_j\not=1}\frac{1}{1-(\prod_{j\leqslant i_l}\rho_{i_l})^{\gamma_j}\xi_j\exp\left(\gamma_j\sum_{k=j}^r t_k\right)}\right).
\end{align*}
Hence, by \eqref{log-der}, we obtain the first assertion. Next, we assume that 
$\gamma_1\in \mathbb{Z}_p^\times$ and $\gamma_j\in p\mathbb{Z}_p$ $(2\leqslant j\leqslant r)$. Then 
$$(\prod_{j\leqslant i_l}\rho_{i_l})^{\gamma_j}=1\quad (2\leqslant j\leqslant r),$$
from which it follows that 
\begin{align*}
F_{p,r}((t_j);(\gamma_j);c)
& =\prod_{j=1}^{r}\left(\frac{1}{\exp\left(\gamma_j\sum_{k=j}^r t_k\right)-1}-\frac{c}{\exp\left(c\gamma_j\sum_{k=j}^r t_k\right)-1}\right)\notag\\
& \ +\sum_{d=1}^{r}\left(-\frac{1}{p}\right)^d \sum_{1\leqslant i_1<\cdots<i_d\leqslant r}\sum_{\rho_{i_1}^p=1}\cdots\sum_{\rho_{i_d}^p=1}\notag\\
& \quad\times \left(\frac{1}{(\prod_{i_l}\rho_{i_l})\exp\left(\gamma_1\sum_{k=1}^r t_k\right)-1}-\frac{c}{(\prod_{i_l}\rho_{i_l})^{c}\exp\left(c\gamma_1\sum_{k=1}^r t_k\right)-1}\right)\\
& \qquad \times \prod_{j=2}^{r}\left(\frac{1}{\exp\left(\gamma_j\sum_{k=j}^r t_k\right)-1}-\frac{c}{\exp\left(c\gamma_j\sum_{k=j}^r t_k\right)-1}\right).
\end{align*}
Using \eqref{log-der}, we can rewrite the second term on the right-hand side as
\begin{align*}
& \prod_{j=2}^{r}\left(\frac{1}{\exp\left(\gamma_j\sum_{k=j}^r t_k\right)-1}-\frac{c}{\exp\left(c\gamma_j\sum_{k=j}^r t_k\right)-1}\right)\\
& \quad \times \sum_{d=1}^{r}\left(-\frac{1}{p}\right)^d \sum_{1\leqslant i_1<\cdots<i_d\leqslant r}p^d\left(\frac{1}{\exp\left(p\gamma_1\sum_{k=1}^r t_k\right)-1}-\frac{c}{\exp\left(cp\gamma_1\sum_{k=1}^r t_k\right)-1}\right).
\end{align*}
Noting 
$$\sum_{d=1}^{r}\left(-\frac{1}{p}\right)^d \sum_{1\leqslant i_1<\cdots<i_d\leqslant r}p^d=\sum_{d=1}^{r}\binom{r}{d}(-1)^d=(1-1)^r-1=-1,$$
we obtain the second assertion.

In summary, we have proven 
the formulas in the theorem for $c\in \mathbb{N}_{>1}$ with $(c,p)=1$.
It should be noted that each coefficient of the left-hand side of each formula is 
expressed as a polynomial in $c$.
On the other hand, each coefficient of the right-hand side of each formula is 
continuous in $c$, by Theorem \ref{continuity theorem}.
Therefore, 
these identities can be extended 
to $c\in\mathbb Z_p^\times$.
\end{proof}


We consider the case $r=2$, with $(\gamma_1,\gamma_2)=(1,\eta)$ for $\eta\in \mathbb{Z}_p$, 
and $c\in \mathbb{N}_{>1}$ with $(c,p)=1$. 
From \eqref{Eur-exp1} and Example \ref{C-main-1}, we have 
\begin{align*}
& L_{p,2}(-k_1,-k_2;\omega^{k_1},\omega^{k_2};1,\eta;c)\\
& =\sum_{\xi_1^c=1 \atop \xi_1\not=1}\sum_{\xi_2^c=1 \atop \xi_2\not=1}\sum_{\nu=0}^{k_2}\binom{k_2}{\nu}\aa_{k_1+\nu}(\xi_1)\aa_{k_2-\nu}(\xi_2)\eta^{k_2-\nu}\\
& \ -\frac{1}{p} \sum_{\rho_{1}^p=1}\sum_{\xi_1^c=1 \atop \xi_1\not=1}\sum_{\xi_2^c=1 \atop \xi_2\not=1}\sum_{\nu=0}^{k_2}\binom{k_2}{\nu}\aa_{k_1+\nu}(\rho_{1}\xi_1)\aa_{k_2-\nu}(\xi_2)\eta^{k_2-\nu}\\
& \ -\frac{1}{p} \sum_{\rho_{2}^p=1}\sum_{\xi_1^c=1 \atop \xi_1\not=1}\sum_{\xi_2^c=1 \atop \xi_2\not=1}\sum_{\nu=0}^{k_2}\binom{k_2}{\nu}\aa_{k_1+\nu}(\rho_{2}\xi_1)\aa_{k_2-\nu}(\rho_{2}^{\eta}\xi_2)\eta^{k_2-\nu}\\
& \ +\frac{1}{p^2} \sum_{\rho_{1}^p=1}\sum_{\rho_{2}^p=1}\sum_{\xi_1^c=1 \atop \xi_1\not=1}\sum_{\xi_2^c=1 \atop \xi_2\not=1}\sum_{\nu=0}^{k_2}\binom{k_2}{\nu}\aa_{k_1+\nu}(\rho_{1}\rho_2\xi_1)\aa_{k_2-\nu}(\rho_{2}^{\eta}\xi_2)\eta^{k_2-\nu}
\end{align*}
for $k_1,k_2\in \mathbb{N}_0$. 
By the argument similar to 
\eqref{2-0-1}, we have
\begin{equation}
\sum_{\xi^c=1 \atop \xi\not=1} \aa_n(\alpha \xi)=
\begin{cases}
\left(1-c^{n+1}\right)\frac{B_{n+1}}{n+1} & (\alpha=1)\\
c^{n+1}\aa_{n}(\alpha^c)-\aa_n(\alpha) & (\alpha\not=1)
\end{cases}
\label{p-xi-0}
\end{equation}
for $n\in \mathbb{N}_0$.
In addition, 
using \eqref{log-der} with $k=p$, we have 
\begin{equation}
\sum_{\rho^p=1}\sum_{\xi^c=1 \atop \xi\not=1}\aa_n(\rho\xi)=\sum_{\rho^p=1}\left(c^{n+1}\aa_{n}(\rho^c)-\aa_{n}(\rho)\right)=p^{n+1}\left(1-c^{n+1}\right)\frac{B_{n+1}}{n+1}\label{p-xi}
\end{equation}
for $n\in \mathbb{N}_0$. 
Hence, by 
continuity with respect to $c$, we have the following.

\begin{example} \label{C-DZV-1}
For $k_1,k_2\in \mathbb{N}_0$, $\eta\in \mathbb{Z}_p$ and $c\in \mathbb{Z}_p^\times$,
\begin{align}
& L_{p,2}(-k_1,-k_2;\omega^{k_1},\omega^{k_2};1,\eta;c)\notag\\
& = \sum_{\nu=0}^{k_2}\binom{k_2}{\nu}\left(1-p^{k_1+\nu}\right)\left(1-c^{k_1+\nu+1}\right)\frac{B_{k_1+\nu+1}}{k_1+\nu+1}\left(1-c^{k_2-\nu+1}\right)\frac{B_{k_2-\nu+1}}{k_2-\nu+1}\eta^{k_2-\nu}\notag\\
& \ -\frac{1}{p} \sum_{\rho_{2}^p=1}\sum_{\nu=0}^{k_2}\binom{k_2}{\nu}\left(c^{k_1+\nu+1}\aa_{k_1+\nu}(\rho_{2}^c)-\aa_{k_1+\nu}(\rho_{2})\right)\left(c^{k_2-\nu+1}\aa_{k_2-\nu}(\rho_{2}^{c\eta})-\aa_{k_2-\nu}(\rho_{2}^{\eta})\right)\eta^{k_2-\nu}\notag\\
& \ +\frac{1}{p}\sum_{\rho_{2}^p=1}\sum_{\nu=0}^{k_2}\binom{k_2}{\nu}p^{k_1+\nu}\left(1-c^{k_1+\nu+1}\right)\frac{B_{k_1+\nu+1}}{k_1+\nu+1}\left(c^{k_2-\nu+1}\aa_{k_2-\nu}(\rho_{2}^{c\eta})-\aa_{k_2-\nu}(\rho_{2}^{\eta})\right)\eta^{k_2-\nu}. \label{DV-01}
\end{align}
In particular, when $\eta\in p\mathbb{Z}_p$, 
then by \eqref{Ber-01} and \eqref{p-xi},
\begin{align}
& L_{p,2}(-k_1,-k_2;\omega^{k_1},\omega^{k_2};1,\eta;c)\notag\\
& =\sum_{\nu=0}^{k_2}\binom{k_2}{\nu}\left(1-p^{k_1+\nu}\right)\left(1- c^{k_1+\nu+1} \right)\left( 1-c^{k_2-\nu+1}\right)\frac{B_{k_1+\nu+1}B_{k_2-\nu+1}}{(k_1+\nu+1)(k_2-\nu+1)}\eta^{k_2-\nu}. \label{L2-val}
\end{align}
Note that the case $c=2$ was 
obtained in our previous paper \cite[Section 4]{KMT-IJNT}.
In the special case when $k_1\in \mathbb{N}$, $k_2\in \mathbb{N}_0$ and $k_1+k_2$ is odd, 
\begin{equation}
L_{p,2}(-k_1,-k_2;\omega^{k_1},\omega^{k_2};1,\eta;c)=\displaystyle{\frac{c-1}{2}}\left( 1-c^{k_1+k_2+1}\right)\left(1-p^{k_1+k_2}\right)\frac{B_{k_1+k_2+1}}{k_1+k_2+1}. \label{odd-val}
\end{equation}
\end{example}

Furthermore, computing the coefficient of $t_1^{n_1}t_2^{n_2}t_3^{n_3}$ in \eqref{generating} with $r=3$, we obtain the following:

\begin{example}
For $k_1,k_2,k_3\in \mathbb{N}_0$, $\eta_1,\eta_2\in p\mathbb{Z}_p$ 
and $c\in \mathbb{Z}_p^\times$, 
\begin{align}
& L_{p,3}(-k_1,-k_2,-k_3;\omega^{k_1},\omega^{k_2},\omega^{k_3};1,\eta_1,\eta_2;c)\notag\\
& =\sum_{\nu_1=0}^{k_3}\sum_{\nu_2=0}^{k_3-\nu_1}\sum_{\kappa=0}^{k_2}\binom{k_2}{\kappa}\binom{k_3}{\nu_1,\,\nu_2}\left(1-p^{k_1+\nu_2+\kappa}\right)\left(1-c^{k_1+\nu_2+\kappa+1}\right)\left(1-c^{k_2+\nu_1-\kappa+1}\right)\notag\\
& \qquad \times \left(1-c^{k_3-\nu_1-\nu_2+1}\right)\frac{B_{k_1+\nu_2+\kappa+1}}{k_1+\nu_2+\kappa+1}\frac{B_{k_2+\nu_1-\kappa+1}}{k_2+\nu_1-\kappa+1}\frac{B_{k_3-\nu_1-\nu_2+1}}{k_3-\nu_1-\nu_2+1}\notag\\
& \qquad \times 
\eta_1^{k_2+\nu_1-\kappa}\eta_2^{k_3-\nu_1-\nu_2},\label{tri-comv}
\end{align}
where $\binom{N}{\nu_1,\,\nu_2}=\frac{N!}{\nu_1!\ \nu_2!\ (N-\nu_1-\nu_2)!}$.
\end{example}

\subsection{Multiple Kummer congruences
}\label{Kummer}

We will show certain congruence relations 
for the 
twisted multiple Bernoulli numbers,
which are certain generalizations of 
Kummer congruences for Bernoulli numbers (see \eqref{ord-Kummer}, below), from the viewpoint of $p$-adic multiple $L$-functions 
(Theorem \ref{Th-Kummer}).
Then we will extract specific congruences for 
ordinary Bernoulli numbers
in the depth $2$ case (Example \ref{DKC}). 

First, we recall the ordinary Kummer congruences. By \eqref{1-p-LF-gamma}, we know from \cite[p.\,31]{Kob} that 
\begin{equation}
L_{p,1}(1-m;\omega^{m-1};1;c) \equiv L_{p,1}(1-n;\omega^{n-1};1;c)\quad ({\rm mod\ }p^l) \label{L-K-r-1}
\end{equation}
for $m,n \in \mathbb{N}_0$ and $l\in \mathbb{N}$ 
such that $m \equiv n\ ({\rm mod\ }(p-1)p^{l-1})$ and $c\in \mathbb{N}_{>1}$. Therefore, from \eqref{KL-LP}, we see that 
\begin{equation}
(1-c^m)(1-p^{m-1})\frac{B_m}{m}\equiv (1-c^n)(1-p^{n-1})\frac{B_n}{n}\quad ({\rm mod\ }p^l). \label{Ord-KC}
\end{equation}
Note that $c\in \mathbb{N}_{>1}$ can be chosen arbitrarily under the condition $(c,p)=1$. 
In particular, when $p>2$,  
for even positive integers $m$ and $n$ 
such that $m \equiv n\ ({\rm mod\ }(p-1)p^{l-1})$ and $n \not\equiv 0\ ({\rm mod\ }p-1)$, 
we can choose $c$ so that 
$1-c^m \equiv 1-c^n\ ({\rm mod\ }p^l)$ and $(1-c^m,p)=(1-c^n,p)=1$, 
which yields the ordinary Kummer congruences (see \cite[p.\,32]{Kob}):
\begin{equation}
(1-p^{m-1})\frac{B_m}{m}\equiv (1-p^{n-1})\frac{B_n}{n}\quad ({\rm mod\ }p^l). \label{ord-Kummer}
\end{equation}

As a multiple analogue of \eqref{Ord-KC}, we have the following:

\begin{theorem}[{\bf Multiple Kummer congruences}]
\label{Th-Kummer}
Let $m_1,\ldots,m_r,n_1,\ldots,n_r\in \mathbb{N}_0$ with 
$m_j\equiv n_j$\ $\text{\rm mod}\ (p-1)p^{l_j-1}$ 
for $l_j\in \mathbb{N}$ $(1\leqslant j\leqslant r)$. 
Then, for $\gamma_1,\ldots,\gamma_r\in \mathbb{Z}_p$ 
and $c\in \mathbb{N}_{>1}$ with $(c,p)=1$, 
\begin{align}
& L_{p,r}(-m_1,\ldots,-m_r;\omega^{m_1},\ldots,\omega^{m_r};\gamma_1,\ldots,\gamma_{r};c)\notag\\
& \equiv L_{p,r}(-n_1,\ldots,-n_r;\omega^{n_1},\ldots,\omega^{n_r};\gamma_1,\ldots,\gamma_{r};c)\quad  \left({\rm mod}\  p^{\min \{l_j\,|\,{1\leqslant j\leqslant r}\}}\right). \label{L-val}
\end{align}
In other words, 
\begin{align}
& \sum_{\xi_1^c=1 \atop \xi_1\not=1}\cdots\sum_{\xi_r^c=1 \atop \xi_r\not=1}\aa((m_j);(\xi_j);(\gamma_j)) \notag\\
& +\sum_{d=1}^{r}\left(-\frac{1}{p}\right)^d \sum_{1\leqslant i_1<\cdots<i_d\leqslant r}\sum_{\rho_{i_1}^p=1}\cdots\sum_{\rho_{i_d}^p=1}\sum_{\xi_1^c=1 \atop \xi_1\not=1}\cdots\sum_{\xi_r^c=1 \atop \xi_r\not=1}\aa((m_j);((\prod_{j\leqslant i_l}\rho_{i_l})^{\gamma_j}\xi_j);(\gamma_j))\notag\\
& \equiv \sum_{\xi_1^c=1 \atop \xi_1\not=1}\cdots\sum_{\xi_r^c=1 \atop \xi_r\not=1}\aa((n_j);(\xi_j);(\gamma_j)) \notag\\
& \qquad +\sum_{d=1}^{r}\left(-\frac{1}{p}\right)^d \sum_{1\leqslant i_1<\cdots<i_d\leqslant r}\sum_{\rho_{i_1}^p=1}\cdots\sum_{\rho_{i_d}^p=1}\sum_{\xi_1^c=1 \atop \xi_1\not=1}\cdots\sum_{\xi_r^c=1 \atop \xi_r\not=1}\aa((n_j);((\prod_{j\leqslant i_l}\rho_{i_l})^{\gamma_j}\xi_j);(\gamma_j))\notag\\
& \qquad \qquad \left({\rm mod}\  p^{\min \{l_j\,|\,{1\leqslant j\leqslant r}\}}\right). \label{Kummer-main}
\end{align}
\end{theorem}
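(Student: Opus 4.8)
The plan is to bypass the twisted multiple Bernoulli numbers in the first stage and argue directly on the integral representation of Definition \ref{Def-pMLF}, reducing the congruence \eqref{L-val} to a pointwise congruence of integrands over the region $\left(\mathbb{Z}_p^r\right)'_{\{\gamma_j\}}$ defined in \eqref{region}. The key observation is that, on this region, every partial sum $y_\nu := \sum_{j=1}^{\nu} x_j\gamma_j$ is a $p$-adic unit, so that $\langle y_\nu\rangle^{n_\nu}\omega^{n_\nu}(y_\nu) = y_\nu^{n_\nu}$ via $y_\nu = \omega(y_\nu)\langle y_\nu\rangle$. Specializing \eqref{e-6-1} at $(s_j) = (-n_j)$ and $k_j = n_j$ therefore gives the clean formula
\begin{equation*}
L_{p,r}(-n_1,\ldots,-n_r;\omega^{n_1},\ldots,\omega^{n_r};\gamma_1,\ldots,\gamma_r;c) = \int_{\left(\mathbb{Z}_p^r\right)'_{\{\gamma_j\}}} \prod_{\nu=1}^{r} y_\nu^{n_\nu}\ \prod_{j=1}^{r} d\mm(x_j),
\end{equation*}
whose integrand is a genuine $\mathbb{Z}_p$-valued function, free of the $(-1/p)^d$ terms that appear after the projector expansion in Theorem \ref{T-main-1}. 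Keeping the integrand integral in this way is what makes the congruence manageable.

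First I would record the elementary fact that $u^{m} \equiv u^{n} \pmod{p^{l}}$ for every unit $u\in\mathbb{Z}_p^\times$ whenever $m\equiv n \pmod{(p-1)p^{l-1}}$; this is precisely the decomposition $u = \omega(u)\langle u\rangle$ combined with $\omega(u)^{p-1}=1$ and $\langle u\rangle^{p^{l-1}}\equiv 1 \pmod{p^{l}}$, and it already underlies the single-variable congruence \eqref{Ord-KC}. Applying it factorwise, on $\left(\mathbb{Z}_p^r\right)'_{\{\gamma_j\}}$ one gets $y_\nu^{m_\nu} \equiv y_\nu^{n_\nu} \pmod{p^{l_\nu}}$ for each $\nu$. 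Next I would pass from these factorwise congruences to a congruence of the products by the telescoping identity
\begin{equation*}
\prod_{\nu=1}^{r} y_\nu^{m_\nu} - \prod_{\nu=1}^{r} y_\nu^{n_\nu} = \sum_{\mu=1}^{r}\Bigl(\prod_{\nu<\mu} y_\nu^{n_\nu}\Bigr)\bigl(y_\mu^{m_\mu}-y_\mu^{n_\mu}\bigr)\Bigl(\prod_{\nu>\mu} y_\nu^{m_\nu}\Bigr);
\end{equation*}
since all the $y_\nu$ are units, the $\mu$-th summand is divisible by $p^{l_\mu}$, and hence the difference of the two integrands is divisible by $p^{\min_j l_j}$ uniformly on the region.

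Finally I would integrate. Because $\mm$ is a $\mathbb{Z}_p$-valued measure (so the product measure has norm at most $1$), integration does not decrease $p$-adic valuations: the bound $\bigl|\int h\,\prod_j d\mm\bigr|_p \leqslant \|h\|$ applied to $h = \prod_\nu y_\nu^{m_\nu} - \prod_\nu y_\nu^{n_\nu}$ yields $\bigl|L_{p,r}(-m_1,\ldots;\ldots) - L_{p,r}(-n_1,\ldots;\ldots)\bigr|_p \leqslant p^{-\min_j l_j}$, which is \eqref{L-val}. The equivalent reformulation \eqref{Kummer-main} is then immediate after substituting the twisted multiple Bernoulli expansions of both sides furnished by Theorem \ref{T-main-1}.

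I expect the only genuinely delicate point to be the bookkeeping in the reduction to the restricted region, so that the integrand stays in $\mathbb{Z}_p$; once that is in place, the argument is merely the unit-exponent congruence, the telescoping product estimate, and the norm bound for integration against a $\mathbb{Z}_p$-valued measure, all of which are routine. A minor separate check is the behaviour at $p=2$, where $q=4$ and the decomposition $\mathbb{Z}_2^\times = W\times(1+4\mathbb{Z}_2)$ replaces the odd-$p$ version, but the unit-exponent congruence persists verbatim.
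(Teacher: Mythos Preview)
Your proposal is correct and follows essentially the same route as the paper's proof. The paper argues exactly as you do: on $\left(\mathbb{Z}_p^r\right)'_{\{\gamma_j\}}$ the integrand of \eqref{e-6-1} at $(s_j)=(-n_j)$, $k_j=n_j$ collapses to $\prod_\nu y_\nu^{n_\nu}$, the unit congruence $y_\nu^{m_\nu}\equiv y_\nu^{n_\nu}\pmod{p^{l_\nu}}$ gives a pointwise bound on the difference of integrands, and integrating against the $\mathbb{Z}_p$-valued measure $\widetilde{\mathfrak m}_c$ yields \eqref{L-val}; the paper then invokes Theorem \ref{T-main-1} for \eqref{Kummer-main}. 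Your telescoping identity and explicit norm bound simply spell out the step the paper compresses into ``Hence, \eqref{MKC1} implies \eqref{L-val}.''
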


\begin{proof}
The proof works in the same way as 
the case of $L_p(s;\chi)$ stated above (for the details, see \cite[Chapter 2,\,Section\,3]{Kob}).
As for the integrand of \eqref{integration},
we have
\begin{equation}
\left(\sum_{\nu=1}^{j}x_{\nu}\gamma_{\nu}\right)^{m_j}\equiv \left(\sum_{\nu=1}^{j}x_{\nu}\gamma_{\nu}\right)^{n_j}\quad \left(\textrm{mod\ } p^{l_j}\right)\quad (1\leqslant j\leqslant r) \label{MKC1}
\end{equation}
for $(x_j)\in \left(\mathbb{Z}_p^r\right)'_{\{\gamma_j\}}$. 
Hence, \eqref{MKC1} 
implies 
\eqref{L-val}. 
It follows from Theorem \ref{T-main-1} that \eqref{Kummer-main} holds. 
\end{proof}

\begin{remark}
In the case $p=2$, since 
$$\left(\mathbb{Z}/2^l\mathbb{Z}\right)^\times\simeq \mathbb{Z}/2\mathbb{Z} \times \mathbb{Z}/2^{l-2}\mathbb{Z}\quad (l\in \mathbb{N}_{>2}),$$
\eqref{MKC1} holds under the condition
\begin{equation}
m_j\equiv n_j\ \text{\rm mod}\ 2^{l_j-2}\ \ (l_j\in \mathbb{N}_{>2};\ 1\leqslant j\leqslant r), \label{2-condition}
\end{equation}
and 
so does \eqref{L-val} under \eqref{2-condition}. 
Hence the following examples also hold under \eqref{2-condition} in the case $p=2$.
\end{remark}




In the case $r=1$, Theorem \ref{Th-Kummer} is nothing but \eqref{L-K-r-1},
a reformulation of \eqref{Ord-KC}.

In the case $r=2$, we obtain the following: 

\begin{example}\label{Cor-Kummer-2}
For $m_1,m_2,n_1,n_2\in \mathbb{N}_0$ with $m_j\equiv n_j$\ $\text{\rm mod}\ (p-1)p^{l_j-1}$ $(l_1,l_2\in \mathbb{N})$, $\eta\in \mathbb{Z}_p$ and $c\in \mathbb{Z}_p^\times$, 
\begin{align}
& \sum_{\nu=0}^{m_2}\binom{m_2}{\nu}\left(1-p^{m_1+\nu}\right)\left(1-c^{m_1+\nu+1}\right)\frac{B_{m_1+\nu+1}}{m_1+\nu+1}\left(1-c^{m_2-\nu+1}\right)\frac{B_{m_2-\nu+1}}{m_2-\nu+1}\eta^{m_2-\nu}\notag\\
& \ -\frac{1}{p} \sum_{\rho_{2}^p=1}\sum_{\nu=0}^{m_2}\binom{m_2}{\nu}\left(c^{m_1+\nu+1}\aa_{m_1+\nu}(\rho_{2}^c)-\aa_{m_1+\nu}(\rho_{2})\right)\left(c^{m_2-\nu+1}\aa_{m_2-\nu}(\rho_{2}^{c\eta})-\aa_{m_2-\nu}(\rho_{2}^{\eta})\right)\eta^{m_2-\nu}\notag\\
& \ +\frac{1}{p}\sum_{\rho_{2}^p=1}\sum_{\nu=0}^{m_2}\binom{m_2}{\nu}p^{m_1+\nu}\left(1-c^{m_1+\nu+1}\right)\frac{B_{m_1+\nu+1}}{m_1+\nu+1}\left(c^{m_2-\nu+1}\aa_{m_2-\nu}(\rho_{2}^{c\eta})-\aa_{m_2-\nu}(\rho_{2}^{\eta})\right)\eta^{m_2-\nu}\notag\\
&\ \equiv \sum_{\nu=0}^{n_2}\binom{n_2}{\nu}\left(1-p^{n_1+\nu}\right)\left(1-c^{n_1+\nu+1}\right)\frac{B_{n_1+\nu+1}}{n_1+\nu+1}\left(1-c^{n_2-\nu+1}\right)\frac{B_{n_2-\nu+1}}{n_2-\nu+1}\eta^{n_2-\nu}\notag\\
& \ -\frac{1}{p} \sum_{\rho_{2}^p=1}\sum_{\nu=0}^{n_2}\binom{n_2}{\nu}\left(c^{n_1+\nu+1}\aa_{n_1+\nu}(\rho_{2}^c)-\aa_{n_1+\nu}(\rho_{2})\right)\left(c^{n_2-\nu+1}\aa_{n_2-\nu}(\rho_{2}^{c\eta})-\aa_{n_2-\nu}(\rho_{2}^{\eta})\right)\eta^{n_2-\nu}\notag\\
& \ +\frac{1}{p}\sum_{\rho_{2}^p=1}\sum_{\nu=0}^{n_2}\binom{n_2}{\nu}p^{n_1+\nu}\left(1-c^{n_1+\nu+1}\right)\frac{B_{n_1+\nu+1}}{n_1+\nu+1}\left(c^{n_2-\nu+1}\aa_{n_2-\nu}(\rho_{2}^{c\eta})-\aa_{n_2-\nu}(\rho_{2}^{\eta})\right)\eta^{n_2-\nu}\notag\\
& \qquad \qquad \left({\rm mod}\  p^{\min \{l_1,l_2 \}}\right),\label{gen-Kummer}
\end{align}
which follows 
by setting $(\gamma_1,\gamma_2)=(1,\eta)$ for $\eta\in \mathbb{Z}_p$ in \eqref{L-val} and using Example \ref{C-DZV-1}. 
\end{example}

Next we consider certain congruences among ordinary Bernoulli numbers: 

\begin{example}\label{DKC}
We set $\eta=p$ and $c\in \mathbb{N}_{>1}$ with $(c,p)=1$
in \eqref{gen-Kummer}. 
Note that this case was already calculated in \eqref{L2-val} with $\eta=p$. 
Hence, 
we similarly obtain the following  {\bf double Kummer congruence}
for ordinary Bernoulli numbers:
\begin{align}
& \sum_{\nu=0}^{m_2}\binom{m_2}{\nu}\left(1-p^{m_1+\nu}\right)\left(1- c^{m_1+\nu+1} \right)\left( 1-c^{m_2-\nu+1}\right)\frac{B_{m_1+\nu+1}B_{m_2-\nu+1}}{(m_1+\nu+1)(m_2-\nu+1)}p^{m_2-\nu}\notag \\
& \equiv \sum_{\nu=0}^{n_2}\binom{n_2}{\nu}\left(1-p^{n_1+\nu}\right)\left(1- c^{n_1+\nu+1} \right)\left( 1-c^{n_2-\nu+1}\right)\frac{B_{n_1+\nu+1}B_{n_2-\nu+1}}{(n_1+\nu+1)(n_2-\nu+1)}p^{n_2-\nu}\notag \\
& \qquad \left({\rm mod}\  p^{\min \{l_1,l_2 \}}\right) \label{Kum-mod-d1}
\end{align}
for $m_1,m_2,n_1,n_2\in \mathbb{N}_0$ with $m_j\equiv n_j$\ $\text{\rm mod}\ (p-1)p^{l_j-1}$ $(l_j\in \mathbb{N};\ j=1,2)$.
In particular, when $m_1$ and $m_2$ are of different parity, 
we have from \eqref{odd-val} that 
\begin{align}
&\displaystyle{\frac{c-1}{2}}\left( 1-c^{m_1+m_2+1}\right)\left(1-p^{m_1+m_2}\right)\frac{B_{m_1+m_2+1}}{m_1+m_2+1}\notag \\
& \equiv \displaystyle{\frac{c-1}{2}}\left( 1-c^{n_1+n_2+1}\right)\left(1-p^{n_1+n_2}\right)\frac{B_{n_1+n_2+1}}{n_1+n_2+1}\ \ \left({\rm mod}\  p^{\min \{l_1,l_2 \}}\right),  \label{Kum-mod-d2}
\end{align}
which is equivalent to \eqref{Ord-KC} in the case $(c,p)=1$ with $p^2\nmid (c-1)$. 
Therefore, choosing $c$ suitably, we obtain the ordinary Kummer congruence \eqref{ord-Kummer}. 
From this observation, we can regard \eqref{Kum-mod-d1} as a certain general form of 
Kummer congruence including \eqref{ord-Kummer}. 
We do not know whether 
our \eqref{Kum-mod-d1} is a 
new congruence for 
Bernoulli numbers, or 
whether 
it follows from the ordinary Kummer congruence \eqref{Ord-KC}.
\end{example}

In the case $r=3$, we obtain the following:

\begin{example}\label{triple-KC}
Using \eqref{tri-comv} with $(\gamma_1,\gamma_2,\gamma_3)=(1,p,p)$, we can similarly obtain the following {\bf triple Kummer congruence}
for ordinary Bernoulli numbers:
\begin{align}
& \sum_{\nu_1=0}^{m_3}\sum_{\nu_2=0}^{m_3-\nu_1}\sum_{\kappa=0}^{m_2}\binom{m_2}{\kappa}\binom{m_3}{\nu_1\ \nu_2}\left(1-p^{m_1+\nu_2+\kappa}\right)\left(1-c^{m_1+\nu_2+\kappa+1}\right)\left(1-c^{m_2+\nu_1-\kappa+1}\right)\notag\\
& \quad \times \left(1-c^{m_3-\nu_1-\nu_2+1}\right)\frac{B_{m_1+\nu_2+\kappa+1}}{m_1+\nu_2+\kappa+1}\frac{B_{m_2+\nu_1-\kappa+1}}{m_2+\nu_1-\kappa+1}\frac{B_{m_3-\nu_1-\nu_2+1}}{m_3-\nu_1-\nu_2+1}p^{m_2+m_3-\kappa-\nu_2}\notag\\
& \equiv \sum_{\nu_1=0}^{n_3}\sum_{\nu_2=0}^{n_3-\nu_1}\sum_{\kappa=0}^{n_2}\binom{n_2}{\kappa}\binom{n_3}{\nu_1\ \nu_2}\left(1-p^{n_1+\nu_2+\kappa}\right)\left(1-c^{n_1+\nu_2+\kappa+1}\right)\left(1-c^{n_2+\nu_1-\kappa+1}\right)\notag\\
& \qquad \times \left(1-c^{n_3-\nu_1-\nu_2+1}\right)\frac{B_{n_1+\nu_2+\kappa+1}}{n_1+\nu_2+\kappa+1}\frac{B_{n_2+\nu_1-\kappa+1}}{n_2+\nu_1-\kappa+1}\frac{B_{n_3-\nu_1-\nu_2+1}}{n_3-\nu_1-\nu_2+1}p^{n_2+n_3-\kappa-\nu_2}\notag\\
& \qquad \left({\rm mod}\  p^{\min \{l_1,l_2,l_3 \}}\right) \label{Kum-mod-triple}
\end{align}
for $m_1,m_2,m_3,n_1,n_2,n_3\in \mathbb{N}_0$ with $m_j\equiv n_j$\ $\text{\rm mod}\ (p-1)p^{l_j-1}$ $(l_j\in \mathbb{N};\ j=1,2,3)$. It is also unclear whether this is new or follows from \eqref{Ord-KC}.
\end{example}

\subsection{Functional relations for $p$-adic multiple $L$-functions}\label{Funct-rel}
In this subsection, we will prove certain functional relations with a parity condition
among $p$-adic multiple $L$-functions (Theorem \ref{T-6-1}).
They are extensions of the vanishing property of the
Kubota-Leopoldt $p$-adic $L$-functions with odd characters
in the single variable case (cf. Proposition \ref{Prop-zero} and Example \ref{Rem-zero-2}), 
as well as 
the functional relations among
$p$-adic double $L$-functions proved by  \cite{KMT-IJNT}
in the double variable case under the condition $c=2$
(cf. Example \ref{P-Func-eq-1}). 
They may also 
be regarded as certain $p$-adic analogues of the parity result for MZVs (see Remark \ref{Rem-Parity-result}). 
It should be noted that the following functional relations 
are peculiar to the $p$-adic case,
which are derived from the relation \eqref{parity-2}, below, among Bernoulli numbers, 
and cannot be observed in the complex case.

Let $r \geqslant 2$. 
For each $J\subset\{1,\ldots,r\}$ with $1\in J$, we write 
$$J=\{ j_1(J),j_2(J),\ldots,j_{|J|}(J)\}\quad (1=j_1(J)<\cdots<j_{|J|}(J)),$$
where $|J|$ implies the number of elements of $J$. 
In addition, we formally let $j_{|J|+1}(J)=r+1$.

\begin{theorem}\label{T-6-1}
For $r\in \mathbb{N}_{>1}$, let $k_1,\ldots,k_r\in \mathbb{Z}$ with $k_1+\cdots+k_r\not\equiv r\pmod 2$, $\gamma_{1}\in \mathbb{Z}_p$, $\gamma_2,\ldots,\gamma_{r}\in p\mathbb{Z}_p$ and $c\in \mathbb{Z}_p^\times$. Then, for $s_1,\ldots,s_r\in \mathbb{Z}_p$, 
\begin{align}
& L_{p,r} (s_1,\ldots,s_r;\omega^{k_1},\ldots,\omega^{k_r};\gamma_1,\gamma_2,\ldots,\gamma_{r};c)\notag\\
& \ =-\sum_{J\subsetneq\{1,\ldots,r\} \atop 1\in J}
    \Bigr(\frac{1-c}{2}\Bigl)^{r-|J|} L_{p,|J|}\Bigl(\Bigl({\sum}_\mu s_l\Bigr)_{\mu=1}^{|J|};\bigl(\omega^{{\sum}_\mu k_l}\bigr)_{\mu=1}^{|J|};(\gamma_{j})_{j\in J};c\Bigr), \label{main-1}
\end{align}
where the symbol ${\sum}_\mu$ means the
sum over $j_\mu(J)\leqslant l<j_{\mu+1}(J)$.
\if0
\begin{align}
& L_{p,r} (s_1,\ldots,s_r;\omega^{k_1},\ldots,\omega^{k_r};\gamma_1,\gamma_2,\ldots,\gamma_{r};c)\notag\\
& \ =-\sum_{J\subsetneq\{1,\ldots,r\} \atop 1\in J}
    \Bigr(\frac{1-c}{2}\Bigl)^{r-|J|}\notag\\
&   \qquad \times L_{p,|J|}\Bigl(\Bigl(\sum_{j_\mu(J)\leqslant l<j_{\mu+1}(J)}s_l\Bigr)_{\mu=1}^{|J|};\bigl(\omega^{\sum_{j_\mu(J)\leqslant l<j_{\mu+1}(J)}k_l}\bigr)_{\mu=1}^{|J|};(\gamma_{j})_{j\in J};c\Bigr). \label{main-1}
\end{align}
\fi
\end{theorem}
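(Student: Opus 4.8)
The plan is to prove \eqref{main-1} first at non-positive integer arguments, where the generating function of Theorem \ref{T-5-gene} is available, and then to propagate it to all $s_1,\ldots,s_r\in\mathbb{Z}_p$ by continuity. We may assume $\gamma_1\in\mathbb{Z}_p^\times$, since for $\gamma_1\in p\mathbb{Z}_p$ both sides vanish by Remark \ref{Rem-gamma1}. For fixed characters $\omega^{k_1},\ldots,\omega^{k_r}$ and fixed $c\in\mathbb{Z}_p^\times$, both sides of \eqref{main-1} are continuous in $(s_1,\ldots,s_r)\in\mathbb{Z}_p^r$ by Theorem \ref{continuity theorem}. The points $(-n_1,\ldots,-n_r)$ with $n_l\in\mathbb{N}_0$ and $n_l\equiv k_l\pmod{p-1}$ are dense in $\mathbb{Z}_p^r$ (by the Chinese Remainder Theorem one solves $n_l\equiv k_l\pmod{p-1}$ together with $n_l\equiv -s_l\pmod{p^N}$, then adjusts by a multiple of $(p-1)p^N$ to make $n_l\geqslant 0$), so it suffices to establish \eqref{main-1} there. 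For such $n_l$ one has $\omega^{n_l}=\omega^{k_l}$, so the parity hypothesis reads $\sum_l n_l\not\equiv r\pmod 2$, and the target becomes
\begin{align*}
L_{p,r}(-n_1,\ldots,-n_r;\omega^{n_1},\ldots,\omega^{n_r};(\gamma_j);c)
=-\sum_{J\subsetneq\{1,\ldots,r\} \atop 1\in J}\Bigl(\tfrac{1-c}{2}\Bigr)^{r-|J|}
L_{p,|J|}\bigl((-N_\mu)_\mu;(\omega^{N_\mu})_\mu;(\gamma_j)_{j\in J};c\bigr),
\end{align*}
where $N_\mu=\sum_{j_\mu(J)\leqslant l<j_{\mu+1}(J)}n_l$.

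Next I would assemble these values into the generating function $F_{p,r}(t_1,\ldots,t_r;(\gamma_j);c)$ whose coefficient of $\prod_l t_l^{n_l}/n_l!$ is exactly the left-hand side above. Since $\gamma_1\in\mathbb{Z}_p^\times$ and $\gamma_2,\ldots,\gamma_r\in p\mathbb{Z}_p$, formula \eqref{generating} of Theorem \ref{T-5-gene} gives the product form
\begin{align*}
F_{p,r}(t_1,\ldots,t_r;(\gamma_j);c)=g_1(u_1)\prod_{j=2}^{r}\phi(u_j),
\qquad u_j:=\gamma_j\sum_{k=j}^{r}t_k,
\end{align*}
with $\phi(u)=\frac{1}{e^{u}-1}-\frac{c}{e^{cu}-1}$ and $g_1(u)=\phi(u)-\phi(pu)$. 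The crucial input is the parity relation \eqref{parity-2} among Bernoulli numbers, which says precisely that $\phi(u)-\frac{c-1}{2}$ is an odd function of $u$ (its even Taylor coefficients, the moments $\int x^{2m}d\widetilde{\mathfrak{m}}_c$ for $m\geqslant1$, vanish because $B_{2m+1}=0$). Writing $\phi(u)=\frac{c-1}{2}+h(u)$ with $h$ odd, both $h$ and $g_1=h(u)-h(pu)$ are odd; hence $g_1(u_1)$ and each $h(u_j)$ are odd under $t\mapsto-t$, and a product of several of them has total-degree parity equal to the number of factors modulo $2$.

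Then I would rewrite the right-hand side as a generating series. Applying Theorem \ref{T-5-gene} to each depth-$|J|$ function and merging variables via $\tau_\mu=\sum_{j_\mu(J)\leqslant l<j_{\mu+1}(J)}t_l$ (which reproduces the multinomial grouping defining $N_\mu$), the term indexed by $J$ becomes $g_1(u_1)\prod_{\mu=2}^{|J|}\phi(u_{j_\mu(J)})$. Summing over all $J$ with $1\in J$, including $J=\{1,\ldots,r\}$, factorizes:
\begin{align*}
\sum_{1\in J}\Bigl(\tfrac{1-c}{2}\Bigr)^{r-|J|}g_1(u_1)\prod_{\mu=2}^{|J|}\phi(u_{j_\mu(J)})
=g_1(u_1)\prod_{j=2}^{r}\Bigl(\tfrac{1-c}{2}+\phi(u_j)\Bigr)
=g_1(u_1)\prod_{j=2}^{r}h(u_j),
\end{align*}
since $\frac{1-c}{2}+\phi(u)=h(u)$. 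The full-depth term $J=\{1,\ldots,r\}$ equals $F_{p,r}$, so the generating function of the right-hand side of \eqref{main-1} is $F_{p,r}(t)-g_1(u_1)\prod_{j=2}^{r}h(u_j)$. The subtracted term is a product of $r$ odd functions and therefore carries only monomials of total degree $\equiv r\pmod2$; for every multi-index with $\sum_l n_l\not\equiv r\pmod2$ its coefficient vanishes, and comparing the coefficient of $\prod_l t_l^{n_l}/n_l!$ yields the displayed identity. Theorem \ref{continuity theorem} then upgrades it from the dense set of non-positive integers to all of $\mathbb{Z}_p^r$.

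I expect two main obstacles. The first is the combinatorial bookkeeping in the merging step: one must verify that substituting $\tau_\mu=\sum t_l$ into $F_{p,|J|}$ indeed recovers $L_{p,|J|}((-N_\mu)_\mu;\ldots)$ as the coefficient of $\prod_l t_l^{n_l}/n_l!$, and that the resulting arguments $u_{j_\mu(J)}$ agree with the $\gamma_j\sum_{k\geqslant j}t_k$ inherited from depth $r$. The second, more delicate, is the passage from non-positive integers to all of $\mathbb{Z}_p^r$: the density used above is immediate for odd $p$, but for $p=2$ the constraint $n_l\equiv k_l\pmod2$ forces a parity on $-n_l$, so one only reaches the coset $\{s_l\equiv k_l\pmod2\}$ directly, and the remaining values must be obtained from the rigid-analyticity of Theorem \ref{Th-pMLF} on $\mathfrak{X}_r(q^{-1})$ combined with continuity (replacing $p-1$ by $q=4$ throughout).
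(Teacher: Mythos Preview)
Your proposal is correct and follows essentially the same mechanism as the paper's proof: both hinge on the fact that $h(u)=\phi(u)-\tfrac{c-1}{2}$ is odd (the paper's $\gh_1$, leading to the parity relation \eqref{parity-2}), and both show that the sum of the $L$-values over \emph{all} $J\ni1$ has generating function equal to the product of $r$ odd functions, hence vanishes at every index with $\sum n_l\not\equiv r\pmod 2$.

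The packaging differs. The paper introduces the auxiliary numbers $\mathcal B((n_j);(\gamma_j);c)$, proves Lemma~\ref{L-6-2} (the integral expansion over subsets $J$), specializes to $\gamma_1=1$, and splits $\mathbb{Z}_p^\times\times\mathbb{Z}_p^{|J|-1}=\mathbb{Z}_p^{|J|}\setminus(p\mathbb{Z}_p\times\mathbb{Z}_p^{|J|-1})$ to obtain \eqref{eq-zero}; it then scales back to general $\gamma_1\in\mathbb{Z}_p^\times$. You instead invoke the already-proved product formula \eqref{generating} of Theorem~\ref{T-5-gene} directly for general $\gamma_1\in\mathbb{Z}_p^\times$ and perform the subset sum combinatorially, factoring $\sum_{1\in J}(\tfrac{1-c}{2})^{r-|J|}\prod_{\mu\geqslant 2}\phi(u_{j_\mu(J)})=\prod_{j\geqslant 2}(\tfrac{1-c}{2}+\phi(u_j))=\prod_{j\geqslant 2}h(u_j)$. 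These lead to the literally same odd product $g_1(u_1)\prod_{j\geqslant 2}h(u_j)$ (indeed, in the paper's notation $\mathcal B((n_j);1,\gamma_2,\ldots;c)-p^{\sum n_l}\mathcal B((n_j);1,\gamma_2/p,\ldots;c)$ has generating function $(h(u_1)-h(pu_1))\prod_{j\geqslant 2}h(u_j)=g_1(u_1)\prod_{j\geqslant 2}h(u_j)$). Your merging step $\tau_\mu\mapsto\sum_l t_l$ is exactly the multinomial identity needed, and your check that $v_\mu=\gamma_{j_\mu(J)}\sum_{\nu\geqslant\mu}\tau_\nu=u_{j_\mu(J)}$ is correct. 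What your route buys is avoiding both Lemma~\ref{L-6-2} and the separate reduction to $\gamma_1=1$; what the paper's route buys is self-containment (no appeal to \eqref{generating}, itself derived from Theorem~\ref{T-main-1}). The density step and its $p=2$ caveat are treated the same way in both.
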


To prove Theorem \ref{T-6-1}, we first define certain multiple 
analogues of Bernoulli numbers, and discuss their properties.

We consider $\widetilde{\mathfrak{H}}_1(t;\gamma;c)$ in $\mathbb{C}_p$, that is,  
\begin{equation}
\begin{split}
\widetilde{\mathfrak{H}}_1(t;\gamma;c)&=\frac{1}{e^{\gamma t}-1}-\frac{c}{e^{c\gamma t}-1}\\
&=\sum_{m=0}^\infty \left(1-c^{m+1}\right)B_{m+1}\frac{(\gamma t)^m}{(m+1)!}\\
&=\frac{c-1}{2}+\sum_{m=1}^\infty \left(c^{m+1}-1\right)\zeta(-m)\frac{(\gamma t)^m}{m!}
\end{split}
\label{3-3-2}
\end{equation}
for $c\in \mathbb{Z}_{p}^\times$ and $\gamma\in \mathbb{Z}_p$. 
For simplicity, we set 
\begin{equation}
\bb (n;\gamma;c):=\left(1-c^{n+1}\right)\frac{B_{n+1}}{n+1}\gamma^{n}\quad (n \in \mathbb{N}). \label{Ber-def}
\end{equation}
It follows from \eqref{2-1} and \eqref{2-1-2} that
\begin{equation}
\widetilde{\mathfrak{H}}_1(t;\gamma;c)=\sum_{m=0}^\infty \int_{\mathbb{Z}_p} x^{m}d\mm(x) \frac{(\gamma t)^m}{m!}
=\int_{\mathbb{Z}_p} e^{x\gamma t} d\mm(x),\label{3-4}
\end{equation}
\begin{equation}
\begin{split}
  \gh_1(t;\gamma;c):& =\widetilde{\mathfrak{H}}_1(t;\gamma;c)-\frac{c-1}{2}=\sum_{m=1}^\infty \bb (m;\gamma;c)\frac{t^m}{m!}\\
  &=\int_{\mathbb{Z}_p}(e^{x\gamma t}-1)d\widetilde{\mathfrak{m}}_c(x). 
\end{split}
\label{e-3-5}
\end{equation}
We know that $B_{2m+1}=0$, namely $\bb ({2m};\gamma;c)=0$ for $m\in \mathbb{N}$. Hence we have
\begin{equation}
\gh_1(-t;\gamma;c)=-\gh_1(t;\gamma;c). \label{e-3-6}
\end{equation}
For $r\in \mathbb{N}$ 
and $\gamma_1,\ldots,\gamma_r\in \mathbb{Z}_p$, we set
\begin{equation*}
  \begin{split}
    \gh_r(t_1,\ldots,t_r;\gamma_1,\ldots,\gamma_r;c)
    &:=
    \prod_{j=1}^{r} \gh_1\Bigl( \sum_{k=j}^r t_k;\gamma_j;c\Bigr)
    \\
    &\left(=
    \prod_{j=1}^{r} \left\{\widetilde{\mathfrak{H}}_1\Bigl( \sum_{k=j}^r t_k;\gamma_j;c\Bigr)-\frac{c-1}{2}\right\}\right),
\end{split}
\end{equation*}
and define the multiple analogues of Bernoulli numbers $\bb (n_1,\ldots,n_r;\gamma_1,\ldots,\gamma_r;c)$ by the Taylor expansion 
\begin{equation}
\label{Ber-def-r}
\begin{split}
  \gh_r(t_1,\ldots,t_r;\gamma_1,\ldots,\gamma_r;c)
  &=
    \sum_{n_1=1}^\infty
    \cdots
    \sum_{n_r=1}^\infty
    \bb (n_1,\ldots,n_r;\gamma_1,\ldots,\gamma_r;c)
    \frac{t_1^{n_1}}{n_1!}
    \cdots
    \frac{t_r^{n_r}}{n_r!}.
  \end{split}
\end{equation}
By \eqref{e-3-6}, we have
\begin{equation}
  \gh_r(-t_1,\ldots,-t_r;\gamma_1,\ldots,\gamma_r;c)
  =(-1)^r
  \gh_r(t_1,\ldots,t_r;\gamma_1,\ldots,\gamma_r;c).
\label{parity-1}
\end{equation}
Hence, when $n_1+\cdots+n_r\not\equiv r\pmod 2$, we obtain
\begin{equation}
  \bb (n_1,\ldots,n_r;\gamma_1,\ldots,\gamma_r;c)=0.
\label{parity-2}
\end{equation}

\begin{remark}
From \eqref{def-tilde-H}, we have
\begin{align}
    & \widetilde{\mathfrak{H}}_r((t_j);(\gamma_j);c) 
    =
    \prod_{j=1}^{r} \widetilde{\mathfrak{H}}_1\Bigl(\sum_{k=j}^r t_k;\gamma_j;c\Bigr). \label{Ber-def-r-2}
\end{align}
Comparing \eqref{Ber-def-r} with \eqref{Ber-def-r-2}, we see that $\gh_r((t_j);(\gamma_j);c)$ is defined as a slight modification of $\widetilde{\mathfrak{H}}_r((t_j);(\gamma_j);c)$, 
and it can be verified that a formula of the same form
as \eqref{parity-1} holds for $\widetilde{\mathfrak{H}}_r((t_j);(\gamma_j);c)$, as well.
\end{remark}

It follows from \eqref{Ber-def}, \eqref{e-3-5} and \eqref{Ber-def-r} that $\bb (n_1,\ldots,n_r;\gamma_1,\ldots,\gamma_r;c)$ can be expressed as a polynomial in $\{ B_{n}\}_{n\geqslant 1}$ and $\{\gamma_1,\ldots,\gamma_r\}$ with $\mathbb{Q}$-coefficients. 

\begin{lemma} \label{L-6-2}
For $n_1,\ldots,n_r\in \mathbb{N}$, $\gamma_1,\ldots,\gamma_r\in \mathbb{Z}_p$ and $c\in \mathbb{Z}_p^\times$, 
\begin{align}
\label{e-6-2}
  \bb (n_1,\ldots,n_r;\gamma_1,\ldots,\gamma_r;c)
    &=
    \sum_{J\subset\{1,\ldots,r\}\atop 1\in J}
    \Bigr(\frac{1-c}{2}\Bigl)^{r-|J|}
    \int_{\mathbb{Z}_p^{|J|}}
    \prod_{l=1}^r 
    (\sum_{\substack{j\in J\\j\leqslant l}}x_j \gamma_j)^{n_l}
    \prod_{j\in J} d\widetilde{\mathfrak{m}}_c(x_j),
\end{align}
where the empty product is interpreted as $1$.
\end{lemma}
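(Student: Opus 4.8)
The plan is to expand the defining product for $\gh_r$ into a sum indexed by subsets $J\subset\{1,\ldots,r\}$ and then to read off Taylor coefficients, so that the $p$-adic integral representation of each factor is propagated through.

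First I would recall that by definition $\gh_1(t;\gamma;c)=\widetilde{\mathfrak{H}}_1(t;\gamma;c)-\frac{c-1}{2}$, so that the defining product becomes
\[
\gh_r(t_1,\ldots,t_r;\gamma_1,\ldots,\gamma_r;c)=\prod_{j=1}^r\gh_1\Bigl(\sum_{k=j}^r t_k;\gamma_j;c\Bigr)=\prod_{j=1}^r\left(\widetilde{\mathfrak{H}}_1\Bigl(\sum_{k=j}^r t_k;\gamma_j;c\Bigr)-\frac{c-1}{2}\right).
\]
Expanding this product by choosing, for each $j$, either the factor $\widetilde{\mathfrak{H}}_1$ or the scalar $-\frac{c-1}{2}$, and letting $J$ be the set of indices where the first choice is made, yields
\[
\gh_r=\sum_{J\subset\{1,\ldots,r\}}\Bigl(\frac{1-c}{2}\Bigr)^{r-|J|}\prod_{j\in J}\widetilde{\mathfrak{H}}_1\Bigl(\sum_{k=j}^r t_k;\gamma_j;c\Bigr).
\]

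Next I would use \eqref{3-4}, namely $\widetilde{\mathfrak{H}}_1(t;\gamma;c)=\int_{\mathbb{Z}_p}e^{x\gamma t}d\mm(x)$, to rewrite the product over $J$ as a single $|J|$-fold integral against the product measure $\prod_{j\in J}d\mm(x_j)$,
\[
\prod_{j\in J}\widetilde{\mathfrak{H}}_1\Bigl(\sum_{k=j}^r t_k;\gamma_j;c\Bigr)=\int_{\mathbb{Z}_p^{|J|}}\exp\Bigl(\sum_{j\in J}x_j\gamma_j\sum_{k=j}^r t_k\Bigr)\prod_{j\in J}d\mm(x_j),
\]
exactly as in the computation preceding Proposition \ref{T-multi-2} (with $\mk_{\xi_j}$ replaced by $\mm$). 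The decisive bookkeeping step is to interchange the order of summation in the exponent,
\[
\sum_{j\in J}x_j\gamma_j\sum_{k=j}^r t_k=\sum_{l=1}^r\Bigl(\sum_{\substack{j\in J\\ j\leqslant l}}x_j\gamma_j\Bigr)t_l,
\]
so that the exponential factors as $\prod_{l=1}^r\exp\bigl(t_l\sum_{j\in J,\,j\leqslant l}x_j\gamma_j\bigr)$. Extracting the coefficient of $\frac{t_1^{n_1}}{n_1!}\cdots\frac{t_r^{n_r}}{n_r!}$ then produces the integrand $\prod_{l=1}^r(\sum_{j\in J,\,j\leqslant l}x_j\gamma_j)^{n_l}$, and comparison with the Taylor expansion \eqref{Ber-def-r} gives
\[
\bb(n_1,\ldots,n_r;\gamma_1,\ldots,\gamma_r;c)=\sum_{J\subset\{1,\ldots,r\}}\Bigl(\frac{1-c}{2}\Bigr)^{r-|J|}\int_{\mathbb{Z}_p^{|J|}}\prod_{l=1}^r\Bigl(\sum_{\substack{j\in J\\ j\leqslant l}}x_j\gamma_j\Bigr)^{n_l}\prod_{j\in J}d\mm(x_j).
\]

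The final step is to discard the subsets $J$ with $1\notin J$: for such $J$ the smallest index exceeds $1$, so the inner sum for $l=1$ is empty and the integrand carries a factor $0^{n_1}$, which vanishes because $n_1\in\mathbb{N}$. This leaves precisely the sum over $J$ with $1\in J$, establishing \eqref{e-6-2}. I expect the two points deserving care to be this concluding vanishing observation, which is exactly the mechanism forcing the restriction $1\in J$, and the justification that extracting a Taylor coefficient commutes with the $p$-adic integral; the latter is controlled by the same convergence that underlies \eqref{3-4} and the derivation of Proposition \ref{T-multi-2}, and the remaining manipulations (expanding the product, Fubini for the product measure, and the index swap in the exponent) are routine.
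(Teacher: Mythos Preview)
Your proof is correct and follows essentially the same route as the paper's: expand the defining product for $\gh_r$ over subsets $J\subset\{1,\ldots,r\}$, use the integral representation of each factor, swap the order of summation in the exponent, extract Taylor coefficients, and discard $J$ with $1\notin J$ via the factor $0^{n_1}$. The only cosmetic difference is that the paper writes $\gh_1(t;\gamma;c)=\int_{\mathbb{Z}_p}(e^{x\gamma t}-1)\,d\mm(x)$ (equation \eqref{e-3-5}) and expands the product $\prod_j(e^{\cdots}-1)$ inside a single $r$-fold integral, then reduces to a $|J|$-fold integral using $\int_{\mathbb{Z}_p}1\,d\mm=\frac{c-1}{2}$ (equation \eqref{2-1-2}), whereas you expand $\prod_j(\widetilde{\mathfrak{H}}_1-\frac{c-1}{2})$ first and invoke \eqref{3-4} directly; the two are equivalent.
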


\begin{proof}
From \eqref{e-3-5} and \eqref{Ber-def-r}, we have
\begin{align*}
  \gh_r&(t_1,\ldots,t_r;\gamma_1,\ldots,\gamma_r;c)\\
    &=
    \int_{\mathbb{Z}_p^r}\prod_{j=1}^r(e^{x_j \gamma_j (\sum_{l=j}^r t_l)}-1)\prod_{j=1}^r d\widetilde{\mathfrak{m}}_c(x_j)
    \\
    &=
    \int_{\mathbb{Z}_p^r}
    \sum_{J\subset\{1,\ldots,r\}}(-1)^{r-|J|}
    \exp\Bigl(\sum_{j\in J}x_j \gamma_j (\sum_{l=j}^r t_l)\Bigr)
    \prod_{j=1}^r d\widetilde{\mathfrak{m}}_c(x_j)
    \\
    &=
    \int_{\mathbb{Z}_p^r}
    \sum_{J\subset\{1,\ldots,r\}}(-1)^{r-|J|}
    \exp\Bigr(\sum_{l=1}^r t_l(\sum_{j\in J \atop j\leqslant l}x_j \gamma_j)\Bigl)
    \prod_{j=1}^r d\widetilde{\mathfrak{m}}_c(x_j)
    \\
    &=
    \sum_{n_1=0}^\infty
    \cdots
    \sum_{n_r=0}^\infty
    \sum_{J\subset\{1,\ldots,r\}}
    (-1)^{r-|J|}
    \int_{\mathbb{Z}_p^r}
    \prod_{l=1}^r 
    \frac{\left(t_l(\sum_{j\in J \atop j\leqslant l}x_j \gamma_j)\right)^{n_l}}{n_l!}
    \prod_{j=1}^r d\widetilde{\mathfrak{m}}_c(x_j)
    \\
    &=
    \sum_{n_1=0}^\infty
    \cdots
    \sum_{n_r=0}^\infty
    \sum_{J\subset\{1,\ldots,r\}}
    (-1)^{r-|J|}
    \int_{\mathbb{Z}_p^r}
    \prod_{l=1}^r 
    (\sum_{\substack{j\in J \\ j\leqslant l}}x_j \gamma_j)^{n_l}
    \prod_{j=1}^r d\widetilde{\mathfrak{m}}_c(x_j)
    \frac{t_1^{n_1}}{n_1!}
    \cdots
    \frac{t_r^{n_r}}{n_r!}.
\end{align*}
Here, we consider each coefficient of $\frac{t_1^{n_1}}{n_1!} \cdots \frac{t_r^{n_r}}{n_r!}$ for $n_1,\ldots,n_r\in \mathbb{N}$. If $1\not\in J$ then 
$$\sum_{j\in J \atop j\leqslant 1}x_j \gamma_j$$
is an empty sum, which implies $0$. Hence, we obtain from \eqref{2-1-2} that 
\begin{align*}
  \bb &(n_1,\ldots,n_r;\gamma_1,\ldots,\gamma_r;c)\\
    &=
    \sum_{J\subset\{1,\ldots,r\}}
    (-1)^{r-|J|}
    \int_{\mathbb{Z}_p^r}
    \prod_{l=1}^r 
    (\sum_{\substack{j\in J\\j\leqslant l}}x_j \gamma_j)^{n_l}
    \prod_{j=1}^r d\widetilde{\mathfrak{m}}_c(x_j)
    \\
    &=
    \sum_{J\subset\{1,\ldots,r\} \atop 1\in J}
    (-1)^{r-|J|}
    \int_{\mathbb{Z}_p^r}
    \prod_{l=1}^r 
    (\sum_{\substack{j\in J\\j\leqslant l}}x_j \gamma_j)^{n_l}
    \prod_{j=1}^r d\widetilde{\mathfrak{m}}_c(x_j)
    \\
    &=
    \sum_{J\subset\{1,\ldots,r\} \atop 1\in J}
    \Bigr(\frac{1-c}{2}\Bigl)^{r-|J|}
    \int_{\mathbb{Z}_p^{|J|}}
    \prod_{l=1}^r 
    (\sum_{\substack{j\in J\\j\leqslant l}}x_j \gamma_j)^{n_l}
    \prod_{j\in J} d\widetilde{\mathfrak{m}}_c(x_j)
\end{align*}
for $n_1,\ldots,n_r\in \mathbb{N}$, 
and this completes the proof.
\end{proof}

\begin{proof}[Proof of Theorem \ref{T-6-1}] 
If $\gamma_1\in p\mathbb{Z}_p$, then it follows from Remark \ref{Rem-gamma1} that all functions on 
both sides of \eqref{main-1} are zero-functions. Hence, \eqref{main-1} holds trivially, 
so we only consider the case 
$\gamma_1\in \mathbb{Z}_p^\times,\ \gamma_2,\ldots,\gamma_r\in p\mathbb{Z}_p$ 
in \eqref{e-6-2}. 

First, we consider the case $\gamma_1=1$. 
Setting $\gamma_1=1$ and $\gamma_2,\ldots,\gamma_r\in p\mathbb{Z}_p$ in \eqref{e-6-2}, 
we have
\begin{align*}
  \bb &(n_1,\ldots,n_r;\gamma_1,\gamma_2,\ldots,\gamma_r;c)\\
    &=
    \sum_{J\subset\{1,\ldots,r\} \atop 1\in J}
    \Bigr(\frac{1-c}{2}\Bigl)^{r-|J|}
    \int_{\mathbb{Z}_p^{|J|}}
    \prod_{l=1}^r 
    (\sum_{\substack{j\in J\\j\leqslant l}}x_j \gamma_j)^{n_l}
    \prod_{j\in J} d\widetilde{\mathfrak{m}}_c(x_j)
\end{align*}
for $n_1,\ldots,n_r\in \mathbb{N}$. From the condition $\gamma_1=1$ and $\gamma_2,\ldots,\gamma_r\in p\mathbb{Z}_p$, we 
see that
\begin{align*}
(\mathbb{Z}_p^{|J|})'_{\{\gamma_j\}_{j\in J}}&=\mathbb{Z}_p^\times \times \mathbb{Z}_p^{|J|-1}=\mathbb{Z}_p^{|J|}\setminus \left(p\mathbb{Z}_p \times \mathbb{Z}_p^{|J|-1}\right)
\end{align*}
for any $J\subset\{1,\ldots,r\}$ with $1\in J$. 
Therefore, we have
\begin{align}
&   \sum_{J\subset\{1,\ldots,r\} \atop 1\in J}    \Bigr(\frac{1-c}{2}\Bigl)^{r-|J|}L_{p,|J|}\Bigl(\Bigl(-{\sum}_{\mu}n_l\Bigr)_{\mu=1}^{|J|};\bigl(\omega^{{\sum}_{\mu}n_l}\bigr)_{\mu=1}^{|J|};(\gamma_{j})_{j\in J};c\Bigr)\notag\\
    &
    =
    \sum_{J\subset\{1,\ldots,r\}\atop 1\in J}
    \Bigr(\frac{1-c}{2}\Bigl)^{r-|J|}
    \int_{\mathbb{Z}_p^\times \times \mathbb{Z}_p^{|J|-1}}
    \prod_{l=1}^r 
    (\sum_{\substack{j\in J\\ j\leqslant l}}x_j \gamma_j)^{n_l}
    \prod_{j\in J} d\widetilde{\mathfrak{m}}_c(x_j)\notag\\
    &=
    \sum_{J\subset\{1,\ldots,r\} \atop 1\in J}
    \Bigr(\frac{1-c}{2}\Bigl)^{r-|J|}
    \int_{\mathbb{Z}_p^{|J|}}
    \prod_{l=1}^r 
    (\sum_{\substack{j\in J\\i\leqslant l}}x_j \gamma_j)^{n_l}
    \prod_{j\in J} d\widetilde{\mathfrak{m}}_c(x_j)\notag\\
&   \ \ -p^{\sum_{l=1}^{r}n_l}\sum_{J\subset\{1,\ldots,r\}\atop 1\in J}
    \Bigr(\frac{1-c}{2}\Bigl)^{r-|J|}
    \int_{\mathbb{Z}_p^{|J|}}
    \prod_{l=1}^r 
    (x_1+\sum_{\substack{j\in J\\1<j\leqslant l}}x_j \gamma_j/p)^{n_l}
    \prod_{j\in J} d\widetilde{\mathfrak{m}}_c(x_j)\notag\\
& =  \bb (n_1,\ldots,n_r;1,\gamma_2,\ldots,\gamma_r;c)
    -p^{\sum_{l=1}^{r}n_l}
     \bb (n_1,\ldots,n_r;1,\gamma_2/p,\ldots,\gamma_r/p;c), \label{eq-zero}
\end{align}
which is equal to $0$ when $n_1+\cdots+n_r\not\equiv r\pmod 2$ because of \eqref{parity-2}. 
Let $k_1,\ldots,k_r\in \mathbb{Z}$ with $k_1+\cdots+k_r\not\equiv r\pmod 2$. Then the above consideration 
implies that
\begin{align}
&   \sum_{J\subsetneq\{1,\ldots,r\}\atop 1\in J}
    \Bigr(\frac{1-c}{2}\Bigl)^{r-|J|}L_{p,|J|}\Bigl(\Bigl(-{\sum}_{\mu}n_l\Bigr)_{\mu=1}^{|J|};\bigl(\omega^{{\sum}_{\mu}k_l}\bigr)_{\mu=1}^{|J|};(\gamma_{j})_{j\in J};c\Bigr)\notag\\
& \qquad +L_{p,r}(-n_1,\ldots,-n_r;\omega^{k_1},\ldots,\omega^{k_r};\gamma_1,\gamma_2,\ldots,\gamma_{r};c)=0 \label{L-parity}
\end{align}
holds on 
$$S_{\{k_j\}}=\{(n_1,\ldots,n_r) \in \mathbb{N}^{r}\mid n_j\in k_j+(p-1)\mathbb{Z}\ (1\leqslant j \leqslant r)\}.$$
Since $S_{\{k_j\}}$ is dense in $\mathbb{Z}_p^r$, we obtain \eqref{main-1} in the case $\gamma_1=1$. 

Next we consider the case $\gamma_1\in \mathbb{Z}_p^\times$. 
We can easily check that if $\gamma_1\in \mathbb{Z}_p^\times$, 
\begin{align*}
& L_{p,r}((s_j);(\omega^{k_j});(\gamma_j);c) \\
& \quad  =\langle \gamma_1\rangle^{-s_1-\cdots -s_r} \omega^{k_1+\cdots +k_r}(\gamma_1) 
    \ L_{p,r}((s_j);(\omega^{k_j});(\gamma_j/\gamma_1);c),\\
& L_{p,|J|}\Bigl(\Bigl({\sum}_{\mu}s_l\Bigr)_{\mu=1}^{|J|};\bigl(\omega^{{\sum}_{\mu}k_l}\bigr)_{\mu=1}^{|J|};(\gamma_{j})_{j\in J};c\Bigr)  \\
& =\langle \gamma_1\rangle^{-s_1-\cdots -s_r} \omega^{k_1+\cdots +k_r}(\gamma_1) L_{p,|J|}\Bigl(\Bigl({\sum}_{\mu}s_l\Bigr)_{\mu=1}^{|J|};\bigl(\omega^{{\sum}_{\mu}k_l}\bigr)_{\mu=1}^{|J|};(\gamma_{j}/\gamma_1)_{j\in J};c\Bigr)
\end{align*}
for $J\subsetneq \{1,\ldots,r\}$ with $1\in J$. 
Since we already proved \eqref{main-1} for 
the case $\{\gamma_j/\gamma_1\}_{j=1}^{r}$, 
we consequently obtain \eqref{main-1} for 
$\gamma_1\in \mathbb{Z}_p^\times$ by multiplying 
both sides by
$$\langle \gamma_1\rangle^{-s_1-\cdots -s_r} \omega^{k_1+\cdots +k_r}(\gamma_1). $$
Thus we obtain the proof of Theorem \ref{T-6-1}.
\end{proof}

\begin{remark}\label{Rem-Parity-result}
Note that each $p$-adic multiple $L$-function appearing on the right-hand side of \eqref{main-1} is of depth lower  than $r$. 
The relation \eqref{main-1} reminds us of the \textit{parity result} for MZVs which implies that 
the MZV whose depth and weight are of different parity can be expressed as a polynomial in MZVs of lower depth with $\mathbb{Q}$-coefficients.
In fact, \eqref{L-parity} shows that 
$L_{p,r}((-n_j);(\omega^{k_j});(\gamma_j);c)$ can be expressed as a polynomial in $p$-adic multiple $L$-values of lower depth than $r$ at non-positive integers with $\mathbb{Q}$-coefficients, when $n_1+\cdots+n_r\not\equiv r\pmod 2$. This can be regarded as a $p$-adic version of the parity result for $p$-adic multiple $L$-values. 
By the density property, we obtain its continuous version, that is, the functional relation \eqref{main-1}.
\end{remark}

The following result corresponds to the case $r=1$ in Theorem \ref{T-6-1}.

\begin{example}\label{Rem-zero-2}
Setting $r=1$ in \eqref{eq-zero}, we obtain from Definition \ref{Def-Kubota-L} that 
\begin{align*}
    &
    \Bigr(\frac{1-c}{2}\Bigl)^{r-1}
    \int_{\mathbb{Z}_p^\times}
    x_1^{n_1}
    d\widetilde{\mathfrak{m}}_c(x_1)\left(=\Bigr(\frac{1-c}{2}\Bigl)^{r-1}
    \left(c^{n_1+1}-1\right)L_p(-n_1;\omega^{n_1+1})\right)\notag\\
& =  \bb (n_1;1;c)
    -p^{n_1}
     \bb (n_1;1;c)=0
\end{align*}
for $n_1\in \mathbb{N}$ such that 
$n_1\not\equiv 1\pmod 2$, namely $n_1+1$ is odd, 
due to \eqref{parity-2}. Hence, if $k$ is odd, 
then $L_p(-n_1;\omega^{k})=0$ for $n_1\in \mathbb{N}$ such that $n_1+1\equiv k\ ({\rm mod}\ p-1)$. This implies 
$$
L_p(s,\omega^k)\equiv 0
$$
when $k$ is odd, the statement of Proposition \ref{Prop-zero}. Thus, we can regard Theorem \ref{T-6-1} as a multiple version of Proposition \ref{Prop-zero}.
\end{example}

Next, we consider the case $r=2$.

\begin{example} \label{P-Func-eq-1}
Let $k,l\in \mathbb{N}_0$ with $2\nmid (k+l)$,
$\gamma_1\in \mathbb{Z}_p^\times$, $\gamma_2\in p\mathbb{Z}_p$ and $c\in \mathbb{Z}_p^\times $. Then, for $s_1,s_2\in \mathbb{Z}_p$, we obtain from \eqref{main-1} and \eqref{1-p-LF-gamma} that 
\begin{align}
& L_{p,2}(s_1,s_2;\omega^{k},\omega^{l};\gamma_1,\gamma_2;c)\notag\\
& ={\frac{c-1}{2}}\langle \gamma_1\rangle^{-s_1-s_2} \omega^{k+l}(\gamma_1)\left( \langle c\rangle^{1-s_1-s_2}\omega^{k+l+1}(c)-1\right)L_p(s_1+s_2;\omega^{k+l+1}). \label{Func-eq-1}
\end{align}
Note that this functional relation in the case $p>2$, $c=2$ and $\gamma_1=1$ was already proved in \cite{KMT-IJNT} by a different method. 
\end{example}


\begin{remark}\label{R-3-2} 
As we noted above, when $k+l$ is even, $L_p(s;\omega^{k+l+1})$ is the zero-function, so is the right-hand side of \eqref{Func-eq-1}. On the other hand, even if $k+l$ is even, the left-hand side of \eqref{Func-eq-1} is not necessarily the zero-function. In fact, it follows from \eqref{L2-val} that
\begin{align*}
& L_{p,2}(-1,-1;\omega,\omega;1,\gamma_2;c)=\frac{(1-c^2)^2(1-p)}{4}B_2^2\gamma_2
\end{align*}
for $\gamma_2\in p\mathbb{Z}_p$, which does not vanish if $\gamma_2\not=0$, 
so $L_{p,2}(s_1,s_2;\omega,\omega;1,\gamma_2;c)$ 
is not the zero-function.
As a result, 
$L_{p,2}(s_1,s_2;\omega^{k},\omega^l;1,\gamma_2;c)$ 
seems to have more information 
than the Kubota-Leopoldt $p$-adic $L$-functions.
\end{remark}

Lastly, we consider the case $r=3$.

\begin{example}\label{E-6-3}
Let 
$k_1,k_2,k_3\in \mathbb{Z}$ with $2\mid (k_1+k_2+k_3)$, $\gamma_1\in \mathbb{Z}_p^\times$, $\gamma_2,\gamma_3\in p\mathbb{Z}_p$ and $c\in \mathbb{Z}_p^\times $. Then for $s_1,s_2,s_3\in \mathbb{Z}_p$, we obtain from \eqref{main-1} that 
\begin{align*}
& {L_{p,3}(s_1,s_2,s_3;\omega^{k_1},\omega^{k_2},\omega^{k_3};\gamma_1,\gamma_2,\gamma_3;c)} \notag\\
& \quad {=\frac{1-c}{2}L_{p,2}(s_1,s_2+s_3;\omega^{k_1},\omega^{k_2+k_3};\gamma_1,\gamma_2;c)} {+\frac{1-c}{2}L_{p,2}(s_1+s_2,s_3;\omega^{k_1+k_2},\omega^{k_3};\gamma_1,\gamma_3;c)} \notag\\
& \quad \quad 
-\left(\frac{1-c}{2}\right)^2\langle \gamma_1\rangle^{-s_1-s_2-s_3}\omega^{k_1+k_2+k_3}(\gamma_1)\\
& \qquad \quad 
\times \left(\langle c\rangle^{1-s_1-s_2-s_3}\omega^{k_1+k_2+k_3+1}(c)-1\right)L_{p}(s_1+s_2+s_3;\omega^{k_1+k_2+k_3+1}).
\end{align*}
Note that from \eqref{1-p-LF-gamma} the third term on the right-hand side vanishes because $L_p(s;\omega^{k})$ is the zero function when $k$ is odd. 
\end{example}

Using Theorem \ref{T-6-1} and Examples \ref{P-Func-eq-1} and \ref{E-6-3}, we can immediately obtain the following result by induction on $r\geqslant 2$. 

\begin{corollary}\label{C-6-4} 
Let $r\in \mathbb{N}_{\geqslant 2}$, $k_1,\ldots,k_r\in \mathbb{Z}$ with $k_1+\cdots+k_r\not\equiv r\pmod 2$, $\gamma_1\in \mathbb{Z}_p$, $\gamma_2,\ldots,\gamma_{r}\in p\mathbb{Z}_p$ and $c\in \mathbb{Z}_p^\times$. Then 
$$L_{p,r} (s_1,\ldots,s_r;\omega^{k_1},\ldots,\omega^{k_r};\gamma_1,\ldots,\gamma_{r};c)$$
can be expressed as a polynomial in $p$-adic $j$-ple $L$-functions for $j\in \{1,2,\ldots,r-1\}$ satisfying $j \not\equiv r \pmod 2$, with $\mathbb{Q}$-coefficients.
\end{corollary}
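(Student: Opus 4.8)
The plan is to argue by strong induction on the depth $r\geqslant 2$, using the functional relation \eqref{main-1} of Theorem \ref{T-6-1} as the single mechanism that lowers the depth, together with the observation that the parity hypothesis $k_1+\cdots+k_r\not\equiv r\pmod 2$ is inherited by exactly those lower-depth terms that still require reduction. For the base case $r=2$ the sum in \eqref{main-1} runs over the single subset $J=\{1\}$, so Example \ref{P-Func-eq-1} already writes $L_{p,2}(s_1,s_2;\omega^{k},\omega^{l};\gamma_1,\gamma_2;c)$ (with $k+l$ odd) as a multiple of $L_{p,1}$, and since $1\not\equiv 2\pmod 2$ this is of the asserted form.

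For the inductive step I would first apply Theorem \ref{T-6-1} to express $L_{p,r}$ as the combination $-\sum_{J\subsetneq\{1,\dots,r\},\,1\in J}\left(\frac{1-c}{2}\right)^{r-|J|}L_{p,|J|}\bigl(\cdots;(\gamma_j)_{j\in J};c\bigr)$. The decisive bookkeeping point is that, because the blocks $[\,j_\mu(J),j_{\mu+1}(J)\,)$ partition $\{1,\dots,r\}$, the total exponent of the Teichm\"uller characters attached to each summand $L_{p,|J|}$ equals $\sum_{l=1}^r k_l$, so its residue modulo $2$ is independent of $J$ and equals $k_1+\cdots+k_r\equiv r+1\pmod 2$. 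I would then split the summands according to the parity of $j:=|J|$ against that of $r$.

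The summands with $j\not\equiv r\pmod 2$ are already of the required shape (they have $1\leqslant j<r$) and are retained. For a summand with $j\equiv r\pmod 2$ and $j\geqslant 2$, its total character exponent is $\equiv r+1\not\equiv j\pmod 2$, so it satisfies the parity hypothesis of the corollary with $r$ replaced by $j$; moreover its restricted parameters still have $\gamma_{j_1(J)}=\gamma_1\in\mathbb{Z}_p$, the remaining $\gamma$'s in $p\mathbb{Z}_p$, and the same $c\in\mathbb{Z}_p^\times$, so the induction hypothesis (valid since $2\leqslant j<r$) rewrites $L_{p,j}$ as a polynomial in $p$-adic $j'$-ple $L$-functions with $j'<j<r$ and $j'\not\equiv j\equiv r\pmod 2$, again of the desired form. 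The only summand escaping both cases is the singleton $J=\{1\}$ when $j=1\equiv r\pmod 2$, i.e.\ when $r$ is odd; there the single character exponent is $k_1+\cdots+k_r\equiv r+1\equiv 0\pmod 2$, so by Example \ref{example for r=1} the term is a scalar multiple of $L_p(s_1+\cdots+s_r;\omega^{k_1+\cdots+k_r+1})$ with odd index, which vanishes by Proposition \ref{Prop-zero}. Combining the three cases yields the claim, the reduction being linear (hence a fortiori polynomial) with coefficients that are $\mathbb{Q}$-linear combinations of powers of $\frac{1-c}{2}$.

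I expect the main difficulty to be organizational rather than analytic: carrying the invariant ``character-sum modulo $2$ is unchanged'' through the recursion and confirming that the degenerate depth-one summand truly disappears. The induction closes precisely because this invariant is preserved by \eqref{main-1} while the depth strictly decreases, so the summands that need further reduction (those with $j\equiv r$) are exactly the ones for which the parity hypothesis persists; the boundary case $r$ odd, $J=\{1\}$ is the unique place where one must appeal to the single-variable vanishing of Proposition \ref{Prop-zero} instead of a further depth drop. Example \ref{E-6-3} is the first instance ($r=3$) in which this vanishing is invoked and serves as the template for the general step.
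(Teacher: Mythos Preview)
Your argument is correct and is exactly the induction the paper has in mind: apply Theorem \ref{T-6-1}, keep the summands with $|J|\not\equiv r\pmod 2$, recurse on the summands with $|J|\equiv r\pmod 2$ (where the parity hypothesis persists since the total character exponent is unchanged), and kill the residual $|J|=1$ term for odd $r$ via Proposition \ref{Prop-zero} as in Example \ref{E-6-3}. The paper's proof is the one-line ``by induction on $r\geqslant 2$ using Theorem \ref{T-6-1} and Examples \ref{P-Func-eq-1}, \ref{E-6-3}'', and you have simply unpacked that sentence.
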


\

\section{Special values of $p$-adic multiple $L$-functions at positive integers}\label{sec-5}
In the 
previous section, particularly in Theorem \ref{T-main-1}, we saw that 
the special values of our $p$-adic multiple $L$-functions at non-positive integers
are expressed  in terms of the twisted multiple Bernoulli numbers (Definition \ref{Def-M-Bern}),
which are
the special values of the complex multiple 
zeta-functions of generalized Euler-Zagier-Lerch type 
at non-positive integers
(cf. \cite{FKMT-Desing}). 

In this section, we will discuss their special values 
at positive integers.
In the complex case, 
the special values 
of multiple zeta functions (the function \eqref{Barnes-Lerch} with  all 
$\xi_j$ and $\gamma_j$ are $1$)
at positive integers  
are given by the special values of multiple polylogarithms 
at unity (cf. \cite{FKMT-Desing}). 
To consider the $p$-adic case, 
we will introduce 
specific $p$-adic functions in 
each subsection.

Our main result is Theorem \ref{L-Li theorem-2}
where we will show a $p$-adic analogue of the equality \eqref{zeta=Li}. 
The result generalizes 
Coleman's work \cite{C}. 
We will establish in Theorem \ref{L-Li theorem-2} a close relationship of our $p$-adic multiple $L$-functions
with the $p$-adic TMSPLs
\footnote{
We remind that TMSPL stands for the twisted multiple star polylogarithm 
(see \S \ref{sec-1}).
},
which are star-variants of
generalizations of  $p$-adic  multiple polylogarithm
introduced by the first-named author \cite{Fu1, F2}
and Yamashita \cite{Y} 
for 
investigating 
the $p$-adic realization of certain motivic fundamental group.
It is achieved 
by showing that the special values of $p$-adic multiple $L$-functions at positive integers
are described by  
$p$-adic twisted multiple $L$-star values;
the special values at unity
of the $p$-adic TMSPLs (Definition \ref{Def-TMSPL}, see also \cite{Fu1,Y})
constructed by Coleman's $p$-adic iterated integration theory \cite{C}.
To relate 
$p$-adic multiple $L$-functions with $p$-adic TMSPLs,
we shall 
introduce $p$-adic rigid TMSPLs (Definition \ref{def of pMMPL})
and their partial versions (Definition \ref{def of pPMPL}) as intermediate objects
and investigate their several basic properties mainly in 
Subsections \ref{sec-5-3} and \ref{sec-5-4}. 

\subsection{$p$-adic rigid twisted multiple star polylogarithms} \label{sec-5-3}
In this 
subsection, we 
introduce $p$-adic rigid TMSPLs
(Definition \ref{def of pMMPL}), 
and 
give a description of special values of $p$-adic multiple $L$-functions at positive integers 
via special values of $p$-adic rigid TMSPLs at roots of unity
 (Theorem \ref{L-ell theorem}), 
which extends Coleman's result \eqref{Coleman equality}.

First, we briefly review the necessary 
basics of  rigid analysis in our specific case.

\begin{notation}[cf. \cite{BGR,FvP}]\label{rigid-basics}
\label{basics on rigid}
Let $\alpha_1, \dots, \alpha_n\in {\mathbb C}_p$, 
$\rs_1,\dots,\rs_n\in{\mathbb Q}_{>0}$, 
and $\rs_0\in{\mathbb Q}_{\geqslant 0}$.
The space
\begin{equation}\label{affinoid presentation}
X=\left\{z\in {\bf P}^1({\mathbb C}_p)\bigm| |z-\alpha_i|_p\geqslant \rs_i \ (i=1,\dots, n),
|z|_p\leqslant 1/\rs_0
\right\}
\end{equation}
is equipped with 
the structure of an {\it affinoid}, a special type of rigid analytic space. 
A {\it rigid analytic function} on $X$ is 
a function $f(z)$ on $X$
which admits  the convergent expansion
$$
f(z)=\sum_{m\geqslant 0}a_{m}(\infty;f)z^{m}
+\sum_{i=1}^n\sum_{m>0}\frac{a_{m}(\alpha_i;f)}{(z-\alpha_i)^{m}}
$$
with $\mathbb C_p$-coefficients.
The  expressions are 
unique 
(the Mittag-Leffler decompositions), 
which can be  shown from, for example, \cite[I.1.3]{FvP}.
We denote the algebra of rigid analytic functions on $X$ by $A^{\mathrm{rig}}(X)$.
\end{notation}

\begin{notation}
For $a$ in $\textbf{P}^{1}(\mathbb{C}_p)$, 
we let $\overline{a}={\rm red}(a)$, where 
the reduction map red is defined as 
$${\rm red}:\textbf{P}^{1}(\mathbb{C}_p) \to \textbf{P}^{1}({\overline{\mathbb{F}}_p})\left(=\overline{\mathbb{F}}_p\cup \{\overline{\infty}\}\right),$$
where $\overline{\mathbb{F}}_p$ is the algebraic closure of ${\mathbb{F}}_p$. 
For a finite subset $D\subset \textbf{P}^{1}(\mathbb{C}_p)$, we define $\overline{D}={\rm red}(D)\subset \textbf{P}^{1}(\overline{\mathbb{F}}_p)$.
For $a_0 \in \textbf{P}^{1}(\overline{\mathbb{F}}_p)$, 
we define 
its tubular neighborhood $]a_0[={\rm red}^{-1}(a_0)$. 
Namely, 
$]\overline{a}[=\{x\in \textbf{P}^{1}(\mathbb{C}_p)\,|\,|x-a|_p<1\}$
for $a\in {\mathbb{C}}_p$,  $]\overline{0}[={\frak M}_{\mathbb{C}_p}$
and $]\overline{\infty}[=\textbf{P}^{1}(\mathbb{C}_p)\setminus {\mathcal O}_{\mathbb{C}_p}$. 
For a finite subset 
$S\subset  \textbf{P}^{1}(\overline{\mathbb{F}}_p)$,
we define $]S[:={\rm red}^{-1}(S)\subset \textbf{P}^{1}(\mathbb{C}_p)$.
By abuse of notation, 
we denote $A^{\mathrm{rig}}({\bf P}^1({\mathbb C}_p)- ]S[)$
by $A^{\mathrm{rig}}({\bf P}^1\setminus S)$.
\end{notation}

We recall 
two fundamental properties of rigid analytic functions:
\begin{proposition}[{\cite[Chapter 6]{BGR}, etc.}]\label{two fundamental properties of rigid analytic functions}
Let $X$ be as in \eqref{affinoid presentation}.
Then the following holds.

{\rm (i)}\ {\it The coincidence principle}:
If two rigid analytic functions $f(z)$ and $g(z)$ coincide in a subaffinoid of $X$,
then  they coincide on the whole of $X$. 

{\rm (ii)}\ The algebra $A^{\mathrm{rig}}(X)$ forms a Banach algebra  with the supremum norm.
\end{proposition}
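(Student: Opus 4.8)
The plan is to exploit the explicit Mittag--Leffler decomposition recorded in Notation~\ref{rigid-basics}, which renders $A^{\mathrm{rig}}(X)$ entirely concrete. First I would observe that the affinoid $X$ of \eqref{affinoid presentation} is $\mathbf{P}^1(\mathbb{C}_p)$ with the finitely many \emph{disjoint} open residue tubes $|z-\alpha_i|_p<\delta_i$ (and, when $\delta_0>0$, the tube $|z|_p>1/\delta_0$ around $\infty$) deleted; such a complement of finitely many disjoint open disks is connected and irreducible, so $A^{\mathrm{rig}}(X)$ is an integral domain. The crucial technical input, which I would use throughout, is that by the ultrametric nature of $|\cdot|_p$ together with the orthogonality of the terms of the decomposition, the supremum norm is computed termwise: for $f=\sum_{m\geqslant 0}a_m(\infty)z^m+\sum_{i=1}^n\sum_{m>0}a_m(\alpha_i)(z-\alpha_i)^{-m}$ one has $\|f\|=\max\{\sup_m|a_m(\infty)|_p\,\delta_0^{-m},\ \max_i\sup_{m>0}|a_m(\alpha_i)|_p\,\delta_i^{-m}\}$, and rigid analyticity is exactly the condition that these weighted coefficient sequences tend to $0$.

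For part (ii), I would first deduce completeness. By the displayed norm formula, the map sending $f$ to its family of Mittag--Leffler coefficients is an isometry of $A^{\mathrm{rig}}(X)$ onto the space of all coefficient families whose weighted sequences $(|a_m(\infty)|_p\,\delta_0^{-m})_m$ and $(|a_m(\alpha_i)|_p\,\delta_i^{-m})_m$ converge to $0$; this target is a closed subspace of a finite product of $c_0$-type spaces over $\mathbb{C}_p$, hence a $\mathbb{C}_p$-Banach space, so $A^{\mathrm{rig}}(X)$ is complete. Since $X$ is an affinoid, a product of rigid analytic functions is again rigid analytic, so $A^{\mathrm{rig}}(X)$ is closed under multiplication, and submultiplicativity $\|fg\|\leqslant\|f\|\,\|g\|$ is immediate from the pointwise identity $|f(z)g(z)|_p=|f(z)|_p\,|g(z)|_p\leqslant\|f\|\,\|g\|$. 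Together these give the Banach algebra structure. (In fact the norm is multiplicative because $X$ is reduced and connected, but this is not needed for the statement.)

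For part (i), set $h=f-g$, a rigid analytic function vanishing on a non-empty subaffinoid $Y\subset X$; the plan is to show $h\equiv 0$ by propagating the vanishing through the connectedness of $X$. On any closed disk or annulus $U\subset Y$, $h$ restricts to a convergent Taylor or Laurent series that vanishes identically, so by uniqueness of such local expansions all of its local coefficients vanish, i.e. $h|_U=0$. I would then cover $X$ by a connected chain of closed annuli and disks with non-empty overlaps and spread $h=0$ from $U$ across all of $X$, matching expansions on each overlap by their uniqueness. Equivalently, and more conceptually, the restriction map $A^{\mathrm{rig}}(X)\to A^{\mathrm{rig}}(Y)$ is injective because $X$ is integral and a non-empty subaffinoid $Y$ is Zariski-dense (the zero locus of a nonzero function is a proper analytic subset, with empty admissible interior), which forces $h=0$.

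The genuinely substantive point is the coincidence principle (i): completeness in (ii) is bookkeeping once the termwise norm formula is in hand, and submultiplicativity is automatic. The delicate step is the rigid-analytic analytic continuation, since connectedness in the naive topological sense does not suffice on a rigid space; I would handle it by the explicit admissible covering of $X$ by overlapping affinoid annuli with connected nerve, propagating the vanishing of $h$ across each overlap by uniqueness of the power-series expansions. This is precisely the content packaged in the cited references \cite[Chapter 6]{BGR} and \cite[I.1.3]{FvP}, which I would invoke to keep the argument short.
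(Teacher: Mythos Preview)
The paper does not prove this proposition at all: it is stated with a citation to \cite[Chapter~6]{BGR} and no proof environment follows. It is treated as background material from rigid analysis, invoked later (e.g.\ in the proof of Proposition~\ref{rigidness}) rather than established.

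Your sketch is broadly on the right track and would make a serviceable outline if a proof were required. A few remarks. For (ii), the key technical claim---that the supremum norm on $X$ is computed termwise from the Mittag--Leffler coefficients---is correct but is itself the nontrivial content (orthogonality of the decomposition); once granted, completeness and submultiplicativity follow as you say. For (i), your second argument (``$X$ is integral, hence restriction to a non-empty affinoid subdomain is injective'') is the cleanest route, but be careful: the assertion that the zero locus of a nonzero $h\in A^{\mathrm{rig}}(X)$ has empty admissible interior is essentially a restatement of the coincidence principle, so you should ground it in something more primitive---for instance, that a reduced connected affinoid algebra is an integral domain and that an affinoid subdomain of positive dimension is Zariski-dense. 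Also note that ``subaffinoid'' must be read as a non-empty affinoid subdomain of the same dimension (an open in the analytic sense), not an arbitrary closed point, or the statement is false. Your first argument via a chain of overlapping closed disks and annuli with connected nerve is fine for the specific $X$ of \eqref{affinoid presentation} and is closer in spirit to how such results are typically used in practice.
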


The following function plays the main role in this subsection:

\begin{definition}\label{def of pMMPL}
Let $n_1,\dots,n_r\in \mathbb{N}$ and $\xi_1,\dots,\xi_{r}\in\mathbb{C}_p$ with $|\xi_j|_p\leqslant 1$ ($1\leqslant j\leqslant r$).
The {\bf $p$-adic rigid TMSPL} 
$\ell^{(p),\star}_{n_1,\dots,n_r}(\xi_1,\dots,\xi_{r};z)$ is defined by the following 
$p$-adic power series:
\begin{equation}\label{series expression for ell}
\ell^{(p),\star}_{n_1,\dots,n_r}(\xi_1,\dots,\xi_{r};z):=
\underset{(k_1,p)=\cdots=(k_r,p)=1}{\underset{0<k_1\leqslant\cdots\leqslant k_{r}}{\sum}}
\frac{\xi_1^{k_1}\cdots\xi_r^{k_r}}{k_1^{n_1}\cdots k_r^{n_r}}z^{k_r},
\end{equation}
which converges for $z\in ]\bar{0}[=\{x\in\mathbb{C}_p\bigm| \ |x|_p<1\}$, 
by the assumption $|\xi_j|_p\leqslant 1$ for $1\leqslant j\leqslant r$.
\end{definition}

It will be proved 
that  it is rigid analytic 
(Proposition \ref{rigidness}) 
and, 
furthermore, overconvergent 
(Theorem \ref{rigidness III}).
We remark that when $r=1$, $\ell^{(p),\star}_{n}(1;z)$
is equal to the $p$-adic polylogarithm
$\ell_n^{(p)}(z)$
in \cite[p.196]{C}.
The following integral expressions 
are generalizations of \cite[Lemma 7.2]{C}:

\begin{theorem}\label{integral theorem}
Let $n_1,\dots,n_r\in \mathbb{N}$, and $\xi_1,\dots,\xi_{r}\in\mathbb{C}_p$ with $|\xi_j|_p\leqslant 1$ ($1\leqslant j\leqslant r$).
Define the 
finite subset $S$ of  $\textbf{P}^{1}(\overline{\mathbb{F}}_p)$ by
\footnote{
Here, we ignore the multiplicity.
}
\begin{equation}\label{S0}
S=\{
\overline{\xi_{r}^{-1}},\overline{(\xi_{r-1}\xi_{r})^{-1}},
\dots,\overline{(\xi_1\cdots\xi_{r-1})^{-1}}\}.
\end{equation}
Then the $p$-adic rigid TMSPL
$\ell^{(p),\star}_{n_1,\dots,n_r}(\xi_1,\dots,\xi_{r};z)$ is 
extended 
to $\textbf{P}^{1}({\mathbb{C}}_p)- ]S[$
as a function in 
$z$ by  the following $p$-adic integral expression:
\begin{align}\label{integral expression}
\ell&^{(p),\star}_{ n_1,\dots,n_r}  (\xi_1,\dots,\xi_{r};z)= \notag \\
&\int_{(\mathbb{Z}_p^r)'_{\{1\}}}\langle x_1\rangle^{-n_1}\langle x_1+x_2\rangle^{-n_2}\cdots\langle x_1+\cdots+x_r\rangle^{-n_r}\cdot 
\omega(x_1)^{-n_1}\omega(x_1+x_2)^{-n_2}\cdots\omega(x_1+\cdots+x_r)^{-n_r} \notag \\
&\qquad\qquad\qquad \times d{\frak m}_{\xi_1\cdots\xi_{r}z}(x_1)\cdots d{\frak m}_{\xi_{r} z}(x_{r}), 
\end{align}
where $(\mathbb{Z}_p^r)'_{\{1\}}=\Bigl\{(x_1,\dots,x_r)\in\mathbb{Z}_p^r\Bigm|p\nmid x_1, p\nmid (x_1+x_2),\dots, p\nmid (x_1+\cdots+x_r)\Bigr\}$
(cf. \eqref{region}).
\end{theorem}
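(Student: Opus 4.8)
The plan is to prove two complementary facts: that the integral on the right of \eqref{integral expression} is well-defined for every $z\in\textbf{P}^1(\mathbb{C}_p)-]S[$, and that on the tubular neighborhood $]\overline{0}[$ it reproduces the defining series \eqref{series expression for ell}. Together these show that the integral extends $\ell^{(p),\star}_{n_1,\dots,n_r}(\xi_1,\dots,\xi_r;z)$ beyond its disc of convergence to the whole of $\textbf{P}^1(\mathbb{C}_p)-]S[$, as asserted.

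First I would simplify the integrand. On the region $(\mathbb{Z}_p^r)'_{\{1\}}$ every partial sum $x_1+\cdots+x_l$ is a unit, so the decomposition $u=\omega(u)\langle u\rangle$ gives
\[
\langle x_1+\cdots+x_l\rangle^{-n_l}\,\omega(x_1+\cdots+x_l)^{-n_l}=(x_1+\cdots+x_l)^{-n_l}\qquad(1\leqslant l\leqslant r),
\]
so the integrand is the continuous function $\prod_{l=1}^{r}(x_1+\cdots+x_l)^{-n_l}$ on the compact open set $(\mathbb{Z}_p^r)'_{\{1\}}$. For well-definedness I would recall that the Koblitz measure $\mathfrak{m}_\eta$ of \eqref{Koblitz-measure} exists exactly when $|\eta-1|_p\geqslant 1$. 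Writing $\eta_j:=\xi_j\cdots\xi_r z$ for the parameter attached to the variable $x_j$, the condition $|\eta_j-1|_p\geqslant 1$ fails precisely when $z$ lies in $]\overline{(\xi_j\cdots\xi_r)^{-1}}[$ (when $\xi_j\cdots\xi_r$ is a unit; otherwise it holds automatically on $\mathcal{O}_{\mathbb{C}_p}$). Removing $]S[$ with $S$ as in \eqref{S0} therefore makes all $r$ measures $\mathfrak{m}_{\eta_1},\dots,\mathfrak{m}_{\eta_r}$ simultaneously defined; since they are $\mathcal{O}_{\mathbb{C}_p}$-valued and the integrand is continuous, the iterated integral of \eqref{p-adic integ} converges and defines a function on $\textbf{P}^1(\mathbb{C}_p)-]S[$.

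The heart of the proof is the evaluation on $]\overline{0}[=\{|z|_p<1\}$, where each $\eta_j\in]\overline{0}[$. I would compute the integral straight from \eqref{p-adic integ}, using $\mathfrak{m}_{\eta_j}(a_j+p^N\mathbb{Z}_p)=\eta_j^{a_j}/(1-\eta_j^{p^N})$. Since $|\eta_j^{p^N}|_p\to 0$ the denominators tend to $1$, and the limiting sum is $\sum \prod_{l=1}^{r}(x_1+\cdots+x_l)^{-n_l}\prod_{j=1}^{r}\eta_j^{a_j}$ over $(a_1,\dots,a_r)\in\mathbb{N}_0^r$ with all partial sums prime to $p$. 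Putting $y_l:=a_1+\cdots+a_l$ and using the identity $\prod_{j=1}^{r}\eta_j^{a_j}=z^{y_r}\prod_{i=1}^{r}\xi_i^{y_i}$, the substitution $(a_j)\mapsto(y_l)$ is a bijection onto $\{0<y_1\leqslant\cdots\leqslant y_r,\ p\nmid y_l\}$ and turns the sum into
\[
\sum_{\substack{0<k_1\leqslant\cdots\leqslant k_r\\(k_1,p)=\cdots=(k_r,p)=1}}\frac{\xi_1^{k_1}\cdots\xi_r^{k_r}}{k_1^{n_1}\cdots k_r^{n_r}}\,z^{k_r},
\]
which is exactly \eqref{series expression for ell}. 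For $r=1$ this recovers the computation behind Coleman's Lemma 7.2.

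The step I expect to be the main obstacle is making this Riemann-sum evaluation rigorous, namely interchanging $\lim_{N\to\infty}$ with the summation: one must bound the tails of $\sum_{(a_j)}\prod_l(\cdots)^{-n_l}\prod_j\eta_j^{a_j}$ uniformly and absorb the factors $(1-\eta_j^{p^N})^{-1}$. This rests on the estimate $\prod_j|\eta_j|_p^{a_j}\to 0$ as $\sum_j a_j\to\infty$ together with the boundedness of the integrand, both available from $|\xi_i|_p\leqslant 1$ and $|z|_p<1$. Once equality with the series on $]\overline{0}[$ is secured and the integral is known to live on $\textbf{P}^1(\mathbb{C}_p)-]S[$, the extension claim follows; the coincidence principle (Proposition \ref{two fundamental properties of rigid analytic functions}(i)) then pins it down as the unique such extension once rigid analyticity is established in Proposition \ref{rigidness}.
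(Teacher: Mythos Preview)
Your proposal is correct and follows essentially the same route as the paper: simplify the integrand via $\langle u\rangle\,\omega(u)=u$, write the integral as the limit of its Riemann sums, and identify that limit with the series \eqref{series expression for ell} on $]\bar 0[$ after the change of variables $k_l=a_1+\cdots+a_l$. The paper's proof is terser (``by direct calculation''), but one point worth noting is that it packages the $N$-th Riemann sum, together with the factors $(1-(\xi_j\cdots\xi_r z)^{p^N})^{-1}$, as a named rational function $g^N_{n_1,\dots,n_r}(\xi_1,\dots,\xi_r;z)$; this object is then reused in Lemma~\ref{congruence} and Proposition~\ref{rigidness} to prove rigid analyticity on ${\bf P}^1(\mathbb C_p)-]S[$, so you may want to retain it rather than passing immediately to the limit.
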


\begin{proof}
Since $\langle x\rangle\cdot \omega (x)=x\neq 0$
for $x\in \mathbb{Z}_p^\times$,
the right-hand side of \eqref{integral expression} is
\begin{align}
\int_{(\mathbb{Z}_p^r)'_{\{1\}}}
& x_1^{-n_1}(x_1+x_2)^{-n_2}\cdots (x_1+\cdots+x_r)^{-n_r}
d{\frak m}_{\xi_1\cdots\xi_{r}z}(x_1)\cdots d{\frak m}_{\xi_{r} z}(x_{r}) 
\notag\\
&=\lim_{M\to\infty}
\underset{(l_1,p)=\cdots=(l_1+\cdots+l_r,p)=1}{\underset{0\leqslant l_1, \dots,l_{r}<p^M}{\sum}}
\frac{\xi_1^{l_1}\xi_2^{l_1+l_2}\cdots\xi_{r}^{l_1+\cdots+l_{r}}}{l_1^{n_1}(l_1+l_2)^{n_2}\cdots (l_1+\cdots+l_r)^{n_r}}z^{l_1+\cdots+l_r} \notag \\
&\qquad \qquad \times \frac{1}{1-(\xi_1\cdots\xi_{r} z)^{p^M}}
\cdots 
\frac{1}{1-(\xi_{r-1}\xi_r z)^{p^M}}\cdot
\frac{1}{1-(\xi_r z)^{p^M}}.
\label{limit expression}\\
&=\lim_{M\to\infty} \qquad g^M_{n_1,\dots,n_r}(\xi_1,\dots,\xi_{r};z)
\qquad \qquad\qquad \text{(say).} \notag
\end{align}
By direct calculation,\ 
it can be shown that it is equal to
the right-hand side of \eqref{series expression for ell} 
when $|z|_p<1$.
\end{proof}

As for $g^M_{n_1,\dots,n_r}(\xi_1,\dots,\xi_{r};z)$
defined in the above proof, we have

\begin{lemma}\label{congruence}
Fix  $n_1,\dots, n_r, M\in \mathbb{N}$, and $\xi_1,\dots,\xi_{r}\in\mathbb{C}_p$ with $|\xi_j|_p=1$ ($1\leqslant j\leqslant r$).
Then for 
$
z_0\in 
{\bf P}^1({\mathbb C}_p) -]\overline{\xi_{r}^{-1}},\overline{(\xi_{r-1}\xi_r)^{-1}},\dots,\overline{(\xi_1\cdots\xi_{r})^{-1}}[ 
$,  we have
$$
g^M_{n_1,\dots,n_r}(\xi_1,\dots,\xi_{r};z_0)\in{\mathcal O}_{{\mathbb C}_p}
$$
and
$$
g^{M+1}_{n_1,\dots,n_r}(\xi_1,\dots,\xi_{r};z_0)\equiv g^M_{n_1,\dots,n_r}(\xi_1,\dots,\xi_{r};z_0) \pmod{p^M}.
$$
\end{lemma}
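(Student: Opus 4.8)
The plan is to read $g^M_{n_1,\dots,n_r}(\xi_1,\dots,\xi_r;z)$, as defined in the proof of Theorem \ref{integral theorem}, as a Riemann sum of Koblitz measures. Writing $w_j=\xi_j\cdots\xi_r$ (so $|w_j|_p=1$), $k_l(a)=a_1+\cdots+a_l$, and $f(a)=\prod_{l=1}^r k_l(a)^{-n_l}$ for a representative $a=(a_1,\dots,a_r)$ with $0\leqslant a_j<p^M$, one checks using $\mathfrak m_{w_jz}(a_j+p^M\mathbb{Z}_p)=(w_jz)^{a_j}/(1-(w_jz)^{p^M})$ that
\begin{equation*}
g^M_{n_1,\dots,n_r}(\xi_1,\dots,\xi_r;z_0)=\sum_{\substack{0\leqslant a_j<p^M\\ (k_l(a),p)=1\ (1\leqslant l\leqslant r)}}f(a)\prod_{j=1}^r{\mathfrak m}_{w_jz_0}(a_j+p^M\mathbb{Z}_p).
\end{equation*}
The key observation is that $z_0$ lying outside the tubes $]\overline{(\xi_j\cdots\xi_r)^{-1}}[$ is equivalent to $|w_jz_0-1|_p\geqslant 1$ for every $j$ (since $|w_jz_0-1|_p=|w_j|_p\,|z_0-w_j^{-1}|_p=|z_0-w_j^{-1}|_p$), which is exactly the hypothesis in \eqref{Koblitz-measure} guaranteeing that $\mathfrak m_{w_jz_0}$ is ${\mathcal O}_{\mathbb{C}_p}$-valued.

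Granting this, the integrality assertion is immediate: each $k_l(a)$ is prime to $p$, hence a unit in $\mathbb{Z}_p$, so $f(a)\in\mathbb{Z}_p^\times\subset{\mathcal O}_{\mathbb{C}_p}$, while every factor $\mathfrak m_{w_jz_0}(a_j+p^M\mathbb{Z}_p)$ lies in ${\mathcal O}_{\mathbb{C}_p}$; thus $g^M(\xi_1,\dots,\xi_r;z_0)$, being a finite ${\mathcal O}_{\mathbb{C}_p}$-linear combination, lies in ${\mathcal O}_{\mathbb{C}_p}$. The only omitted point $z_0=\infty$ is harmless, since $g^M$ is a rational function whose numerator has degree strictly less than the degree $rp^M$ of the denominator, whence $g^M(\infty)=0$.

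For the congruence I would pass from level $M$ to level $M+1$ by refining each residue box. Additivity of the measure gives $\mathfrak m_{w_jz_0}(a_j+p^M\mathbb{Z}_p)=\sum_{t_j=0}^{p-1}\mathfrak m_{w_jz_0}(a_j+t_jp^M+p^{M+1}\mathbb{Z}_p)$; expanding the product over $j$ and setting $b_j=a_j+t_jp^M$ rewrites $g^M$ as a sum over $0\leqslant b_j<p^{M+1}$ with integrand $f$ evaluated at $a(b):=b\bmod p^M$. Because $M\geqslant 1$, the residues $k_l(b)\bmod p$ agree with $k_l(a(b))\bmod p$, so the coprimality condition $(k_l,p)=1$ is precisely the one defining $g^{M+1}$. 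Subtracting,
\begin{equation*}
g^{M+1}(\xi_1,\dots,\xi_r;z_0)-g^M(\xi_1,\dots,\xi_r;z_0)=\sum_{\substack{0\leqslant b_j<p^{M+1}\\ (k_l(b),p)=1}}\bigl(f(b)-f(a(b))\bigr)\prod_{j=1}^r{\mathfrak m}_{w_jz_0}(b_j+p^{M+1}\mathbb{Z}_p).
\end{equation*}
Here each $k_l(b)\equiv k_l(a(b))\pmod{p^M}$; since these are $p$-adic units, so are their $(-n_l)$-th powers, and congruences mod $p^M$ persist under inversion and under multiplying the $r$ integral factors, giving $f(b)\equiv f(a(b))\pmod{p^M}$. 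As each measure factor again lies in ${\mathcal O}_{\mathbb{C}_p}$, every summand lies in $p^M{\mathcal O}_{\mathbb{C}_p}$, and the ultrametric inequality yields $g^{M+1}\equiv g^M\pmod{p^M}$.

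The main obstacle is the bookkeeping in the refinement step: one must verify that the defining summation condition $(k_l,p)=1$ is stable under subdividing residue classes (which rests on $M\geqslant 1$) and that the mod-$p^M$ congruence of each partial sum $k_l$ propagates through the rational expression $f=\prod_l k_l^{-n_l}$. Both reduce to the elementary fact that for $p$-adic units $u\equiv u'\pmod{p^M}$ one has $u^{-1}\equiv u'^{-1}\pmod{p^M}$, together with multiplicativity of such congruences among integral elements. The only genuinely conceptual input is the identification of the tube condition on $z_0$ with the integrality hypothesis $|w_jz_0-1|_p\geqslant 1$ of \eqref{Koblitz-measure}, which makes both claims fall out of the measure formalism rather than forcing direct estimates on the rational function $g^M$.
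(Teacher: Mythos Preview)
Your proof is correct and in fact cleaner than the paper's. Both arguments recognize $g^M$ as the level-$M$ Riemann sum for the integral \eqref{integral expression}, but you exploit this systematically: once you note that $z_0\notin]\overline{w_j^{-1}}[$ is equivalent to $|w_jz_0-1|_p\geqslant 1$, the hypothesis of \eqref{Koblitz-measure} is met for \emph{every} relevant $z_0$ (including $|z_0|_p>1$), so integrality and the mod-$p^M$ congruence follow uniformly from the fact that the measures are ${\mathcal O}_{\mathbb C_p}$-valued and additive under refinement, together with $f(b)\equiv f(a(b))\pmod{p^M}$ for $p$-adic unit arguments.

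The paper instead splits into two cases. For $z_0\in{\mathcal O}_{\mathbb C_p}-]S[$ it carries out essentially your refinement argument by hand, writing $l_j=l_j'+kp^M$ and expanding. For $z_0\in]\overline{\infty}[$ it does not use the measure directly but substitutes $\varepsilon=1/z_0$ and derives a functional equation
\[
g^{M}_{n_1,\dots,n_r}(\xi_1,\dots,\xi_{r};z_0)\equiv(-1)^{r+n_1+\cdots+n_r}\,g^{M}_{n_1,\dots,n_r}(\xi_1^{-1},\dots,\xi_{r}^{-1};\varepsilon)\pmod{p^M},
\]
thereby reducing to the integral case. Your approach avoids this detour entirely; the paper's route, on the other hand, yields the functional equation above as a byproduct (which is of independent interest, being responsible for Lemma \ref{ell at infinity} via the behavior near $\infty$). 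Either way the logical content is the same: Riemann sums of a uniformly continuous integrand against a bounded measure are Cauchy.
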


\begin{proof}
When $z_0\in{\bf P}^1({\mathbb C}_p) 
-]\overline{\xi_{r}^{-1}},\overline{(\xi_{r-1}\xi_r)^{-1}},\dots,\overline{(\xi_1\cdots\xi_{r})^{-1}},
\overline{\infty}[$,
namely when $z_0\in{\mathcal O}_{{\mathbb C}_p}
-]S[
$, it is clear that
$g^M_{n_1,\dots,n_r}(\xi_1,\dots,\xi_{r};z_0)\in{\mathcal O}_{{\mathbb C}_p}$.
In the definition of $g^{M+1}_{n_1,\dots,n_r}(\xi_1,\dots,\xi_{r};z)$, 
$l_j$ ($1\leqslant j\leqslant r$) 
belongs to 
the interval
$[0,p^{M+1})$.
Writing $l_j=l_j^{\prime}+kp^M$ ($0\leqslant l_j^{\prime}<p^M$ and $1\leqslant k\leqslant p-1$),
we have

\begin{align*}
g^{M+1}_{n_1,\dots,n_r}&(\xi_1,\dots,\xi_{r};z_0)
\equiv
\underset{(l_1^{\prime} ,p)=\cdots=(l_1^{\prime} +\cdots+l_r^{\prime} ,p)=1}
{\underset{0\leqslant l_1^{\prime} , \dots,l_{r}^{\prime} <p^M}{\sum}} 
\frac{\xi_1^{l_1^{\prime} }\xi_2^{l_1^{\prime} +l_2^{\prime} }
\cdots\xi_{r}^{l_1^{\prime} +\cdots+l_{r}^{\prime} }z_0^{l_1^{\prime} +\cdots+l_r^{\prime} }}{l_1^{\prime n_1}(l_1^{\prime} +l_2^{\prime} )^{n_2}\cdots (l_1^{\prime} +\cdots+l_r^{\prime} )^{n_r}} \\ 
&\times \{1+(\xi_1\cdots\xi_{r} z_0)^{p^M}+(\xi_1\cdots\xi_{r} z_0)^{2p^M}+\cdots+(\xi_1\cdots\xi_{r} z_0)^{(p-1)p^M}\} \\
&\times \{1+(\xi_2\cdots\xi_{r} z_0)^{p^M}+(\xi_2\cdots\xi_{r} z_0)^{2p^M}+\cdots+(\xi_2\cdots\xi_{r} z_0)^{(p-1)p^M}\} \\
&\qquad\qquad\qquad\qquad \cdots \\
&\times \{1+(\xi_{r} z_0)^{p^M}+(\xi_{r} z_0)^{2p^M}+\cdots+(\xi_{r} z_0)^{(p-1)p^M}\}  \\
&\times \frac{1}{1-(\xi_1\cdots\xi_{r} z)^{p^{M+1}}}
\cdots 
\frac{1}{1-(\xi_{r-1}\xi_r z)^{p^{M+1}}}\cdot
\frac{1}{1-(\xi_r z)^{p^{M+1}}}
\pmod{p^M} \\
&
\qquad
=g^M_{n_1,\dots,n_r}(\xi_1,\dots,\xi_{r};z).
\end{align*}

When $z_0\in]\overline{\infty}[$, 
let $\varepsilon=\frac{1}{z_0}\in]\bar{0}[$.
By direct calculation,
$g^M_{n_1,\dots,n_r}(\xi_1,\dots,\xi_{r};z_0)\in{\mathcal O}_{{\mathbb C}_p}$.
We then 
have
\begin{align*}
g^{M}_{n_1,\dots,n_r}&(\xi_1,\dots,\xi_{r};z_0)
=
\underset{(l_1,p)=\cdots=(l_1+\cdots+l_r,p)=1}{\underset{0\leqslant l_1, \dots,l_{r}<p^M}{\sum}} 
\frac{(\frac{\xi_1\xi_2\cdots\xi_{r}}{\varepsilon})^{l_1}(\frac{\xi_2\cdots\xi_{r}}{\varepsilon})^{l_2}\cdots (\frac{\xi_{r-1}\xi_r}{\varepsilon})^{l_{r-1}}
(\frac{\xi_r}{\varepsilon})^{l_r}}
{l_1^{n_1}(l_1+l_2)^{n_2}\cdots (l_1+\cdots+l_r)^{n_r}} \\
&\qquad\qquad\times\frac{1}{1-(\frac{\xi_1\cdots\xi_{r}}{\varepsilon})^{p^M}}
\cdots 
\frac{1}{1-(\frac{\xi_{r-1}\xi_r}{\varepsilon})^{p^M}}\cdot
\frac{1}{1-(\frac{\xi_r}{\varepsilon})^{p^M}} \\
&=(-1)^r
\underset{(l_1,p)=\cdots=(l_1+\cdots+l_r,p)=1}{\underset{0\leqslant l_1, \dots,l_{r}<p^M}{\sum}} 
\frac{(\frac{\varepsilon}{\xi_1\xi_2\cdots\xi_{r}})^{p^M-l_1}(\frac{\varepsilon}{\xi_2\cdots\xi_{r}})^{p^M-l_2}\cdots (\frac{\varepsilon}{\xi_{r-1}\xi_r})^{p^M-l_{r-1}}
(\frac{\varepsilon}{\xi_r})^{p^M-l_r}}
{l_1^{n_1}(l_1+l_2)^{n_2}\cdots (l_1+\cdots+l_r)^{n_r}} \\
&\qquad\qquad\times\frac{1}{1-(\frac{\varepsilon}{\xi_1\cdots\xi_{r}})^{p^M}}
\cdots 
\frac{1}{1-(\frac{\varepsilon}{\xi_{r-1}\xi_r})^{p^M}}\cdot
\frac{1}{1-(\frac{\varepsilon}{\xi_r})^{p^M}} \\
&=(-1)^r
\underset{(l_1,p)=\cdots=(l_1+\cdots+l_r,p)=1}{\underset{0\leqslant l_1, \dots,l_{r}<p^M}{\sum}} 
\frac{(\frac{\varepsilon}{\xi_1\xi_2\cdots\xi_{r}})^{l_1}(\frac{\varepsilon}{\xi_2\cdots\xi_{r}})^{l_2}\cdots (\frac{\varepsilon}{\xi_{r-1}\xi_r})^{l_{r-1}}
(\frac{\varepsilon}{\xi_r})^{l_r}}
{(p^M-l_1)^{n_1}(2p^M-l_1-l_2)^{n_2}\cdots (rp^M-l_1-\cdots-l_r)^{n_r}} \\
&\qquad\qquad\cdot\frac{1}{1-(\frac{\varepsilon}{\xi_1\cdots\xi_{r}})^{p^M}}
\cdots 
\frac{1}{1-(\frac{\varepsilon}{\xi_{r-1}\xi_r})^{p^M}}\cdot
\frac{1}{1-(\frac{\varepsilon}{\xi_r})^{p^M}} \\
&\equiv (-1)^{r+n_1+\cdots+n_r}
\underset{(l_1,p)=\cdots=(l_1+\cdots+l_r,p)=1}{\underset{0\leqslant l_1, \dots,l_{r}<p^M}{\sum}} 
\frac{(\frac{\varepsilon}{\xi_1\xi_2\cdots\xi_{r}})^{l_1}(\frac{\varepsilon}{\xi_2\cdots\xi_{r}})^{l_2}\cdots (\frac{\varepsilon}{\xi_{r-1}\xi_r})^{l_{r-1}}
(\frac{\varepsilon}{\xi_r})^{l_r}}
{{l_1}^{n_1}(l_1+l_2)^{n_2}\cdots (l_1+\cdots+l_r)^{n_r}} \\
&\qquad\qquad\cdot\frac{1}{1-(\frac{\varepsilon}{\xi_1\cdots\xi_{r}})^{p^M}}
\cdots 
\frac{1}{1-(\frac{\varepsilon}{\xi_{r-1}\xi_r})^{p^M}}\cdot
\frac{1}{1-(\frac{\varepsilon}{\xi_r})^{p^M}} \pmod{p^M} \\
&= (-1)^{r+n_1+\cdots+n_r}
g^{M}_{n_1,\dots,n_r}(\xi_1^{-1},\dots,\xi_{r}^{-1};\varepsilon).
\end{align*}
Therefore, by our previous argument, and by $|\xi_j|_p=|\xi_j^{-1}|_p=1$,
it follows that 
\begin{align*}
g^{M+1}_{n_1,\dots,n_r}&(\xi_1,\dots,\xi_{r};z_0)
\equiv  (-1)^{r+n_1+\cdots+n_r}
g^{M+1}_{n_1,\dots,n_r}(\xi_1^{-1},\dots,\xi_{r}^{-1};\varepsilon) \\
&\equiv (-1)^{r+n_1+\cdots+n_r}
g^{M}_{n_1,\dots,n_r}(\xi_1^{-1},\dots,\xi_{r}^{-1};\varepsilon)
\equiv
g^{M}_{n_1,\dots,n_r}(\xi_1,\dots,\xi_{r};z_0)
\pmod{p^M}.
\end{align*}
\end{proof}


Theorem \ref{integral theorem} and Lemma \ref{congruence} imply the following:

\begin{proposition}\label{rigidness}
Fix  $n_1,\dots,n_r\in \mathbb{N}$ and $\xi_1,\dots,\xi_{r}\in\mathbb{C}_p$ with $|\xi_j|_p=1$ ($1\leqslant j\leqslant r$).
By the 
integral formula \eqref{integral expression},
the  $p$-adic rigid TMSPL 
$\ell^{(p),\star}_{n_1,\dots,n_r}(\xi_1,\dots,\xi_{r};z)$ is a rigid analytic function on
$
{\bf P}^1({\mathbb C}_p) - ]S[
$.
Namely, 
$$
\ell^{(p),\star}_{n_1,\dots,n_r}(\xi_1,\dots,\xi_{r};z)
\in A^{\rm{rig}}( {\bf P}^1\setminus S
).
$$
\end{proposition}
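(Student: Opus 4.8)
The plan is to realize the extended function as a uniform limit, in the supremum norm, of the rigid analytic functions $g^M_{n_1,\dots,n_r}(\xi_1,\dots,\xi_{r};z)$ appearing in the proof of Theorem \ref{integral theorem}, and then to invoke the completeness of the Banach algebra of rigid analytic functions (Proposition \ref{two fundamental properties of rigid analytic functions}(ii)). First I would record that $X:={\bf P}^1({\mathbb C}_p)-]S[$ is an affinoid in the sense of Notation \ref{rigid-basics}: since every point of $S$ in \eqref{S0} is the reduction of an element of absolute value $1$, choosing lifts $a_1,\dots,a_n\in\mathbb{C}_p^\times$ of the points of $S$ gives $X=\{z\in{\bf P}^1({\mathbb C}_p)\mid |z-a_i|_p\geqslant 1\ (1\leqslant i\leqslant n)\}$, which is the presentation \eqref{affinoid presentation} with all $\delta_i=1$ and $\delta_0=0$.

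Next I would check that each $g^M_{n_1,\dots,n_r}(\xi_1,\dots,\xi_{r};z)$ lies in $A^{\mathrm{rig}}(X)$. From \eqref{limit expression} one sees that $g^M$ factors as a polynomial $P_M(z)$ of degree strictly less than $rp^M$ times $\prod_{j=1}^r\bigl(1-(\xi_j\cdots\xi_r z)^{p^M}\bigr)^{-1}$, whose denominator has degree exactly $rp^M$; hence $g^M$ is a proper rational function, in particular regular at $\infty$. Its poles lie among the $z$ with $(\xi_j\cdots\xi_r z)^{p^M}=1$, that is $z=\zeta\,(\xi_j\cdots\xi_r)^{-1}$ with $\zeta^{p^M}=1$. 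Since any $p^M$th root of unity reduces to $1$, every such pole reduces to $\overline{(\xi_j\cdots\xi_r)^{-1}}\in S$, so all poles lie in $]S[$ and $g^M$ is rigid analytic on $X$. This pole--location bookkeeping, together with the regularity at $\infty$, is the only real point to be established; it is precisely what is reflected by the membership $g^M(z_0)\in{\mathcal O}_{\mathbb{C}_p}$ asserted in Lemma \ref{congruence}.

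Finally, Lemma \ref{congruence} gives $|g^{M+1}(z_0)-g^M(z_0)|_p\leqslant p^{-M}$ for every $z_0\in X$, so $\|g^{M+1}-g^M\|_X\leqslant p^{-M}\to 0$ in the supremum norm $\|\cdot\|_X$ and $\{g^M\}$ is a Cauchy sequence in $A^{\mathrm{rig}}(X)$. By completeness (Proposition \ref{two fundamental properties of rigid analytic functions}(ii)) it converges to some $f\in A^{\mathrm{rig}}(X)$, and by construction the pointwise limit $\lim_{M\to\infty}g^M(z)$ is exactly the integral expression \eqref{integral expression} defining the extension, which agrees with the series \eqref{series expression for ell} on $]\bar 0[$. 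Thus $\ell^{(p),\star}_{n_1,\dots,n_r}(\xi_1,\dots,\xi_r;z)=f\in A^{\mathrm{rig}}({\bf P}^1\setminus S)$, and the uniqueness of this extension follows from the coincidence principle (Proposition \ref{two fundamental properties of rigid analytic functions}(i)). I expect the pole-location step to be the main obstacle, the remainder being a routine completeness argument.
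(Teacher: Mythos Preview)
Your proof is correct and follows essentially the same approach as the paper: exhibit $\ell^{(p),\star}_{n_1,\dots,n_r}$ as the uniform limit of the rational functions $g^M$ on the affinoid $X={\bf P}^1({\mathbb C}_p)-]S[$, using Lemma \ref{congruence} for the Cauchy estimate and Proposition \ref{two fundamental properties of rigid analytic functions}(ii) for completeness. You are somewhat more explicit than the paper in verifying that each $g^M$ actually lies in $A^{\mathrm{rig}}(X)$ (the pole-location and regularity-at-$\infty$ check), which the paper takes for granted.
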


\begin{proof}
Since the space ${\bf P}^1({\mathbb C}_p) - ]S[$
is an affinoid and
the algebra 
$A^{\mathrm{rig}}( {\bf P}^1\setminus S)$
forms a Banach algebra 
with the supremum norm (cf. Notation \ref{basics on rigid}),
by Lemma \ref{congruence}, the rational functions
$$
g^{M}_{n_1,\dots,n_r}(\xi_1,\dots,\xi_{r};z)\in
A^{\mathrm{rig}}( {\bf P}^1\setminus S)
$$
uniformly converge to a rigid analytic function 
$\ell(z)\in A^{\mathrm{rig}}( {\bf P}^1\setminus S)$
when $M$ tends to $\infty$, thanks to Proposition \ref{two fundamental properties of rigid analytic functions}.
It is easy to see that the restriction of $\ell(z)$ 
to $\textbf{P}^{1}({\mathbb{C}}_p)- ]S[$ coincides with \eqref{limit expression}, 
and hence, with $\ell^{(p),\star}_{n_1,\dots,n_r}(\xi_1,\dots,\xi_{r};z)$.
Therefore, the analytic continuation of  $\ell^{(p),\star}_{n_1,\dots,n_r}(\xi_1,\dots,\xi_{r};z)$
is given by $\ell(z)$.
\end{proof}

From now on, we will employ the same symbol 
$\ell^{(p),\star}_{n_1,\dots,n_r}(\xi_1,\dots,\xi_{r};z)$
to denote its analytic continuation.

We note that, by \eqref{limit expression},

\begin{lemma}\label{ell at infinity}
For $n_1,\dots,n_r\in \mathbb{N}$, and $\xi_1,\dots,\xi_{r}\in\mathbb{C}_p$ with $|\xi_j|_p=1$ ($1\leqslant j\leqslant r$),
we have
$$
\ell^{(p),\star}_{n_1,\dots,n_r}(\xi_1,\dots,\xi_{r};\infty)=0.$$
\end{lemma}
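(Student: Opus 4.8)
The plan is to read the vanishing directly off the limit expression \eqref{limit expression} established in the proof of Theorem \ref{integral theorem}, namely
$$
\ell^{(p),\star}_{n_1,\dots,n_r}(\xi_1,\dots,\xi_{r};z)=\lim_{M\to\infty}g^M_{n_1,\dots,n_r}(\xi_1,\dots,\xi_{r};z),
$$
where by Lemma \ref{congruence} and Proposition \ref{rigidness} this convergence is uniform in the supremum norm on the affinoid $\textbf{P}^{1}({\mathbb C}_p)- ]S[$. First I would check that $\infty$ genuinely lies in this affinoid: since $|\xi_j|_p=1$ for every $j$, each product $\xi_j\xi_{j+1}\cdots\xi_{r}$ is a unit of ${\mathcal O}_{{\mathbb C}_p}$, so every element of $S$ reduces to a nonzero element of $\overline{\mathbb{F}}_p$; in particular $\overline{\infty}\notin S$, whence $]\overline{\infty}[\,\cap\,]S[=\emptyset$ and $\infty\in \textbf{P}^{1}({\mathbb C}_p)- ]S[$. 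Thus evaluation at $\infty$ is meaningful both for $\ell^{(p),\star}_{n_1,\dots,n_r}$ and for each $g^M_{n_1,\dots,n_r}$.

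Next I would show that $g^M_{n_1,\dots,n_r}(\xi_1,\dots,\xi_{r};\infty)=0$ for every $M$. Writing $g^M_{n_1,\dots,n_r}$ over the common denominator
$$
Q(z)=\prod_{j=1}^{r}\bigl(1-(\xi_j\xi_{j+1}\cdots\xi_{r}z)^{p^M}\bigr)
$$
coming from the Koblitz measures, one sees that $g^M_{n_1,\dots,n_r}(\xi_1,\dots,\xi_{r};z)=P(z)/Q(z)$, where $P(z)$ is a ${\mathbb C}_p$-linear combination of the monomials $z^{l_1+\cdots+l_r}$ with $0\leqslant l_j<p^M$. Hence $\deg_z P\leqslant r(p^M-1)<rp^M=\deg_z Q$, so $g^M_{n_1,\dots,n_r}$ is a proper rational function and vanishes at $z=\infty$.

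Finally, since point evaluation on the affinoid is continuous with respect to the supremum norm, it commutes with the uniform limit, giving
$$
\ell^{(p),\star}_{n_1,\dots,n_r}(\xi_1,\dots,\xi_{r};\infty)=\lim_{M\to\infty}g^M_{n_1,\dots,n_r}(\xi_1,\dots,\xi_{r};\infty)=0.
$$
I expect the only point requiring genuine care to be the degree comparison between $P$ and $Q$, that is, confirming that the total denominator degree $rp^M$ strictly dominates the largest exponent $l_1+\cdots+l_r$ appearing in the numerator; once this strict inequality is in hand, the continuity of point evaluation (a consequence of Proposition \ref{rigidness} together with Proposition \ref{two fundamental properties of rigid analytic functions}(ii)) closes the argument at once.
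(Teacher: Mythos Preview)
Your proof is correct and follows essentially the same route as the paper: the paper's proof simply asserts that $g^{M}_{n_1,\dots,n_r}(\xi_1,\dots,\xi_{r};\infty)=0$ ``by direct calculation'' and then passes to the limit, whereas you have carefully supplied the details (the check that $\overline{\infty}\notin S$, the degree comparison $\deg P\leqslant r(p^M-1)<rp^M=\deg Q$, and the continuity of point evaluation under uniform convergence).
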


\begin{proof}
By direct 
calculation, $g^{M}_{n_1,\dots,n_r}(\xi_1,\dots,\xi_{r};\infty)=0$.
The claim 
then follows 
because $\ell^{(p),\star}_{n_1,\dots,n_r}(\xi_1,\dots,\xi_{r};z)$ is defined to be the limit of
$g^{M}_{n_1,\dots,n_r}(\xi_1,\dots,\xi_{r};z)$.
\end{proof}

The special values of $p$-adic multiple $L$-functions at positive integer points
are described in terms of the special values of
$p$-adic rigid TMSPLs
at roots of unity, as follows:

\begin{theorem}\label{L-ell theorem}
For  $n_1,\dots,n_r\in \mathbb{N}$
and $c\in \mathbb{N}_{>1}$ with $(c,p)=1$,
\begin{equation}\label{L-ell-formula}
L_{p,r}(n_1,\dots,n_r;\omega^{-n_1},\dots,\omega^{-n_r};1,\dots,1;c)= \\
\underset{\xi_1\cdots\xi_r\neq 1, \ \dots, \ \xi_{r-1}\xi_r\neq 1, \ \xi_r\neq 1}{\sum_{\xi_1^c=\cdots=\xi_r^c=1}}
\ell^{(p),\star}_{n_1,\dots,n_r}(\xi_1,\dots,\xi_r;1).
\end{equation}
\end{theorem}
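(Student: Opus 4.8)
The plan is to prove the identity by matching integral representations and then performing a change of variables over the indexing roots of unity. First I would expand the left-hand side directly from Definition \ref{Def-pMLF}, specializing $\gamma_1=\cdots=\gamma_r=1$, $s_j=n_j$ and $k_j=-n_j$. Writing $\Phi(x_1,\dots,x_r):=\prod_{\nu=1}^r\langle x_1+\cdots+x_\nu\rangle^{-n_\nu}\omega(x_1+\cdots+x_\nu)^{-n_\nu}$ for the resulting integrand, this gives
\[
L_{p,r}(n_1,\dots,n_r;\omega^{-n_1},\dots,\omega^{-n_r};1,\dots,1;c)
=\int_{(\mathbb{Z}_p^r)'_{\{1\}}}\Phi(x_1,\dots,x_r)\,\prod_{j=1}^r d\widetilde{\mathfrak{m}}_c(x_j).
\]
The crucial observation is that $\Phi$ is exactly the integrand occurring in the integral expression \eqref{integral expression} of Theorem \ref{integral theorem} (note $\omega^{-n_\nu}(\cdot)=\omega(\cdot)^{-n_\nu}$); only the measures differ, and the region $(\mathbb{Z}_p^r)'_{\{1\}}$ coincides.

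Next I would expand the measure via its definition \eqref{KM-measure}, $\widetilde{\mathfrak{m}}_c=\sum_{\eta^c=1,\,\eta\neq 1}\mathfrak{m}_\eta$, and distribute over the $r$-fold product to obtain a finite sum over tuples $(\eta_1,\dots,\eta_r)$ of $c$-th roots of unity, each distinct from $1$, of the integrals $\int_{(\mathbb{Z}_p^r)'_{\{1\}}}\Phi\,\prod_{j=1}^r d\mathfrak{m}_{\eta_j}(x_j)$. I would then apply the bijective change of variables $\xi_r:=\eta_r$ and $\xi_j:=\eta_j\eta_{j+1}^{-1}$ for $1\leqslant j<r$, whose inverse is the telescoping relation $\eta_j=\xi_j\xi_{j+1}\cdots\xi_r$. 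This is a bijection on $r$-tuples of $c$-th roots of unity under which the constraints $\eta_j\neq 1$ become precisely $\xi_j\xi_{j+1}\cdots\xi_r\neq 1$ for every $j$, reproducing exactly the summation range on the right-hand side; moreover the measure $d\mathfrak{m}_{\eta_j}(x_j)$ attached to $x_j$ turns into $d\mathfrak{m}_{\xi_j\cdots\xi_r}(x_j)$, matching the measures of \eqref{integral expression} at $z=1$.

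Finally I would invoke Theorem \ref{integral theorem} with $z=1$ to recognize each summand $\int_{(\mathbb{Z}_p^r)'_{\{1\}}}\Phi\,\prod_{j=1}^r d\mathfrak{m}_{\xi_j\cdots\xi_r}(x_j)$ as $\ell^{(p),\star}_{n_1,\dots,n_r}(\xi_1,\dots,\xi_r;1)$, which assembles into the right-hand side. The one delicate point, and the main (if modest) obstacle, is to check that $z=1$ genuinely lies in the domain $\textbf{P}^{1}(\mathbb{C}_p)-\,]S[$ on which Theorem \ref{integral theorem} licenses the integral formula: since $\overline 1\in S$ would force some product $\xi_j\cdots\xi_r$, a root of unity of order prime to $p$ because $(c,p)=1$, to reduce to $1$ and hence to equal $1$, the imposed constraints $\xi_j\cdots\xi_r\neq 1$ guarantee both that $1\notin\,]S[$ and that the Koblitz measures $\mathfrak{m}_{\xi_j\cdots\xi_r}$ (equivalently $\mathfrak{m}_{\eta_j}$) are well defined. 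Verifying this simultaneous compatibility is the crux; the remainder is bookkeeping of indices.
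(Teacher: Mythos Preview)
Your proposal is correct and follows essentially the same approach as the paper's proof: both write out the defining integral for $L_{p,r}$, expand the product measure $(\widetilde{\mathfrak m}_c)^r$ as a sum over tuples $(\eta_1,\dots,\eta_r)$ with each $\eta_j\neq 1$, perform the telescoping change of variables $\eta_j=\xi_j\cdots\xi_r$, and identify each summand with $\ell^{(p),\star}_{n_1,\dots,n_r}(\xi_1,\dots,\xi_r;1)$ via the integral expression~\eqref{integral expression}. Your explicit verification that $z=1$ lies in the domain $\mathbf{P}^1(\mathbb C_p)-\,]S[$ (using that roots of unity of order prime to $p$ reduce to $1$ only if they equal $1$) is a point the paper leaves implicit, but otherwise the arguments coincide.
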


\begin{proof}
%
By the definition,
\begin{align*}
L_{p,r}(n_1,\dots,&n_r;\omega^{-n_1},\dots,\omega^{-n_r};1,\dots,1;c)= \\
&\int_{(\mathbb{Z}_p^r)'_{\{1\}}}\langle x_1\rangle^{-n_1}\langle x_1+x_2\rangle^{-n_2}\cdots\langle x_1+\cdots+x_r\rangle^{-n_r}  \\
&\qquad\qquad
\cdot\omega(x_1)^{-n_1}\omega(x_1+x_2)^{-n_2}\cdots\omega(x_1+\cdots+x_r)^{-n_r} 
d{\widetilde{\frak m}}_c(x_1)\cdots d{\widetilde{\frak m}}_c(x_r),
\end{align*}
where 
${\widetilde{\frak m}}_c=\underset{\xi\neq 1}{\underset{{\xi^c=1}}{\sum}}{\frak m}_\xi$.
By \eqref{integral expression} and
\begin{align*}
({\widetilde{\frak m}}_c)^r
&=\Bigl\{\underset{\xi_1^{\prime}\neq 1}{\underset{{\xi_1^{\prime c}=1}}{\sum}}{\frak m}_{\xi_1^{\prime}} \Bigr\}\times
\Bigl\{\underset{\xi_2^{\prime}\neq 1}{\underset{{\xi_2^{\prime c}=1}}{\sum}}{\frak m}_{\xi_2^{\prime}} \Bigr\}\times\cdots\times
\Bigl\{\underset{\xi_r^{\prime}\neq 1}{\underset{{\xi_r^{\prime c}=1}}{\sum}}{\frak m}_{\xi_r^{\prime}} \Bigr\}
\\
&=\underset{\xi_1\cdots\xi_r\neq 1,\dots,\xi_{r-1}\xi_r\neq 1,\xi_r\neq 1}{\sum_{\xi_1^c=\cdots=\xi_r^c=1}}
{\frak m}_{\xi_1\cdots\xi_r}\times{\frak m}_{\xi_2\cdots\xi_r}\times
\cdots\times
{\frak m}_{\xi_{r-1}\xi_r}\times{\frak m}_{\xi_r},
\end{align*}
the formula follows.
\end{proof}

\begin{remark}
It is 
worth noting 
that  the right-hand side of \eqref{L-ell-formula} is $p$-adically
continuous, not only with respect to $n_1,\ldots,n_r$, but also with respect to $c$, 
by Theorem \ref{continuity theorem}.
\end{remark}

As a special case when $r=1$ of Theorem \ref{L-ell theorem},
we recover Coleman's formula in \cite[p.203]{C}, below, 
by Example \ref{example for r=1}.

\begin{example}
For $n\in \mathbb{N}_{>1}$
and $c\in \mathbb{N}_{>1}$ with $(c,p)=1$,
\begin{equation}\label{Coleman equality}
(c^{1-n}-1)\cdot L_{p}(n;\omega^{1-n})
=\underset{\xi\neq 1}{\sum_{\xi^c=1}}\ell^{(p),\star}_{n}(\xi;1).
\end{equation}
\end{example}

When $r=2$, we have: 

\begin{example}
For  $n_1,n_2\in \mathbb{N}$
and $c\in \mathbb{N}_{>1}$ with $(c,p)=1$,
\begin{equation*}
L_{p,2}(n_1,n_2;\omega^{-n_1},\omega^{-n_2};1,1;c)= \\
\underset{\xi_1\xi_2\neq 1, \ \xi_2\neq 1}{\sum_{\xi_1^c=\xi_2^c=1}}
\ell^{(p),\star}_{n_1,n_2}(\xi_1,\xi_2;1).
\end{equation*}
\end{example}

\subsection{$p$-adic partial twisted multiple star polylogarithms}\label{sec-5-4}
We will prove that
$p$-adic rigid TMSPLs  (Definition \ref{def of pMMPL})
are overconvergent in Theorem \ref{rigidness II}.
In order to do so, 
$p$-adic partial TMSPLs will be introduced in Definition \ref{def of pPMPL}, 
and their properties will be investigated.

First, we recall the notion of overconvergent functions and rigid cohomologies
in our particular  case
(consult \cite{Ber} for a general theory)
%
%
%
%

\begin{notation}\label{overconvergent functions and associated cohomologies}
Let
$S=\{s_0,\dots,s_d\}$ (all $s_i$'s are distinct)
be a finite subset of $\textbf{P}^{1}(\overline{\mathbb{F}}_p)$.
Let $\widehat s_i$ be a lift of $s_i$. 
An {\it overconvergent function} on  $\textbf{P}^{1}\setminus S$
is a function belonging to the ${\mathbb C}_p$-algebra
$$
A^\dag(  {\bf P}^1\setminus S)
:=
\underset{\lambda\to 1^{-}}{\rm ind\text{-}lim} \ A^\mathrm{rig}(U_\lambda),
$$
where
$U_\lambda$ is the affinoid  
obtained by removing all closed discs
of radius $\lambda$ around $\widehat s_i$ 
from
${\bf P}^1({\mathbb C}_p)$, i.e.
\begin{equation}\label{removing all closed discs}
U_\lambda:={\bf P}^1({\mathbb C}_p)\setminus \bigcup_{0\leqslant i \leqslant d} 
z_i^{-1}\left(\{\alpha\in{\mathbb C}_p\bigm||\alpha|_p\leqslant\lambda\}\right)
\end{equation}
and $z_i$ is a local parameter
\begin{equation}\label{local parameter}
z_i:]s_i[\overset{\sim}{\to} ]\bar{0}[.
\end{equation}
(Note that the above $\widehat s_i$ is equal to $z_i^{-1}(0)$.)
An overconvergent function on  $ {\bf P}^1\setminus S$ is, in short, 
a function 
which can be analytically extended 
to an affinoid which is bigger than
${\bf P}^1({\mathbb C}_p)- ]s_0,s_1,\dots,s_d[$.
We note that the definition of the space of
overconvergent functions 
does not depend on the
choice of  local parameter $z_i$.

\end{notation}

Denote by $\mathbb{C}_p[[x,y]]$ the ring of formal power series in $x,y$ with $\mathbb{C}_p$-coefficients. The following lemmas are quite useful:

\begin{lemma}\label{Lemma-3-14}
Assume $s_0=\overline{\infty}$ and take $\widehat{s_0}=\infty$.
Then 
we have a description:
\begin{equation}\label{description of A1}
\begin{split}
A^\dag(  {\bf P}^1\setminus S) \simeq
\Biggl\{f(z)=\sum_{r\geqslant 0}a_{r}(\widehat s_0;f)z^{r}
+\sum_{l=1}^d\sum_{m>0}\frac{a_{m}(\widehat s_l;f)}{(z-\widehat{s_l})^{m}}
\in{\mathbb C}_p[[z,\frac{1}{z-\widehat{s_l}}]] \Biggm| 
&\\ 
\frac{|a_{m}(\widehat s_l;f)|_p}{\lambda^{m}}\to 0 \  (m\to \infty) 
 \ \text{  for some  } 0<\lambda <1 
\quad (0\leqslant l \leqslant & d) 
\Biggr\}, \\
\end{split}
\end{equation}
and 
\begin{equation}\label{description of A2}
\begin{split}
A^{\mathrm{rig}}({\bf P}^1\setminus S) \simeq
\Biggl\{f(z)=&\sum_{m\geqslant 0}a_{m}(\widehat s_0;f)z^{m}
+\sum_{l=1}^d\sum_{m>0}\frac{a_{m}(\widehat s_l;f)}{(z-\widehat{s_l})^{m}}
\in{\mathbb C}_p[[z,\frac{1}{z-\widehat{s_l}}]] \Biggm| \\
&
|a_{m}(\widehat s_l;f)|_p\to 0 \  (m\to \infty) 
\text{ for }  0\leqslant l \leqslant d
\Biggr\}. 
\end{split}
\end{equation}
\end{lemma}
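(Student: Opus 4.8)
The plan is to obtain both descriptions directly from the Mittag-Leffler decomposition recalled in Notation~\ref{rigid-basics}, translating membership in the relevant affinoids into explicit growth conditions on the Mittag-Leffler coefficients $a_m(\widehat{s_l};f)$. The only structural inputs I would need are the uniqueness of that decomposition (cited there from \cite{FvP}) together with the coincidence principle of Proposition~\ref{two fundamental properties of rigid analytic functions}, which guarantees that these coefficients are intrinsic and independent of the particular affinoid on which $f$ is presented.

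First I would treat the rigid case \eqref{description of A2}. Since $s_0=\overline{\infty}$ with $\widehat{s_0}=\infty$ and the remaining lifts $\widehat{s_l}$ $(1\leqslant l\leqslant d)$ are finite, the affinoid ${\bf P}^1({\mathbb C}_p)-\,]S[$ is exactly of the shape \eqref{affinoid presentation} with all radii equal to $1$, that is
\[
{\bf P}^1({\mathbb C}_p)-\,]S[\;=\;\bigl\{z\in{\bf P}^1({\mathbb C}_p)\bigm||z|_p\leqslant 1,\ |z-\widehat{s_l}|_p\geqslant 1\ (1\leqslant l\leqslant d)\bigr\}.
\]
Applying Notation~\ref{rigid-basics} yields the unique expansion written in \eqref{description of A2}, and it then remains only to estimate supremum norms: on this affinoid the term $a_m(\widehat{s_0};f)z^m$ has norm $|a_m(\widehat{s_0};f)|_p$ (because $|z|_p\leqslant 1$), and each principal-part term $a_m(\widehat{s_l};f)/(z-\widehat{s_l})^m$ has norm $|a_m(\widehat{s_l};f)|_p$ (because $|z-\widehat{s_l}|_p\geqslant 1$). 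Hence, since $A^{\mathrm{rig}}$ is a Banach algebra for the supremum norm, convergence is equivalent to $|a_m(\widehat{s_l};f)|_p\to 0$ for every $l$, which is precisely \eqref{description of A2}.

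Next I would handle the overconvergent case \eqref{description of A1} by running the same computation on the larger affinoids $U_\lambda$ and then passing to the inductive limit. Using the local parameters \eqref{local parameter}, namely $1/z$ at $\infty$ and $z-\widehat{s_l}$ at the finite points, the affinoid \eqref{removing all closed discs} becomes
\[
U_\lambda=\bigl\{z\in{\bf P}^1({\mathbb C}_p)\bigm||z|_p\leqslant 1/\lambda,\ |z-\widehat{s_l}|_p\geqslant\lambda\ (1\leqslant l\leqslant d)\bigr\}.
\]
Repeating the norm estimate on $U_\lambda$ (where $\sup|z^m|_p=\lambda^{-m}$ and $\sup|(z-\widehat{s_l})^{-m}|_p=\lambda^{-m}$) shows that $f\in A^{\mathrm{rig}}(U_\lambda)$ iff $|a_m(\widehat{s_l};f)|_p/\lambda^m\to 0$ for all $l$. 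By the definition $A^\dag({\bf P}^1\setminus S)=\underset{\lambda\to 1^-}{\rm ind\text{-}lim}\,A^{\mathrm{rig}}(U_\lambda)$ from Notation~\ref{overconvergent functions and associated cohomologies}, an overconvergent function is represented by an element of a single $A^{\mathrm{rig}}(U_\lambda)$ with $\lambda<1$, giving the condition ``$|a_m(\widehat{s_l};f)|_p/\lambda^m\to 0$ for some $0<\lambda<1$''; conversely any formal expansion obeying this bound converges on that $U_\lambda$ and so defines an overconvergent function. This is exactly \eqref{description of A1}.

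The hard part will be the bookkeeping at the point $\infty$ together with the coherence of the coefficients under the transition maps of the colimit. Concretely, I would verify that removing a closed disc of radius $\lambda$ about $\widehat{s_0}=\infty$ in the local parameter $1/z$ corresponds to the bound $|z|_p\leqslant 1/\lambda$, so that the ``principal part at $\infty$'' is in fact the holomorphic series $\sum_{m\geqslant 0}a_m(\widehat{s_0};f)z^m$ whose supremum norm scales as $\lambda^{-m}$ just like the genuine principal parts at the finite $\widehat{s_l}$. I would then invoke the coincidence principle (Proposition~\ref{two fundamental properties of rigid analytic functions}) to conclude that the restriction maps $A^{\mathrm{rig}}(U_\lambda)\to A^{\mathrm{rig}}(U_{\lambda'})$ (for $\lambda<\lambda'$, i.e.\ $U_\lambda\supseteq U_{\lambda'}$) are injective and preserve Mittag-Leffler coefficients; this makes the $a_m(\widehat{s_l};f)$ attached to a class in the inductive limit well defined and shows that the single growth condition faithfully captures the colimit, completing the identification.
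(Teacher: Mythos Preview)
Your proposal is correct and follows exactly the approach the paper intends: the paper's own proof is the single sentence ``They follow from the definitions (cf.\ Notation~\ref{basics on rigid}),'' and what you have written is precisely a careful unpacking of those definitions---the Mittag-Leffler decomposition of Notation~\ref{rigid-basics} together with the supremum-norm bounds on the affinoids $U_\lambda$ from Notation~\ref{overconvergent functions and associated cohomologies}. Your treatment of the point at infinity and of the coherence of coefficients under the transition maps is more explicit than anything the paper spells out, but it is the natural justification behind the one-line proof.
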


\begin{proof}
They follow from the definitions (cf. Notation \ref{basics on rigid}).
\end{proof}

We note that 
$$
A^\dag(  {\bf P}^1\setminus S)\subset
A^{\mathrm{rig}}({\bf P}^1\setminus S) .
$$
The following is one of the most important properties of overconvergent functions.
\begin{lemma}\label{useful example}
Let $f(z)\in A^\dag(  {\bf P}^1\setminus S)$.
Under the 
assumption in Lemma \ref{Lemma-3-14},
there exists a unique (up to 
additive constant)
solution $F(z)\in A^\dag(  {\bf P}^1\setminus S)$ of the differential equation
$$
dF(z)=f(z)dz
$$
if and only if the {\it residues} of the differential $1$-form $f(z)dz$,
i.e. $a_1(\widehat s_l;f)$ ($1\leqslant l\leqslant d$) are all $0$.
\end{lemma}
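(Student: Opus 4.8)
The plan is to work entirely through the explicit Mittag-Leffler description of $A^\dag({\bf P}^1\setminus S)$ supplied by Lemma \ref{Lemma-3-14} and to integrate the one-form $f(z)\,dz$ term by term. Under the normalization $s_0=\overline{\infty}$, $\widehat{s_0}=\infty$, write
$$
f(z)=\sum_{m\geq 0}a_m(\widehat{s_0};f)z^m+\sum_{l=1}^{d}\sum_{m\geq 1}\frac{a_m(\widehat{s_l};f)}{(z-\widehat{s_l})^m}.
$$
The naive term-by-term antiderivative sends $z^m$ to $z^{m+1}/(m+1)$, and sends $(z-\widehat{s_l})^{-m}$ to $-(z-\widehat{s_l})^{-(m-1)}/(m-1)$ for $m\geq 2$; the only term that fails to have a primitive inside the algebra is $(z-\widehat{s_l})^{-1}$, whose primitive is $\log(z-\widehat{s_l})$, which is not of the rational Mittag-Leffler form and hence lies outside $A^\dag({\bf P}^1\setminus S)$. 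This already isolates the residues $a_1(\widehat{s_l};f)$ as the sole obstruction.

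For necessity, I would read the residue off any $F\in A^\dag({\bf P}^1\setminus S)$ directly: differentiating a principal part $\sum_{m\geq 1}b_{l,m}(z-\widehat{s_l})^{-m}$ produces only powers $(z-\widehat{s_l})^{-m}$ with $m\geq 2$, while the power-series part contributes no pole at $\widehat{s_l}$. Hence the coefficient of $(z-\widehat{s_l})^{-1}$ in $F'$ vanishes, so $dF=f\,dz$ forces $a_1(\widehat{s_l};f)=0$ for every $1\leq l\leq d$.

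For sufficiency, assuming all residues vanish I would define $F$ by the term-by-term antiderivative above (now with no logarithmic contribution) and verify that $F$ again meets the overconvergence growth condition of Lemma \ref{Lemma-3-14}. The point to check is that integration only multiplies a coefficient $a_m$ by $1/(m+1)$ or by $-1/(m-1)$, and $|1/n|_p=p^{v_p(n)}\leq n$. Given an overconvergence bound $|a_m|_p/\lambda^m\to 0$ for some $\lambda<1$, I would fix any $\lambda'$ with $\lambda<\lambda'<1$ and estimate
$$
\frac{|a_m/(m+1)|_p}{\lambda'^{\,m}}\leq (m+1)\Bigl(\frac{\lambda}{\lambda'}\Bigr)^{m}\cdot\frac{|a_m|_p}{\lambda^{m}},
$$
which tends to $0$ since the geometric factor $(\lambda/\lambda')^m$ dominates the polynomial factor $m+1$; the same estimate handles the principal parts. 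Thus $F\in A^\dag({\bf P}^1\setminus S)$, and the normalization $\widehat{s_0}=\infty$ is exactly what makes the primitives $z^{m+1}/(m+1)$ of the entire part admissible, as their pole at infinity is harmless once $\overline{\infty}\in S$.

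Uniqueness up to an additive constant follows by applying the same decomposition to $G\in A^\dag({\bf P}^1\setminus S)$ with $G'=0$: differentiation annihilates only the constant term and carries every other basis monomial to a nonzero multiple of a distinct monomial, so $G'=0$ forces $G$ to be constant. The main obstacle I anticipate is precisely the convergence check in the sufficiency step, namely controlling the $p$-adic sizes $|1/n|_p$ introduced by integration; the device of trading a slightly larger overconvergence radius $\lambda'$ for $\lambda$, as above, is what makes this routine.
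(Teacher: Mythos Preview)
Your proof is correct and follows essentially the same route as the paper's: term-by-term integration of the Mittag-Leffler expansion, the observation that the terms $(z-\widehat{s_l})^{-1}$ are the unique obstruction, and the $\lambda\mapsto\lambda'$ trick to absorb the polynomial growth of $|1/n|_p$ and recover overconvergence. Your treatment is in fact slightly more detailed than the paper's, which dismisses the `only if' direction as ``easy'' and does not spell out uniqueness; your estimate $|1/n|_p\leq n$ is a cruder but perfectly adequate version of the paper's $\mathrm{ord}_p(n)=O(\log n/\log p)$.
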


\begin{proof}
When $a_1(\widehat s_l;f)$ ($1\leqslant l\leqslant d$) are all $0$,
integrations of $f(z)$ in \eqref{description of A1} 
are formally given by the following power series
$$
\sum_{r\geqslant 1}\frac{a_{r-1}(\widehat s_0;f)}{r}\cdot z^{r}
+\sum_{l=1}^d\sum_{m>0}\frac{a_{m+1}(\widehat s_l;f)}{-m}\cdot\frac{1}{(z-\widehat{s_l})^{m}}+
\text{constant}.
$$
By replacing the $\lambda$  by $\lambda'$ such that $\lambda<\lambda'<1$, 
and using $\text{ord}_p(n)=O(\log n/\log p )$,
we then get
$$
\frac{|a_{m+1}(\widehat s_l;f)|_p}{|m|_p}\cdot \frac{1}{\lambda^{\prime m}}\to 0 \quad  (m\to \infty),
$$
whence $F(z)$ belongs to  $A^\dag(  {\bf P}^1\setminus S)$.
The \lq if'-part is obtained.
The \lq only if'-part is easy.
\end{proof}

The lemma actually is a consequence of the fact that
$\dim H^1_\dag( {\bf P}^1\setminus S)=d$.

The following function is the main object in this subsection:

\begin{definition}\label{def of pPMPL}
Let  $n_1,\dots,n_r\in \mathbb{N}$ and $\xi_1,\dots,\xi_{r}\in\mathbb{C}_p$ 
such that $|\xi_j|_p\leqslant 1$ ($1\leqslant j\leqslant r$).
Let  $\alpha_1,\dots,\alpha_r\in \mathbb{N}$ with $0<\alpha_j<p$
 ($1\leqslant j\leqslant r$).
The {\bf $p$-adic partial TMSPL} 
$\ell^{\equiv (\alpha_1,\dots,\alpha_r),(p),\star}_{n_1,\dots,n_r}(\xi_1,\dots\xi_{r};z)$ is defined by the following 
$p$-adic power series:
\begin{equation}\label{series expression for partial double}
\ell^{\equiv (\alpha_1,\dots,\alpha_r),(p),\star}_{n_1,\dots,n_r}(\xi_1,\dots,\xi_{r};z):=
\underset{k_1\equiv \alpha_1,\dots ,k_r\equiv\alpha_r \bmod p
}{\underset{0<k_1\leqslant\cdots \leqslant k_r}{\sum}}
\frac{\xi_1^{k_1}\cdots\xi_{r}^{k_{r}}}{k_1^{n_1}\cdots k_r^{n_r}}z^{k_r},
\end{equation}
which converges for $z\in ]\bar{0}[$.
\end{definition}

Similar 
to \eqref{limit expression}, we have the following 
limit expression of them.

\begin{proposition}
Let  $n_1,\dots,n_r\in \mathbb{N}$,
$\xi_1,\dots,\xi_{r}\in\mathbb{C}_p$ such that 
$|\xi_j|_p\leqslant 1$ ($1\leqslant j\leqslant r$), 
and $\alpha_1,\dots,\alpha_r\in \mathbb{N}$ such that $0<\alpha_j<p$
($1\leqslant j\leqslant r$).
When $z\in]\overline{0}[$, the function $\ell^{\equiv (\alpha_1,\dots,\alpha_r),(p),\star}_{n_1,\dots,n_r}(\xi_1,\dots,\xi_{r};z)$ can be 
expressed as
\begin{align}\label{limit expression for partial double}
\ell^{\equiv (\alpha_1,\dots,\alpha_r),(p),\star}_{n_1,\dots,n_r}&
(\xi_1,\dots,\xi_{r};z) 
=\lim_{M\to\infty}
\underset{l_1\equiv \alpha_1,l_1+l_2\equiv\alpha_2,\dots ,l_1+\cdots+l_r\equiv\alpha_r \bmod p}{\underset{0\leqslant l_1, \dots,l_{r}<p^M}{\sum}}
\frac{\xi_1^{l_1}\xi_2^{l_1+l_2}\cdots\xi_{r}^{l_1+\cdots+l_{r}}z^{l_1+\cdots+l_r}}
{l_1^{n_1}(l_1+l_2)^{n_2}\cdots (l_1+\cdots+l_r)^{n_r}} \notag \\
&\qquad \qquad \cdot\frac{1}{1-(\xi_1\cdots\xi_{r} z)^{p^M}}
\cdots 
\frac{1}{1-(\xi_{r-1}\xi_r z)^{p^M}}\cdot
\frac{1}{1-(\xi_r z)^{p^M}}.
\end{align}
\end{proposition}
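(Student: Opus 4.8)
The plan is to mimic the proof of Theorem \ref{integral theorem}: expand each of the $r$ geometric factors in the right-hand side of \eqref{limit expression for partial double} as a $p$-adic geometric series, reindex the resulting multiple sum, and match the outcome with the defining series \eqref{series expression for partial double}. Write $G_M(z)$ for the finite expression appearing inside the limit in \eqref{limit expression for partial double}. First I would record the change of summation variables $k_j=l_1+\cdots+l_j$ (equivalently $l_1=k_1$, $l_j=k_j-k_{j-1}$), under which $0<k_1\leqslant\cdots\leqslant k_r$ becomes $l_1\geqslant 1$, $l_j\geqslant 0$ ($2\leqslant j\leqslant r$). The crucial bookkeeping observation is that the numerator factorizes as $\xi_1^{l_1}\xi_2^{l_1+l_2}\cdots\xi_r^{l_1+\cdots+l_r}z^{l_1+\cdots+l_r}=\prod_{i=1}^{r}\bigl(\xi_i\xi_{i+1}\cdots\xi_r z\bigr)^{l_i}$, and the base $\xi_i\cdots\xi_r z$ of the $i$-th factor here is exactly the base of the $i$-th geometric factor $\frac{1}{1-(\xi_i\cdots\xi_r z)^{p^M}}$ in \eqref{limit expression for partial double}.

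Next, since $z\in]\bar{0}[$ forces $|\xi_i\cdots\xi_r z|_p<1$, each factor expands as $\frac{1}{1-(\xi_i\cdots\xi_r z)^{p^M}}=\sum_{t_i\geqslant 0}(\xi_i\cdots\xi_r z)^{t_i p^M}$. Substituting these into $G_M(z)$ and setting $L_i:=l_i+t_ip^M$, the combined product $\prod_i(\xi_i\cdots\xi_r z)^{l_i}\prod_i(\xi_i\cdots\xi_r z)^{t_ip^M}$ collapses to $\prod_i(\xi_i\cdots\xi_r z)^{L_i}=\xi_1^{K_1}\cdots\xi_r^{K_r}z^{K_r}$ with $K_j:=L_1+\cdots+L_j$. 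The reindexing is legitimate because in the $p$-adic setting the terms tend to $0$ (they are bounded by $|z|_p^{l_1+\cdots+l_r+(t_1+\cdots+t_r)p^M}$), so the series converge unconditionally and may be rearranged and grouped freely. The congruences $l_1+\cdots+l_j\equiv\alpha_j\bmod p$ are preserved, since $p\mid p^M$, as $K_j\equiv\alpha_j\bmod p$. Hence $G_M(z)=\sum_{0<K_1\leqslant\cdots\leqslant K_r,\ K_j\equiv\alpha_j}\xi_1^{K_1}\cdots\xi_r^{K_r}z^{K_r}\big/\prod_j\bigl(l_1+\cdots+l_j\bigr)^{n_j}$, where now $l_i=L_i\bmod p^M$ denotes the representative of $L_i=K_i-K_{i-1}$ in $[0,p^M)$; this differs from \eqref{series expression for partial double} only in that the denominators carry the truncated residues rather than the true $K_j$.

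The final and decisive point is the convergence estimate. For indices with $K_r<p^M$ one has each $L_i<p^M$, so $l_i=L_i$ and $l_1+\cdots+l_j=K_j$; these terms already coincide with those of \eqref{series expression for partial double}. For the remaining indices, $K_r\geqslant p^M$ gives $|z^{K_r}|_p\leqslant|z|_p^{p^M}$. Here the hypothesis $0<\alpha_j<p$ is essential: it ensures $l_1+\cdots+l_j\equiv K_j\equiv\alpha_j\not\equiv 0\bmod p$ and $K_j\equiv\alpha_j\not\equiv 0\bmod p$, so both $(l_1+\cdots+l_j)^{n_j}$ and $K_j^{n_j}$ are $p$-adic units and the difference of the two reciprocal products lies in $\mathcal{O}_{\mathbb{C}_p}$. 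Together with $|\xi_1^{K_1}\cdots\xi_r^{K_r}|_p\leqslant 1$ this yields $\bigl|G_M(z)-\ell^{\equiv(\alpha_1,\dots,\alpha_r),(p),\star}_{n_1,\dots,n_r}(\xi_1,\dots,\xi_r;z)\bigr|_p\leqslant|z|_p^{p^M}\to 0$ as $M\to\infty$, which is the assertion. I expect the variable bookkeeping and, above all, the verification that every partial sum of truncated residues remains prime to $p$ (so that all reciprocals are integral) to be the parts demanding the most care; the $z$-adic decay is then immediate.
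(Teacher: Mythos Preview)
Your proposal is correct and is precisely the ``direct calculation'' that the paper invokes as its entire proof: expanding the geometric factors, reindexing via $L_i=l_i+t_ip^M$ and $K_j=L_1+\cdots+L_j$, and controlling the tail $K_r\geqslant p^M$ by $|z|_p^{p^M}$ using that $\alpha_j\not\equiv 0$ forces all denominators to be $p$-adic units. You have simply supplied the details the paper omits; there is nothing to add.
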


\begin{proof}
It can be proved by direct calculation.
\end{proof}

It is 
worth noting that here the condition 
$\alpha_j\neq 0$ ($1\leqslant j\leqslant r$)
is necessary to make the limit convergent.

\begin{proposition}\label{rigid extension}
Let  $n_1,\dots,n_r\in \mathbb{N}$, 
$\xi_1,\dots,\xi_{r}\in\mathbb{C}_p$ such that $|\xi_j|_p= 1$ ($1\leqslant j\leqslant r$), 
and $\alpha_1,\dots,\alpha_r\in \mathbb{N}$ such that $0<\alpha_j<p$
($1\leqslant j\leqslant r$).
Take $S$
as in \eqref{S0}.
Then the function $\ell^{\equiv (\alpha_1,\dots,\alpha_r),(p),\star}_{n_1,\dots,n_r}(\xi_1,\dots,\xi_{r};z)$
can be 
analytically extended 
to 
${\bf P}^1({\mathbb C}_p) - ]S[
$
as a rigid analytic function.
Namely, 
$$\ell^{\equiv (\alpha_1,\dots,\alpha_r),(p),\star}_{n_1,\dots,n_r}(\xi_1,\dots,\xi_{r};z)\in 
A^{\mathrm{rig}}( {\bf P}^1\setminus S).
$$
\end{proposition}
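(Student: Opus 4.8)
The plan is to avoid redoing the congruence estimates of Lemma \ref{congruence}, and instead to reduce the statement directly to the rigid analyticity of the non-partial rigid TMSPL already proved in Proposition \ref{rigidness}, by means of a root-of-unity averaging. The input is the orthogonality relation $\frac{1}{p}\sum_{\rho^p=1}\rho^{k-\alpha}=\mathbf{1}[\,k\equiv\alpha\bmod p\,]$ for integers $\alpha,k$. Applying it in each coordinate $j$ and inserting it into the defining series \eqref{series expression for partial double}, I would first establish, as an identity of power series on $]\bar{0}[$,
\begin{equation*}
\ell^{\equiv (\alpha_1,\dots,\alpha_r),(p),\star}_{n_1,\dots,n_r}(\xi_1,\dots,\xi_{r};z)
=\frac{1}{p^r}\sum_{\rho_1^p=1}\cdots\sum_{\rho_r^p=1}\Bigl(\prod_{j=1}^r\rho_j^{-\alpha_j}\Bigr)\,\ell^{(p),\star}_{n_1,\dots,n_r}(\rho_1\xi_1,\dots,\rho_r\xi_r;z).
\end{equation*}
The verification is an interchange of the finite sums over the $\rho_j$ with the absolutely convergent series on $]\bar{0}[$; since $0<\alpha_j<p$, the congruence $k_j\equiv\alpha_j\bmod p$ already entails $(k_j,p)=1$, so the partial series is unchanged if one additionally imposes $(k_i,p)=1$, and after inserting the orthogonality relation the inner sum becomes precisely the rigid TMSPL attached to the twisted arguments $\rho_i\xi_i$.

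With this identity in hand, I would invoke Proposition \ref{rigidness}. Each twisted argument satisfies $|\rho_j\xi_j|_p=|\xi_j|_p=1$, so every summand $\ell^{(p),\star}_{n_1,\dots,n_r}(\rho_1\xi_1,\dots,\rho_r\xi_r;z)$ is rigid analytic on ${\bf P}^1({\mathbb C}_p)-\,]S_\rho[$, where $S_\rho$ denotes the finite set attached to the arguments $\rho_1\xi_1,\dots,\rho_r\xi_r$ by \eqref{S0}. The decisive point is that $S_\rho=S$ for every choice of $(\rho_1,\dots,\rho_r)$: each $\rho_j$ is a $p$-th root of unity, whence $|\rho_j-1|_p<1$ and $\overline{\rho_j}=\overline{1}$ in $\overline{\mathbb{F}}_p$, so every reduced partial product $\overline{(\rho_i\cdots\rho_r\,\xi_i\cdots\xi_r)^{-1}}$ coincides with $\overline{(\xi_i\cdots\xi_r)^{-1}}$ and no singular disc moves. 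Hence the right-hand side of the identity is a finite $\mathbb{C}_p$-linear combination of elements of the $\mathbb{C}_p$-algebra $A^{\mathrm{rig}}({\bf P}^1\setminus S)$, and therefore lies in $A^{\mathrm{rig}}({\bf P}^1\setminus S)$.

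Since $S$ consists of reductions of units, $\overline{0}\notin S$ and $]\bar{0}[\,\subset{\bf P}^1({\mathbb C}_p)-\,]S[$; the displayed identity, valid on $]\bar{0}[$ where all the series converge, then exhibits its right-hand side as the analytic continuation of $\ell^{\equiv (\alpha_1,\dots,\alpha_r),(p),\star}_{n_1,\dots,n_r}(\xi_1,\dots,\xi_r;z)$ and places it in $A^{\mathrm{rig}}({\bf P}^1\setminus S)$, the uniqueness of this continuation being guaranteed by the coincidence principle (Proposition \ref{two fundamental properties of rigid analytic functions}(i)). I do not anticipate a real obstacle here; the only point demanding care is the bookkeeping giving $S_\rho=S$, i.e.\ that twisting by $p$-th roots of unity leaves the singular set fixed. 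A more computational alternative would mimic Lemma \ref{congruence} through the limit expression \eqref{limit expression for partial double}: writing $h^M$ for its $M$-th partial product, one checks $h^M\in\mathcal{O}_{\mathbb{C}_p}$ and $h^{M+1}\equiv h^M\bmod p^M$ on ${\bf P}^1({\mathbb C}_p)-\,]S[$ and passes to the uniform limit in the Banach algebra $A^{\mathrm{rig}}({\bf P}^1\setminus S)$; there the genuinely delicate step is the congruence near $\overline{\infty}$, which the averaging argument bypasses entirely.
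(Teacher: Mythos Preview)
Your proposal is correct and follows essentially the same approach as the paper: the paper's proof consists precisely of the averaging identity you display (recorded there as equation \eqref{partial ell-ell}) together with an appeal to Proposition \ref{rigidness}. You supply more detail than the paper does—in particular the observation that $\overline{\rho_j}=\overline{1}$ forces $S_\rho=S$, which the paper leaves implicit—but the argument is the same.
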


\begin{proof}
%
The relation
\begin{equation}\label{partial ell-ell}
\ell^{\equiv (\alpha_1,\dots,\alpha_r),(p),\star}_{n_1,\dots,n_r}(\xi_1,\dots,\xi_{r};z)
=\frac{1}{p^r}\sum_{\rho_1^p=\cdots=\rho_r^p=1} \rho_1^{-\alpha_1}\cdots\rho_r^{-\alpha_r}
\ell^{(p),\star}_{n_1,\dots,n_r}(\rho_1\xi_1,\dots,\rho_{r}\xi_{r}; z)
\end{equation}
holds on $]\overline{0}[$.
Since $\ell^{(p),\star}_{n_1,\dots,n_r}(\rho_1\xi_1,\dots,\rho_{r}\xi_{r}; z)$ 
is rigid analytic on the above space
by Proposition \ref{rigidness},
the function $\ell^{\equiv (\alpha_1,\dots,\alpha_r),(p),\star}_{n_1,\dots,n_r}(\xi_1,\dots,\xi_{r};z)$
can be extended there as a rigid analytic function.
\end{proof}

From now on, we will employ the same symbol
$\ell^{\equiv (\alpha_1,\dots,\alpha_r),(p),\star}_{n_1,\dots,n_r}(\xi_1,\dots,\xi_{r};z)$
to denote its analytic continuation.

The following formulas are necessary to prove our Theorem \ref{rigidness II}.

\begin{lemma}\label{differential equations}
Let  $n_1,\dots,n_r\in \mathbb{N}$, 
$\xi_1,\dots,\xi_{r}\in\mathbb{C}_p$ such that 
$|\xi_j|_p= 1$ ($1\leqslant j\leqslant r$), 
and $\alpha_1,\dots,\alpha_r\in \mathbb{N}$ such that 
$0<\alpha_j<p$
($1\leqslant j\leqslant r$).

{\rm (i)}\ For $n_r\neq 1$, 
$$\frac{d}{dz}\ell^{\equiv (\alpha_1,\dots,\alpha_r),(p),\star}_{n_1,\dots,n_r}(\xi_1,\dots,\xi_{r};z)
=\frac{1}{z}\ell^{\equiv (\alpha_1,\dots,\alpha_r),(p),\star}_{n_1,\dots,n_{r-1},n_r-1}(\xi_1,\dots,\xi_{r};z).
$$

{\rm (ii)}\ For $n_r=1$ and $r\neq 1$,
$$\frac{d}{dz}\ell^{\equiv (\alpha_1,\dots,\alpha_r),(p),\star}_{n_1,\dots,n_r}(\xi_1,\dots,\xi_{r};z)=
\begin{cases}
&\frac{\xi_r(\xi_r z)^{\alpha_r-\alpha_{r-1}-1}}{1-(\xi_r z)^p}
\ell^{\equiv (\alpha_1,\dots,\alpha_{r-1}),(p),\star}_{n_1,\dots,n_{r-1}}(\xi_1,\dots,\xi_{r-2},\xi_{r-1};\xi_r z)  \\
&\qquad\qquad\qquad\qquad\qquad\qquad \text{if}\quad \alpha_r\geqslant\alpha_{r-1} , \\
&\frac{\xi_r(\xi_r z)^{\alpha_r-\alpha_{r-1}+p-1}}{1-(\xi_r z)^p}\ell^{\equiv (\alpha_1,\dots,\alpha_{r-1}),(p),\star}_{n_1,\dots,n_{r-1}}(\xi_1,\dots,\xi_{r-2},\xi_{r-1};\xi_r z)  \\
&\qquad\qquad\qquad\qquad\qquad\qquad \text{if}\quad \alpha_r<\alpha_{r-1}. \\
\end{cases}
$$

{\rm (iii)}\ For $n_r=1$ and $r=1$ with $\xi_1=\xi$ and  $\alpha_1=\alpha$,
 $$\frac{d}{dz}\ell^{\equiv \alpha, (p),\star}_{1}(\xi;z)=
\frac{\xi (\xi z)^{\alpha-1}}{1-(\xi z)^p}.
$$

\end{lemma}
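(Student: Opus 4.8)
The plan is to establish all three identities first on the open disc $]\bar 0[=\{|z|_p<1\}$ by differentiating the defining power series \eqref{series expression for partial double} term by term, and then to propagate them to the whole domain ${\bf P}^1({\mathbb C}_p)-]S[$ by the coincidence principle (Proposition \ref{two fundamental properties of rigid analytic functions}(i)). On $]\bar 0[$ each $\ell^{\equiv(\alpha_1,\dots,\alpha_r),(p),\star}$ is a genuinely convergent power series in $z$, and only the factor $z^{k_r}$ depends on $z$, so $\frac{d}{dz}z^{k_r}=k_r z^{k_r-1}$ and term-by-term differentiation is legitimate.

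For part (i), differentiating produces $\sum \frac{\xi_1^{k_1}\cdots\xi_r^{k_r}}{k_1^{n_1}\cdots k_r^{n_r}}\,k_r z^{k_r-1}$ over the same index set. Since $n_r\neq 1$, the factor $k_r/k_r^{n_r}=1/k_r^{n_r-1}$ simply lowers the last exponent by one, and extracting $z^{-1}$ from $z^{k_r-1}$ identifies the result with $\frac1z\,\ell^{\equiv(\alpha_1,\dots,\alpha_r),(p),\star}_{n_1,\dots,n_{r-1},n_r-1}(\xi_1,\dots,\xi_r;z)$. This case is immediate.

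For parts (ii) and (iii) the factor $k_r/k_r$ cancels entirely, leaving $\sum \frac{\xi_1^{k_1}\cdots\xi_r^{k_r}}{k_1^{n_1}\cdots k_{r-1}^{n_{r-1}}}z^{k_r-1}$, and the crux is to carry out the innermost summation over $k_r$ with $k_1,\dots,k_{r-1}$ held fixed. The constraints are $k_r\geqslant k_{r-1}$ and $k_r\equiv\alpha_r\bmod p$; since $k_{r-1}\equiv\alpha_{r-1}\bmod p$ with $0<\alpha_{r-1},\alpha_r<p$, the least admissible $k_r$ is $k_{r-1}+(\alpha_r-\alpha_{r-1})$ when $\alpha_r\geqslant\alpha_{r-1}$ and $k_{r-1}+(\alpha_r-\alpha_{r-1}+p)$ when $\alpha_r<\alpha_{r-1}$. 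Writing $k_r$ as this least value plus $pm$ with $m\geqslant 0$ and summing the geometric series gives, in the first case,
\[
\sum_{m\geqslant 0}\xi_r^{k_r}z^{k_r-1}=(\xi_r z)^{k_{r-1}}\cdot\frac{\xi_r(\xi_r z)^{\alpha_r-\alpha_{r-1}-1}}{1-(\xi_r z)^p},
\]
and likewise with exponent $\alpha_r-\alpha_{r-1}+p-1$ in the second case. The leftover factor $(\xi_r z)^{k_{r-1}}$ then recombines with the remaining sum over $k_1,\dots,k_{r-1}$ into $\ell^{\equiv(\alpha_1,\dots,\alpha_{r-1}),(p),\star}_{n_1,\dots,n_{r-1}}(\xi_1,\dots,\xi_{r-1};\xi_r z)$, yielding exactly the asserted formula. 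Part (iii) is the $r=1$ specialization: there are no variables $k_1,\dots,k_{r-1}$, the outer sum is empty, and one is left directly with $\sum_{k\equiv\alpha}\xi^k z^{k-1}=\frac{\xi(\xi z)^{\alpha-1}}{1-(\xi z)^p}$.

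I expect the genuinely delicate point to be this index bookkeeping for the innermost sum: pinning down the least $k_r\geqslant k_{r-1}$ in the prescribed residue class is precisely what forces the dichotomy $\alpha_r\geqslant\alpha_{r-1}$ versus $\alpha_r<\alpha_{r-1}$ and fixes the shift $\alpha_r-\alpha_{r-1}-1$ or $\alpha_r-\alpha_{r-1}+p-1$. Finally, to pass from $]\bar 0[$ to all of ${\bf P}^1\setminus S$, I would observe that both sides of each identity are rigid analytic there: the left-hand sides are derivatives of the rigid analytic functions of Proposition \ref{rigid extension}, and differentiation preserves rigid analyticity because the Mittag-Leffler coefficients only get multiplied by integers $m$ with $|m|_p\leqslant 1$; on the right, the factor $1/(1-(\xi_r z)^p)$ is rigid analytic on ${\bf P}^1\setminus S$ since each of its poles lies at $z=\rho\xi_r^{-1}$ for a $p$-th root of unity $\rho$, which reduces to $\overline{\xi_r^{-1}}\in S$ because every $p$-th root of unity reduces to $\bar 1$. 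Agreement on the subaffinoid $]\bar 0[$ then forces agreement on the whole space by Proposition \ref{two fundamental properties of rigid analytic functions}(i).
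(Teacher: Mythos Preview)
Your proof is correct and is precisely the ``direct calculation'' that the paper alludes to in its one-line proof; you have simply spelled out the termwise differentiation, the geometric summation over $k_r$, and the residue-class bookkeeping that the authors left implicit. Your additional remark about propagating the identities from $]\bar 0[$ to ${\bf P}^1({\mathbb C}_p)-]S[$ via the coincidence principle is a sensible clarification that the paper does not make explicit but which is needed when the lemma is invoked in the proof of Theorem~\ref{rigidness II}.
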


\begin{proof}
They can be proved by direct calculation.
\end{proof}

By Lemma \ref{ell at infinity} and \eqref{partial ell-ell}, 
we remark that:

\begin{remark}\label{easy remark}
For $n_1,\dots,n_r\in \mathbb{N}$, 
$\xi_1,\dots,\xi_{r}\in\mathbb{C}_p$ such that 
$|\xi_j|_p= 1$ ($1\leqslant j\leqslant r$)
and $\alpha_1,\dots,\alpha_r\in \mathbb{N}$ such that 
$0<\alpha_j<p$
($1\leqslant j\leqslant r$),
the following hold:

%
(i)\  $\ell^{\equiv (\alpha_1,\dots,\alpha_r),(p),\star}_{n_1,\dots,n_r}(\xi_1,\dots,\xi_{r};0)
=\ell^{\equiv (\alpha_1,\dots,\alpha_r),(p),\star}_{n_1,\dots,n_r}(\xi_1,\dots,\xi_{r};\infty)=0$.

(ii)\  $\ell^{(p),\star}_{n_1,\dots,n_r}(\xi_1,\dots,\xi_{r};z)=
\sum_{0<\alpha_1,\dots, \alpha_r<p}\ell^{\equiv (\alpha_1,\dots,\alpha_r),(p),\star}_{n_1,\dots,n_r}(\xi_1,\dots,\xi_{r};z).$
\end{remark}

Next, we discuss a new property of 
$\ell^{\equiv (\alpha_1,\dots,\alpha_r),(p),\star}_{n_1,\dots,n_r}(\xi_1,\dots,\xi_{r}; z)$. 

\begin{theorem}\label{rigidness II}
Let  $n_1,\dots,n_r\in \mathbb{N}$, 
$\xi_1,\dots,\xi_{r}\in\mathbb{C}_p$ such that 
$|\xi_j|_p= 1$ ($1\leqslant j\leqslant r$),
and $\alpha_1,\dots,\alpha_r\in \mathbb{N}$ such that 
$0<\alpha_j<p$
($1\leqslant j\leqslant r$).
Take 
$S$ as in \eqref{S0}.
The function 
$\ell^{\equiv (\alpha_1,\dots,\alpha_r),(p),\star}_{n_1,\dots,n_r}(\xi_1,\dots,\xi_{r}; z)$
is an overconvergent function on 
${\bf P}^1\setminus S$.
Namely, 
$$\ell^{\equiv (\alpha_1,\dots,\alpha_r),(p),\star}_{n_1,\dots,n_r}(\xi_1,\dots,\xi_{r};z)\in
 A^\dag( {\bf P}^1\setminus S
).$$
\end{theorem}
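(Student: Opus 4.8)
The plan is to induct on the total weight $w=n_1+\cdots+n_r$, using the differential equations of Lemma \ref{differential equations} to descend in weight, and Lemma \ref{useful example} to pass from an overconvergent derivative back to an overconvergent primitive. The crucial preliminary observation is that the residue hypothesis in Lemma \ref{useful example} is automatic here: by Proposition \ref{rigid extension} the function $\ell^{\equiv (\alpha_1,\dots,\alpha_r),(p),\star}_{n_1,\dots,n_r}$ already lies in $A^{\mathrm{rig}}({\bf P}^1\setminus S)$, so its Mittag-Leffler principal parts are single-valued series in $(z-\widehat{s_l})^{-1}$, and differentiating term by term sends $a_m(z-\widehat{s_l})^{-m}$ to $-m\,a_m(z-\widehat{s_l})^{-m-1}$, which never produces a simple-pole term. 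Hence $d\ell^{\equiv (\alpha_1,\dots,\alpha_r),(p),\star}_{n_1,\dots,n_r}$ has vanishing residues at every point of $S$, and the only thing to verify in the inductive step is that this derivative is overconvergent. Lemma \ref{useful example} then furnishes an overconvergent primitive $F$, and since $F$ and $\ell^{\equiv (\alpha_1,\dots,\alpha_r),(p),\star}_{n_1,\dots,n_r}$ are two primitives of the same $1$-form on the connected affinoid ${\bf P}^1({\mathbb C}_p)-]S[$, they differ by a constant, forcing $\ell^{\equiv (\alpha_1,\dots,\alpha_r),(p),\star}_{n_1,\dots,n_r}\in A^\dag({\bf P}^1\setminus S)$. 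To match the normalization $s_0=\overline{\infty}$ of Lemma \ref{Lemma-3-14} I would enlarge $S$ to $S\cup\{\overline{\infty}\}$; since $\overline{\infty}\notin S$ and the functions vanish at $\infty$ by Remark \ref{easy remark}(i), overconvergence on ${\bf P}^1\setminus(S\cup\{\overline{\infty}\})$ together with analyticity at $\infty$ descends to overconvergence on ${\bf P}^1\setminus S$.

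For the base case $w=1$, necessarily $r=1$ and $n_1=1$, and part (iii) of Lemma \ref{differential equations} gives $\frac{d}{dz}\ell^{\equiv\alpha,(p),\star}_{1}(\xi;z)=\frac{\xi(\xi z)^{\alpha-1}}{1-(\xi z)^p}$. This is a rational function whose poles occur only where $(\xi z)^p=1$, i.e. at $z=\xi^{-1}\zeta$ with $\zeta\in\mu_p$; since every $p$-th root of unity reduces to $1$, all these poles lie in $]\overline{\xi^{-1}}[=]S[$, while the exponent $\alpha-1\geqslant 0$ prevents a pole at $z=0$. Thus the derivative is overconvergent and the base case follows.

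For $w\geqslant 2$ there are two cases. If $n_r\geqslant 2$, part (i) gives $\frac{d}{dz}\ell^{\equiv (\alpha_1,\dots,\alpha_r),(p),\star}_{n_1,\dots,n_r}=\frac{1}{z}\,\ell^{\equiv (\alpha_1,\dots,\alpha_r),(p),\star}_{n_1,\dots,n_{r-1},n_r-1}$; the right-hand factor is overconvergent by the induction hypothesis (weight $w-1$) and vanishes at $z=0$ by Remark \ref{easy remark}(i), so dividing by $z$, which is a regular point of the affinoid since $\overline{0}\notin S$, keeps it overconvergent. If $n_r=1$ (so $r\geqslant 2$), part (ii) writes the derivative as an explicit rational prefactor times $\ell^{\equiv (\alpha_1,\dots,\alpha_{r-1}),(p),\star}_{n_1,\dots,n_{r-1}}(\xi_1,\dots,\xi_{r-1};\xi_r z)$. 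The latter is a depth-$(r-1)$, weight-$(w-1)$ partial TMSPL, overconvergent by induction on the affinoid attached to its own singular set $S'$; after the substitution $z\mapsto\xi_r z$ the reduced set $S'$ becomes exactly $S\setminus\{\overline{\xi_r^{-1}}\}$, so this factor is overconvergent on the larger affinoid ${\bf P}^1\setminus(S\setminus\{\overline{\xi_r^{-1}}\})$, hence on ${\bf P}^1\setminus S$. The rational prefactor again has poles only where $(\xi_r z)^p=1$, all lying in $]\overline{\xi_r^{-1}}[\subseteq\,]S[$; the only remaining apparent singularity, a simple pole at $z=0$ that appears when $\alpha_r=\alpha_{r-1}$, is cancelled by the vanishing at $0$ of the $\ell$-factor. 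The product is therefore overconvergent, completing the inductive step.

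The main obstacle is the last case: one must track carefully, under the change of variable $z\mapsto\xi_r z$, how the reduced singular locus transforms, so as to see that it is absorbed into $S$, and simultaneously check that the prefactor's pole at $z=0$ is genuinely removable because the accompanying partial polylogarithm vanishes there. Everything else is bookkeeping once the residue-vanishing observation of the first paragraph reduces the whole problem, through Lemma \ref{useful example}, to overconvergence of the explicitly computed derivatives.
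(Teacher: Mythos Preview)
Your proposal is correct and follows essentially the same inductive strategy as the paper: induction on the weight, using the differential equations of Lemma \ref{differential equations} together with Lemma \ref{useful example} to promote an overconvergent derivative to an overconvergent primitive, and invoking Proposition \ref{rigid extension} to identify that primitive with $\ell^{\equiv (\alpha),(p),\star}$.

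The differences are organizational rather than substantive. You isolate the residue-vanishing once and for all at the outset (a rigid primitive exists, so its derivative has no simple poles in the Mittag-Leffler expansion), whereas the paper rederives this in each case via the explicit coefficients \eqref{expansion at 0}--\eqref{other residues}. You treat the auxiliary points $\overline{0}$ and $\overline{\infty}$ by direct removable-singularity and gluing arguments, while the paper enlarges the singular set to $S_{\infty,0}$ and then descends; these are equivalent manoeuvres. Your explicit flag of the $\alpha_r=\alpha_{r-1}$ subcase (where the rational prefactor acquires a simple pole at $z=0$, cancelled by the vanishing of the $\ell$-factor) is in fact slightly more careful than the paper's case (iii), which passes over this point.
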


\begin{proof}
It is achieved by induction on the 
weight $n_1+\cdots+n_r$.

(i) Assume that the weight is equal to $1$, i.e. $r=1$ and $n_1=1$.
By 
 changing variable to $w(z)=\frac{1}{\xi z-1}$, we see that 
${\bf P}^1 \setminus\{{\overline{\xi^{-1}}}\}$
is identified with ${\bf P}^1 \setminus\{{\overline{\infty}}\}$.
By direct calculation,
it can be checked that  
$\frac{(\xi z)^{\alpha-1}}{1-(\xi z)^p}\cdot\frac{dz}{dw}$, as a function in $w$, 
belongs to $A^\dag({\bf P}^1\setminus\{{\overline{\infty}}\})$.
\footnote{We note that it  also follows from the fact that it is a rational function on $w$
whose poles are all of the form $w=\frac{1}{\zeta_p-1}$ with $\zeta_p\in\mu_p$.}
Hence, by Lemma \ref{useful example}, 
there exists a unique (modulo constant) function
$F(w)$ in $A^\dag({\bf P}^1
\setminus\{{\overline{\infty}}\})$
such that 
$$
\frac{dF}{dw}=\frac{\xi (\xi z)^{\alpha-1}}{1-(\xi z)^p}\cdot\frac{dz}{dw}.
$$
Therefore, there exists a unique function $F(z)$ in $A^\dag({\bf P}^1
\setminus\{{\overline{\xi^{-1}}}\})$
such that 
$$
F(0)=0 \quad \text{and} \quad
\frac{dF(z)}{dz}=\frac{\xi (\xi z)^{\alpha-1}}{1-(\xi z)^p}.
$$

By Proposition \ref{rigid extension} and Lemma \ref{differential equations} (iii),
 $\ell^{\equiv \alpha,(p),\star}_{1}(\xi;z)$ is 
the unique function in
$A^{\mathrm{rig}}( {\bf P}^1
\setminus \{\overline{\xi^{-1}}\} )$
satisfying the above properties, and
$F(z)|_{{\bf P}^1({\mathbb C}_p) -]{{\overline{\xi^{-1}}}}[}$ belongs to
$A^{\mathrm{rig}}( {\bf P}^1 
 \setminus \{ {\overline{\xi^{-1}}}\} )$.
Thus, we have
$$
F(z)|_{{\bf P}^1({\mathbb C}_p) -]{{\overline{\xi^{-1}}}}[}\equiv\ell^{\equiv \alpha,(p),\star}_{1}(\xi;z).
$$
By the coincidence principle of rigid analytic functions, 
we deduce 
that  $\ell^{\equiv \alpha,(p),\star}_{1}(\xi;z)$ can be uniquely
extended 
to a rigid analytic space bigger than
${\bf P}^1({\mathbb C}_p) -]{{\overline{\xi^{-1}}}}[ $
by $F(z)\in A^\dag({\bf P}^1
\setminus\{{\overline{\xi^{-1}}}\} )$.

(ii) Assume that  $n_r \neq 1$.
We let 
\begin{equation*}
S_\infty=S\cup \{\overline{\infty}\}
\qquad \text{ and } \qquad
S_{\infty,0}=S\cup \{\overline{\infty}\}\cup\{\bar 0\},
\end{equation*}
and take a lift $\{\widehat{s_0}, \widehat{s_1}, \dots, \widehat{s_d}\}$  
of $S_{\infty,0}$,  with
\begin{equation*}
\widehat{s_0}=\infty \qquad \text{ and } \qquad
\widehat{s_1}=0.
\end{equation*}

For a finite subset $\widetilde{S}$ of $\textbf{P}^{1}(\overline{\mathbb{F}}_p)$, let 
$\Omega^{\dag,1}({\bf P}^1\setminus \widetilde{S})$ 
be the space of 
overconvergent differential $1$-forms there, i.e.
$\Omega^{\dag,1}( {\bf P}^1\setminus \widetilde{S})=
A^{\dag,1}( {\bf P}^1\setminus \widetilde{S})dz$.

By the 
assumption
$$
\ell^{\equiv (\alpha_1,\dots,\alpha_r),(p),\star}_{n_1,\dots,n_{r-1},n_r-1}(\xi_1,\dots,\xi_{r-1},\xi_r;z)\in A^\dag( {\bf P}^1\setminus S
),
$$
and by $\frac{dz}{z}\in 
\Omega^{\dag,1}({\bf P}^1
\setminus\{\overline{\infty},\overline{0}\})$,
we have 
\begin{equation*}
\ell^{\equiv (\alpha_1,\dots,\alpha_r),(p),\star}_{n_1,\dots,n_{r-1},n_r-1}(\xi_1,\dots,\xi_{r-1},\xi_r;z)\frac{dz}{z}\in
\Omega^{\dag,1}( {\bf P}^1\setminus S_{\infty,0}
 ).
\end{equation*}
\if0
Here $\Omega^{\dag,1}( {\bf P}^1\setminus \widetilde{S})$ 
($\widetilde{S}$: a finite subset of $\textbf{P}^{1}(\overline{\mathbb{F}}_p)$)
denotes the space of 
overconvergent differential $1$-forms there, i.e.
$\Omega^{\dag,1}( {\bf P}^1\setminus \widetilde{S})=
A^{\dag,1}( {\bf P}^1\setminus \widetilde{S})dz$.
\fi
Let 
\begin{equation}\label{put}
f(z):=\frac{1}{z}\ell^{\equiv (\alpha_1,\dots,\alpha_r),(p),\star}_{n_1,\dots,n_{r-1},n_r-1}(\xi_1,\dots,\xi_{r-1},\xi_r; z)\in
A^{\dag}( {\bf P}^1\setminus S_{\infty,0}
).
\end{equation}

Since $\ell^{\equiv (\alpha_1,\dots,\alpha_r),(p),\star}_{n_1,\dots,n_{r-1},n_r}(\xi_1,\dots,\xi_{r};z)$
belongs to
$
A^{\mathrm{rig}}( {\bf P}^1\setminus S
) 
\Bigl(\subset
A^{\mathrm{rig}}( {\bf P}^1\setminus S_{\infty,0}
 ) 
\Bigr) 
$
by Proposition \ref{rigid extension}, 
and it satisfies the differential equation in Lemma \ref{differential equations} (i),
i.e. its differential  is equal to $f(z)$,
we have, in particular, that
\begin{equation}\label{expansion at 0}
a_m(\widehat s_1;f)=0 \qquad (m>0)
\end{equation}
(recall that $\widehat{s_1}=0$) and
\begin{equation}\label{other residues}
a_1(\widehat s_l;f)=0 \qquad (2\leqslant l\leqslant d),
\end{equation}
by \eqref{description of A1} and \eqref{description of A2}.

By \eqref{put} and \eqref{expansion at 0},
$$
f(z)\in A^{\dag}( {\bf P}^1\setminus S_\infty
).
$$

By \eqref{other residues} and Lemma \ref{useful example},
there exists a unique function
$F(z)$ in 
$
A^{\dag}( {\bf P}^1\setminus S_\infty
)
$,
i.e. a function $F(z)$ which is rigid analytic on an affinoid  $V$ of
$$
{\bf P}^1({\mathbb C}_p) - ]S_\infty[  \quad = \quad
{\bf P}^1({\mathbb C}_p) - ]\overline{\infty},S[
$$
such that 
\begin{equation}\label{differential property}
F(0)=0 \quad \text{and} \quad
dF(z)=f(z)dz.
\end{equation}

Since $\ell^{\equiv (\alpha_1,\dots,\alpha_r),(p),\star}_{n_1,\dots,n_{r-1},n_r}(\xi_1,\dots,\xi_{r}; z)$
is the 
unique function in
$
A^{\mathrm{rig}}( {\bf P}^1\setminus S
) 
$
satisfying \eqref{differential property},
the restrictions of both $F(z)$ and
$\ell^{\equiv (\alpha_1,\dots,\alpha_r),(p),\star}_{n_1,\dots,n_{r-1},n_r}(\xi_1,\dots,\xi_{r}; z)$
to the subspace
${\bf P}^1({\mathbb C}_p) -]S_\infty[
$ must coincide, i.e.
$$
F(z)\Bigm|_{{\bf P}^1({\mathbb C}_p) - ]S_\infty[
}
\equiv
\ell^{\equiv (\alpha_1,\dots,\alpha_r),(p),\star}_{n_1,\dots,n_{r-1},n_r}(\xi_1,\dots,\xi_{r};z)\Bigm|_{{\bf P}^1({\mathbb C}_p) - ]S_\infty[
}.
$$

Hence by the coincidence principle of rigid analytic functions,
there is a rigid analytic function $G(z)$ on the union of $V$ and
${\bf P}^1({\mathbb C}_p) - ]S[$
whose restriction to $V$ is equal to $F(z)$
and whose restriction to 
${\bf P}^1({\mathbb C}_p) -]S[
$
is equal to
$\ell^{\equiv (\alpha_1,\dots,\alpha_r),(p),\star}_{n_1,\dots,n_{r-1},n_r}(\xi_1,\dots,\xi_{r}; z)$.
Therefore, 
$$\ell^{\equiv (\alpha_1,\dots,\alpha_r),(p),\star}_{n_1,\dots,n_{r-1},n_r}(\xi_1,\dots,\xi_{r};z)\in
A^{\mathrm{rig}}( {\bf P}^1\setminus S
) 
$$
can be rigid analytically
extended 
to a bigger rigid analytic space
by $G(z)$,
namely, 
$$\ell^{\equiv (\alpha_1,\dots,\alpha_r),(p),\star}_{n_1,\dots,n_r}(\xi_1,\dots,\xi_{r};z)\in
 A^\dag( {\bf P}^1\setminus S
).$$


(iii) Assume that $n_r=1$ ($r\geqslant 2$), and let 
$$
\beta(z):=
\begin{cases}
\frac{\xi_r (\xi_r z)^{\alpha_r-\alpha_{r-1}-1}}{1-(\xi_r z)^p} &\text{if}\quad \alpha_r\geqslant\alpha_{r-1} ,\\
\frac{\xi_r (\xi_r z)^{\alpha_r-\alpha_{r-1}+p-1}}{1-(\xi_r z)^p}
&\text{if}\quad \alpha_r<\alpha_{r-1} .\\
\end{cases}
$$
By the 
assumption
$$
\ell^{\equiv (\alpha_1,\dots,\alpha_{r-1}),(p),\star}_{n_1,\dots,n_{r-1}}(\xi_1,\dots,\xi_{r-2},\xi_{r-1};\xi_r z) 
\in A^\dag( {\bf P}^1 
\setminus\{
\overline{\xi_{r}^{-1}},\dots,\overline{(\xi_1\cdots\xi_{r})^{-1}}\} ),
$$
and by
$\beta(z)dz\in
\Omega^{\dag,1}({\bf P}^1
\setminus\{\overline{\infty},\overline{\xi_r^{-1}}\})$,
we have
\begin{equation*}
\ell^{\equiv (\alpha_1,\dots,\alpha_{r-1}),(p),\star}_{n_1,\dots,n_{r-1}}(\xi_1,\dots,\xi_{r-2},\xi_{r-1};\xi_r z) \cdot \beta(z)dz
\in\Omega^{\dag,1}( {\bf P}^1 \setminus S_\infty
).
\end{equation*}
Let 
$$
f(z):=\ell^{\equiv (\alpha_1,\dots,\alpha_{r-1}),(p),\star}_{n_1,\dots,n_{r-1}}(\xi_1,\dots,\xi_{r-2},\xi_{r-1};\xi_r z) \cdot \beta(z)
\in
A^{\dag}( {\bf P}^1 \setminus S_\infty
).
$$
Then it follows from the same arguments as 
in (ii) that
$$\ell^{\equiv (\alpha_1,\dots,\alpha_r),(p),\star}_{n_1,\dots,n_r}(\xi_1,\dots,\xi_{r}; z)\in
 A^\dag( {\bf P}^1\setminus S
).$$
\end{proof}


We saw, in Proposition \ref{rigidness}, that
$\ell^{(p),\star}_{n_1,\dots,n_r}(\xi_1,\dots,\xi_{r}; z)$
is a rigid analytic function: 
$
\ell^{(p),\star}_{n_1,\dots,n_r}(\xi_1,\dots,\xi_{r}; z)
\in A^{\mathrm{rig}}( {\bf P}^1\setminus S
).
$
But, in fact, 
we can say more:

\begin{theorem}\label{rigidness III}
Let  $n_1,\dots,n_r\in \mathbb{N}$, and 
$\xi_1,\dots,\xi_{r}\in\mathbb{C}_p$ such that 
$|\xi_j|_p= 1$ ($1\leqslant j\leqslant r$).
Take 
$S$ as in \eqref{S0}.
Then the function 
$\ell^{(p),\star}_{n_1,\dots,n_r}(\xi_1,\dots,\xi_{r}; z)$
is an overconvergent function on 
${\bf P}^1\setminus S
$:
$$\ell^{(p),\star}_{n_1,\dots,n_r}(\xi_1,\dots,\xi_{r}; z)\in
 A^\dag( {\bf P}^1\setminus S
).$$

%
\end{theorem}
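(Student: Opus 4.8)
The plan is to deduce this theorem directly from the overconvergence of the partial pieces established in Theorem \ref{rigidness II}, using the decomposition of $\ell^{(p),\star}$ into partial TMSPLs given in Remark \ref{easy remark}. First I would invoke Remark \ref{easy remark}(ii), which expresses the $p$-adic rigid TMSPL as the \emph{finite} sum
$$
\ell^{(p),\star}_{n_1,\dots,n_r}(\xi_1,\dots,\xi_{r};z)
=\sum_{0<\alpha_1,\dots,\alpha_r<p}
\ell^{\equiv (\alpha_1,\dots,\alpha_r),(p),\star}_{n_1,\dots,n_r}(\xi_1,\dots,\xi_{r};z),
$$
the number of summands being $(p-1)^r$. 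By Theorem \ref{rigidness II}, under the hypothesis $|\xi_j|_p=1$ each summand on the right-hand side lies in $A^\dag({\bf P}^1\setminus S)$.

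Next I would recall that $A^\dag({\bf P}^1\setminus S)$ is a ${\mathbb C}_p$-algebra, being the inductive limit of the Banach algebras $A^{\mathrm{rig}}(U_\lambda)$ (cf. Notation \ref{overconvergent functions and associated cohomologies}); in particular it is closed under finite ${\mathbb C}_p$-linear combinations. Since the displayed right-hand side is precisely such a combination of elements of $A^\dag({\bf P}^1\setminus S)$, it follows at once that $\ell^{(p),\star}_{n_1,\dots,n_r}(\xi_1,\dots,\xi_{r};z)\in A^\dag({\bf P}^1\setminus S)$, which is the assertion.

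The one point requiring a word of care is that Remark \ref{easy remark}(ii) is, \emph{a priori}, an identity of power series on $]\bar 0[$, whereas the statement concerns the analytically continued functions on ${\bf P}^1\setminus S$. Both sides are, however, rigid analytic on ${\bf P}^1\setminus S$ (the left side by Proposition \ref{rigidness}, each term on the right by Proposition \ref{rigid extension}), and since $|\xi_j|_p=1$ forces $\bar 0\notin S$, the closed disc $\{z\in\mathbb{C}_p\mid |z|_p\leqslant\lambda\}$ with $\lambda<1$ is a subaffinoid of ${\bf P}^1\setminus S$ on which the identity already holds; by the coincidence principle (Proposition \ref{two fundamental properties of rigid analytic functions}(i)) the decomposition then holds throughout ${\bf P}^1\setminus S$. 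There is no genuine obstacle here, as all the analytic difficulty has already been absorbed into Theorem \ref{rigidness II}, whose inductive proof produced the primitives with controlled residues; the present statement is thus essentially a formal corollary of that result together with the algebra structure of $A^\dag$.
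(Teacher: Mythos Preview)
Your proof is correct and follows essentially the same route as the paper: invoke Theorem \ref{rigidness II} for each partial piece and then sum via Remark \ref{easy remark}(ii), using that $A^\dag({\bf P}^1\setminus S)$ is an algebra. Your extra paragraph justifying the extension of the decomposition from $]\bar 0[$ to ${\bf P}^1\setminus S$ via the coincidence principle is a detail the paper leaves implicit, but the argument is otherwise identical.
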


\begin{proof}
By 
Theorem \ref{rigidness II}, 
we have
$$\ell^{\equiv (\alpha_1,\dots,\alpha_r),(p),\star}_{n_1,\dots,n_r}(\xi_1,\dots,\xi_{r}; z)\in
 A^\dag( {\bf P}^1\setminus S
).$$
Then by Remark \ref{easy remark} (ii), we have
$\ell^{(p),\star}_{n_1,\dots,n_r}(\xi_1,\dots,\xi_{r};z)\in A^\dag( {\bf P}^1\setminus S)$, since 
$A^\dag( {\bf P}^1\setminus S)$ forms an algebra.
\end{proof}

\begin{example}
In particular, when $r=1$, 
we have
$$\ell^{(p),\star}_n(1;z)\in A^\dag( {\bf P}^1\setminus \{\bar 1\}).$$
In fact, in \cite[Proposition 6.2]{C}, Coleman showed that
$$
\ell^{(p),\star}_n(1;z)\in A^{\mathrm{rig}}(\widetilde X),
$$
with
$\widetilde X=\left\{z\in {\bf P}^1({\mathbb C}_p)\bigm| |z-1|_p >p^{\frac{-1}{p-1}}\right\}$.
\end{example}

\begin{remark}
In \cite[Definition 2.13]{F2}, the first-named author introduced the overconvergent
$p$-adic MPL $Li^\dag_{n_1,\dots,n_r}(z)$.
The relationship between 
$Li^\dag_{n_1,\dots,n_r}(z)$ and 
$\ell^{(p),\star}_{n_1,\dots,n_r}(\xi_1,\dots,\xi_{r};z)$ 
is still under investigation.
\end{remark}

\subsection{$p$-adic twisted multiple star polylogarithms}\label{sec-5-5}
In \cite{Fu1, Y}, 
$p$-adic TMSPLs (Definition \ref{Def-TMSPL}) were introduced 
as 
multiple generalizations 
of Coleman's $p$-adic polylogarithms \cite{C}, and the 
$p$-adic multiple $L$-values are defined 
as their special values at $1$. 
The aim of 
this subsection is to clarify the 
relationship between $p$-adic rigid TMSPLs
(Definition \ref{def of pMMPL})
and $p$-adic TMSPLs  in Theorem \ref{ell-Li theorem}, 
and to establish the 
relationship between 
special values at positive integers of our $p$-adic multiple $L$-functions
(see Definition \ref{Def-pMLF})
and the $p$-adic twisted multiple $L$-star values,
the special values of $p$-adic TMSPLs at $1$,
in Theorem \ref{L-Li theorem}.
Theorem \ref{ell-Li theorem} extends the previous result
\eqref{Coleman L-Li} of Coleman in the depth $1$ case, shown in \cite{C}.

First we recall Coleman's $p$-adic iterated integration theory \cite{C}
for 
our particular case.
For other nice expositions of his theory,
see \cite[Section 5]{B}, \cite[Subsection 2.2.1]{Br} and \cite[Subsection 2.1]{Fu1}.
The integration here is different from the integration using a certain $p$-adic measure
which is explained in Subsection \ref{sec-3-1}.

\begin{notation}
Fix $\varpi\in\mathbb C_p$.
The {\it $p$-adic logarithm} $\log^\varpi$ {\it associated to 
the branch} $\varpi$ is defined to be 
the locally 
analytic group homomorphism
$\mathbb C_p^\times\to\mathbb C_p$
with the usual Taylor expansion  for the logarithm on $]\bar 1[=1+{\frak M}_{\mathbb C_p}$.
It is uniquely characterized by $\log^\varpi(p)=\varpi$
because ${\mathbb C}_p^\times\simeq 
p^{\mathbb Q}\times \mu_{\infty}\times (1+{\frak M}_{\mathbb C_p})$.
We call this $\varpi\in\mathbb C_p$ the {\it branch parameter}
of the $p$-adic logarithm. 

Let
$S=\{s_0,\dots,s_d\}$ (all $s_i$ are distinct)
be a finite subset of $\textbf{P}^{1}(\overline{\mathbb{F}}_p)$.
Define
\[
A^\varpi_{\mathrm{loc}}({\bf P}^1\setminus S)
:=\prod_{x\in {\bf P}^1({\overline{\mathbb{F}}_p}) }A^\varpi_{\mathrm{log}}(U(x)),\qquad
\Omega^\varpi_{\mathrm{loc}}({\bf P}^1\setminus S)
:=\prod_{x\in  {\bf P}^1({\overline{\mathbb{F}}_p}) }\Omega^\varpi_{\mathrm{log}}(U(x)),
\]
where
\begin{align*}
&A^\varpi_{\mathrm{log}}(U(x)):=
\begin{cases}
A^\mathrm{rig}(]x[) &  (\text{if } x\not\in S), \\
\underset{\lambda\to 1^-}{\lim}
A^\mathrm{rig}\Bigl(]x[ \ \cap U_\lambda\Bigr)\Bigl[\log^\varpi(z_i)\Bigr] &
(\text{if }x=s_i \quad (0\leqslant i \leqslant d)),
\end{cases}\\
&\Omega^\varpi_{\mathrm{log}}(U(x)):=
\begin{cases}
A^{\mathrm{rig}}(]x[)\cdot dz_x
&  (\text{if }x\not\in S), \\
A^\varpi_{\mathrm{log}}(U(x))dz_i &
(\text{if }x=s_i \quad (0\leqslant i \leqslant d)).
\end{cases}\\
\end{align*}
Here, $U_\lambda$ is the same affinoid as in \eqref{removing all closed discs},
$z_i$ is a local parameter as in  \eqref{local parameter}
and $z_x$ is a local parameter of $]x[$. 
We note that $\log^\varpi(z_i)$ is a locally analytic function defined on
$]s_i[-z_i^{-1}(0)$, 
whose differential is $\frac{dz_i}{z_i}$, and it is transcendental over 
$A^\mathrm{rig}\Bigl(]s_i[ \ \cap U_\lambda\Bigr)$, 
and
$$
\underset{\lambda\to 1^-}{\lim}
A^\mathrm{rig}\Bigl(]s_i[ \ \cap U_\lambda\Bigr)
\cong\Bigl\{f(z_i)=\sum\limits_{n=-\infty}^{\infty}a_nz_i^n  \ (a_n\in\mathbb C_p)
\text{ converging on }\lambda<|z_i|_p<1 \text{ for some } 0\leqslant \lambda<1\Bigr\}
$$
(see \cite{Be1}).
We remark that these definitions of $A^\varpi_{\mathrm{log}}(U(x))$ and $\Omega^\varpi_{\mathrm{log}}(U(x))$
are independent of any choice of local parameters 
modulo standard isomorphisms.
By taking a component-wise derivative, we obtain a $\mathbb C_p$-linear map
$d:A^\varpi_{\mathrm{loc}}({\bf P}^1\setminus S)\to
\Omega^\varpi_{\mathrm{loc}}({\bf P}^1\setminus S)$.
We may regard $A^\dag({\bf P}^1\setminus S)$
and $\Omega^{\dag,1}({\bf P}^1\setminus S)$
in Notation \ref{overconvergent functions and associated cohomologies}
to be 
subspaces of $A^\varpi_{\mathrm{loc}}({\bf P}^1\setminus S)$ 
and $\Omega^\varpi_{\mathrm{loc}}({\bf P}^1\setminus S)$, 
respectively.

By a result 
of Coleman \cite{C},  we have an 
$A^\dag({\bf P}^1\setminus S)$-subalgebra 
$$
A^\varpi_{\mathrm{Col}}({\bf P}^1\setminus S)$$
of $A^\varpi_{\mathrm{loc}}({\bf P}^1\setminus S)$,
which we call {\it the ring of Coleman functions} 
attached to a branch parameter $\varpi\in\mathbb C_p$,
and a $\mathbb C_p$-linear map
$$
\int_{(\varpi)}:A^\varpi_{\mathrm{Col}}({\bf P}^1\setminus S)\underset
{A^\dag({\bf P}^1\setminus S)}{\otimes}
\Omega^{\dag,1}({\bf P}^1\setminus S)
\to A^\varpi_{\mathrm{Col}}({\bf P}^1\setminus S)\Bigm/ \mathbb C_p\cdot 1
$$
satisfying 
$d\bigm|_{A^\varpi_{\mathrm{Col}}({\bf P}^1\setminus S)}\circ\int_{(\varpi)}=
id_{A^\varpi_{\mathrm{Col}}({\bf P}^1\setminus S)\otimes
\Omega^{\dag,1}({\bf P}^1\setminus S)}$,
which we call {\it the $p$-adic (Coleman) integration}
attached to a branch parameter $\varpi\in\mathbb C_p$.
We often drop  the subscript ${(\varpi)}$.

Since
$A^\varpi_{\mathrm{Col}}({\bf P}^1\setminus S)$ is a subspace
of $A^\varpi_{\mathrm{loc}}({\bf P}^1\setminus S)$,
each element $f$ of
$A^\varpi_{\mathrm{Col}}({\bf P}^1\setminus S)$ can be seen as a map
$f:U\to \mathbb C_p$
by  $f|_{U\cap ]x[}\in A^\varpi_{\mathrm{log}}(U_x)$ for each residue $]x[$,
where $U$ is an affinoid bigger than
${\bf P}^1({\mathbb C}_p)- ]S[$.
This is how we regard $f$ as a function.
\end{notation}

Here we recall two important properties of Coleman functions: 
%
%
%

\begin{proposition}[The Branch Independency Principle {\cite[Proposition 2.3]{Fu1}}]
\label{branch independency principle}
For $\varpi_1,\varpi_2\in\mathbb C_p$, 
define the isomorphisms
$$
\iota_{\varpi_1,\varpi_2}:A^{\varpi_1}_{\mathrm{loc}}({\bf P}^1\setminus S)
\overset{\sim}{\to}A^{\varpi_2}_{\mathrm{loc}}({\bf P}^1\setminus S) \quad \text{ and } \quad
\tau_{\varpi_1,\varpi_2}:\Omega^{\varpi_1}_{\mathrm{loc}}({\bf P}^1\setminus S)\overset{\sim}{\to}\Omega^{\varpi_2}_{\mathrm{loc}}({\bf P}^1\setminus S)
$$
which are 
obtained
by replacing each $\log^{\varpi_1}(z_{s_i})$ by $\log^{\varpi_2}(z_{s_i})$
for $1\leqslant i\leqslant d$. Then
$$\iota_{\varpi_1,\varpi_2}(A^{\varpi_1}_{\mathrm{Col}}({\bf P}^1\setminus S))=A^{\varpi_2}_{\mathrm{Col}}({\bf P}^1\setminus S),$$
$$\tau_{\varpi_1,\varpi_2}(A^{\varpi_1}_{\mathrm{Col}}({\bf P}^1\setminus S)\otimes\Omega^{\dag,1}({\bf P}^1\setminus S))
=A^{\varpi_2}_{\mathrm{Col}}({\bf P}^1\setminus S)\otimes\Omega^{\dag,1}({\bf P}^1\setminus S),$$ 
and
$$\iota_{\varpi_1,\varpi_2}\circ\int_{(\varpi_1)}=\int_{(\varpi_2)}\circ\tau_{\varpi_1,\varpi_2} \mod \mathbb C_p\cdot 1.$$
Namely the following diagram is commutative:
\[
\begin{CD}
A^{\varpi_1}_{\mathrm{Col}}({\bf P}^1\setminus S)\underset{A^\dag({\bf P}^1\setminus S)}{\otimes}\Omega^{\dag,1}({\bf P}^1\setminus S)
@>{\tau_{\varpi_1,\varpi_2}}>>
A^{\varpi_2}_{\mathrm{Col}}({\bf P}^1\setminus S)\underset{A^\dag({\bf P}^1\setminus S)}{\otimes}\Omega^{\dag,1}({\bf P}^1\setminus S) \\
@V{\int_{(\varpi_1)}}VV
@VV{\int_{(\varpi_2)}}V \\
A^{\varpi_1}_{\mathrm{Col}}({\bf P}^1\setminus S)\Bigm/ \mathbb C_p\cdot 1
@>{\iota_{\varpi_1,\varpi_2}}>>
A^{\varpi_2}_{\mathrm{Col}}({\bf P}^1\setminus S)\Bigm/ \mathbb C_p\cdot 1. \\
\end{CD}
\]
\end{proposition}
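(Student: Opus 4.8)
The plan is to reduce the entire statement to two ingredients: that the substitution maps $\iota_{\varpi_1,\varpi_2}$ and $\tau_{\varpi_1,\varpi_2}$ are well-defined isomorphisms intertwining the local differential $d$, and that Coleman primitives are unique up to an additive constant. First I would make the substitution precise at the local level. On each tubular neighborhood $]s_i[$ the ring $A^{\varpi}_{\mathrm{log}}(U(s_i))$ is, by its very construction, a polynomial algebra $\bigl(\underset{\lambda\to 1^-}{\lim}\,A^{\mathrm{rig}}(]s_i[\cap U_\lambda)\bigr)[\log^\varpi(z_i)]$ in the single transcendental generator $\log^\varpi(z_i)$ over the overconvergent base. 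Since $\log^{\varpi_1}(z_i)$ and $\log^{\varpi_2}(z_i)$ are both transcendental over the same base, the assignment $\log^{\varpi_1}(z_i)\mapsto\log^{\varpi_2}(z_i)$, identity on the base, extends uniquely to a $\mathbb{C}_p$-algebra isomorphism; taking the product over all $x\in {\bf P}^1(\overline{\mathbb{F}}_p)$ produces $\iota_{\varpi_1,\varpi_2}$, and likewise $\tau_{\varpi_1,\varpi_2}$ on $1$-forms. The essential local computation is that, via $\mathbb{C}_p^\times\simeq p^{\mathbb{Q}}\times\mu_\infty\times(1+\mathfrak{M}_{\mathbb{C}_p})$, one has $\log^{\varpi_2}(z_i)-\log^{\varpi_1}(z_i)=(\varpi_2-\varpi_1)\,\mathrm{ord}_p(z_i)$, so that $d\log^{\varpi_1}(z_i)=dz_i/z_i=d\log^{\varpi_2}(z_i)$. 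This gives the intertwining relation $d\circ\iota_{\varpi_1,\varpi_2}=\tau_{\varpi_1,\varpi_2}\circ d$ on $A^{\varpi_1}_{\mathrm{loc}}({\bf P}^1\setminus S)$.

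Next I would prove the stability statement $\iota_{\varpi_1,\varpi_2}(A^{\varpi_1}_{\mathrm{Col}})=A^{\varpi_2}_{\mathrm{Col}}$; I expect this to be the main obstacle, because $A^\varpi_{\mathrm{Col}}$ is given not by a closed formula but as the $A^\dag({\bf P}^1\setminus S)$-subalgebra generated by iterated Coleman integrals. I would argue by induction on the natural filtration of $A^\varpi_{\mathrm{Col}}$ by the number of iterated integrations (equivalently the \emph{transcendence level} in the $\log^\varpi$'s). At level $0$ the algebra is $A^\dag({\bf P}^1\setminus S)$, on which $\iota_{\varpi_1,\varpi_2}$ acts as the identity, so the claim is trivial. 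For the inductive step, an element of level $n$ is a finite sum of terms $\int_{(\varpi_1)}(g\,\omega)$ with $g\in A^{\varpi_1}_{\mathrm{Col}}$ of level $<n$ and $\omega\in\Omega^{\dag,1}({\bf P}^1\setminus S)$; applying the intertwining of the previous paragraph, $\iota_{\varpi_1,\varpi_2}$ carries such a term to a Coleman function for $\varpi_2$ of level $\leqslant n$, since it is a primitive of the overconvergent form $\tau_{\varpi_1,\varpi_2}(g\,\omega)=\iota_{\varpi_1,\varpi_2}(g)\,\omega$, whose coefficient $\iota_{\varpi_1,\varpi_2}(g)$ already lies in $A^{\varpi_2}_{\mathrm{Col}}$ by induction. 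Surjectivity follows by applying the same argument to $\iota_{\varpi_2,\varpi_1}=\iota_{\varpi_1,\varpi_2}^{-1}$. The second displayed equation, for the tensor product with $\Omega^{\dag,1}$, is then immediate, since $\tau_{\varpi_1,\varpi_2}$ restricts to the identity on the branch-independent space $\Omega^{\dag,1}$, so $\tau_{\varpi_1,\varpi_2}$ acts as $\iota_{\varpi_1,\varpi_2}\otimes\mathrm{id}$ on $A^{\varpi_1}_{\mathrm{Col}}\underset{A^\dag}{\otimes}\Omega^{\dag,1}$.

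Finally, the compatibility of the two integrations is a formal consequence of uniqueness. Both composites $\iota_{\varpi_1,\varpi_2}\circ\int_{(\varpi_1)}$ and $\int_{(\varpi_2)}\circ\tau_{\varpi_1,\varpi_2}$ are maps from $A^{\varpi_1}_{\mathrm{Col}}\underset{A^\dag}{\otimes}\Omega^{\dag,1}$ into $A^{\varpi_2}_{\mathrm{Col}}\bigm/\mathbb{C}_p\cdot 1$. Since $d$ annihilates constants it descends to the quotient, and I would compute $d$ of each composite: using $d\circ\int_{(\varpi_1)}=\mathrm{id}$, $d\circ\int_{(\varpi_2)}=\mathrm{id}$, and $d\circ\iota_{\varpi_1,\varpi_2}=\tau_{\varpi_1,\varpi_2}\circ d$, both composites yield $\tau_{\varpi_1,\varpi_2}$ after differentiation. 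Hence their difference is killed by $d$, so it lies in $\ker d=\mathbb{C}_p\cdot 1$; but we are already working modulo $\mathbb{C}_p\cdot 1$, so the two composites coincide, which is precisely the commutativity of the asserted diagram. The only delicate point in this last step is to invoke correctly the uniqueness of Coleman primitives (Coleman's theorem, recalled in the preceding Notation), which is exactly what licenses the passage from ``agree after applying $d$'' to ``agree modulo $\mathbb{C}_p\cdot 1$''.
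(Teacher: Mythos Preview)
The paper does not give its own proof of this proposition; it is quoted without proof from \cite[Proposition~2.3]{Fu1}. So there is no in-paper argument to compare against directly, and I can only assess your sketch on its own merits.

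Your overall architecture is right, but the inductive step contains a genuine gap. You write that $\iota_{\varpi_1,\varpi_2}(F)$, with $F=\int_{(\varpi_1)}(g\,\omega)$, is a Coleman function for $\varpi_2$ ``since it is a primitive'' of $\iota_{\varpi_1,\varpi_2}(g)\,\omega$. But being a primitive in $A^{\varpi_2}_{\mathrm{loc}}$ of a form in $A^{\varpi_2}_{\mathrm{Col}}\otimes\Omega^{\dag,1}$ does \emph{not} place you in $A^{\varpi_2}_{\mathrm{Col}}$. The kernel of $d$ on $A^{\varpi_2}_{\mathrm{loc}}$ consists of all locally constant functions (one independent constant per residue disc), so there are infinitely many local primitives; Coleman integration singles out a specific one via a Frobenius-equivariance condition. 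Your argument yields $d(\iota(F)-G)=0$ for the Coleman primitive $G\in A^{\varpi_2}_{\mathrm{Col}}$, hence $\iota(F)-G$ is locally constant, but you have not shown it is \emph{globally} constant. The same issue resurfaces in your final paragraph: you invoke ``$\ker d=\mathbb{C}_p\cdot 1$'', which is true on $A^{\varpi_2}_{\mathrm{Col}}$ but not on $A^{\varpi_2}_{\mathrm{loc}}$, and at that point you have not yet established that the relevant difference lies in $A^{\varpi_2}_{\mathrm{Col}}$.

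What is missing is exactly the verification that $\iota_{\varpi_1,\varpi_2}$ intertwines the Frobenius structures on $A^{\varpi_1}_{\mathrm{loc}}$ and $A^{\varpi_2}_{\mathrm{loc}}$ (equivalently, that the Frobenius action on the transcendental generators $\log^{\varpi}(z_i)$ is branch-compatible). Once that is checked, your induction goes through: $\iota(F)$ inherits Frobenius-invariance from $F$, hence coincides with the Coleman primitive $G$ up to a global constant. This Frobenius compatibility is the substantive content of the proposition and is what is established in \cite{Fu1}.
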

%
%

\begin{proposition}[The Coincidence Principle {\cite[Chapter IV]{C}}]\label{coincidence principle}
Let $\varpi\in\mathbb C_p$, and let
$f\in A^\varpi_{\mathrm{Col}}({\bf P}^1\setminus S)$.
Suppose that $f|_U\equiv 0$ for 
an admissible open subset $U$ 
of ${\bf P}^1(\mathbb C_p)$, 
that is,
$U$ is of the form 
$
\left\{z\in {\bf P}^1({\mathbb C}_p)\bigm| |z-\alpha_i|_p> \rs_i \ (i=1,\dots, n),
|z|_p< 1/\rs_0
\right\}
$
for some $\alpha_1, \dots, \alpha_n\in {\mathbb C}_p$ and $\rs_1,\dots,\rs_n\in{\mathbb Q}_{>0}$
and $\rs_0\in{\mathbb Q}_{\geqslant 0}$ (cf.\ \cite{B,C}).
Then $f=0$.
\end{proposition}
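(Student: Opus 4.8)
The plan is to induct on the \emph{depth} of $f$, the minimal number of nested Coleman integrations $\int_{(\varpi)}$ required to build $f$ from overconvergent functions. By construction $A^\dag({\bf P}^1\setminus S)$ is the depth-zero subspace of $A^\varpi_{\mathrm{Col}}({\bf P}^1\setminus S)$; the filtration $A^{(0)}\subset A^{(1)}\subset\cdots$ with $A^{(m+1)}=A^{(m)}+\int_{(\varpi)}\bigl(A^{(m)}\otimes_{A^\dag}\Omega^{\dag,1}\bigr)$ is multiplicative, and $d$ carries $A^{(m)}$ into $A^{(m-1)}\otimes_{A^\dag}\Omega^{\dag,1}$, so that every Coleman function has a finite depth. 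Two facts will drive the argument: the coincidence principle for rigid analytic functions (Proposition~\ref{two fundamental properties of rigid analytic functions}(i)), and the identity $\ker\bigl(d\colon A^\varpi_{\mathrm{Col}}\to A^\varpi_{\mathrm{Col}}\otimes_{A^\dag}\Omega^{\dag,1}\bigr)=\mathbb C_p\cdot 1$, which is forced by the relation $d\circ\int_{(\varpi)}=\mathrm{id}$.

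In the base case $f\in A^\dag({\bf P}^1\setminus S)$ is rigid analytic on some affinoid $X$ strictly containing ${\bf P}^1({\mathbb C}_p)- \,]S[$. The nonempty admissible open $U$ meets $X$ in a set containing a subaffinoid on which $f\equiv 0$, so Proposition~\ref{two fundamental properties of rigid analytic functions}(i) yields $f\equiv 0$ on $X$, hence $f=0$. For the inductive step, assume the claim for depth $<m$ and let $f$ have depth $m\geqslant 1$ with $f|_U\equiv 0$. Writing $\Omega^{\dag,1}({\bf P}^1\setminus S)=A^\dag({\bf P}^1\setminus S)\,dz$ as in Notation~\ref{overconvergent functions and associated cohomologies}, we have $df=g\,dz$ with $g\in A^\varpi_{\mathrm{Col}}({\bf P}^1\setminus S)$ of depth $\leqslant m-1$. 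Since $U$ is admissible open and $f$ vanishes there, $df|_U=0$, and comparing with the generator $dz$ gives $g|_U=0$. By the induction hypothesis $g=0$ globally, so $df=0$; therefore $f\in\ker d=\mathbb C_p\cdot 1$ is a constant, and as $U\neq\emptyset$ this constant is $0$.

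The delicate point, which I would import from \cite{C} rather than reprove, is the identity $\ker d=\mathbb C_p\cdot 1$. Locally on a residue disc $]s_i[$ a Coleman function is a polynomial in $\log^\varpi(z_i)$ with coefficients that are rigid analytic on the punctured disc (Notation~\ref{overconvergent functions and associated cohomologies}); since $\log^\varpi(z_i)$ is transcendental over $A^\mathrm{rig}$ with differential $dz_i/z_i$, a vanishing total differential forces each coefficient to be a rigid analytic function with zero derivative, hence locally constant. What upgrades this \lq locally constant\rq\ to \lq globally constant\rq\ across residue discs is exactly the Frobenius-equivariance built into the ring $A^\varpi_{\mathrm{Col}}$, and this rigidity is the substantive content of Coleman's theory. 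A secondary technical check is that restriction to the nonempty admissible open $U$ is injective on $A^\dag$, so that $g|_U=0$ indeed gives $g=0$ after the inductive reduction; this is again Proposition~\ref{two fundamental properties of rigid analytic functions}(i) applied after unwinding the Mittag-Leffler expansions of Lemma~\ref{Lemma-3-14}.
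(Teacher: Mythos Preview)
The paper does not supply its own proof of this proposition; it is stated as a citation of Coleman \cite[Chapter~IV]{C}, and the line following it (``a locally constant Coleman function is globally constant'') is recorded as a \emph{consequence}. So there is no in-paper argument to compare your proposal against.

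As for your proposal itself: the inductive reduction of the coincidence principle to the statement $\ker d=\mathbb C_p\cdot 1$ on $A^\varpi_{\mathrm{Col}}$ is a correct and standard outline. Two points deserve correction. First, your assertion that $\ker d=\mathbb C_p\cdot 1$ ``is forced by the relation $d\circ\int_{(\varpi)}=\mathrm{id}$'' is wrong: that relation only exhibits $\int_{(\varpi)}$ as a right inverse to $d$, hence shows $d$ is surjective onto the relevant target, but says nothing about $\ker d$. You tacitly acknowledge this later when you call it the ``delicate point'' to import from \cite{C}, but the earlier sentence should be removed. Second, be aware of the logical ordering: the paper derives ``locally constant $\Rightarrow$ globally constant'' (equivalently $\ker d=\mathbb C_p$) \emph{from} the coincidence principle, whereas you run the implication the other way. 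Both directions are valid once one has either statement, and in Coleman's construction both are consequences of the same Frobenius-equivariance input (that Frobenius acts on the relevant $H^1$ without eigenvalue $1$). Your final paragraph identifies this correctly; just do not present $\ker d=\mathbb C_p$ as a lighter fact than the proposition you are proving --- it is of the same depth, and that is why the paper simply cites Coleman rather than arguing either direction.
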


It follows from this proposition that a locally constant 
Coleman function is globally constant.

%

\begin{notation}\label{Coleman-notation}
Fix a branch $\varpi\in\mathbb C_p$ and let
$\omega\in A^\varpi_{\mathrm{Col}}({\bf P}^1\setminus S)\underset{A^\dag({\bf P}^1\setminus S)}{\otimes}\Omega^{\dag,1}({\bf P}^1\setminus S)$.
Then by Coleman's integration theory, there exists
a unique (modulo constant) Coleman function 
$F_\omega\in A^\varpi_{\mathrm{Col}}({\bf P}^1\setminus S)$ such that
$\int\omega\equiv F_\omega$ (modulo constant).
For $x,y\in]{\bf P}^1\setminus S[$, 
we define 
$$\int_x^y\omega:=
F_\omega(y)-F_\omega(x).$$
It is clear that $\int_x^y\omega$ does not depend on the 
choice
of $F_\omega$ (although it may depend on the choice of a branch $\varpi\in\mathbb C_p$).
If both $F_\omega(x)$ and $F_\omega(y)$ is well-defined 
when $x$ or $y$ belongs to  $]S_0[$,
we also denote $F_\omega(y)-F_\omega(x)$ by $\int_x^y\omega$.
By letting $x$ be 
fixed and $y$ vary, we may 
regard $\int_x^y\omega$ as the Coleman function
which is characterized by $dF_\omega=\omega$ and $F_\omega(x)=0$.
We note that
\begin{equation}
\iota_{\varpi_1,\varpi_2}\left(\int_{(\varpi_1),x}^y\omega\right)
=\int_{(\varpi_2),x}^y\tau_{\varpi_1,\varpi_2} (\omega).
\end{equation}
\end{notation}

We will apply Coleman's theory to the function defined below,
which is the main object in this subsection.

\begin{definition}\label{Def-TMSPL}
Let $n_1,\dots,n_r\in \mathbb{N}$, and 
$\xi_1,\dots,\xi_{r}\in\mathbb{C}_p$
such that $|\xi_j|_p\leqslant 1$ ($1\leqslant j\leqslant r$).
The {\bf $p$-adic TMSPL} 
$Li^{(p),\star}_{n_1,\dots,n_r}(\xi_1,\dots,\xi_r; z)$ is defined by the following 
$p$-adic power series:
\begin{equation}\label{series expression}
Li^{(p),\star}_{n_1,\dots,n_r}(\xi_1,\dots,\xi_r ;z):=
{\underset{0<k_1\leqslant\cdots\leqslant k_{r}}{\sum}}
\frac{\xi_1^{k_1}\cdots\xi_{r}^{k_{r}} z^{k_r}}{k_1^{n_1}\cdots k_r^{n_r}},
\end{equation}
which converges for $z\in ]\bar{0}[$. 
\end{definition}

\begin{remark}\label{remark:Coleman-Furusho} \ 
\begin{enumerate}
\item We note that when $r=1$ and $\xi=1$, $Li^{(p),\star}_{n}(1;z)$
is equal to Coleman's $p$-adic polylogarithm
$\ell_n(z)$ in \cite[p.\,195]{C}.
\item
The $p$-adic multiple polylogarithm $Li^{(p)}_{n_1,\dots,n_r}(1,\dots,1;z)$
which is a non-star version of \eqref{series expression}
with $\xi_1=\cdots=\xi_r=1$,
was investigated by the first-named author in \cite{F2}. 
It is easy to see that 
$Li^{(p),\star}_{n_1,\dots,n_r}(1,\dots,1;z)$
is written as a linear combination of such $p$-adic multiple polylogarithms.

\item
Yamashita \cite{Y} treats a non-star version of \eqref{series expression}
with the case when all $\xi_j$ are roots of unity, 
whose orders are prime to $p$.
\end{enumerate}
\end{remark}

The following can be proved by direct calculation.

\begin{lemma}\label{differential equations fo Li}
Let $n_1,\dots,n_r\in \mathbb{N}$, and 
$\xi_1,\dots,\xi_{r}\in\mathbb{C}_p$
such that $|\xi_j|_p\leqslant 1$ ($1\leqslant j\leqslant r$).
\begin{enumerate}[{\rm (i)}]
\item $\displaystyle{
\frac{d}{dz}Li^{(p),\star}_{1}(\xi_1;z)=\frac{\xi_1}{1-\xi_1z}.}$
\item $\displaystyle{
\frac{d}{dz}Li^{(p),\star}_{n_1,\dots,n_r}(\xi_1,\dots,\xi_r ;z)=
\begin{cases}
\frac{1}{z}Li^{(p),\star}_{n_1,\dots,n_{r-1},n_r-1}(\xi_1,\dots,\xi_r;z)
&\text{if}\quad n_r\neq 1 ,\\
\frac{\xi_r}{1-\xi_r z} Li^{(p),\star}_{n_1,\dots,n_{r-1}}(\xi_1,\dots,\xi_{r-2},\xi_{r-1}\xi_r;z)&\text{if}\quad n_r=1. 
\end{cases}}$
\end{enumerate}
\end{lemma}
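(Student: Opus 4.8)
The plan is to derive both relations by differentiating the defining power series \eqref{series expression} term by term on the disc $]\overline{0}[$ and then resumming, exactly as in the parallel computation behind Lemma \ref{differential equations}. First I would record why this is legitimate: for $|z|_p<1$ the coefficients of \eqref{series expression} grow at most polynomially in $k_r$ (since $|k_1^{n_1}\cdots k_r^{n_r}|_p^{-1}\leqslant (k_1\cdots k_r)^{N}$ with $N=n_1+\cdots+n_r$, while $|\xi_j|_p\leqslant 1$), so the series and its formal term-by-term derivative have the same domain of convergence and agree as functions on $]\overline{0}[$; moreover the family of terms tends to $0$, hence is unconditionally summable, so I may freely reorder the multiple sum and carry out an inner summation first.

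For part (i) and for the first case of part (ii), namely $n_r\neq 1$, the computation is immediate: since $\frac{d}{dz}\frac{z^{k_r}}{k_r^{n_r}}=\frac1z\,\frac{z^{k_r}}{k_r^{n_r-1}}$, differentiating \eqref{series expression} reproduces the same series with $n_r$ lowered to $n_r-1$ together with an overall factor $\frac1z$, which is the asserted identity. For $r=1$, $n_1=1$ I would instead sum the resulting geometric progression $\sum_{k_1\geqslant 1}\xi_1^{k_1}z^{k_1-1}=\xi_1(1-\xi_1 z)^{-1}$, valid because $|\xi_1 z|_p<1$, yielding (i). The substantive case is $n_r=1$ with $r\geqslant 2$. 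Here the cancellation $k_r^{n_r}=k_r$ removes all dependence of the summand on $k_r$ except through $z^{k_r-1}$, so after differentiating I would fix $k_1\leqslant\cdots\leqslant k_{r-1}$ and evaluate the inner geometric series over the top index $k_r\geqslant k_{r-1}$. The feature peculiar to the star (non-strict) convention is that this inner range \emph{includes} the diagonal term $k_r=k_{r-1}$; summing $\sum_{k_r\geqslant k_{r-1}}\xi_r^{k_r}z^{k_r-1}$ then produces the factor $(1-\xi_r z)^{-1}$ together with a boundary contribution carrying $\xi_r^{k_{r-1}}z^{k_{r-1}-1}$. Reassembling the remaining sum over $k_1\leqslant\cdots\leqslant k_{r-1}$ and using $\xi_{r-1}^{k_{r-1}}\xi_r^{k_{r-1}}=(\xi_{r-1}\xi_r)^{k_{r-1}}$ collapses the depth from $r$ to $r-1$ and merges the last two twists into $\xi_{r-1}\xi_r$, giving the depth-reduced TMSPL of the second case of (ii).

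The one step that requires genuine care — and which I expect to be the main obstacle — is tracking the $z$-exponent at the diagonal: the boundary term supplies $z^{k_{r-1}-1}$, whereas the reduced polylogarithm in \eqref{series expression} carries $z^{k_{r-1}}$, and reconciling this single leftover power of $z$ against the $(1-\xi_r z)^{-1}$ factor is what pins down the precise prefactor. This is exactly where the star version departs from the strict multiple polylogarithm, whose top index runs over $k_r>k_{r-1}$ and hence omits the diagonal, producing instead the classical $(1-\xi_r z)^{-1}$ form without the diagonal correction. Since all manipulations take place among power series absolutely (indeed unconditionally) convergent on $]\overline{0}[$, no analytic-continuation input is needed for the lemma itself; the resulting differential relations are precisely the identities later fed into Coleman's integration theory to extend the $p$-adic TMSPLs beyond $]\overline{0}[$.
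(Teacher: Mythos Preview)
Your approach---termwise differentiation of the defining series \eqref{series expression} followed by summation of the inner geometric progression---is exactly the ``direct calculation'' the paper invokes, and part (i) together with the $n_r\neq 1$ case of (ii) go through just as you describe.

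However, the concern you flag in the $n_r=1$ case is not a mere bookkeeping issue to be reconciled: carrying it through shows that the formula as printed is \emph{incorrect} for the star version. Summing over $k_r\geqslant k_{r-1}$ (the star range) gives
\[
\sum_{k_r\geqslant k_{r-1}}\xi_r^{k_r}z^{k_r-1}
=\frac{\xi_r^{k_{r-1}}z^{k_{r-1}-1}}{1-\xi_r z},
\]
and after reassembly one obtains
\[
\frac{d}{dz}Li^{(p),\star}_{n_1,\dots,n_r}(\xi_1,\dots,\xi_r;z)
=\frac{1}{z(1-\xi_r z)}\,Li^{(p),\star}_{n_1,\dots,n_{r-1}}(\xi_1,\dots,\xi_{r-2},\xi_{r-1}\xi_r;z),
\]
not $\frac{\xi_r}{1-\xi_r z}\,Li^{(p),\star}_{n_1,\dots,n_{r-1}}(\cdots;z)$. (Check $r=2$, $n_1=n_2=1$, $\xi_1=\xi_2=1$: the constant term of the true derivative is $1$, while the stated right-hand side has constant term $0$.) The formula stated in the lemma is the one valid for the \emph{non-star} polylogarithm, where the inner sum runs over $k_r>k_{r-1}$ and starts at $k_{r-1}+1$, producing the extra factor $\xi_r z$ that converts $z^{k_{r-1}-1}$ to $\xi_r z^{k_{r-1}}$. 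Equivalently, the star and non-star derivatives differ by the diagonal contribution $\frac{1}{z}\,Li^{(p),\star}_{n_1,\dots,n_{r-1}}(\xi_1,\dots,\xi_{r-2},\xi_{r-1}\xi_r;z)$. So your instinct was right: the ``obstacle'' is a genuine discrepancy in the statement, not something your calculation needs to absorb.
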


The  definition, below, is suggested by
the differential equations, above.

\begin{theorem-definition}\label{Coleman function theorem}
Fix a branch of the $p$-adic logarithm by $\varpi\in\mathbb C_p$.
Let $n_1,\dots,n_r\in \mathbb{N}$,
$\xi_1,\dots,\xi_{r}\in\mathbb{C}_p$
such that  $|\xi_j|_p\leqslant 1$ ($1\leqslant j\leqslant r$), and let 
$$
S_r:=\{\bar 0,\overline{\infty}, \overline{(\xi_r)^{-1}},\overline{(\xi_{r-1}\xi_r)^{-1}},\dots,\overline{(\xi_1\cdots\xi_r)^{-1}}\}
\subset  {\bf{P}}^{1}(\overline{\mathbb{F}}_p).
$$
Then we have the Coleman function attached to  $\varpi\in\mathbb C_p$
$$
Li^{(p),\star,\varpi}_{n_1,\dots,n_r}(\xi_1,\dots,\xi_r; z)\in
A^\varpi_{\text{\rm Col}}
({\bf P}^1\setminus S_r
),
$$
which is constructed
by the following iterated integrals:
\begin{align}
&Li^{(p),\star,\varpi}_{1}(\xi_1;z)=-\log^\varpi (1-\xi_1z)=\int_0^z\frac{\xi_1}{1-\xi_1t}dt, 
\label{intLi1}
\\
&Li^{(p),\star,\varpi}_{n_1,\dots,n_r}(\xi_1,\dots,\xi_r; z)=
\begin{cases}
\int_0^zLi^{(p),\star,\varpi}_{n_1,\dots,n_{r-1},n_r-1}(\xi_1,\dots,\xi_r; t)\frac{dt}{t} &\text{if}\quad n_r\neq 1 ,\\
\int_0^zLi^{(p),\star,\varpi}_{n_1,\dots,n_{r-1}}(\xi_1,\dots,\xi_{r-2},\xi_{r-1}\xi_r; t)\frac{\xi_r dt}{1-\xi_r t} &\text{if}\quad n_r=1. 
\end{cases}
\label{intLi2} 
\end{align}
\end{theorem-definition}

\begin{proof}
It is achieved by the induction on the weight $w:=n_1+\cdots+ n_r$.

When $w=1$,  the integration starting from $0$ is well-defined 
because the differential form $\frac{\xi_1}{1-\xi_1t}dt$  has no pole at $t=0$.
Hence, we have \eqref{intLi1}.

When $w>1$ and $n_r>1$, 
the induction assumption implies that 
$Li^{(p),\star,\varpi}_{n_1,\dots,n_r-1}(\xi_1,\dots,\xi_r; 0)$ 
is equal to $0$ because it is an integration from $0$ to $0$.
So $Li^{(p),\star,\varpi}_{n_1,\dots,n_r-1}(\xi_1,\dots,\xi_r; t)$
has a zero of order at least $1$, 
so the integrand of the right-hand side of \eqref{intLi2} has no pole at $t=0$.
The integration \eqref{intLi2} starting from $0$ is well-defined. 
The case when $n_r=1$ can be proved in a similar 
(or an easier) way.
%
\end{proof}

\begin{remark}\label{S_r-0}
It is easy to see that
$Li^{(p),\star,\varpi}_{n_1,\dots,n_r}(\xi_1,\dots,\xi_r ;z)
=Li^{(p),\star}_{n_1,\dots,n_r}(\xi_1,\dots,\xi_r; z)$
when $|z|_p<1$ for all branches $\varpi\in\mathbb C_p$.
Thus, the Coleman function 
$Li^{(p),\star,\varpi}_{n_1,\dots,n_r}(\xi_1,\dots,\xi_r; z)
\in A^\varpi_{\text{Col}}
({\bf P}^1\setminus S_r
)$
is defined on an affinoid bigger than
${\bf P}^1({\mathbb C}_p) - ]{S_r}\setminus\{\overline{0}\} [.$
\end{remark}

\begin{proposition}\label{branch independent region}
Fix a branch  $\varpi\in\mathbb C_p$.
Let $n_1,\dots,n_r\in \mathbb{N}$, and 
$\xi_1,\dots,\xi_{r}\in\mathbb{C}_p$
such that $|\xi_j|_p\leqslant 1$ ($1\leqslant j\leqslant r$).
Then the restriction of the $p$-adic TMSPL
$Li^{(p),\star,\varpi}_{n_1,\dots,n_r}(\xi_1,\dots,\xi_r ;z)$
to
${\bf P}^1({\mathbb C}_p) - ]{S_r}\setminus\{\overline{0}\} [$
does not depend on 
$\varpi$. 
\end{proposition}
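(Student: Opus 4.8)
The plan is to show that the branch parameter $\varpi$ influences $Li^{(p),\star,\varpi}_{n_1,\dots,n_r}(\xi_1,\dots,\xi_r;z)$ only through logarithmic terms supported at the points of $S_r$, and that once the tubes $]S_r\setminus\{\bar{0}\}[$ are deleted the surviving function is genuinely rigid analytic, hence forced to be branch-free by analytic continuation from $]\bar{0}[$.

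First I would recall from the construction of Coleman functions that $Li^{(p),\star,\varpi}_{n_1,\dots,n_r}(\xi_1,\dots,\xi_r;z)$ lies in $A^\varpi_{\mathrm{Col}}({\bf P}^1\setminus S_r)\subset A^\varpi_{\mathrm{loc}}({\bf P}^1\setminus S_r)=\prod_{x}A^\varpi_{\mathrm{log}}(U(x))$, and that the branch enters a factor $A^\varpi_{\mathrm{log}}(U(x))$ only when $x\in S_r$; for $x\notin S_r$ one has $A^\varpi_{\mathrm{log}}(U(x))=A^{\mathrm{rig}}(]x[)$, with no occurrence of $\log^\varpi$. Writing $R:={\bf P}^1(\mathbb{C}_p)-]S_r\setminus\{\bar{0}\}[$, the region $R$ is the union of the residue discs $]x[$ with $x\notin S_r$ together with $]\bar{0}[$. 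On each disc $]x[$ with $x\notin S_r$ the restriction is rigid analytic by definition, and on $]\bar{0}[$ Remark \ref{S_r-0} identifies the function with the power series $Li^{(p),\star}_{n_1,\dots,n_r}(\xi_1,\dots,\xi_r;z)$, which carries no $\log^\varpi$-term. I would conclude from the sheaf property of rigid analytic functions over the admissible residue-disc cover that, for each fixed $\varpi$, the restriction $Li^{(p),\star,\varpi}_{n_1,\dots,n_r}(\xi_1,\dots,\xi_r;z)|_R$ has no logarithmic singularity, i.e. belongs to $A^{\mathrm{rig}}(R)$.

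The branch-independence would then follow from the coincidence principle. By Remark \ref{S_r-0}, on any closed subdisc $\{z\mid |z|_p\leqslant\rho\}$ with $\rho<1$ the function equals the $\varpi$-free power series $Li^{(p),\star}_{n_1,\dots,n_r}(\xi_1,\dots,\xi_r;z)$. Since $R$ is a connected affinoid containing this subaffinoid, Proposition \ref{two fundamental properties of rigid analytic functions}~(i) shows that an element of $A^{\mathrm{rig}}(R)$ is determined by its restriction to $\{z\mid |z|_p\leqslant\rho\}$. Applying this to two branches $\varpi_1,\varpi_2$, the two rigid analytic functions $Li^{(p),\star,\varpi_1}_{n_1,\dots,n_r}|_R$ and $Li^{(p),\star,\varpi_2}_{n_1,\dots,n_r}|_R$ agree on $\{z\mid |z|_p\leqslant\rho\}$, hence on all of $R$, which is the assertion.

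The hard part will be making the step $Li^{(p),\star,\varpi}_{n_1,\dots,n_r}|_R\in A^{\mathrm{rig}}(R)$ rigorous. The a priori difficulty is that, although the local factor $A^{\mathrm{rig}}(]x[)$ at $x\notin S_r$ carries no $\log^\varpi$, the actual rigid analytic component of $Li^{(p),\star,\varpi}_{n_1,\dots,n_r}$ there might still vary with $\varpi$, because Coleman's antiderivative is glued across residue discs in a branch-dependent fashion. This is precisely what the coincidence principle repairs: rigid analyticity on the connected $R$ forces every such far component to be the analytic continuation of the central power series on $]\bar{0}[$, thereby eliminating its potential $\varpi$-dependence. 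Thus the two points needing care are the connectedness of $R$ and the admissibility of its residue-disc cover, so that the sheaf property genuinely yields an element of $A^{\mathrm{rig}}(R)$; alternatively, one could bypass the gluing worry by invoking the Branch Independency Principle (Proposition \ref{branch independency principle}) to write $\iota_{\varpi_1,\varpi_2}(Li^{(p),\star,\varpi_1}_{n_1,\dots,n_r})=Li^{(p),\star,\varpi_2}_{n_1,\dots,n_r}$ and then checking that $\iota_{\varpi_1,\varpi_2}$ acts trivially on the components supported away from $]S_r\setminus\{\bar{0}\}[$.
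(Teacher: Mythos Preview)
Your primary strategy cannot be completed, because the key claim $Li^{(p),\star,\varpi}_{n_1,\dots,n_r}|_R\in A^{\mathrm{rig}}(R)$ is actually \emph{false}. Already for weight one, $Li^{(p),\star,\varpi}_{1}(\xi;z)=-\log^\varpi(1-\xi z)$ with $|\xi|_p=1$: on $R$ one has $|1-\xi z|_p=1$, so the branch drops out, but the remaining function $z\mapsto\log\bigl((1-\xi z)/\omega(1-\xi z)\bigr)$ is only locally analytic, because the Teichm\"uller part $\omega(1-\xi z)$ jumps between residue discs. The open residue discs covering $R$ are pairwise disjoint and do not constitute an admissible affinoid cover, so no sheaf condition recovers a global element of $A^{\mathrm{rig}}(R)$ from the disc-wise rigid pieces; indeed the product $\prod_x A^{\mathrm{rig}}(]x[)$ is precisely $A^\varpi_{\mathrm{loc}}$, not $A^{\mathrm{rig}}(R)$. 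Consequently the coincidence principle for rigid analytic functions is unavailable here, and the circularity you flagged in your third paragraph is genuine.

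Your alternative route through Proposition~\ref{branch independency principle} does work, but note what it actually requires. The principle gives $\iota_{\varpi_1,\varpi_2}\circ\int_{(\varpi_1)}=\int_{(\varpi_2)}\circ\tau_{\varpi_1,\varpi_2}$; to deduce $\iota_{\varpi_1,\varpi_2}(Li^{(p),\star,\varpi_1}_{n_1,\dots,n_r})=Li^{(p),\star,\varpi_2}_{n_1,\dots,n_r}$ you must unwind the iterated-integral construction of Theorem-Definition~\ref{Coleman function theorem}, i.e.\ induct on weight. Then on each residue disc in $R$ with $x\notin S_r$ the map $\iota$ is the identity because $A^{\mathrm{rig}}(]x[)$ carries no $\log^\varpi$; and on $]\bar 0[$ one uses Remark~\ref{S_r-0} to see that the local component is the pure power series, hence again fixed by $\iota$. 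This is exactly the paper's argument: the authors proceed by induction on the weight $w=n_1+\cdots+n_r$, handling the base case via $|1-\xi_1 z|_p=1$ on $R$, and for the inductive step noting that the integrand is branch-independent on $R$ while the only potential new log-term (at $]\bar 0[$, from the pole of $dt/t$) is excluded by the explicit power-series expansion there. So your afterthought is the correct approach and coincides with the paper's, whereas your headline strategy rests on a false intermediate assertion.
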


\begin{proof}
It is achieved by the induction on the weight $w:=n_1+\cdots+ n_r$.
Take $z\in {\bf P}^1({\mathbb C}_p) - ]{S_r}\setminus\{\overline{0}\} [$.

When $w=1$,  it is easy to see that
$Li^{(p),\star,\varpi}_{1}(\xi_1;z)=-\log^\varpi (1-\xi_1z)$ is 
independent of the choice of  the branch $\varpi\in\mathbb{C}_p$.

Consider the case when $w>1$ and $n_r>1$.
Due to the existence of a pole of $\frac{dt}{t}$ at $t=0$,
the integration \eqref{intLi2} might have a ``log-term'' on  its restriction to $]\bar 0[$.
However, it is easy to see that the restriction of 
$Li^{(p),\star,\varpi}_{n_1,\dots,n_r}(\xi_1,\dots,\xi_r; t)$
to $]\bar 0[$ has no log-term, 
because it is given by the series expansion \eqref{series expression}
by Remark \ref{S_r-0}.
By the induction assumption,
the restriction of
$Li^{(p),\star,\varpi}_{n_1,\dots,n_r-1}(\xi_1,\dots,\xi_r; t)$
to ${\bf P}^1({\mathbb C}_p) - ]{S_r}\setminus\{\overline{0}\} [$
is independent of any choice of the branch $\varpi\in\mathbb{C}_p$.
Therefore, $Li^{(p),\star,\varpi}_{n_1,\dots,n_r}(\xi_1,\dots,\xi_r; t)$
has no log-term and does not depend on any choice of the branch.

The above proof also works for the case when $w>1$ and $n_r=1$.
The branch independency then follows since
two poles of $\frac{\xi_r dt}{1-\xi_r t} $ are outside of the region
${\bf P}^1({\mathbb C}_p) - ]{S_r}\setminus\{\overline{0}\} [$.
\end{proof}

The $p$-adic rigid TMSPL
$\ell^{(p),\star}_{n_1,\dots,n_r}(\xi_1,\dots,\xi_{r}; z)$
in Subsection \ref{sec-5-3}
is described by the $p$-adic TMSPL
$Li^{(p),\star,\varpi}_{n_1,\dots,n_r}(\xi_1,\dots,\xi_r ;z)$ as follows:

\begin{theorem}\label{ell-Li theorem}
Fix a branch  $\varpi\in\mathbb C_p$.
Let $n_1,\dots,n_r\in \mathbb{N}$, and 
$\xi_1,\dots,\xi_{r}\in\mathbb{C}_p$
such that $|\xi_j|_p= 1$ ($1\leqslant j\leqslant r$).
The equality
\begin{align}
\ell&^{(p),\star}_{n_1,\dots,n_r}(\xi_1,\dots,\xi_r ;z)  \notag \\
&=\frac{1}{p^r}\sum_{0<\alpha_1,\dots,\alpha_r<p}\sum_{\rho_1^p=\cdots=\rho_r^p=1}
\rho_1^{-\alpha_1}\cdots\rho_r^{-\alpha_r}
Li^{(p),\star,\varpi}_{n_1,\dots,n_r}(\rho_1\xi_1,\dots,\rho_r\xi_r ;z) 
\label{ell-Li formula} 
\end{align}
holds for $z\in{\bf P}^1({\mathbb C}_p) - ]{S_r}\setminus\{\overline{0}\} [.$
\end{theorem}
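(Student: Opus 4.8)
The plan is to prove \eqref{ell-Li formula} first on the residue disc $]\bar 0[$ by a direct power series manipulation, and then to propagate the identity to the whole of ${\bf P}^1({\mathbb C}_p)-]S_r\setminus\{\bar 0\}[$ by invoking the coincidence principle for Coleman functions (Proposition \ref{coincidence principle}). On $]\bar 0[$ every function involved is given by its defining convergent series: the left-hand side by \eqref{series expression for ell}, and each $Li^{(p),\star,\varpi}_{n_1,\dots,n_r}(\rho_1\xi_1,\dots,\rho_r\xi_r;z)$ on the right-hand side by \eqref{series expression}, since $Li^{(p),\star,\varpi}=Li^{(p),\star}$ on $]\bar 0[$ by Remark \ref{S_r-0}. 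So the identity on $]\bar 0[$ reduces to an identity of formal power series, which then continues to the full affinoid.

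For the power series identity I would substitute \eqref{series expression} into the right-hand side and interchange the summations over the indices $0<k_1\leqslant\cdots\leqslant k_r$ and over the characters $(\alpha_j),(\rho_j)$, which is legitimate by absolute convergence for $|z|_p<1$. The coefficient of $\xi_1^{k_1}\cdots\xi_r^{k_r}z^{k_r}/(k_1^{n_1}\cdots k_r^{n_r})$ then factors as $\prod_{j=1}^r\bigl(\frac{1}{p}\sum_{\rho_j^p=1}\sum_{0<\alpha_j<p}\rho_j^{k_j-\alpha_j}\bigr)$. The inner orthogonality relation $\frac{1}{p}\sum_{\rho_j^p=1}\rho_j^{k_j-\alpha_j}$ is $1$ when $k_j\equiv\alpha_j\pmod p$ and $0$ otherwise, so summing over $\alpha_j\in\{1,\dots,p-1\}$ produces exactly the indicator of $(k_j,p)=1$. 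Hence the right-hand side collapses to the $p$-restricted series \eqref{series expression for ell}, that is, to $\ell^{(p),\star}_{n_1,\dots,n_r}(\xi_1,\dots,\xi_r;z)$. This is the same character-sum mechanism already used in \eqref{partial ell-ell} and Remark \ref{easy remark}, now applied to the non-restricted series $Li^{(p),\star}$.

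For the continuation step I would first observe that, since each $\rho_j\in\mu_p$ reduces to $\bar 1$, one has $\overline{(\rho_j\cdots\rho_r\xi_j\cdots\xi_r)^{-1}}=\overline{(\xi_j\cdots\xi_r)^{-1}}$, so the singular set of every $Li^{(p),\star,\varpi}_{n_1,\dots,n_r}(\rho_1\xi_1,\dots,\rho_r\xi_r;z)$ is the common set $S_r$ of Theorem-Definition \ref{Coleman function theorem}. Thus the right-hand side lies in $A^\varpi_{\mathrm{Col}}({\bf P}^1\setminus S_r)$, and by Remark \ref{S_r-0} it has no log-singularity at $\bar 0$, so it is a genuine function, in fact a Coleman function, on ${\bf P}^1({\mathbb C}_p)-]S_r\setminus\{\bar 0\}[$. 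On the other hand, by Theorem \ref{rigidness III} the left-hand side is overconvergent on ${\bf P}^1\setminus S$ with $S$ as in \eqref{S0} (so $S=S_r\setminus\{\bar 0,\bar\infty\}$), hence belongs to $A^\dag({\bf P}^1\setminus(S_r\setminus\{\bar 0\}))\subset A^\varpi_{\mathrm{Col}}({\bf P}^1\setminus(S_r\setminus\{\bar 0\}))$. Consequently the difference of the two sides is a Coleman function on ${\bf P}^1\setminus(S_r\setminus\{\bar 0\})$ that vanishes on the admissible open subset $]\bar 0[$, and Proposition \ref{coincidence principle} then forces it to vanish identically on ${\bf P}^1({\mathbb C}_p)-]S_r\setminus\{\bar 0\}[$, which is the assertion.

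The main obstacle I anticipate is this last continuation step rather than the combinatorial identity: one must check carefully that both sides belong to a common Coleman algebra on the enlarged domain ${\bf P}^1\setminus(S_r\setminus\{\bar 0\})$ — in particular that the right-hand side really extends across $\bar 0$ without a log-term, so that $\bar 0$ may be removed from the forbidden locus — and that the branch-dependent right-hand side is compatible with the branch-free left-hand side. This compatibility is exactly what the branch independence of $Li^{(p),\star,\varpi}$ on ${\bf P}^1({\mathbb C}_p)-]S_r\setminus\{\bar 0\}[$ provides, via Proposition \ref{branch independent region}. Once these membership and branch-independence facts are secured, the coincidence principle applies verbatim and the proof concludes.
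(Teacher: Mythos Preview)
Your proposal is correct and follows essentially the same approach as the paper: verify \eqref{ell-Li formula} on $]\bar 0[$ via the character-sum/orthogonality computation (the paper phrases this by routing through \eqref{series expression for partial double} and Remark \ref{easy remark} (ii), but it is the same identity), observe that both sides lie in $A^\varpi_{\mathrm{Col}}({\bf P}^1\setminus S_r)$ (using Theorem \ref{rigidness III} for the left and Theorem-Definition \ref{Coleman function theorem} for the right), and then apply the coincidence principle. Your extra remarks on why the singular sets for all the $Li^{(p),\star,\varpi}(\rho_1\xi_1,\dots,\rho_r\xi_r;z)$ coincide and on branch independence are correct but not strictly needed for the argument as stated, since $\varpi$ is fixed throughout.
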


\begin{proof}
By the power series expansion
\eqref{series expression for partial double}
and \eqref{series expression}, and Remark \ref{easy remark} (ii),
it is easy to see that the equality holds on $]\bar 0[$.
By Theorem \ref{rigidness III},
the left-hand side belongs to 
$A^\dag({\bf P}^1
\setminus{S_r})$
($\subset A^\varpi_{\text{Col}}({\bf P}^1 \setminus{S_r})$), 
and by Theorem-Definition \ref{Coleman function theorem},
the right-hand side  belongs to $A^\varpi_{\text{Col}}({\bf P}^1  \setminus{S_r})$.
Therefore, 
by the coincidence principle (Proposition \ref{coincidence principle}),
the equality actually holds on 
${\bf P}^1({\mathbb C}_p) - ]{S_r}\setminus\{\overline{0}\} [$,
on an affinoid bigger than the space.
\end{proof}

The following is a reformulation of
the equation in Theorem \ref{ell-Li theorem}.

\begin{theorem}\label{ell-Li reformulation prop}
Fix a branch  $\varpi\in\mathbb C_p$.
Let $n_1,\dots,n_r\in \mathbb{N}$, and 
$\xi_1,\dots,\xi_{r}\in\mathbb{C}_p$
such that $|\xi_j|_p= 1$ ($1\leqslant j\leqslant r$).
The equality
\begin{align}
&\ell^{(p),\star}_{n_1,\dots,n_r}(\xi_1,\dots,\xi_r; z) \notag \\
&=Li^{(p),\star,\varpi}_{n_1,\dots,n_r}(\xi_1,\dots, \xi_r; z) 
+\sum_{d=1}^r\left(-\frac{1}{p}\right)^d
\sum_{1\leqslant i_1<\cdots<i_d\leqslant r}\sum_{\rho_{i_1}^p=1}\cdots\sum_{\rho_{i_d}^p=1}
Li^{(p),\star,\varpi}_{n_1,\dots,n_r}\Bigl(\bigl((\prod_{l=1}^d\rho_{i_l}^{\delta_{i_lj}})\xi_j\bigr); z\Bigr)
\label{ell-Li reformulation} 
\end{align}
holds for $z\in{\bf P}^1({\mathbb C}_p) - ]{S_r}\setminus\{\overline{0}\} [$, 
where $\delta_{ij}$ is the Kronecker delta. 
\end{theorem}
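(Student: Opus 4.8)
The plan is to deduce the reformulation \eqref{ell-Li reformulation} directly from the formula in Theorem \ref{ell-Li theorem} by a purely algebraic regrouping of the double sum over the $\alpha_j$'s and the $\rho_j$'s; no new analytic input is needed beyond Theorem \ref{ell-Li theorem} itself. The starting point is the elementary orthogonality identity for $p$-th roots of unity: for any function $f$ on $\mu_p$,
\[
\frac{1}{p}\sum_{\alpha=1}^{p-1}\sum_{\rho^p=1}\rho^{-\alpha}f(\rho)=f(1)-\frac{1}{p}\sum_{\rho^p=1}f(\rho),
\]
which follows at once from $\sum_{\alpha=0}^{p-1}\rho^{-\alpha}=0$ for $\rho\neq 1$ and $=p$ for $\rho=1$. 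In operator language, the action of $\frac{1}{p}\sum_{0<\alpha_j<p}\sum_{\rho_j^p=1}\rho_j^{-\alpha_j}$ on the $j$-th argument equals the specialization $\rho_j=1$ minus $\frac{1}{p}\sum_{\rho_j^p=1}$.

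Next I would exploit the fact that in the right-hand side of Theorem \ref{ell-Li theorem} the summations over the distinct indices $j=1,\dots,r$ act on independent arguments of $Li^{(p),\star,\varpi}_{n_1,\dots,n_r}(\rho_1\xi_1,\dots,\rho_r\xi_r;z)$, so the whole prefactor $\frac{1}{p^r}\sum_{(\alpha_j)}\sum_{(\rho_j)}\prod_j\rho_j^{-\alpha_j}$ factors as the product $\prod_{j=1}^{r}\bigl(\mathrm{id}_j-\frac{1}{p}\sum_{\rho_j^p=1}\bigr)$. Expanding this product over subsets $I=\{i_1<\cdots<i_d\}\subseteq\{1,\dots,r\}$—choosing the term $-\frac{1}{p}\sum_{\rho_j^p=1}$ exactly for $j\in I$ and the specialization $\rho_j=1$ for $j\notin I$—produces precisely the right-hand side of \eqref{ell-Li reformulation}: the empty subset yields $Li^{(p),\star,\varpi}_{n_1,\dots,n_r}(\xi_1,\dots,\xi_r;z)$, while a subset of size $d$ contributes
\[
\left(-\frac{1}{p}\right)^{d}\sum_{\rho_{i_1}^p=1}\cdots\sum_{\rho_{i_d}^p=1}Li^{(p),\star,\varpi}_{n_1,\dots,n_r}\Bigl(\bigl((\textstyle\prod_{l=1}^{d}\rho_{i_l}^{\delta_{i_lj}})\xi_j\bigr);z\Bigr),
\]
the Kronecker delta $\delta_{i_lj}$ recording that only the slots $j\in I$ acquire the twist by $\rho_j$.

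Finally I would check that every $Li$-function occurring on the right is a genuine Coleman function on the common domain ${\bf P}^1({\mathbb C}_p)-]S_r\setminus\{\overline 0\}[$, so that the termwise identity is meaningful there: since each $\rho_{i_l}\in\mu_p$ lies in $]\bar 1[$, multiplying any $\xi_j$ by such a root of unity leaves the reduction $\overline{\xi_j^{-1}}$ unchanged, whence every twisted function has the same reduced singular set $S_r$ as in Theorem-Definition \ref{Coleman function theorem}. Thus the equality of Theorem \ref{ell-Li theorem}, valid on this domain, transfers verbatim to \eqref{ell-Li reformulation}. The argument is entirely formal, and the only step demanding care is the bookkeeping of the subset expansion together with the verification that the Kronecker-delta notation faithfully records which arguments are twisted; I expect no genuine obstacle beyond this indexing.
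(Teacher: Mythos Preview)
Your proposal is correct and is essentially the same argument as the paper's: both rest on expanding the product $\prod_{j=1}^r\bigl(1-\tfrac{1}{p}\sum_{\rho_j^p=1}\bigr)$ over subsets of $\{1,\dots,r\}$. The paper carries this out directly at the level of the power-series coefficients (writing the coprimality condition as $\prod_i(1-\tfrac{1}{p}\sum_{\rho_i}\rho_i^{k_i})$), whereas you phrase it as an operator identity applied to the right-hand side of Theorem~\ref{ell-Li theorem}; your framing has the minor advantage that the domain of validity is inherited automatically rather than via an implicit appeal to the coincidence principle.
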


\begin{proof}
It 
follows by direct calculation:
\begin{align*}
\ell^{(p),\star}_{n_1,\dots,n_r}&(\xi_1,\dots,\xi_{r}; z)
=
\underset{(k_1,p)=\cdots=(k_r,p)=1}{
{\underset{0<k_1\leqslant\cdots\leqslant k_{r}}{\sum}}}
\frac{\xi_1^{k_1}\cdots\xi_{r}^{k_{r}} z^{k_r}}{k_1^{n_1}\cdots k_r^{n_r}} 
=
{\underset{0<k_1\leqslant\cdots\leqslant k_{r}}{\sum}}
\left\{\prod_{i=1}^r
\left(1-\frac{1}{p}\sum_{\rho_i^p=1}\rho_i^{k_i}\right)
\frac{\xi_i^{k_i}}{k_i^{n_i}} \right\} z^{k_r}\\
=&Li^{(p),\star}_{n_1,\dots,n_r}(\xi_1,\dots,\xi_r; z)
+\sum_{d=1}^r\left(-\frac{1}{p}\right)^d
\sum_{1\leqslant i_1<\cdots<i_d\leqslant r}\sum_{\rho_{i_1}^p=1}\cdots\sum_{\rho_{i_d}^p=1}
Li^{(p),\star}_{n_1,\dots,n_r}\Bigl(\bigl((\prod_{l=1}^d\rho_{i_l}^{\delta_{i_lj}})\xi_j\bigr); z\Bigr).
\end{align*}
\end{proof}

\begin{example}
In the case when $r=1$ and $\xi_1=1$, \eqref{ell-Li reformulation} gives
$$
\ell_n^{(p),\star}(1;z)=Li_n^{(p),\star}(1;z)-\frac{1}{p}\sum_{\rho^p=1}Li_n^{(p),\star}(\rho;z).
$$
Combining this with
the distribution relation (cf. \cite{C})
\begin{equation}\label{distribution relation}
\frac{1}{r}\sum_{\eta\in\mu_r}Li^{(p),\star}_n(\eta; z)=
\frac{1}{r^n}Li_n^{(p),\star}(1;z^r)
\end{equation}
for $r\geqslant 1$,
we recover Coleman's formula 
$$
\ell_n^{(p),\star}(1;z)=Li_n^{(p),\star}(1;z)-\frac{1}{p^n}Li_n^{(p),\star}(1;z^p),
$$
shown in \cite[Section VI]{C}.
\end{example}

We define the $p$-adic twisted multiple $L$-star values as 
the special values of the $p$-adic TMSPLs at unity, 
under a certain condition, below.

\begin{theorem-definition}
Fix a branch  $\varpi\in\mathbb C_p$.
Let $n_1,\dots,n_r\in \mathbb{N}$,
$\rho_1,\dots,\rho_r\in\mu_p$ and
$\xi_1,\dots,\xi_r\in\mu_c$ with $(c,p)=1$.
Assume that 
\begin{equation}\label{assumption for pMLV}
\xi_1\cdots\xi_r\neq 1, \quad \xi_2\cdots\xi_r\neq 1,\quad
\dots, \quad  \xi_{r-1}\xi_r\neq 1,\quad \xi_r\neq 1.
\end{equation}
Then the special value of
\begin{equation}\label{function for pMLV}
Li^{(p),\star,\varpi}_{n_1,\dots,n_r}(\rho_1\xi_1,\dots,\rho_r\xi_r; z)
\end{equation}
at $z=1$
is well-defined.
Furthermore, it is independent of the 
choice of the branch $\varpi\in\mathbb C_p$, and
it belongs to ${\mathbb Q}_p(\mu_{cp})$.
We call this value 
{\bf the $p$-adic twisted multiple $L$-star value}, 
and denote it by
\begin{equation}\label{FYpMLV}
Li^{(p),\star}_{n_1,\dots,n_r}(\rho_1\xi_1,\dots,\rho_r\xi_r).
\end{equation}
\end{theorem-definition}

\begin{proof}
By the assumption \eqref{assumption for pMLV},
$$1\in{\bf P}^1({\mathbb C}_p) - ]{S_r}\setminus\{\overline{0}\} [.$$
Hence the special value of \eqref{function for pMLV} at $z=1$ is well-defined 
by Remark \ref{S_r-0}.

The branch independency of the special value follows from Proposition \ref{branch independent region}.

The points $0$ and $1$, and all of the differential 1-forms
$\frac{\xi_r dt}{1-\xi_rt}$, 
$\frac{(\xi_{r-1}\xi_r) dt}{1-(\xi_{r-1}\xi_r)t}$, $\dots$,
$\frac{(\xi_1\cdots\xi_r) dt}{1-(\xi_1\cdots\xi_r)t}$
are defined over ${\mathbb{Q}}_p(\mu_{cp})$.
Then by the Galois equivalency stated in \cite[Remark 2.3]{BdJ},
the special value of \eqref{function for pMLV}
for $z\in {\mathbb{Q}}_p(\mu_{cp})$ is
invariant under the action of the absolute Galois group 
${\rm Gal}(\overline{\mathbb{Q}}_p/{\mathbb{Q}}_p(\mu_{cp}))$
if we take $\varpi\in {\mathbb{Q}}_p(\mu_{cp})$.
 Since  we have shown that the special values at $z=1$ are independent of 
the choice of $\varpi$,
the value \eqref{FYpMLV} must belong to ${\mathbb Q}_p(\mu_{cp})$.
\end{proof}

\begin{remark} \label{Rem-FY}\ 
\begin{enumerate}
\item
The $p$-adic multiple zeta values were introduced as
the special values at $z=1$ of $p$-adic MPL
$Li^{(p),\varpi}_{n_1,\dots,n_r}(1,\dots,1;z)$,
the non-star version of the function \eqref{function for pMLV} with all $\rho_i=1$  and $\xi_i=1$ (thus it is not the case of \eqref{assumption for pMLV})
and their basic properties are investigated  by the first-named author in \cite{Fu1}.
\item
The $p$-adic multiple $L$-values introduced by Yamashita \cite{Y} are special values
at $z=1$ of the non-star version of the function
\eqref{function for pMLV} with all $\rho_i=1$ and
$(\xi_r,n_r)\neq (1,1)$.
Unver's \cite{U} cyclotomic $p$-adic multi-zeta values might be closely related
to Yamashita's 
values.
\end{enumerate}
\end{remark}

By Theorem \ref{L-ell theorem} and Theorem \ref{ell-Li theorem}, we have the following theorem.
A very nice point, here, is that our assumption \eqref{assumption for pMLV}
appeared as the  condition of the summation in equation \eqref{L-ell-formula}. 

\begin{theorem}\label{L-Li theorem}
For  $n_1,\dots,n_r\in \mathbb{N}$, 
and $c\in \mathbb{N}_{>1}$ with $(c,p)=1$,
\begin{align*}
L_{p,r} & (n_1,\dots,n_r;\omega^{-n_1},\dots,\omega^{-n_r};1,\dots,1;c)= \notag \\
&\frac{1}{p^r}\sum_{0<\alpha_1,\dots,\alpha_r<p}\sum_{\rho_1^p=\cdots=\rho_r^p=1}
\underset{\xi_1\cdots\xi_r\neq 1,\dots,\xi_{r-1}\xi_r\neq 1,\xi_r\neq 1}{\sum_{\xi_1^c=\cdots=\xi_r^c=1}}
\rho_1^{-\alpha_1}\cdots\rho_r^{-\alpha_r} \cdot 
Li^{(p),\star}_{n_1,\dots,n_r}(\rho_1\xi_1,\dots,\rho_r\xi_r).
\end{align*}
%
\end{theorem}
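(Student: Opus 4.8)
The plan is to derive the identity by chaining together the two evaluation theorems already proved, Theorem~\ref{L-ell theorem} and Theorem~\ref{ell-Li theorem}, with the only new input being the specialization of the latter to the point $z=1$. First I would invoke Theorem~\ref{L-ell theorem}, which writes
\[
L_{p,r}(n_1,\dots,n_r;\omega^{-n_1},\dots,\omega^{-n_r};1,\dots,1;c)=
\underset{\xi_1\cdots\xi_r\neq 1,\dots,\xi_r\neq 1}{\sum_{\xi_1^c=\cdots=\xi_r^c=1}}
\ell^{(p),\star}_{n_1,\dots,n_r}(\xi_1,\dots,\xi_r;1),
\]
thereby reducing the left-hand side to a finite sum of special values at $z=1$ of $p$-adic rigid TMSPLs. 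Since each $\xi_j$ is a $c$\,th root of unity with $(c,p)=1$, its order is prime to $p$, so $|\xi_j|_p=1$ and Theorem~\ref{ell-Li theorem} applies to every summand.

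The second step is to evaluate the identity of Theorem~\ref{ell-Li theorem} at $z=1$. For this I must verify that $1$ lies in the region ${\bf P}^1({\mathbb C}_p) - ]{S_r}\setminus\{\overline{0}\} [$ on which that identity is valid, where $S_r=\{\bar 0,\overline{\infty},\overline{\xi_r^{-1}},\dots,\overline{(\xi_1\cdots\xi_r)^{-1}}\}$. Because the reduction map is injective on roots of unity of order prime to $p$, for each $1\leqslant j\leqslant r$ one has $\overline{(\xi_j\cdots\xi_r)^{-1}}=\overline 1$ precisely when $\xi_j\cdots\xi_r=1$; the running condition $\xi_1\cdots\xi_r\neq 1,\dots,\xi_r\neq 1$ of the outer sum is exactly the hypothesis \eqref{assumption for pMLV}, and it forces $\overline 1\notin S_r\setminus\{\bar 0\}$, i.e. $1\in {\bf P}^1({\mathbb C}_p) - ]{S_r}\setminus\{\overline{0}\} [$. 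Hence Theorem~\ref{ell-Li theorem} at $z=1$ yields
\[
\ell^{(p),\star}_{n_1,\dots,n_r}(\xi_1,\dots,\xi_r;1)=\frac{1}{p^r}\sum_{0<\alpha_1,\dots,\alpha_r<p}\sum_{\rho_1^p=\cdots=\rho_r^p=1}\rho_1^{-\alpha_1}\cdots\rho_r^{-\alpha_r}\,Li^{(p),\star,\varpi}_{n_1,\dots,n_r}(\rho_1\xi_1,\dots,\rho_r\xi_r;1),
\]
and by the Theorem-Definition preceding the statement, each value $Li^{(p),\star,\varpi}_{n_1,\dots,n_r}(\rho_1\xi_1,\dots,\rho_r\xi_r;1)$ is well-defined, independent of the branch $\varpi$, and equal to the $p$-adic twisted multiple $L$-star value $Li^{(p),\star}_{n_1,\dots,n_r}(\rho_1\xi_1,\dots,\rho_r\xi_r)$.

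Finally I would substitute this expansion into the sum furnished by Theorem~\ref{L-ell theorem} and interchange the finitely many summations over $(\xi_j)$, $(\rho_j)$, and $(\alpha_j)$ to arrive at the asserted formula. The one genuine subtlety, which I expect to be the crux of the argument, is the domain check in the second step: one must ensure that evaluation at $z=1$ is legitimate for every term of the outer sum, and that the resulting special values are well-defined. This is exactly the place where the summation constraints in \eqref{L-ell-formula} coincide with the non-vanishing hypotheses \eqref{assumption for pMLV} required to define the $p$-adic twisted multiple $L$-star values, so that no summand falls on the excluded locus and no boundary case arises. Everything else is a formal rearrangement of finite sums.
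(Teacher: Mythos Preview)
Your proposal is correct and follows essentially the same approach as the paper: combine Theorem~\ref{L-ell theorem} with Theorem~\ref{ell-Li theorem} evaluated at $z=1$, observing that the summation constraints in \eqref{L-ell-formula} are precisely the hypotheses \eqref{assumption for pMLV} needed to make the specialization legitimate. Your write-up actually supplies more detail than the paper, which merely remarks that ``our assumption \eqref{assumption for pMLV} appeared as the condition of the summation in equation \eqref{L-ell-formula}'' and leaves the domain check implicit.
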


The above theorem can be 
reformulated as the following,
which is comparable to Theorem \ref{T-main-1}.

\begin{theorem}\label{L-Li theorem-2}
For  $n_1,\dots,n_r\in \mathbb{N}$
and $c\in \mathbb{N}_{>1}$ with $(c,p)=1$,
\begin{align}
L_{p,r} & (n_1,\dots,n_r;\omega^{-n_1},\dots,\omega^{-n_r};1,\dots,1;c) \notag \\
&=
\underset{\xi_1\neq 1}{\sum_{\xi_1^c=1}} \cdots
\underset{\xi_r\neq 1}{\sum_{\xi_r^c=1}} 
Li^{(p),\star}_{n_1,\dots,n_r}\left(\frac{\xi_1}{\xi_2},\frac{\xi_2}{\xi_3},\dots,\frac{\xi_{r}}{\xi_{r+1}}\right)   \notag \\
&+\sum_{d=1}^r\left(-\frac{1}{p}\right)^d
\sum_{1\leqslant i_1<\cdots<i_d\leqslant r}\sum_{\rho_{i_1}^p=1}\cdots\sum_{\rho_{i_d}^p=1}
\underset{\xi_1\neq 1}{\sum_{\xi_1^c=1}} \cdots
\underset{\xi_r\neq 1}{\sum_{\xi_r^c=1}} 
Li^{(p),\star}_{n_1,\dots,n_r}\Bigl(\bigl(\frac{\prod_{l=1}^d\rho_{i_l}^{\delta_{i_lj}}\xi_j}{\xi_{j+1}}\bigr)\Bigr),
\end{align}
where we set 
$\xi_{r+1}=1$.
\end{theorem}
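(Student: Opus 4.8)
The plan is to derive Theorem~\ref{L-Li theorem-2} from Theorem~\ref{L-Li theorem} by reindexing the summations, exactly as Theorem~\ref{T-main-1} is the reindexed form of the formula proved via $p$-adic measures. Theorem~\ref{L-Li theorem} already expresses
$L_{p,r}(n_1,\dots,n_r;\omega^{-n_1},\dots,\omega^{-n_r};1,\dots,1;c)$
as
\[
\frac{1}{p^r}\sum_{0<\alpha_1,\dots,\alpha_r<p}\sum_{\rho_1^p=\cdots=\rho_r^p=1}
\underset{\xi_1\cdots\xi_r\neq1,\dots,\xi_r\neq1}{\sum_{\xi_1^c=\cdots=\xi_r^c=1}}
\rho_1^{-\alpha_1}\cdots\rho_r^{-\alpha_r}\,
Li^{(p),\star}_{n_1,\dots,n_r}(\rho_1\xi_1,\dots,\rho_r\xi_r),
\]
so the content of Theorem~\ref{L-Li theorem-2} is purely combinatorial: rewrite this as the stated two-block expression. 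The first step is to collapse the $\alpha$-sum against the $\rho$-sum. Using the orthogonality relation $\frac{1}{p}\sum_{\rho^p=1}\rho^{-\alpha}(\cdots)$, the averaging over $\alpha_1,\dots,\alpha_r$ together with $\rho_1,\dots,\rho_r$ is precisely the device that projects onto the prime-to-$p$ indices. But a cleaner route is to bypass Theorem~\ref{L-Li theorem} and instead combine Theorem~\ref{L-ell theorem} directly with Theorem~\ref{ell-Li reformulation prop} at $z=1$.

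Concretely, I would start from Theorem~\ref{L-ell theorem}, which gives
\[
L_{p,r}(n_1,\dots,n_r;\omega^{-n_1},\dots,\omega^{-n_r};1,\dots,1;c)
=\underset{\xi_1\cdots\xi_r\neq1,\dots,\xi_r\neq1}{\sum_{\xi_1^c=\cdots=\xi_r^c=1}}
\ell^{(p),\star}_{n_1,\dots,n_r}(\xi_1,\dots,\xi_r;1).
\]
Then I substitute the reformulation of Theorem~\ref{ell-Li reformulation prop} evaluated at $z=1$, namely
\[
\ell^{(p),\star}_{n_1,\dots,n_r}(\xi_1,\dots,\xi_r;1)
=Li^{(p),\star}_{n_1,\dots,n_r}(\xi_1,\dots,\xi_r)
+\sum_{d=1}^r\Bigl(-\tfrac1p\Bigr)^d
\sum_{1\leqslant i_1<\cdots<i_d\leqslant r}\sum_{\rho_{i_1}^p=1}\cdots\sum_{\rho_{i_d}^p=1}
Li^{(p),\star}_{n_1,\dots,n_r}\bigl((\textstyle\prod_{l}\rho_{i_l}^{\delta_{i_lj}})\xi_j\bigr).
\]
This substitution is legitimate because, as noted in the remark after Theorem-Definition, under the running condition \eqref{assumption for pMLV} the point $z=1$ lies in ${\bf P}^1({\mathbb C}_p)-]S_r\setminus\{\bar0\}[$, so both sides are genuine well-defined $p$-adic numbers there, and the $Li$-values on the right are exactly the branch-independent twisted multiple $L$-star values. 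The only point requiring care is that the arguments $\xi_j$ must be written in the normalized shape $\bigl(\frac{\xi_j}{\xi_{j+1}}\bigr)$ appearing in the statement: the $p$-adic TMSPL $Li^{(p),\star}_{n_1,\dots,n_r}(\eta_1,\dots,\eta_r)$ carries arguments $\eta_j$ that are the ratios of consecutive cumulative roots of unity, matching the series \eqref{series expression} whose $j$-th summation variable couples to $\eta_j^{k_j}$.

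The decisive bookkeeping step is the change of variables in the $\xi$-summation. The sum in Theorem~\ref{L-ell theorem} runs over tuples $(\xi_1,\dots,\xi_r)$ of $c$-th roots of unity subject to $\xi_j\xi_{j+1}\cdots\xi_r\neq1$ for every $j$, whereas the summation in Theorem~\ref{L-Li theorem-2} runs over $(\eta_1,\dots,\eta_r)$ with each $\eta_j^c=1$, $\eta_j\neq1$, and the $Li$-argument written as $\eta_j/\eta_{j+1}$ with $\eta_{r+1}=1$. I would set $\eta_j:=\xi_j\xi_{j+1}\cdots\xi_r$ (the cumulative products), so that $\eta_j/\eta_{j+1}=\xi_j$, $\eta_{r+1}=1$, and the constraint $\eta_j\neq1$ becomes exactly $\xi_j\cdots\xi_r\neq1$; this bijection carries the prime-to-$1$ argument format into the ratio format, and is invertible with $\xi_j=\eta_j/\eta_{j+1}$. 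The main obstacle, and the step I would write out most carefully, is verifying that this reindexing is respected \emph{simultaneously} by the twist insertions $\prod_{l}\rho_{i_l}^{\delta_{i_lj}}$ from Theorem~\ref{ell-Li reformulation prop}: one must check that inserting $\rho_{i_l}$ into the $i_l$-th ratio argument corresponds correctly, under the substitution, to the factor $\prod_{l=1}^d\rho_{i_l}^{\delta_{i_lj}}$ multiplying $\xi_j$ in the numerator of $\frac{\prod_l\rho_{i_l}^{\delta_{i_lj}}\xi_j}{\xi_{j+1}}$ in the final statement. Once the indexing of the twists is matched against the Kronecker-delta exponents, the two $d$-sums coincide term by term, and the theorem follows by linearity of the (finite) $\xi$-summation through the expansion.
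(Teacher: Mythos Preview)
Your approach is correct and matches the paper's own proof, which simply cites Theorem~\ref{L-ell theorem} and Theorem~\ref{ell-Li reformulation prop}. The reindexing $\eta_j=\xi_j\xi_{j+1}\cdots\xi_r$ (so $\xi_j=\eta_j/\eta_{j+1}$) that you spell out is exactly the bijection the paper leaves implicit, and your check that the Kronecker-delta twists pass through this substitution unchanged is routine since $\prod_l\rho_{i_l}^{\delta_{i_lj}}$ multiplies only the $j$-th argument.
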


\begin{proof}
This follows from 
Theorem \ref{L-ell theorem} and Theorem \ref{ell-Li reformulation prop}.
\end{proof}

Thus the positive integer values  of the $p$-adic multiple $L$-function
are described as linear combinations of 
the special values of the $p$-adic TMSPL \eqref{function for pMLV}
at roots of unity.
This might be regarded as a $p$-adic analogue of the equality:
\begin{equation}\label{zeta=Li}
\zeta_r(n_1,\dots,n_r)=Li_{n_1,\dots,n_r}(1,\dots,1)
\end{equation}
for $n_1,\dots,n_r\in {\mathbb N}$ with $n_r>1$.

As  a special case of Theorem \ref{L-Li theorem}, 
we have the following, 
when $r=1$: 

\begin{example}\label{Coleman's equation}
For $n\in \mathbb{N}$
and $c\in \mathbb{N}_{>1}$ with $(c,p)=1$,
we have
\begin{equation*}
(c^{1-n}-1)\cdot L_{p}(n;\omega^{1-n})
=\frac{1}{p}\sum_{0<\alpha<p}\sum_{\rho^p=1}
\underset{\xi\neq 1}{\sum_{\xi^c=1}}
\rho^{-\alpha} Li^{(p),\star}_{n}(\rho\xi)
\end{equation*}
by Example \ref{example for r=1}.
This formula and  \eqref{distribution relation}
recover Coleman's equation \cite[(4)]{C}
\begin{equation}\label{Coleman L-Li}
L_p(n;\omega^{1-n})=(1-\frac{1}{p^n})Li_n^{(p),\star}(1).
\end{equation}
\end{example}

When $r=2$, we have:
\begin{example}
For  $a,b\in \mathbb{N}$
and $c\in \mathbb{N}_{>1}$ with $(c,p)=1$,
\begin{align*} 
& L_{p,2}(a,b;\omega^{-a},\omega^{-b}; 1,1; c) \\
&=\frac{1}{p^2}\sum_{0<\alpha,  \beta <p} \ 
\sum_{\rho_1,\rho_2\in\mu_p} \ 
\underset{\xi_1\xi_2\neq 1,\xi_2\neq 1}{\sum_{\xi_1,\xi_2\in\mu_c}}
\rho_1^{-\alpha}\rho_2^{-\beta}Li^{(p),\star}_{a,b}(\rho_1\xi_1,\rho_2\xi_2) \\
&=
\underset{\xi_1\neq 1}{\sum_{\xi_1^c=1}} \ \underset{\xi_2\neq 1}{\sum_{\xi_2^c=1}} 
Li^{(p)}_{a,b}\left(\frac{\xi_1}{\xi_2},\xi_2 \right) 
-\frac{1}{p}\sum_{\rho^p=1} \ 
\underset{\xi_1\neq 1}{\sum_{\xi_1^c=1}} \  \underset{\xi_2\neq 1}{\sum_{\xi_2^c=1}} 
\left\{Li^{(p),\star}_{a,b}\left(\frac{\rho\xi_1}{\xi_2},\xi_2
\right) +
Li^{(p),\star}_{a,b}\left(\frac{\xi_1}{\xi_2},{\rho\xi_2}\right)\right\}
\\
&\qquad\qquad +\frac{1}{p^2}\sum_{\rho_{1}^p=1} \ \sum_{\rho_{2}^p=1} \
\underset{\xi_1\neq 1}{\sum_{\xi_1^c=1}} \ \underset{\xi_2\neq 1}{\sum_{\xi_2^c=1}} 
Li^{(p),\star}_{a,b}\left(\frac{\rho_1\xi_1}{\xi_2},{\rho_2\xi_2}\right).
\end{align*}
\end{example}
\if0
\begin{remark}\ \ Instead of \eqref{p-adic integ}, we consider another type of the $p$-adic integral defined by
\begin{equation}
\int_{\mathbb{Z}_p} f(x)d{\mathfrak{m}}_z(x)  :=\lim_{N\to \infty}\sum_{a=1}^{p^N} f(a) \mk_z\left(a+p^N\mathbb{Z}_p\right)
\quad\in  \mathbb{C}_p.   \label{p-adic integ2}
\end{equation}
Further, instead of \eqref{e-6-1}, we define another type of the $p$-adic multiple $L$-function by
\begin{align}
&\mathcal{L}_{p,r}(s_1,\ldots,s_r;\omega^{k_1},\ldots,\omega^{k_r};\gamma_1,\ldots,\gamma_{r};c)\notag\\
&\quad :={\int_{\left( \mathbb{Z}_p^r\right)'_{\{\gamma_j\}}}}\langle x_1\gamma_1 \rangle^{-{s_1}}\langle x_1\gamma_1+ x_2\gamma_2 \rangle^{-{s_2}}\cdots \langle \sum_{j=1}^{r}x_{j}\gamma_{j} \rangle^{-{s_r}}\notag\\
& \qquad \qquad \times \omega^{k_1}(x_1\gamma_1)\cdots\omega^{k_r}( \sum_{j=1}^{r}x_{j}\gamma_{j}) \prod_{j=1}^{r}d\mm(x_j).\label{e-6-1-f}
\end{align}
Then, similar to Theorem \ref{L-Li theorem-2}, we can prove 
\begin{align}
\mathcal{L}_{p,r} & (n_1,\dots,n_r;\omega^{-n_1},\dots,\omega^{-n_r};1,\dots,1;c) \notag \\
&=
\underset{\xi_1\neq 1}{\sum_{\xi_1^c=1}} \cdots
\underset{\xi_r\neq 1}{\sum_{\xi_r^c=1}} 
Li^{(p)}_{n_1,\dots,n_r}\left(\frac{\xi_1}{\xi_2},\frac{\xi_2}{\xi_3},\dots,\frac{\xi_{r}}{\xi_{r+1}}\right)   \notag \\
&+\sum_{d=1}^r\left(-\frac{1}{p}\right)^d
\sum_{1\leqslant i_1<\cdots<i_d\leqslant r}\sum_{\rho_{i_1}^p=1}\cdots\sum_{\rho_{i_d}^p=1}
\underset{\xi_1\neq 1}{\sum_{\xi_1^c=1}} \cdots
\underset{\xi_r\neq 1}{\sum_{\xi_r^c=1}} 
Li^{(p)}_{n_1,\dots,n_r}\Bigl(\bigl(\frac{\prod_{l=1}^d\rho_{i_l}^{\delta_{i_lj}}\xi_j}{\xi_{j+1}}\bigr)\Bigr)\notag
\end{align}
for  $n_1,\dots,n_r\in \mathbb{N}$
and $c\in \mathbb{N}_{>1}$ with $(c,p)=1$,
where $Li^{(p)}_{n_1,\dots,n_r}(\rho_1\xi_1,\dots,\rho_r\xi_r)$ is the non-star version of the $p$-adic twisted multiple $L$-value (see Remark \ref{Rem-FY} (ii)).
\end{remark}
\fi

\ 

In our subsequent 
paper \cite{FKMT02}, it is shown that Theorem \ref{L-Li theorem-2}
can be 
generalized 
for indices $(n_j)$ 
that are not necessarily all positive integers.

\ 

\if0
\noindent
{\bf Acknowledgements.}\ 
The authors express their sincere gratitude to the referee for important suggestions, and 
to Dr. Stefan Horocholyn for linguistic advice.
Also they wish to express their thanks 
to the Isaac Newton Institute for Mathematical Sciences, Cambridge, and the Max Planck Institute for Mathematics, Bonn, where parts of 
this work were 
carried out in 2013.
\fi

\


\bibliographystyle{amsplain}

\ 

\begin{flushleft}
\begin{small}
H. Furusho\\
Graduate School of Mathematics \\
Nagoya University\\
Furo-cho, Chikusa-ku \\
Nagoya 464-8602, Japan\\
{furusho@math.nagoya-u.ac.jp}

\ 

Y. Komori\\
Department of Mathematics \\
Rikkyo University \\
Nishi-Ikebukuro, Toshima-ku\\
Tokyo 171-8501, Japan\\
komori@rikkyo.ac.jp

\ 

K. Matsumoto\\
Graduate School of Mathematics \\
Nagoya University\\
Furo-cho, Chikusa-ku \\
Nagoya 464-8602, Japan\\
kohjimat@math.nagoya-u.ac.jp

\ 

H. Tsumura\\
Department of Mathematics and Information Sciences\\
Tokyo Metropolitan University\\
1-1, Minami-Ohsawa, Hachioji \\
Tokyo 192-0397, Japan\\
tsumura@tmu.ac.jp

\end{small}
\end{flushleft}

\end{document}